\documentclass[11pt]{amsart}
\pdfoutput=1

\usepackage{amssymb}
\usepackage{stmaryrd}
\usepackage{microtype}
\usepackage{mathtools}
\usepackage{enumitem}
\usepackage{tikz-cd}

\usepackage{booktabs}
\usepackage{pgfplotstable}

\usepackage[margin=1in, marginparwidth=.75in]{geometry}

\usepackage[numbers,longnamesfirst]{natbib}

\usepackage{hyperref}
\hypersetup{
  pdftitle={HDG methods in finite element exterior calculus},
  pdfauthor={Calvin Reedy, Ari Stern, Yunhui Xue},
  pdfsubject={MSC 2020: 65N30},
  bookmarksopen=false
}

\usepackage[capitalize,nameinlink]{cleveref}

\DeclareMathSymbol{\star}{\mathord}{letters}{"3F}

\makeatletter
\g@addto@macro\bfseries{\boldmath}
\makeatother

\newcommand{\jump}[1]{\llbracket #1 \rrbracket}

\newcommand{\avg}[1]{\{\mkern-6.5mu\{ #1 \}\mkern-6.5mu\}}
\newcommand{\bigavg}[1]{\bigl\{\mkern-8mu\bigl\{ #1 \bigr\}\mkern-8mu\bigr\}}

\newcommand{\lVERT}{\lvert\mkern-2mu\lvert\mkern-2mu\lvert}
\newcommand{\rVERT}{\rvert\mkern-2mu\rvert\mkern-2mu\rvert}
\newcommand{\biglVERT}{\bigl\lvert\mkern-3mu\bigl\lvert\mkern-3mu\bigl\lvert}
\newcommand{\bigrVERT}{\bigr\rvert\mkern-3mu\bigr\rvert\mkern-3mu\bigr\rvert}

\newcommand{\ringhat}[1]{\mathchoice
  {\smash{\mathring{\widehat{#1}}}}%
  {\vphantom{{\mathring{\widehat{#1}}}}\smash{\mathring{\widehat{#1}}}}%
  {\vphantom{{\mathring{\widehat{#1}}}}\smash{\mathring{\widehat{#1}}}}%
  {\vphantom{{\mathring{\widehat{#1}}}}\smash{\mathring{\widehat{#1}}}}%
}

\theoremstyle{plain}
\newtheorem{theorem}             {Theorem}[section]
\newtheorem{lemma}      [theorem]{Lemma}
\newtheorem{corollary}  [theorem]{Corollary}
\newtheorem{proposition}[theorem]{Proposition}

\theoremstyle{definition}
\newtheorem{definition} [theorem]{Definition}
\newtheorem{assumption} [theorem]{Assumption}

\theoremstyle{remark}
\newtheorem{remark}     [theorem]{Remark}

\AddToHook{env/lemma/begin}{\crefalias{theorem}{lemma}}
\AddToHook{env/corollary/begin}{\crefalias{theorem}{corollary}}
\AddToHook{env/proposition/begin}{\crefalias{theorem}{proposition}}
\AddToHook{env/definition/begin}{\crefalias{theorem}{definition}}
\AddToHook{env/assumption/begin}{\crefalias{theorem}{assumption}}
\AddToHook{env/remark/begin}{\crefalias{theorem}{remark}}
\crefname{assumption}{Assumption}{Assumptions}

\title{HDG methods in finite element exterior calculus}

\author{Calvin Reedy}
\author{Ari Stern}
\author{Yunhui Xue}

\address{Department of Mathematics\\
  Washington University in St.~Louis}

\email{c.m.reedy@wustl.edu}
\email{stern@wustl.edu}
\email{marston@wustl.edu}

\begin{document}

\begin{abstract}
  We develop and analyze HDG methods for two central problems in
  finite element exterior calculus, the Hodge--Dirac problem and the
  Hodge--Laplace problem, in arbitrary dimension~$n$. Our analysis
  allows for equal-order polynomial spaces with $ \mathcal{O} (1) $
  penalty parameters, by contrast with previous work on the
  Hodge--Laplace problem requiring an underlying conforming complex
  and $ \mathcal{O} (h) $ or $ \mathcal{O} ( h ^{-1} ) $
  penalties. For the Hodge--Dirac problem with degree-$r$ polynomials,
  our error estimates give optimal order-$(r+1)$ convergence in all
  form degrees under suitable regularity hypotheses. For the
  Hodge--Laplace problem, we prove optimal order-$(r+1)$ convergence
  for the $k$-form solution and order-$(r+\frac{1}{2})$ convergence
  for the $ ( k \pm 1 )$-form auxiliary variables. We obtain improved
  order-$ (r + 1) $ auxiliary-variable estimates when $ k = 1 $ and/or
  $ k = n - 1 $, which in particular sharpens some recent HDG error
  estimates for the two-dimensional vector Poisson equation. Our
  analysis also encompasses cases of lower regularity, including
  solutions with reentrant-corner singularities on non-convex domains,
  and requires only mild mesh regularity conditions. The results are
  illustrated by numerical experiments in dimensions two and three.
\end{abstract}

\maketitle

\section{Introduction}

Finite element exterior calculus (FEEC) consolidates many families of
finite element methods for scalar and vector fields in $\mathbb{R}^2$
and $\mathbb{R}^3$, viewing them as special cases of methods for
differential forms in $\mathbb{R}^n$
\citep{ArFaWi2006,ArFaWi2010,Arnold2018}. Methods that had previously
been analyzed case-by-case, including primal Lagrange finite element
methods and the mixed methods of \citet{RaTh1977}, \citet{BrDoMa1985},
and \citet{Nedelec1980,Nedelec1986}, are thereby subject to a single
unifying analysis. However, most of the analysis conducted within the
FEEC framework has been of \emph{conforming} methods, including those
just mentioned.

It has been an open problem in FEEC to perform a similar unifying
analysis for hybridizable discontinuous Galerkin (HDG)
methods---specifically, an analysis that includes \emph{equal-order}
HDG methods, where the finite element spaces for all solution
components consist of discontinuous degree-$r$ polynomials. There has
been some progress on HDG methods in FEEC in recent years:
\begin{itemize}
\item \citet{AwFaGuSt2023} generalize the unified hybridization
  framework of \citet{CoGoLa2009} for the Hodge--Laplace problem in
  FEEC, including conforming, nonconforming, and HDG methods
  \citep[Section 8]{AwFaGuSt2023}. However, analysis is only performed
  for hybridized conforming methods, and there is no analysis of the
  HDG methods. \citet{StZa2026} present a more general hybridization
  framework for Hamiltonian PDEs in FEEC, including the Hodge--Dirac
  and Hodge--Laplace problems, and discuss structure-preserving
  properties of HDG methods, but again there is no analysis.

\item \citet{HoLiXu2022} analyze a family of extended Galerkin (XG)
  methods for the Hodge--Laplace problem, some of which coincide with
  HDG methods \citep[Section 6]{HoLiXu2022}. However, their analysis
  requires a conforming finite element complex and $ \mathcal{O} (h) $
  or $ \mathcal{O} ( h ^{-1} ) $ penalties to ensure that the XG
  solution is nearly a conforming FEEC solution for small $h$. Indeed,
  their ``analysis for XG methods is not applicable to more general
  finite element spaces, e.g., DG spaces of piecewise polynomials of
  equal degrees used in the literature \ldots most existing
  equal-order hybridized DG methods use $ \mathcal{O} (1) $
  stabilization parameters.'' \citep[Remark 4.7]{HoLiXu2022}

\item A very recent paper by \citet{CoNuSa2026} analyzes equal-order
  HDG methods with $ \mathcal{O} (1) $ penalties for the
  two-dimensional vector Laplacian, corresponding to the $1$-form
  Hodge Laplacian in $\mathbb{R}^2$. When using degree-$r$ polynomial
  finite elements, they obtain optimal order-$ ( r + 1 ) $ convergence
  for the vector solution but suboptimal order-$ ( r + \frac{1}{2} ) $
  error estimates for the ``auxiliary variables'' corresponding to the
  curl and divergence. These estimates are therefore not as sharp as
  they could be, in addition to being limited to dimension two. The
  analysis also requires a convex domain (in order to satisfy an
  elliptic regularity assumption \citep[Equation 5]{CoNuSa2026}) and a
  quasi-uniform family of meshes.
\end{itemize}

In this paper, we develop and analyze equal-order HDG methods for the
Hodge--Dirac and Hodge--Laplace problems in FEEC, in arbitrary
dimension $n$. When degree-$r$ polynomial finite elements and
$ \mathcal{O} (1) $ penalties are used, under suitable regularity
hypotheses, our error estimates for the Hodge--Dirac problem give
optimal order-$(r+1)$ convergence in all solution components. For the
$k$-form Hodge--Laplace problem, we prove optimal order-$( r + 1 )$
convergence in the $k$-form solution component and
order-$ ( r + \frac{1}{2} )$ convergence in the $ ( k \pm 1 )$-form
auxiliary variables; however, this improves to order-$(r+1)$
convergence in one or both of the auxiliary variables when $ k = 1 $
and/or $ k = n -1 $. In particular, for the $1$-form Hodge--Laplace
problem in $\mathbb{R}^2$, we get optimal order-$(r+1)$ convergence in
all variables, improving on the results of \citet{CoNuSa2026}. We also
obtain error estimates for lower-regularity cases, including
non-convex domains with reentrant corners. Meshes are assumed to be
shape-regular but not necessarily quasi-uniform. Our analytical
approach adapts and extends the seminal analysis of local DG methods
by \citet{CaCoPeSc2000} for the scalar Poisson problem.

The paper is organized as follows:
\begin{itemize}

\item \Cref{s:preliminaries} introduces the exterior calculus
  preliminaries we will use throughout the paper. We recall the
  Hodge--Dirac and Hodge--Laplace problems, along with the appropriate
  notions of boundary trace, jump, and average needed to formulate and
  analyze the HDG methods.

\item \Cref{s:dirac_methods} develops the HDG methods for the
  Hodge--Dirac problem, proves existence and uniqueness for both the
  local solvers and the global numerical solution, and develops an
  equivalent ``jump-average'' DG formulation used for the subsequent
  error analysis.

\item \Cref{s:dirac_error} proves \emph{a priori} error estimates for
  the Hodge--Dirac HDG methods, depending on domain regularity, mesh
  regularity, and Sobolev regularity of the exact solution. Our
  approach broadly follows the roadmap of the analysis in
  \citet{CaCoPeSc2000}.

\item \Cref{s:laplace} develops and analyzes the HDG methods for the
  Hodge--Laplace problem. The close relationship between the
  Hodge--Dirac and Hodge--Laplace problems allows us to reuse many of
  the previous results, particularly the approximation-theoretic
  estimates in \cref{s:KA_KB}.

\item \Cref{s:laplace_uhat} briefly discusses an alternative
  hybridization of the Hodge--Laplace HDG methods that gives the same
  numerical solutions but has potential advantages for
  implementation. In particular, static condensation yields a
  symmetric positive-definite Schur complement (modulo harmonic forms)
  rather than a saddle-point problem.

\item \cref{s:numerical_experiments} presents numerical experiments
  for the Hodge--Dirac and Hodge--Laplace problems in dimensions two
  and three, illustrating the convergence behavior for smooth
  solutions on convex domains as well as minimum-regularity solutions
  on an L-shaped domain.

\end{itemize}

\subsection*{Acknowledgments}

Ari Stern acknowledges the support of the National Science Foundation
(DMS-2208551) and the Simons Foundation (SFI-MPS-TSM-00014348).

\section{Preliminaries}
\label{s:preliminaries}

\subsection{Exterior calculus}

Let $\Omega \subset \mathbb{R}^n $ be a bounded Lipschitz domain, and
let $ \Lambda ^k (\Omega) $ denote the space of smooth differential
$k$-forms on $\Omega$. We recall the \emph{exterior derivative}
$ \mathrm{d} \colon \Lambda ^k (\Omega) \rightarrow \Lambda ^{ k + 1 }
(\Omega) $ and \emph{codifferential}
$ \delta \coloneqq ( - 1 ) ^k \star ^{-1} \mathrm{d} \star \colon
\Lambda ^k (\Omega) \rightarrow \Lambda ^{ k -1 } (\Omega) $, where
$ \star \colon \Lambda ^k (\Omega) \rightarrow \Lambda ^{ n - k }
(\Omega) $ is the Euclidean \emph{Hodge star} isomorphism. We will
often consider all form degrees simultaneously, writing
\begin{equation*}
  \Lambda (\Omega) \coloneqq \bigoplus _{ k = 0 } ^n \Lambda ^k (\Omega) ,
\end{equation*}
so that $ \mathrm{d} $, $ \delta $, and $ \star $ are operators on
$ \Lambda (\Omega) $. The \emph{Hodge--Dirac operator} on
$ \Lambda (\Omega) $ is $ \mathrm{D} \coloneqq \mathrm{d} + \delta $,
i.e.,
\begin{align*}
  ( \mathrm{D} w ) ^0 &= \delta w ^1 \\
  ( \mathrm{D} w ) ^k &= \mathrm{d} w ^{ k -1 } + \delta w ^{ k + 1 } \quad \text{for} \quad   k = 1, \ldots, n -1 ,\\
  ( \mathrm{D} w ) ^n &= \mathrm{d} w ^{ n -1 } .
\end{align*}
Since $ \mathrm{d} \mathrm{d} = 0 $ and $ \delta \delta = 0 $, the
square of the Hodge--Dirac operator is
$ \mathrm{D} ^2 = \mathrm{d} \delta + \delta \mathrm{d} \eqqcolon L $,
which is the \emph{Hodge--Laplace operator}. Note that $ \mathrm{D} $
is ungraded, mixing forms of different degrees, while $L$ takes
$k$-forms to $k$-forms.

In FEEC, we also encounter several Hilbert spaces of differential
forms, where $ \mathrm{d} $ and $\delta$ are taken in the sense of
distributions. Let $ L ^2 \Lambda (\Omega) $ be the Hilbert space of
differential forms on $\Omega$ with coefficients in $ L ^2 (\Omega) $,
denoting its inner product by $ ( \cdot , \cdot ) _\Omega $ and
associated norm by $ \lVert \cdot \rVert _\Omega $. Similarly, let
$ H ^s \Lambda (\Omega) $ be the space of differential forms with
coefficients in the Sobolev space $ H ^s (\Omega) $, denoting its norm
by $ \lVert \cdot \rVert _{ s, \Omega } $ and seminorm by
$ \lvert \cdot \rvert _{ s, \Omega } $. We also have the Hilbert spaces
\begin{align*}
  H \Lambda (\Omega) &\coloneqq \bigl\{ w \in L ^2 \Lambda (\Omega) : \mathrm{d} w \in L ^2 \Lambda (\Omega) \bigr\} ,\\
  \intertext{equipped with the graph inner product
  $ ( z, w ) _{ H \Lambda (\Omega) } \coloneqq ( z, w ) _\Omega + (
  \mathrm{d} z , \mathrm{d} w ) _\Omega $ and norm
  $ \lVert \cdot \rVert _{ H \Lambda (\Omega) } $, and}
  H ^\ast \Lambda (\Omega) &\coloneqq \bigl\{ w \in L ^2 \Lambda (\Omega) : \delta w \in L ^2 \Lambda (\Omega) \bigr\},
\end{align*}
equipped with the graph inner product
$ ( z, w ) _{ H ^\ast \Lambda (\Omega) } \coloneqq ( z, w ) _\Omega +
( \delta z , \delta w ) _\Omega $ and norm
$ \lVert \cdot \rVert _{ H ^\ast \Lambda (\Omega) } $. The
intersection $ H \Lambda (\Omega) \cap H ^\ast \Lambda (\Omega) $ is
equipped with the inner product
$ ( z, w ) _{ H \Lambda (\Omega) \cap H ^\ast \Lambda (\Omega) }
\coloneqq ( z, w ) _\Omega + ( \mathrm{d} z , \mathrm{d} w ) _\Omega +
( \delta z , \delta w ) _\Omega $ and the corresponding norm
$ \lVert \cdot \rVert _{ H \Lambda (\Omega) \cap H ^\ast \Lambda
  (\Omega) } $.

\begin{remark}
  \label{r:proxies}
  In $ \mathbb{R}^3 $, we have the following correspondence between
  exterior calculus spaces and operators and the familiar spaces and
  operators of vector calculus:
  \begin{equation*}
    \begin{tikzcd}
      \Lambda ^0 (\Omega) \ar[r, shift left, "\mathrm{d}"] \ar[d,
      phantom, sloped, "\cong"] & \Lambda ^1 (\Omega) \ar[l, shift left,
      "\delta"] \ar[r, shift left, "\mathrm{d}"] \ar[d, phantom, sloped,
      "\cong"] & \Lambda ^2 (\Omega) \ar[l, shift left, "\delta"] \ar[r,
      shift left, "\mathrm{d}"] \ar[d, phantom, sloped, "\cong"] &
      \Lambda ^3 (\Omega) \ar[l, shift left, "\delta"] \ar[d, phantom,
      sloped, "\cong"]
      \\
      C ^\infty (\Omega) \ar[r, shift left, "\operatorname{grad}"] & C
      ^\infty ( \Omega , \mathbb{R}^3 ) \ar[l, shift left,
      "-\operatorname{div}"] \ar[r, shift left, "\operatorname{curl}"] &
      C ^\infty ( \Omega , \mathbb{R}^3 ) \ar[l, shift left,
      "\operatorname{curl}"] \ar[r, shift left, "\operatorname{div}"] &
      C ^\infty ( \Omega ) \ar[l, shift left, "-\operatorname{grad}"] \mathrlap{\quad .}
    \end{tikzcd}
  \end{equation*}
  Note that $L$ corresponds to
  $ - \operatorname{div} \operatorname{grad} $ on scalar fields and
  $ \operatorname{curl} \operatorname{curl} - \operatorname{grad}
  \operatorname{div} $ on vector fields, which (modulo sign
  convention) are the usual scalar and vector Laplace operators. The
  Hilbert spaces $ H \Lambda ^k (\Omega) $ and
  $ H ^\ast \Lambda ^k (\Omega) $ may be identified with
  $ H ^1 (\Omega) $, $ H ( \operatorname{curl}; \Omega ) $,
  $ H ( \operatorname{div}; \Omega ) $, and $ L ^2 (\Omega) $.
\end{remark}

\subsection{Tangential and normal traces}

Assuming for the moment that the boundary $ \partial \Omega $ is
smooth, the trace map
$ \operatorname{tr} \colon \Lambda (\Omega) \rightarrow \Lambda (
\partial \Omega ) $ is pullback by the inclusion
$ \partial \Omega \hookrightarrow \Omega $. Given
$ w \in \Lambda (\Omega) $, the \emph{tangential trace}
$ w ^{\mathrm{tan}} \in \Lambda ( \partial \Omega ) $ and \emph{normal
  trace} $ w ^{\mathrm{nor}} \in \Lambda ( \partial \Omega ) $ are
defined by
\begin{equation*}
  w ^{\mathrm{tan}} \coloneqq \operatorname{tr} w, \qquad w ^{\mathrm{nor}} \coloneqq \widehat{ \star } ^{-1} \operatorname{tr} \star w ,
\end{equation*}
where $ \widehat{ \star } $ is the Hodge star on $ \partial \Omega $
with respect to the orientation induced by $ \Omega $ and metric
induced by $ \mathbb{R}^n $. Note that if
$ w \in \Lambda ^k (\Omega) $, then
$ w ^{\mathrm{tan}} \in \Lambda ^k ( \partial \Omega ) $ and
$ w ^{\mathrm{nor}} \in \Lambda ^{ k -1 } ( \partial \Omega ) $; in
particular, $n$-forms have vanishing tangential trace, while $0$-forms
have vanishing normal trace. The definitions above yield a
particularly elegant integration-by-parts formula for differential
forms: from Stokes's theorem and the Leibniz rule for $ \mathrm{d} $,
we have
\begin{equation}
  \label{e:ibp}
  \langle z ^{\mathrm{tan}} , w ^{\mathrm{nor}} \rangle _{ \partial \Omega } = ( \mathrm{d} z , w ) _\Omega - ( z , \delta w ) _\Omega ,
\end{equation}
where $ \langle \cdot , \cdot \rangle _{ \partial \Omega } $ denotes
the $ L ^2 $ inner product on $ \partial \Omega $;
cf.~\citep[Proposition 2.2]{AwFaGuSt2023}.

When $ \partial \Omega $ is merely Lipschitz, \citet{Weck2004} showed
that it is possible to define weak tangential and normal traces such
that \eqref{e:ibp} holds for all $ z \in H \Lambda (\Omega) $ and
$ w \in H ^\ast \Lambda (\Omega) $, where
$ \langle \cdot , \cdot \rangle _{ \partial \Omega } $ is a duality
pairing extending the $ L ^2 $ inner product. (See also
\citet{KuAu2012} for an excellent discussion of
\citeauthor{Weck2004}'s results with applications.) We may then define
the spaces
\begin{align*}
  \mathring{ H } \Lambda (\Omega) &\coloneqq \bigl\{ w \in H \Lambda (\Omega) : w ^{\mathrm{tan}} = 0 \bigr\} ,\\
  \mathring{ H } ^\ast \Lambda (\Omega) &\coloneqq \bigl\{ w \in H ^\ast \Lambda (\Omega) : w ^{\mathrm{nor}} = 0 \bigr\} ,
\end{align*}
which coincide with the closures of compactly supported smooth forms
in $ H \Lambda (\Omega) $ and $ H ^\ast \Lambda (\Omega) $,
respectively \citep[Theorems 5 and 7]{Weck2004}. Note that
$ \mathrm{d} \rvert _{ H \Lambda (\Omega) } $ and
$ \delta \rvert _{ \mathring{ H } ^\ast \Lambda (\Omega) } $ are
adjoints, as are
$ \mathrm{d} \rvert _{ \mathring{ H } \Lambda (\Omega) } $ and
$ \delta \rvert _{ H ^\ast \Lambda (\Omega) } $, since in either case
the boundary terms on the left-hand side of \eqref{e:ibp}
vanish. Consequently,
$ \mathrm{D} \rvert _{ H \Lambda (\Omega) \cap \mathring{ H } ^\ast
  \Lambda (\Omega) } $ and
$ \mathrm{D} \rvert _{ \mathring{ H }\Lambda (\Omega) \cap H ^\ast
  \Lambda (\Omega) } $ are self-adjoint.

\begin{remark}
  For $ \Omega \subset \mathbb{R}^3 $, these traces of differential
  forms correspond to ordinary traces of scalar fields and
  tangential/normal traces of vector fields (see \citet[Table
  1]{AwFaGuSt2023}), and \eqref{e:ibp} recovers the familiar
  integration-by-parts formulas of vector calculus. The weak trace
  theory of \citet{Weck2004} reduces to that of
  \citet{BuCi2001a,BuCi2001b,BuCoSh2002}.
\end{remark}

\subsection{The Hodge--Dirac and Hodge--Laplace problems}
\label{s:dirac_laplace}

We consider the Hodge--Dirac and Hodge--Laplace problems with
vanishing normal boundary conditions, both of which can be described
in terms of the self-adjoint restriction
$ \mathrm{D} \rvert _{ H \Lambda (\Omega) \cap \mathring{ H } ^\ast
  \Lambda (\Omega) } $. This operator has a nullspace
\begin{equation*}
  \mathfrak{H} (\Omega) \coloneqq \bigl\{ w \in H \Lambda (\Omega) \cap \mathring{ H } ^\ast \Lambda (\Omega) : \mathrm{D} w = 0 \bigr\} ,
\end{equation*}
called the space of \emph{harmonic} forms on $\Omega$. We also denote
the spaces of \emph{exact} and \emph{coexact} forms,
\begin{equation*}
  \mathfrak{B} (\Omega) \coloneqq \mathrm{d} H \Lambda (\Omega) , \qquad \mathring{ \mathfrak{B} } ^\ast (\Omega) \coloneqq \delta \mathring{ H } ^\ast \Lambda (\Omega) .
\end{equation*}

\begin{definition}
  Given $ f \in L ^2 \Lambda (\Omega) $, the \emph{Hodge--Dirac
    problem} is to find $ z \in \mathfrak{H} (\Omega) ^\perp $ and
  $ p \in \mathfrak{H} (\Omega) $ such that
  \begin{alignat*}{2}
    \mathrm{D} z + p &= f \quad &\text{in } &\Omega ,\\
    z ^{\mathrm{nor}} &= 0 \quad &\text{on } &\partial \Omega .
  \end{alignat*}
  The solution gives the \emph{Hodge decomposition}
  $ f = \mathrm{d} z + \delta z + p $. That is, it decomposes $f$ into
  the orthogonal sum of an exact form $ \mathrm{d} z $, a coexact form
  $ \delta z $, and a harmonic form $p$.
\end{definition}

The well-posedness of the Hodge--Dirac problem on Lipschitz domains
follows from a result of \citet{Picard1984} showing that
$ H \Lambda (\Omega) \cap \mathring{ H } ^\ast \Lambda (\Omega) $ is
compactly embedded in $ L ^2 \Lambda (\Omega) $. In particular, this
implies that
$ \mathrm{D} \rvert _{ H \Lambda (\Omega) \cap \mathring{ H } ^\ast
  \Lambda (\Omega) } $ has closed range, so by Banach's closed range
theorem we have
\begin{align*}
  L ^2 \Lambda (\Omega)
  &= \mathrm{D} \bigl( H \Lambda (\Omega) \cap \mathring{ H } ^\ast \Lambda (\Omega) \bigr) \oplus \mathfrak{H} (\Omega) \\
  &= \mathfrak{B} (\Omega) \oplus \mathring{ \mathfrak{B} } ^\ast (\Omega) \oplus \mathfrak{H} (\Omega) .
\end{align*}
Finding this decomposition is precisely the Hodge--Dirac
problem. Furthermore, there exists a constant $c$, depending only on
$\Omega$, such that for all
$ w \in H \Lambda (\Omega) \cap \mathring{ H } ^\ast \Lambda (\Omega)
\cap \mathfrak{H} (\Omega) ^\perp $,
\begin{equation*}
  \lVert w \rVert _\Omega \leq c \lVert \mathrm{D} w \rVert _\Omega ,
\end{equation*}
which \citet[Theorem 2.2]{ArFaWi2006} call \emph{Poincar\'e's
  inequality}, and which implies stability of the Hodge--Dirac
problem. For further discussion of these and other operator-theoretic
properties of the Hodge--Dirac operator on a Lipschitz domain, see
\citet{AxMc2004}.

We may also obtain Sobolev regularity and stability results depending
on the geometry of the domain $\Omega$. Following the terminology of
\citep[Section 7.7]{ArFaWi2006}, say that $\Omega$ is
\emph{$t$-regular} for some $ 0 < t \leq 1 $ if
$ H \Lambda (\Omega) \cap \mathring{ H } ^\ast \Lambda (\Omega) $ is
continuously embedded in $ H ^t \Lambda (\Omega) $, i.e., there exists
a constant $C$ such that for all
$ w \in H \Lambda (\Omega) \cap \mathring{ H } ^\ast \Lambda (\Omega)
$,
\begin{equation*}
  \lVert w \rVert _{ t, \Omega } \leq C \lVert w \rVert _{ H \Lambda (\Omega) \cap H ^\ast \Lambda (\Omega) } .
\end{equation*}
In particular, all Lipschitz domains are $ \frac{1}{2} $-regular
\citep[Theorem 11.2]{MiMiTa2001}, Lipschitz polyhedra in
$\mathbb{R}^3$ are $t$-regular for some $ t > \frac{1}{2} $
\citep{AmBeDaGi1998}, and bounded smooth domains
\citep{Gaffney1951,Friedrichs1955} as well as convex Lipschitz domains
\citep{Mitrea2001} are $1$-regular. This immediately implies that the
solution to the Hodge--Dirac problem satisfies
$ z \in H ^t \Lambda (\Omega) $ for any given
$ f \in L ^2 \Lambda (\Omega) $, along with a corresponding stability
estimate.

\begin{definition}
  Given $ f \in L ^2 \Lambda ^k (\Omega) $, the \emph{Hodge--Laplace
    problem} is to find $ u \in \mathfrak{H} ^k (\Omega) ^\perp $ and
  $ p \in \mathfrak{H} ^k (\Omega) $ such that
  \begin{alignat*}{2}
    L u + p &= f \quad &\text{in } &\Omega ,\\
    u ^{\mathrm{nor}} &= 0 \quad &\text{on } &\partial \Omega ,\\
    ( \mathrm{d} u ) ^{\mathrm{nor}} &= 0 \quad &\text{on } &\partial \Omega .
  \end{alignat*}
  The solution gives the Hodge decomposition
  $ f = \mathrm{d} \sigma + \delta \rho + p $, where
  $ \sigma = \delta u $ and $ \rho = \mathrm{d} u $. Equivalently, the
  Hodge--Laplace problem seeks $\sigma$, $u$, $\rho$, and $p$
  satisfying
  \begin{subequations}
    \label{e:laplace_first-order}
    \begin{alignat}{2}
      \delta u &= \sigma \quad &\text{in } &\Omega ,\\
      \mathrm{d} \sigma + \delta \rho + p &= f \quad &\text{in } &\Omega ,\\
      \mathrm{d} u &= \rho \quad &\text{in } &\Omega ,\\
      u ^{\mathrm{nor}} &= 0 \quad &\text{on } &\partial \Omega ,\\
      \rho ^{\mathrm{nor}} &= 0 \quad &\text{on } &\partial \Omega .
    \end{alignat}
  \end{subequations}
\end{definition}

Since $ L = \mathrm{D} ^2 $, solving the Hodge--Laplace problem
amounts to solving two Hodge--Dirac problems in succession: first
$ \mathrm{D} ( \sigma \oplus \rho ) + p = f $, then
$ \mathrm{D} u = \sigma \oplus \rho $. Well-posedness of the
Hodge--Laplace problem thus follows immediately from that for the
Hodge--Dirac problem; see the discussion in \citet[Section
4]{LeSt2016} and references therein. As with the Hodge--Dirac problem,
$t$-regularity of $\Omega$ implies $ H ^t \Lambda (\Omega) $
regularity and stability results for the Hodge--Laplace problem.

\begin{remark}
  For vanishing tangential boundary conditions, one would instead take
  the other self-adjoint restriction
  $ \mathrm{D} \rvert _{ \mathring{ H } \Lambda (\Omega) \cap H ^\ast
    \Lambda (\Omega) } $, noting that analogous compactness and
  $t$-regularity results also hold for
  $ \mathring{ H } \Lambda (\Omega) \cap H ^\ast \Lambda (\Omega)
  $. In this setting, the appropriate space of harmonic forms is
  \begin{equation*}
    \mathring{ \mathfrak{H} } (\Omega) \coloneqq \bigl\{ w \in \mathring{ H } \Lambda (\Omega) \cap H ^\ast \Lambda (\Omega) : \mathrm{D} w = 0 \bigr\} ,
  \end{equation*}
  the boundary conditions for the Hodge--Dirac problem are
  $ z ^{\mathrm{tan}} = 0 $, and the boundary conditions for the
  Hodge--Laplace problem are $ \sigma ^{\mathrm{tan}} = 0 $ and
  $ u ^{\mathrm{tan}} = 0 $.
\end{remark}

\subsection{Broken spaces, jumps, and averages}

Throughout this paper, we will encounter ``broken'' spaces of
differential forms with respect to a partition of $\Omega$ into
non-overlapping Lipschitz subdomains $ K \in \mathcal{T} _h $ (e.g., a
simplicial triangulation), and we will consider traces, jumps, and
averages on the skeleton
$ \partial \mathcal{T} _h \coloneqq \bigsqcup _{ K \in \mathcal{T} _h
} \partial K $. Denote
\begin{equation*}
  ( \cdot , \cdot ) _{ \mathcal{T} _h } \coloneqq \sum _{ K \in \mathcal{T} _h } ( \cdot , \cdot ) _K , \qquad \langle \cdot , \cdot \rangle _{ \partial \mathcal{T} _h } \coloneqq \sum _{ K \in \mathcal{T} _h } \langle \cdot , \cdot \rangle _{ \partial K } .
\end{equation*}
If $ z \in \prod _{ K \in \mathcal{T} _h } H \Lambda (K) $ and
$ w \in \prod _{ K \in \mathcal{T} _h } H ^\ast \Lambda (K) $,
integrating by parts on each $K$ and summing gives
\begin{equation}
  \label{e:ibp_broken}
  \langle z ^{\mathrm{tan}} , w ^{\mathrm{nor}} \rangle _{ \partial \mathcal{T} _h } = ( \mathrm{d} z , w ) _{ \mathcal{T} _h } - ( z, \delta w ) _{ \mathcal{T} _h }.
\end{equation}
In the situations where we will apply \eqref{e:ibp_broken}, we will
generally have
$ z, w \in \prod _{ K \in \mathcal{T} _h } H ^t \Lambda (K) $ with
$ t > \frac{1}{2} $, so the trace theorem implies
$ z ^{\mathrm{tan}} , w ^{\mathrm{nor}} \in L ^2 \Lambda ( \partial
\mathcal{T} _h ) \coloneqq \prod _{ K \in \mathcal{T} _h } L ^2
\Lambda ( \partial K ) $, and
$ \langle \cdot , \cdot \rangle _{ \partial \mathcal{T} _h } $ is
genuinely the $ L ^2 $ inner product rather than a weak duality
pairing.

Since $ \partial \mathcal{T} _h $ is a \emph{disjoint} union of the
subdomain boundaries, each interface
$ e = \partial K ^+ \cap \partial K ^- $ between adjacent
$ K ^\pm \in \mathcal{T} _h $ appears twice: once as
$ e ^+ \subset \partial K ^+ $, once as
$ e ^- \subset \partial K ^- $, with opposite orientations. Hence,
traces in $ L ^2 \Lambda ( \partial \mathcal{T} _h ) $ are generally
\emph{double-valued}, taking independent values on $ e ^\pm $. The
conforming spaces $ H \Lambda (\Omega) $ and
$ H ^\ast \Lambda (\Omega) $ are characterized by having
\emph{single-valued} tangential and normal traces, respectively---but
each type of trace has a distinct notion of ``single-valued'':
\begin{itemize}
\item If $ w \in H \Lambda (\Omega) $, then
  $ w ^{\mathrm{tan}} \rvert _{ e ^+ } = w ^{\mathrm{tan}} \rvert _{ e
    ^- } $, since the tangential trace is orientation-independent.

\item If $ w \in H ^\ast \Lambda (\Omega) $, then
  $ w ^{\mathrm{nor}} \rvert _{ e ^+ } = - w ^{\mathrm{nor}} \rvert _{
    e ^- } $, since $ \widehat{ \star } $ flips sign depending on
  orientation.
\end{itemize}
See the discussion in \citet[Section 3.2]{StZa2026}. This observation
about orientation dependence motivates the following definitions;
cf.~\citep[Definition 3.2]{StZa2026}.

\begin{definition}
  \label{d:jump-avg}
  Given
  $ \widehat{ w } ^{\mathrm{tan}} \in L ^2 \Lambda ( \partial
  \mathcal{T} _h ) $, define the \emph{tangential jump}
  $ \jump{ \widehat{ w } ^{\mathrm{tan}} } \in L ^2 \Lambda ( \partial
  \mathcal{T} _h ) $ by
  \begin{alignat*}{2}
    \jump{ \widehat{ w } ^{\mathrm{tan}} } _{ e ^\pm } &\coloneqq \frac{1}{2} \bigl(  \widehat{ w } ^{\mathrm{tan}} \rvert _{ e ^\pm } - \widehat{ w } ^{\mathrm{tan}} \rvert _{ e ^\mp } \bigr) , \quad & e &\not\subset \partial \Omega, \\
    \jump{ \widehat{ w } ^{\mathrm{tan}} } _{e \phantom{^\pm}} &\coloneqq 0 , \quad & e &\subset \partial \Omega ,\\
    \intertext{and the \emph{tangential average} $ \avg{ \widehat{ w } ^{\mathrm{tan}} } \in L ^2 \Lambda ( \partial \mathcal{T} _h ) $ by}
    \avg{ \widehat{ w } ^{\mathrm{tan}} } _{ e ^\pm } &\coloneqq \frac{1}{2} \bigl(  \widehat{ w } ^{\mathrm{tan}} \rvert _{ e ^+ } + \widehat{ w } ^{\mathrm{tan}} \rvert _{ e ^- } \bigr) , \quad & e &\not\subset \partial \Omega, \\
    \avg{ \widehat{ w } ^{\mathrm{tan}} } _{e \phantom{^\pm}} &\coloneqq \widehat{ w } ^{\mathrm{tan}} \rvert _e , \quad & e &\subset \partial \Omega .\\
    \intertext{Similarly, given $ \widehat{ w } ^{\mathrm{nor}} \in L ^2 \Lambda ( \partial \mathcal{T} _h ) $, define the \emph{normal jump} $ \jump{ \widehat{ w } ^{\mathrm{nor}} } \in L ^2 \Lambda ( \partial \mathcal{T} _h ) $ by}
    \jump{ \widehat{ w } ^{\mathrm{nor}} } _{ e ^\pm } &\coloneqq \frac{1}{2} \bigl(  \widehat{ w } ^{\mathrm{nor}} \rvert _{ e ^+ } + \widehat{ w } ^{\mathrm{nor}} \rvert _{ e ^- } \bigr) , \quad & e &\not\subset \partial \Omega, \\
    \jump{ \widehat{ w } ^{\mathrm{nor}} } _{e \phantom{^\pm}} &\coloneqq \widehat{ w } ^{\mathrm{nor}} \rvert _e , \quad & e &\subset \partial \Omega ,\\
    \intertext{and the \emph{normal average} $ \avg{ \widehat{ w } ^{\mathrm{nor}} } \in L ^2 \Lambda ( \partial \mathcal{T} _h ) $ by}
    \avg{ \widehat{ w } ^{\mathrm{nor}} } _{ e ^\pm } &\coloneqq \frac{1}{2} \bigl(  \widehat{ w } ^{\mathrm{nor}} \rvert _{ e ^\pm } - \widehat{ w } ^{\mathrm{nor}} \rvert _{ e ^\mp } \bigr) , \quad & e &\not\subset \partial \Omega, \\
    \avg{ \widehat{ w } ^{\mathrm{nor}} } _{e \phantom{^\pm}} &\coloneqq 0, \quad & e &\subset \partial \Omega .
  \end{alignat*}
  A tangential or normal trace is \emph{single-valued} if its jump
  vanishes on $ \partial \mathcal{T} _h \setminus \partial \Omega $.
\end{definition}

\begin{remark}
  While the factor of $ \frac{1}{2} $ in the jump is non-standard,
  including it simplifies many calculations and formulas. As usual,
  there is some flexibility in defining jump and average on
  $ \partial \Omega $, and we make the choice above since it is
  convenient for problems with normal boundary conditions:
  $ w \in H \Lambda (\Omega) \cap \mathring{ H } ^\ast \Lambda
  (\Omega) $ says precisely that $ \jump{ w ^{\mathrm{tan}} } = 0 $
  and $ \jump{ w ^{\mathrm{nor}} } = 0 $; cf.~\citep[Proposition
  3.1]{AwFaGuSt2023}.  A different choice, for problems with
  tangential boundary conditions, appears in \citep[Definition
  3.2]{StZa2026}.
\end{remark}

\Cref{d:jump-avg} has the property that
$ \widehat{w} ^{\mathrm{tan}} = \avg{ \widehat{w} ^{\mathrm{tan}} } +
\jump{ \widehat{ w } ^{\mathrm{tan}} } $ and
$ \widehat{w} ^{\mathrm{nor}} = \avg{ \widehat{w} ^{\mathrm{nor}} } +
\jump{ w ^{\mathrm{nor}} } $, and furthermore, these decompositions
are $ L ^2 \Lambda ( \partial \mathcal{T} _h ) $-orthogonal. Indeed,
$ \avg{ \cdot } $ and $ \jump{ \cdot } $ are
$ L ^2 \Lambda ( \partial \mathcal{T} _h ) $-orthogonal projections
onto the two different types of single-valued traces (with boundary
conditions on $ \partial \Omega $ ensuring that the sum is
direct). This orthogonality is expressed in the following identities.

\begin{proposition}[{\citet[Proposition~3.4]{StZa2026}}]
  \label{p:jump-avg}
  For all
  $ \widehat{ z } ^{\mathrm{tan}} , \widehat{ z } ^{\mathrm{nor}} ,
  \widehat{ w } ^{\mathrm{tan}} , \widehat{ w } ^{\mathrm{nor}} \in L
  ^2 \Lambda ( \partial \mathcal{T} _h ) $,
  \begin{align*}
    \langle \widehat{ z } ^{\mathrm{tan}} , \widehat{ w } ^{\mathrm{nor}} \rangle _{ \partial \mathcal{T} _h } &= \bigl\langle \avg{ \widehat{ z } ^{\mathrm{tan}} }, \jump{ \widehat{ w } ^{\mathrm{nor}} } \bigr\rangle _{ \partial \mathcal{T} _h } + \bigl\langle \jump{ \widehat{ z } ^{\mathrm{tan}} }, \avg{ \widehat{ w } ^{\mathrm{nor}} } \bigr\rangle _{ \partial \mathcal{T} _h } , \\
    \langle \widehat{ z } ^{\mathrm{tan}} , \widehat{ w } ^{\mathrm{tan}} \rangle _{ \partial \mathcal{T} _h } &= \bigl\langle \avg{ \widehat{ z } ^{\mathrm{tan}} }, \avg{ \widehat{ w } ^{\mathrm{tan}} } \bigr\rangle _{ \partial \mathcal{T} _h } + \bigl\langle \jump{ \widehat{ z } ^{\mathrm{tan}} }, \jump{ \widehat{ w } ^{\mathrm{tan}} } \bigr\rangle _{ \partial \mathcal{T} _h } , \\
    \langle \widehat{ z } ^{\mathrm{nor}} , \widehat{ w } ^{\mathrm{nor}} \rangle _{ \partial \mathcal{T} _h } &= \bigl\langle \avg{ \widehat{ z } ^{\mathrm{nor}} }, \avg{ \widehat{ w } ^{\mathrm{nor}} } \bigr\rangle _{ \partial \mathcal{T} _h } + \bigl\langle \jump{ \widehat{ z } ^{\mathrm{nor}} }, \jump{ \widehat{ w } ^{\mathrm{nor}} } \bigr\rangle _{ \partial \mathcal{T} _h } .
  \end{align*}
\end{proposition}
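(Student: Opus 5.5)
Since both sides of each identity are sums over the facets of $\partial\mathcal{T}_h$, I will prove them facet by facet: first for a boundary facet $e\subset\partial\Omega$, which appears once in $\partial\mathcal{T}_h$, and then for an interior interface $e=\partial K^+\cap\partial K^-$, which contributes both $e^+$ and $e^-$. On $e^\pm$ I abbreviate $a^\pm = \widehat{z}^{\mathrm{tan}}\rvert_{e^\pm}$ and $b^\pm = \widehat{w}^{\mathrm{nor}}\rvert_{e^\pm}$ (and similarly for the other pairs). These are forms on the same underlying set $e$. The only subtlety is that $e^+$ and $e^-$ carry opposite orientations. However, the $L^2$ inner product of forms on $e$ depends only on the induced metric, not on the orientation. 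So $\langle a^+, b^+\rangle_{e^+}$ and $\langle a^-, b^-\rangle_{e^-}$ can both be computed as integrals over $e$ with a common inner product, and the identities reduce to elementary algebra in the real inner product space $L^2\Lambda(e)$.

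For a boundary facet the three identities are immediate from \cref{d:jump-avg}. In the first, $\avg{\widehat z^{\mathrm{tan}}}=\widehat z^{\mathrm{tan}}$, $\jump{\widehat w^{\mathrm{nor}}}=\widehat w^{\mathrm{nor}}$, and $\jump{\widehat z^{\mathrm{tan}}}=0$. In the second, the tangential average is the identity and the tangential jump vanishes. In the third, the normal jump is the identity and the normal average vanishes. In each case one term on the right equals the left side and the other is zero.

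For an interior facet, the first identity is the key check. By definition, $\avg{a}$ equals $\tfrac12(a^++a^-)$ on both sides and $\jump{b}$ equals $\tfrac12(b^++b^-)$ on both sides, while $\jump{a}_{e^\pm}=\pm\tfrac12(a^+-a^-)$ and $\avg{b}_{e^\pm}=\pm\tfrac12(b^+-b^-)$. Summing over $e^+$ and $e^-$, the right-hand side is
\[
2\cdot\tfrac14\langle a^++a^-,b^++b^-\rangle + 2\cdot\tfrac14\langle a^+-a^-,b^+-b^-\rangle
= \langle a^+,b^+\rangle + \langle a^-,b^-\rangle ,
\]
because the cross terms cancel. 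This is exactly the left-hand side. The second and third identities follow the same pattern. In each, one of the two projections is symmetric and the other antisymmetric under swapping $\pm$, and the parallelogram-type expansion again cancels the cross terms.

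The main obstacle is the orientation bookkeeping. I must make sure that the sign conventions in \cref{d:jump-avg} are compatible with identifying $e^+$ and $e^-$ with the single set $e$ for integration. I also need the $L^2$ pairing on $e^\pm$ to be orientation-independent, which holds because the pointwise inner product of forms uses only the metric. The possible sign flip from $\widehat\star$ is already absorbed into the traces themselves and into how the definitions are written. Once this is settled, summing the facet contributions gives the three identities.
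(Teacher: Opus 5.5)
Your proof is correct. The regrouping of $\langle\cdot,\cdot\rangle_{\partial\mathcal{T}_h}$ into boundary facets and interior pairs $e^\pm$, the boundary cases, and the interior computation with cancelling cross terms all check out against \cref{d:jump-avg}. Your resolution of the orientation worry is also right: the definitions already combine the $e^+$ and $e^-$ restrictions as forms on the common set $e$, and the $L^2$ pairing depends only on the metric.

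The paper does not prove this statement itself; it quotes it from \citet[Proposition~3.4]{StZa2026}. The paragraph preceding it does indicate a more structural way to package the argument. Comparing the formulas in \cref{d:jump-avg}, the tangential average and the normal jump are the same operator $P$ on $L^2\Lambda(\partial\mathcal{T}_h)$, boundary facets included. Likewise, the tangential jump and the normal average are both $I-P$. Once one checks that $P$ is self-adjoint and idempotent, all three identities reduce to the single statement $\langle \widehat z,\widehat w\rangle_{\partial\mathcal{T}_h}=\langle P\widehat z,P\widehat w\rangle_{\partial\mathcal{T}_h}+\langle (I-P)\widehat z,(I-P)\widehat w\rangle_{\partial\mathcal{T}_h}$, applied to whichever pair of traces is at hand.

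Your facet-wise algebra is exactly that check, carried out three times. The operator viewpoint shows at a glance why the mixed tangential--normal identity is no different from the other two, and why \cref{c:pythag} is just Pythagoras. Your version has the merit of being fully explicit about how the boundary convention enters.
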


The following simple corollary, which is just the Pythagorean theorem,
will be useful for bounding jumps and averages by ordinary traces.

\begin{corollary}
  \label{c:pythag}
  For all
  $ \widehat{ w } ^{\mathrm{tan}} , \widehat{ w } ^{\mathrm{nor}} \in
  L ^2 \Lambda ( \partial \mathcal{T} _h ) $,
  \begin{align*}
    \lVert \widehat{ w } ^{\mathrm{tan}} \rVert ^2 _{ \partial \mathcal{T} _h } &= \bigl\lVert \avg{ \widehat{ w } ^{\mathrm{tan}} } \bigr\rVert ^2 _{ \partial \mathcal{T} _h } + \bigl\lVert \jump{ \widehat{ w } ^{\mathrm{tan}} } \bigr\rVert _{ \partial \mathcal{T} _h } ^2 , \\
    \lVert \widehat{ w } ^{\mathrm{nor}} \rVert ^2 _{ \partial \mathcal{T} _h } &= \bigl\lVert \avg{ \widehat{ w } ^{\mathrm{nor}} } \bigr\rVert ^2 _{ \partial \mathcal{T} _h } + \bigl\lVert \jump{ \widehat{ w } ^{\mathrm{nor}} } \bigr\rVert _{ \partial \mathcal{T} _h } ^2 .
  \end{align*}

\end{corollary}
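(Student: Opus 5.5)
The plan is to specialize \cref{p:jump-avg} to the diagonal and then check that the cross terms vanish.

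For the tangential case, take $\widehat{z}^{\mathrm{tan}} = \widehat{w}^{\mathrm{tan}}$ in the second identity of \cref{p:jump-avg}. This gives
\begin{equation*}
  \lVert \widehat{w}^{\mathrm{tan}} \rVert^2_{\partial\mathcal{T}_h} = \bigl\langle \avg{\widehat{w}^{\mathrm{tan}}}, \avg{\widehat{w}^{\mathrm{tan}}} \bigr\rangle_{\partial\mathcal{T}_h} + \bigl\langle \jump{\widehat{w}^{\mathrm{tan}}}, \jump{\widehat{w}^{\mathrm{tan}}} \bigr\rangle_{\partial\mathcal{T}_h},
\end{equation*}
which is exactly the first claimed identity. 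The normal case follows in the same way from the third identity with $\widehat{z}^{\mathrm{nor}} = \widehat{w}^{\mathrm{nor}}$.

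If we prefer not to lean on \cref{p:jump-avg}, we can verify the result directly, one interface at a time.
\begin{itemize}
\item On an interior interface $e$, write $a = \widehat{w}^{\mathrm{tan}}\rvert_{e^+}$ and $b = \widehat{w}^{\mathrm{tan}}\rvert_{e^-}$. Then $e^+ \sqcup e^-$ contributes $\lVert a\rVert_e^2 + \lVert b\rVert_e^2$ to the left-hand side.
\item The average contributes $2\lVert \tfrac12(a+b)\rVert_e^2$, since it takes the same value on both copies of $e$.
\item The jump takes the value $\tfrac12(a-b)$ on $e^+$ and $\tfrac12(b-a)$ on $e^-$, so it contributes $2\lVert \tfrac12(a-b)\rVert_e^2$.
\item Adding these and applying the parallelogram law gives $\lVert a\rVert_e^2 + \lVert b\rVert_e^2$, as required.
\item On a boundary face $e \subset \partial\Omega$, one of the two pieces is the trace itself and the other is zero, so the identity is immediate.
\end{itemize}
The normal case is identical, with the roles of the sum and difference interchanged. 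One small point needs care here: the $L^2$ norm of a form on $e$ must be independent of orientation, so that $a$ and $b$ can be compared on the same $e$. This holds because the inner product of forms is induced by the metric and does not depend on orientation.

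No step poses a real obstacle. The only bookkeeping required is to track the factor of $\tfrac12$ and the double counting of each interior interface.
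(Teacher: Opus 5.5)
Your proposal is correct and follows the paper's reasoning: the corollary is exactly the diagonal case $\widehat{z} = \widehat{w}$ of the second and third identities in \cref{p:jump-avg}, i.e., the Pythagorean theorem for the orthogonal average/jump decompositions. Your optional interface-by-interface check via the parallelogram law, including the orientation-independence remark, is also sound; it is essentially the local form of the identity that the paper applies in the proof of \cref{l:pythag_triple_norm}.
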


\section{HDG methods for the Hodge--Dirac problem}
\label{s:dirac_methods}

\subsection{Weak formulation}

To motivate the weak formulation for the HDG methods, we first make
the following observations about solutions to the Hodge--Dirac
problem:

\begin{itemize}
\item Using integration by parts, the equation
  $ \mathrm{D} z + p = f $ can be rewritten as
  \begin{equation*}
    ( z, \mathrm{D} w ) _{ \mathcal{T} _h }  + ( p, w ) _{ \mathcal{T} _h } + \langle z ^{\mathrm{tan}} , w ^{\mathrm{nor}} \rangle _{ \partial \mathcal{T} _h } - \langle z ^{\mathrm{nor}} , w ^{\mathrm{tan}} \rangle _{ \partial \mathcal{T} _h } = ( f, w ) _{ \mathcal{T} _h } ,
  \end{equation*}
  for all broken test functions
  $ w \in \prod _{ K \in \mathcal{T} _h } H \Lambda (K) \cap H ^\ast
  \Lambda (K) $.

\item The orthogonality condition
  $ z \in \mathfrak{H} (\Omega) ^\perp $ can be written as
  \begin{equation*}
    ( z, q ) _{ \mathcal{T} _h } = 0 ,
  \end{equation*}
  for all harmonic test functions $ q \in \mathfrak{H} (\Omega) $.

\item Since
  $ z \in H \Lambda (\Omega) \cap \mathring{ H } ^\ast \Lambda
  (\Omega) $, we have $ \jump{ z ^{\mathrm{tan}} } = 0 $ and
  $ \jump{ z ^{\mathrm{nor}} } = 0 $. By \cref{p:jump-avg}, the latter
  condition says that $ z ^{\mathrm{nor}} $ is
  $ L ^2 \Lambda ( \partial \mathcal{T} _h )$-orthogonal to
  single-valued tangential traces, i.e.,
  \begin{equation*}
    \langle z ^{\mathrm{nor}} , \widehat{ w } ^{\mathrm{tan}} \rangle _{ \partial \mathcal{T} _h } = 0 ,
  \end{equation*}
  for all test functions
  $ \widehat{ w } ^{\mathrm{tan}} \in L ^2 \Lambda ( \partial
  \mathcal{T} _h ) $ with
  $ \jump{ \widehat{ w } ^{\mathrm{tan}} } = 0 $.
\end{itemize}
We now define the equal-order spaces of polynomial degree $r$
\begin{equation*}
  W _h \coloneqq \prod _{ K \in \mathcal{T} _h } \mathcal{P} _r \Lambda (K) , \qquad \widehat{ W } _h ^{\mathrm{tan}} \coloneqq \prod _{ e \subset \partial \mathcal{T} _h } \mathcal{P} _r \Lambda (e) , \qquad \widehat{ V } _h ^{\mathrm{tan}} \coloneqq  \bigl\{ \widehat{ w } _h ^{\mathrm{tan}} \in \widehat{ W } _h ^{\mathrm{tan}} : \jump{ \widehat{ w } _h ^{\mathrm{tan}} } = 0 \bigr\} ,
\end{equation*}
along with the harmonic spaces
\begin{equation*}
  \overline{ \mathfrak{H} } _h \coloneqq W _h \cap \prod _{ K \in \mathcal{T} _h } \mathring{ \mathfrak{H} } (K) , \qquad \mathfrak{H} _h \coloneqq W _h \cap \mathfrak{H} (\Omega) .
\end{equation*}
Following the terminology of \citep{AwFaGuSt2023}, we call
$ \overline{ \mathfrak{H} } _h $ the space of \emph{local harmonic
  forms}.

\begin{remark}
  \label{r:contractible}
  If the $ K \in \mathcal{T} _h $ are contractible (e.g., simplices),
  then $ \mathring{ \mathfrak{H} } (K) $ consists of constant
  $n$-forms, and
  $ \overline{ \mathfrak{H} } _h = \prod _{ K \in \mathcal{T} _h }
  \mathcal{P} _0 \Lambda ^n ( K ) $ is simply a $ \mathrm{DG}_0 $
  space. Similarly, if $\Omega$ is contractible, then
  $ \mathfrak{H} (\Omega) $ consists of constant $0$-forms, and
  $ \mathfrak{H} _h = \mathfrak{H} (\Omega) \cong \mathbb{R} $.

  However, if $\Omega$ has nontrivial topology, then
  $ \mathfrak{H} (\Omega) $ may contain non-polynomial harmonic
  $k$-forms for $ k > 0 $, in which case
  $ \mathfrak{H} _h \subsetneq \mathfrak{H} (\Omega) $ is a proper
  subspace. Note that there are some circumstances where
  $ \mathfrak{H} _h = \mathfrak{H} (\Omega) $ holds for $\Omega$ with
  nontrivial topology: for example, if $\Omega$ is a rectangle with
  periodic boundary conditions (i.e., a flat torus), then
  $ \mathfrak{H} _h ^1 = \mathfrak{H} ^1 (\Omega) $ consists of
  constant $1$-forms.

  Finally, we note that our $ \mathfrak{H} _h $ is generally distinct
  from the space of ``discrete harmonic forms'' arising in conforming
  FEEC \citep{ArFaWi2006,ArFaWi2010,Arnold2018}, where the latter is
  not necessarily a subspace of $ \mathfrak{H} (\Omega) $ but is
  guaranteed to have the same dimension, regardless of the topology of
  $\Omega$; see also \citet{Christiansen2007}.
\end{remark}

\begin{definition}
  The HDG method is defined by the following weak problem: Find
  \begin{alignat*}{4}
    \text{(local variables)} \qquad z _h &\in W _h , \qquad & \overline{p} _h &\in \overline{ \mathfrak{H} } _h , \\
    \text{(global variables)} \qquad \overline{z} _h &\in \overline{ \mathfrak{H} } _h , \qquad & p _h &\in \mathfrak{H} _h , \qquad & \widehat{ z } _h ^{\mathrm{tan}} &\in \widehat{ V } _h ^{\mathrm{tan}} ,
  \end{alignat*}
  satisfying
  \begin{subequations}
    \label{e:dirac_ns}
    \begin{alignat}{2}
      ( z _h , \mathrm{D} w _h ) _{ \mathcal{T} _h } + ( p _h - \overline{p} _h , w _h ) _{ \mathcal{T} _h } + \langle \widehat{ z } _h ^{\mathrm{tan}} , w _h ^{\mathrm{nor}} \rangle _{ \partial \mathcal{T} _h } - \langle \widehat{ z } _h ^{\mathrm{nor}} , w _h ^{\mathrm{tan}} \rangle _{ \partial \mathcal{T} _h } &= ( f, w _h ) _{ \mathcal{T} _h } , \quad &\forall w _h &\in W _h , \label{e:dirac_ns_w} \\
      ( \overline{z} _h - z _h , \overline{q} _h ) _{ \mathcal{T} _h } &= 0 , \quad &\forall \overline{q} _h &\in \overline{ \mathfrak{H} } _h, \label{e:dirac_ns_qbar} \\
      ( \overline{p} _h , \overline{ w } _h ) _{ \mathcal{T} _h } &= 0 , \quad &\forall \overline{ w } _h &\in \overline{ \mathfrak{H} } _h, \label{e:dirac_ns_wbar} \\
      ( z _h , q _h ) _{ \mathcal{T} _h } &= 0 , \quad &\forall q _h &\in \mathfrak{H} _h, \label{e:dirac_ns_q} \\
      \langle \widehat{ z } _h ^{\mathrm{nor}} , \widehat{ w } _h ^{\mathrm{tan}} \rangle _{ \partial \mathcal{T} _h } &= 0 , \quad &\forall \widehat{ w } _h ^{\mathrm{tan}} &\in \widehat{ V } _h ^{\mathrm{tan}} , \label{e:dirac_ns_wtan}
    \end{alignat}
  \end{subequations}
  where
  \begin{equation*}
    \widehat{ z } _h ^{\mathrm{nor}} \coloneqq z _h ^{\mathrm{nor}} - \alpha ( \widehat{ z } _h ^{\mathrm{tan}} - z _h ^{\mathrm{tan}} ) .
  \end{equation*}
  The penalty $\alpha$ is assumed to be piecewise-constant, i.e.,
  $ \alpha _e ^k \in \mathbb{R} $ for each
  $ e \subset \partial \mathcal{T} _h $ and form degree $k$.
\end{definition}

\begin{remark}
  In \cref{s:dirac_existence_uniqueness}, we will see that a further
  condition on the penalty, that $\alpha$ alternates sign in $k$,
  guarantees existence and uniqueness of the numerical solution. This
  same condition also allows for \emph{static condensation}, whereby
  the local variables are efficiently eliminated, yielding a smaller
  Schur complement system in the global variables alone.
\end{remark}

Before exploring this method further, we prove a few basic facts about
its numerical solutions.

\begin{lemma}
  \label{l:dirac_properties}
  A solution to \eqref{e:dirac_ns} satisfies
  \begin{equation*}
    \overline{p} _h = 0 , \qquad \overline{z} _h = P _{ \overline{ \mathfrak{H} } _h } z _h , \qquad p _h = P _{ \mathfrak{H} _h } f , \qquad \jump{ \widehat{ z } _h ^{\mathrm{nor}} } = 0 ,
  \end{equation*}
  where $ P _{ \overline{ \mathfrak{H} } _h } $ and
  $ P _{ \mathfrak{H} _h } $ denote $ L ^2 $-orthogonal projection
  onto the local and global harmonic spaces.
\end{lemma}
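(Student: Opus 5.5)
Each of the four identities comes from testing one equation of \eqref{e:dirac_ns} with a suitable function. The only real input is \cref{p:jump-avg}, together with the observation that single-valued traces of degree-$r$ polynomials again lie in the discrete trace spaces.

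\emph{The first two identities.} Since $\overline{p}_h \in \overline{\mathfrak{H}}_h$, we may take $\overline{w}_h = \overline{p}_h$ in \eqref{e:dirac_ns_wbar}. This gives $\lVert \overline{p}_h \rVert_{\mathcal{T}_h}^2 = 0$, so $\overline{p}_h = 0$. Equation \eqref{e:dirac_ns_qbar} says that $\overline{z}_h \in \overline{\mathfrak{H}}_h$ and that $z_h - \overline{z}_h \perp \overline{\mathfrak{H}}_h$. This is exactly the characterization of $\overline{z}_h = P_{\overline{\mathfrak{H}}_h} z_h$.

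\emph{The jump identity.} Here I would show that $\jump{\widehat{z}_h^{\mathrm{nor}}}$ is itself an admissible test function in \eqref{e:dirac_ns_wtan}. The trace $\widehat{z}_h^{\mathrm{nor}} = z_h^{\mathrm{nor}} - \alpha(\widehat{z}_h^{\mathrm{tan}} - z_h^{\mathrm{tan}})$ is a degree-$r$ polynomial form on each facet, because $\alpha$ is facet-wise constant. Set $\widehat{w}_h^{\mathrm{tan}} \coloneqq \jump{\widehat{z}_h^{\mathrm{nor}}}$, regarded as an element of $\widehat{W}_h^{\mathrm{tan}}$.
\begin{itemize}
\item On an interior facet $e$, this function takes the same value $\tfrac12(\widehat{z}_h^{\mathrm{nor}}\rvert_{e^+} + \widehat{z}_h^{\mathrm{nor}}\rvert_{e^-})$ on $e^+$ and on $e^-$. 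Hence its tangential jump vanishes there.
\item On boundary facets the tangential jump is zero by definition.
\end{itemize}
So $\widehat{w}_h^{\mathrm{tan}} \in \widehat{V}_h^{\mathrm{tan}}$. Because its tangential jump vanishes, $\avg{\widehat{w}_h^{\mathrm{tan}}} = \widehat{w}_h^{\mathrm{tan}}$, including on $\partial\Omega$. By \cref{p:jump-avg},
\[
0 = \langle \widehat{z}_h^{\mathrm{nor}}, \widehat{w}_h^{\mathrm{tan}} \rangle_{\partial\mathcal{T}_h} = \bigl\langle \jump{\widehat{z}_h^{\mathrm{nor}}}, \avg{\widehat{w}_h^{\mathrm{tan}}} \bigr\rangle_{\partial\mathcal{T}_h} = \bigl\lVert \jump{\widehat{z}_h^{\mathrm{nor}}} \bigr\rVert_{\partial\mathcal{T}_h}^2 ,
\]
which gives $\jump{\widehat{z}_h^{\mathrm{nor}}} = 0$.

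\emph{The projection identity $p_h = P_{\mathfrak{H}_h} f$.} Take $w_h = q_h \in \mathfrak{H}_h \subset W_h$ in \eqref{e:dirac_ns_w}. Since $\mathrm{D} q_h = 0$ on each element and $\overline{p}_h = 0$, the equation reduces to
\[
(p_h, q_h)_{\mathcal{T}_h} + \langle \widehat{z}_h^{\mathrm{tan}}, q_h^{\mathrm{nor}} \rangle_{\partial\mathcal{T}_h} - \langle \widehat{z}_h^{\mathrm{nor}}, q_h^{\mathrm{tan}} \rangle_{\partial\mathcal{T}_h} = (f, q_h)_{\mathcal{T}_h} .
\]
The main point is that both boundary terms vanish. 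Since $q_h \in H\Lambda(\Omega) \cap \mathring{H}^\ast\Lambda(\Omega)$, we have $\jump{q_h^{\mathrm{tan}}} = 0$ and $\jump{q_h^{\mathrm{nor}}} = 0$.
\begin{itemize}
\item For the first boundary term, \cref{p:jump-avg} gives $\langle \avg{\widehat{z}_h^{\mathrm{tan}}}, \jump{q_h^{\mathrm{nor}}} \rangle + \langle \jump{\widehat{z}_h^{\mathrm{tan}}}, \avg{q_h^{\mathrm{nor}}} \rangle$. Both pieces are zero, using $\jump{q_h^{\mathrm{nor}}} = 0$ and $\widehat{z}_h^{\mathrm{tan}} \in \widehat{V}_h^{\mathrm{tan}}$.
\item For the second boundary term, $q_h^{\mathrm{tan}}$ is the pullback of a degree-$r$ polynomial form to affine facets, so it lies in $\widehat{W}_h^{\mathrm{tan}}$. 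It is single-valued, so it lies in $\widehat{V}_h^{\mathrm{tan}}$. Hence the term vanishes by \eqref{e:dirac_ns_wtan}.
\end{itemize}
Thus $(p_h - f, q_h)_{\mathcal{T}_h} = 0$ for all $q_h \in \mathfrak{H}_h$, with $p_h \in \mathfrak{H}_h$, which gives $p_h = P_{\mathfrak{H}_h} f$.

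\emph{Expected main obstacle.} The step needing the most care is checking membership in the discrete trace spaces: that $q_h^{\mathrm{tan}}$ and $\jump{\widehat{z}_h^{\mathrm{nor}}}$ genuinely lie in $\widehat{V}_h^{\mathrm{tan}}$. This requires matching the orientation and sign conventions of \cref{d:jump-avg} on both interior and boundary facets.
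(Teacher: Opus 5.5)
Your proof is correct and follows the paper's argument: you use the same test functions in \eqref{e:dirac_ns_wbar}, \eqref{e:dirac_ns_qbar}, \eqref{e:dirac_ns_w}, and \eqref{e:dirac_ns_wtan}, and you show $\jump{\widehat{z}_h^{\mathrm{nor}}}$ is admissible because $\alpha$ is piecewise constant and normal jumps are tangentially single-valued. Your explicit handling of the two boundary terms for $w_h = q_h$ (via \cref{p:jump-avg}, and by testing \eqref{e:dirac_ns_wtan} with $q_h^{\mathrm{tan}} \in \widehat{V}_h^{\mathrm{tan}}$) fills in what the paper compresses into the remark that $\jump{q_h^{\mathrm{tan}}} = \jump{q_h^{\mathrm{nor}}} = 0$.
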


\begin{proof}
  The first equality follows from \eqref{e:dirac_ns_wbar} with
  $ \overline{w} _h = \overline{p} _h $, and the second is immediate
  from \eqref{e:dirac_ns_qbar}. For the third, observe that every
  $ q _h \in \mathfrak{H} _h \subset \mathfrak{H} (\Omega) $ satisfies
  $ \mathrm{D} q _h = 0 $, $ \jump{ q _h ^{\mathrm{nor}} } = 0 $, and
  $ \jump{ q _h ^{\mathrm{tan}} } = 0 $; since
  $ \overline{p} _h = 0 $, it follows that \eqref{e:dirac_ns_w} with
  $ w _h = q _h $ simplifies to
  $ ( p _h , q _h ) _{ \mathcal{T} _h } = ( f , q _h ) _{ \mathcal{T}
    _h } $. Finally, the fourth follows from \eqref{e:dirac_ns_wtan}
  with
  $ \widehat{ w } _h ^{\mathrm{tan}} = \jump{ \widehat{ z } _h
    ^{\mathrm{nor}} } $; this is in
  $ \widehat{ V } _h ^{\mathrm{tan}} $ since $\alpha$ is
  piecewise-constant and, by \cref{d:jump-avg}, normal jumps are
  tangentially single-valued.
\end{proof}

\begin{remark}
  \label{r:local_harmonics}
  This lemma shows that the local harmonics do not affect the other
  solution components: we could get the same $ z _h $, $ p _h $, and
  $ \widehat{ z } _h ^{\mathrm{tan}} $ by dropping the
  $ \overline{p} _h $ term from \eqref{e:dirac_ns_w} and eliminating
  \eqref{e:dirac_ns_qbar}--\eqref{e:dirac_ns_wbar} altogether. However,
  the local harmonics are needed to obtain well-defined local solvers
  and perform static condensation, as we will see in
  \cref{s:dirac_existence_uniqueness}.
\end{remark}

An equivalent (and more obviously symmetric) weak form is obtained by
substituting the expression for $ \widehat{ z } _h ^{\mathrm{nor}} $
into \eqref{e:dirac_ns} and integrating by parts, yielding
\begin{subequations}
  \label{e:dirac}
  \begin{alignat}{2}
    ( z _h , \delta  w _h ) _{ \mathcal{T} _h } + ( \delta z _h , w _h ) _{ \mathcal{T} _h } + ( p _h - \overline{p} _h , w _h ) _{ \mathcal{T} _h } \qquad & \label{e:dirac_w} \\
    {}+ \langle \widehat{ z } _h ^{\mathrm{tan}} , w _h ^{\mathrm{nor}} \rangle _{ \partial \mathcal{T} _h } + \bigl\langle \alpha ( \widehat{ z } _h ^{\mathrm{tan}} - z _h ^{\mathrm{tan}} ) , w _h ^{\mathrm{tan}} \bigr\rangle _{ \partial \mathcal{T} _h } &= ( f, w _h ) _{ \mathcal{T} _h } , \quad &\forall w _h &\in W _h , \notag \\
    ( \overline{z} _h - z _h , \overline{q} _h ) _{ \mathcal{T} _h } &= 0 , \quad &\forall \overline{q} _h &\in \overline{ \mathfrak{H} } _h, \label{e:dirac_qbar} \\
    ( \overline{p} _h , \overline{ w } _h ) _{ \mathcal{T} _h } &= 0 , \quad &\forall \overline{ w } _h &\in \overline{ \mathfrak{H} } _h, \label{e:dirac_wbar} \\
    ( z _h , q _h ) _{ \mathcal{T} _h } &= 0 , \quad &\forall q _h &\in \mathfrak{H} _h, \label{e:dirac_q} \\
    \bigl\langle z _h ^{\mathrm{nor}} - \alpha ( \widehat{ z } _h ^{\mathrm{tan}} - z _h ^{\mathrm{tan}} ) , \widehat{ w } _h ^{\mathrm{tan}} \bigr\rangle _{ \partial \mathcal{T} _h } &= 0 , \quad &\forall \widehat{ w } _h ^{\mathrm{tan}} &\in \widehat{ V } _h ^{\mathrm{tan}} . \label{e:dirac_wtan}
  \end{alignat}
\end{subequations}
We will use this formulation interchangeably with \eqref{e:dirac_ns}.

\begin{remark}
  This is a special case of the LDG-H methods proposed in
  \citet[Section 4.2]{StZa2026} for a more general class of semilinear
  Hamiltonian PDEs involving the Hodge--Dirac operator; compare
  \eqref{e:dirac_w} and \eqref{e:dirac_wtan} with \citep[Equation
  36]{StZa2026}. However, that paper examines structure-preserving
  properties for \emph{families} of solutions and leaves open the
  treatment of harmonic forms and boundary conditions needed to
  specify a \emph{particular} solution; see \citep[Remark
  3.1]{StZa2026}. Our incorporation of local/global harmonics and
  boundary conditions above is new, as is the analysis to follow.
\end{remark}

\subsection{Existence and uniqueness}
\label{s:dirac_existence_uniqueness}

We next prove both local and global existence and uniqueness of the
weak formulation for the HDG method. By ``local'' existence and
uniqueness we mean that, given any value of the global variables, it
is possible to solve uniquely for the local variables satisfying
\eqref{e:dirac_ns_w}--\eqref{e:dirac_ns_qbar} or equivalently
\eqref{e:dirac_w}--\eqref{e:dirac_qbar}. These \emph{local solvers},
in the language of \citep{CoGoLa2009}, allow us to eliminate the local
variables and obtain a smaller Schur complement system containing only
the global variables, a procedure known as \emph{static
  condensation}. This elimination can be done quite cheaply: since the
local function spaces are broken, it is possible to apply the local
solvers element-by-element on each $ K \in \mathcal{T} _h $, in
parallel if desired. Static condensation thus substantially reduces
the number of degrees of freedom in the resulting global system and is
one of the primary selling points of HDG methods; by contrast,
non-hybridizable DG methods may have many globally coupled degrees of
freedom that cannot be eliminated in this manner \citep{CoGoLa2009}.

The existence and uniqueness results, along with others to follow,
depend on the following assumption that the penalty alternates sign
between even and odd $k$. We will assume that this condition holds for
the remainder of the paper, referring to it as the
\emph{alternating-sign assumption}.

\begin{assumption}
  \label{a:alternating-sign}
  Assume $ ( - 1 ) ^k \alpha ^k $ is either strictly positive for all
  $k$ or strictly negative for all $k$.
\end{assumption}

It will also be useful to split forms into their even-$k$ and odd-$k$
components, which we denote using the superscripts $+$ and $-$. For
example, $ w _h \in W _h $ may be written as
$ w _h = w _h ^+ + w _h ^- $, where
\begin{equation*}
  w _h ^+ \coloneqq \bigoplus _{ k \text{ even} } w _h ^k , \qquad w _h ^- \coloneqq \bigoplus _{ k \text{ odd} } w _h ^k .
\end{equation*}
We likewise use $ \alpha ^+ $ and $ \alpha ^- $ to denote the even-$k$
and odd-$k$ components of the penalty parameters.

\begin{theorem}[local existence and uniqueness]
  \label{t:dirac_local_eu}
  Given any $ \overline{ z } _h \in \overline{ \mathfrak{H} } _h $,
  $ p _h \in \mathfrak{H} _h $,
  $ \widehat{ z } _h ^{\mathrm{tan}} \in \widehat{ V } _h
  ^{\mathrm{tan}} $, and $ f \in L ^2 \Lambda (\Omega) $, there exist
  unique $ z _h \in W _h $ and
  $ \overline{p} _h \in \overline{ \mathfrak{H} } _h $ satisfying
  \eqref{e:dirac_w} and \eqref{e:dirac_qbar}.
\end{theorem}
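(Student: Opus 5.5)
Since $W _h$ and $ \overline{ \mathfrak{H} } _h $ are finite-dimensional and the unknowns $ ( z _h , \overline{p} _h ) \in W _h \times \overline{ \mathfrak{H} } _h $ are matched by test functions $ ( w _h , \overline{q} _h ) \in W _h \times \overline{ \mathfrak{H} } _h $, the system \eqref{e:dirac_w}--\eqref{e:dirac_qbar} is square in the local unknowns. It therefore suffices to prove uniqueness. So I would set $ \overline{z} _h = 0 $, $ p _h = 0 $, $ \widehat{ z } _h ^{\mathrm{tan}} = 0 $, $ f = 0 $ and show $ z _h = 0 $, $ \overline{p} _h = 0 $. The homogeneous equations read
\begin{equation*}
  ( z _h , \delta w _h ) _{ \mathcal{T} _h } + ( \delta z _h , w _h ) _{ \mathcal{T} _h } - ( \overline{p} _h , w _h ) _{ \mathcal{T} _h } - \langle \alpha z _h ^{\mathrm{tan}} , w _h ^{\mathrm{tan}} \rangle _{ \partial \mathcal{T} _h } = 0 , \qquad ( z _h , \overline{q} _h ) _{ \mathcal{T} _h } = 0 .
\end{equation*}

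\textbf{Step 1: a sign-flipped test.} Take $ w _h = z _h ^+ - z _h ^- $. Since $\delta$ maps even forms to odd forms and vice versa, the two volume terms cancel:
\begin{equation*}
  ( z _h ^- , \delta z _h ^+ ) - ( z _h ^+ , \delta z _h ^- ) + ( \delta z _h ^- , z _h ^+ ) - ( \delta z _h ^+ , z _h ^- ) = 0 .
\end{equation*}
For the harmonic term, I would check that $ \overline{ \mathfrak{H} } _h $ is graded. If $ v \in \mathring{ H } \Lambda (K) \cap H ^\ast \Lambda (K) $ with $ \mathrm{D} v = 0 $, then adjointness of $ \mathrm{d} $ and $ \delta $ on this space gives $ \lVert \mathrm{D} v \rVert ^2 = \lVert \mathrm{d} v \rVert ^2 + \lVert \delta v \rVert ^2 $. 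Hence $ \mathrm{d} v = 0 $ and $ \delta v = 0 $ separately, so each degree component of $v$ is again locally harmonic. Consequently $ \overline{p} _h ^+ - \overline{p} _h ^- \in \overline{ \mathfrak{H} } _h $, and
\begin{equation*}
  ( \overline{p} _h , z _h ^+ - z _h ^- ) = ( \overline{p} _h ^+ - \overline{p} _h ^- , z _h ) = 0
\end{equation*}
by the second equation. What remains is
\begin{equation*}
  \sum _k ( - 1 ) ^k \langle \alpha ^k z _h ^{k, \mathrm{tan}} , z _h ^{k, \mathrm{tan}} \rangle _{ \partial \mathcal{T} _h } = 0 .
\end{equation*}
By \cref{a:alternating-sign}, all these terms have the same strict sign, so $ z _h ^{\mathrm{tan}} = 0 $ on every $ \partial K $.

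\textbf{Step 2: recover $ \mathrm{D} z _h = \overline{p} _h $.} With $ z _h ^{\mathrm{tan}} = 0 $, integrating by parts on each $K$ via \eqref{e:ibp} gives $ ( z _h , \delta w _h ) _K = ( \mathrm{d} z _h , w _h ) _K $. The first equation becomes $ ( \mathrm{D} z _h - \overline{p} _h , w _h ) _{ \mathcal{T} _h } = 0 $ for all $ w _h \in W _h $. Since $ \mathrm{D} z _h - \overline{p} _h \in W _h $, it follows that $ \mathrm{D} z _h = \overline{p} _h $ elementwise.

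\textbf{Step 3: conclude.} On each $K$, the polynomial form $ z _h $ lies in $ \mathring{ H } \Lambda (K) \cap H ^\ast \Lambda (K) $, on which $ \mathrm{D} $ is self-adjoint. Since $ \mathrm{D} \overline{p} _h = 0 $,
\begin{equation*}
  \lVert \overline{p} _h \rVert _K ^2 = ( \mathrm{D} z _h , \overline{p} _h ) _K = ( z _h , \mathrm{D} \overline{p} _h ) _K = 0 .
\end{equation*}
Thus $ \overline{p} _h = 0 $ and $ \mathrm{D} z _h = 0 $, so $ z _h \in \overline{ \mathfrak{H} } _h $. Taking $ \overline{q} _h = z _h $ in the second equation then gives $ z _h = 0 $.

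The main obstacle I anticipate is Step 1. The cancellation requires the specific sign-flipped test function. Disposing of the $ \overline{p} _h $ term there also relies on the gradedness of the local harmonic space, which must be checked in general rather than only for contractible elements as in \cref{r:contractible}.
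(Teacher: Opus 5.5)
Your proof is correct. It uses the same core ingredients as the paper: the sign-flipped test $w_h = z_h^+ - z_h^-$, the alternating-sign assumption, integration by parts, and testing the orthogonality equation with $z_h$. The difference is when and how you eliminate $\overline{p}_h$.

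\textbf{The paper's ordering.} The paper disposes of $\overline{p}_h$ \emph{first}, by testing the homogeneous equation with $w_h = \overline{p}_h$. After integrating $(\delta z_h, \overline{p}_h)_{\mathcal{T}_h}$ by parts, the volume term is $(z_h, \mathrm{D}\overline{p}_h)_{\mathcal{T}_h} = 0$. The only boundary term is $\langle z_h^{\mathrm{nor}} + \alpha z_h^{\mathrm{tan}}, \overline{p}_h^{\mathrm{tan}}\rangle_{\partial\mathcal{T}_h}$, which vanishes because $\overline{p}_h^{\mathrm{tan}} = 0$, whatever $z_h$ is. Hence $\lVert \overline{p}_h \rVert^2 = 0$ at once, and the sign-flipped test that follows contains no harmonic term.

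\textbf{Your ordering.} You run the sign-flipped test with $\overline{p}_h$ still present. That forces you to show $\overline{p}_h^+ - \overline{p}_h^- \in \overline{\mathfrak{H}}_h$ and to spend the orthogonality equation on the cross term. You then recover $\overline{p}_h = 0$ in Step~3 from $\mathrm{D} z_h = \overline{p}_h$ and self-adjointness. That step is essentially the paper's first computation, except you route it through $z_h^{\mathrm{tan}} = 0$. The paper's version shows this is unnecessary: it needs only $\overline{p}_h^{\mathrm{tan}} = 0$ and $\mathrm{D}\overline{p}_h = 0$, so it can be done at the outset. The ``main obstacle'' you anticipate is therefore an artifact of your ordering, and the paper avoids it entirely.

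\textbf{If you keep your ordering.} Only parity-gradedness of $\overline{\mathfrak{H}}_h$ is needed, and it is immediate. Since $\mathrm{D}$ maps even forms to odd forms and vice versa, $\mathrm{D}\overline{p}_h = 0$ splits into $\mathrm{D}\overline{p}_h^{\pm} = 0$. Tangential traces respect degree, so $\overline{p}_h^{\pm,\mathrm{tan}} = 0$ as well. No appeal to $\lVert \mathrm{D} v\rVert^2 = \lVert \mathrm{d} v\rVert^2 + \lVert \delta v\rVert^2$ is required, although your argument for that identity is also correct.
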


\begin{proof}
  Since this is a square system, it suffices to show that
  \begin{subequations}
    \begin{alignat}{2}
      ( z _h , \delta  w _h ) _{ \mathcal{T} _h } + ( \delta z _h , w _h ) _{ \mathcal{T} _h } - ( \overline{p} _h , w _h ) _{ \mathcal{T} _h } - \langle \alpha z _h ^{\mathrm{tan}} , w _h ^{\mathrm{tan}} \rangle _{ \partial \mathcal{T} _h } &= 0 , \quad &\forall w _h &\in W _h , \label{e:dirac_local_w} \\
      -( z _h , \overline{q} _h ) _{ \mathcal{T} _h } &= 0 , \quad &\forall \overline{q} _h &\in \overline{ \mathfrak{H} } _h, \label{e:dirac_local_qbar}
    \end{alignat}
  \end{subequations}
  has only the trivial solution $ z _h = 0 $ and
  $ \overline{p} _h = 0 $.

  First, take $ w _h = \overline{p} _h $ in
  \eqref{e:dirac_local_w}. Integrating by parts and rearranging gives
  \begin{equation*}
    ( \overline{p} _h , \overline{p} _h ) _{ \mathcal{T} _h } = ( z _h , \mathrm{D} \overline{p} _h ) _{ \mathcal{T} _h } - \langle z _h ^{\mathrm{nor}} + \alpha z _h ^{\mathrm{tan}} , \overline{p} _h ^{\mathrm{tan}} \rangle _{ \partial \mathcal{T} _h } = 0 ,
  \end{equation*}
  since $ \mathrm{D} \overline{p} _h = 0 $ and
  $ \overline{p} _h ^{\mathrm{tan}} = 0 $ by definition of
  $ \overline{ \mathfrak{H} } _h $. Hence, $ \overline{p} _h = 0 $.

  Next, write $ z _h = z _h ^+ + z _h ^- $, and take
  $ w _h = z _h ^+ - z _h ^- $ in \eqref{e:dirac_local_w}, i.e.,
  $ w _h ^k = ( - 1 ) ^k z _h ^k $. The
  $ \pm ( z _h ^\mp , \delta z _h ^\pm ) _{ \mathcal{T} _h } $ and
  $ \pm ( \delta z _h ^\mp , z _h ^\pm ) _{ \mathcal{T} _h } $ terms
  cancel, leaving only
  \begin{equation*}
    \langle \alpha ^+ z _h ^{+, \mathrm{tan}} , z _h ^{+, \mathrm{tan}} \rangle _{ \partial \mathcal{T} _h } - \langle \alpha ^- z _h ^{-, \mathrm{tan}} , z _h ^{-, \mathrm{tan}} \rangle _{ \partial \mathcal{T} _h } = 0 .
  \end{equation*}
  The alternating-sign assumption implies that the left-hand side is
  either positive- or negative-definite in $ z _h ^{\mathrm{tan}} $,
  so its vanishing implies $ z _h ^{\mathrm{tan}} = 0 $. Integrating
  \eqref{e:dirac_local_w} by parts then gives
  \begin{equation*}
    ( \mathrm{D} z _h , w _h ) _{ \mathcal{T} _h } = 0 , \quad \forall w _h \in W _h ,
  \end{equation*}
  and taking $ w _h = \mathrm{D} z _h $ gives $ \mathrm{D} z _h = 0 $,
  i.e., $ z _h \in \overline{ \mathfrak{H} } _h $. Finally, taking
  $ \overline{q} _h = z _h $ in \eqref{e:dirac_local_qbar} implies
  $ z _h = 0 $.
\end{proof}

\begin{theorem}[global existence and uniqueness]
  \label{t:dirac_global_eu}
  Given any $ f \in L ^2 \Lambda (\Omega) $, there exists a unique
  solution to \eqref{e:dirac}.
\end{theorem}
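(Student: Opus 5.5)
Since \eqref{e:dirac} is a square linear system in $( z _h , \overline{p} _h , \overline{z} _h , p _h , \widehat{ z } _h ^{\mathrm{tan}} )$, it suffices to show that $ f = 0 $ forces the trivial solution. By \cref{l:dirac_properties}, any solution has $ \overline{p} _h = 0 $, $ p _h = P _{ \mathfrak{H} _h } f = 0 $, and $ \overline{z} _h = P _{ \overline{ \mathfrak{H} } _h } z _h $. So it remains to show that $ z _h = 0 $ and $ \widehat{ z } _h ^{\mathrm{tan}} = 0 $; then $ \overline{z} _h = 0 $ follows.

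The key energy step mimics the proof of \cref{t:dirac_local_eu}. I would take $ w _h = z _h ^+ - z _h ^- $ in \eqref{e:dirac_w} and $ \widehat{ w } _h ^{\mathrm{tan}} = \widehat{ z } _h ^{+,\mathrm{tan}} - \widehat{ z } _h ^{-,\mathrm{tan}} $ in \eqref{e:dirac_wtan}. The latter is admissible because $ \widehat{ V } _h ^{\mathrm{tan}} $ splits by form degree. Then I would subtract the second equation from the first. The volume terms $ ( z _h , \delta w _h ) + ( \delta z _h , w _h ) $ cancel between even and odd parts, exactly as in the local proof. The boundary terms $ \langle \widehat{ z } _h ^{\mathrm{tan}} , w _h ^{\mathrm{nor}} \rangle $ and $ \langle z _h ^{\mathrm{nor}} , \widehat{ w } _h ^{\mathrm{tan}} \rangle $ also cancel, because the normal trace lowers degree by one and so pairs even with odd with the opposite sign. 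What remains is
\begin{equation*}
  - \bigl\langle \alpha ^+ ( \widehat{ z } _h ^{+,\mathrm{tan}} - z _h ^{+,\mathrm{tan}} ) , \widehat{ z } _h ^{+,\mathrm{tan}} - z _h ^{+,\mathrm{tan}} \bigr\rangle _{ \partial \mathcal{T} _h } + \bigl\langle \alpha ^- ( \widehat{ z } _h ^{-,\mathrm{tan}} - z _h ^{-,\mathrm{tan}} ) , \widehat{ z } _h ^{-,\mathrm{tan}} - z _h ^{-,\mathrm{tan}} \bigr\rangle _{ \partial \mathcal{T} _h } = 0 ,
\end{equation*}
up to an overall sign. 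The main bookkeeping obstacle is checking these signs carefully. Once they are confirmed, \cref{a:alternating-sign} makes this expression definite, so $ \widehat{ z } _h ^{\mathrm{tan}} = z _h ^{\mathrm{tan}} $ on $ \partial \mathcal{T} _h $.

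With this identity, $ \widehat{ z } _h ^{\mathrm{nor}} = z _h ^{\mathrm{nor}} $. Moreover, $ z _h ^{\mathrm{tan}} = \widehat{ z } _h ^{\mathrm{tan}} $ is single-valued, with vanishing jump. By \cref{l:dirac_properties}, $ \jump{ z _h ^{\mathrm{nor}} } = \jump{ \widehat{ z } _h ^{\mathrm{nor}} } = 0 $, which includes the boundary condition on $ \partial \Omega $. Hence $ z _h \in H \Lambda (\Omega) \cap \mathring{ H } ^\ast \Lambda (\Omega) $. By \cref{p:jump-avg}, the boundary terms of \eqref{e:dirac_ns_w} combine into $ \langle \widehat{ z } _h ^{\mathrm{tan}} , w _h ^{\mathrm{nor}} \rangle - \langle z _h ^{\mathrm{nor}} , w _h ^{\mathrm{tan}} \rangle $. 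Integrating by parts via \eqref{e:ibp_broken}, \eqref{e:dirac_ns_w} then reduces to $ ( \mathrm{D} z _h , w _h ) _{ \mathcal{T} _h } = 0 $ for all $ w _h \in W _h $. Taking $ w _h = \mathrm{D} z _h $ gives $ \mathrm{D} z _h = 0 $.

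Thus $ z _h \in W _h \cap \mathfrak{H} (\Omega) = \mathfrak{H} _h $, and \eqref{e:dirac_q} with $ q _h = z _h $ yields $ z _h = 0 $. Consequently $ \widehat{ z } _h ^{\mathrm{tan}} = z _h ^{\mathrm{tan}} = 0 $. Here $ \widehat{ z } _h ^{\mathrm{tan}} $ is a facet-wise polynomial, and each facet lies in some $ \partial K $, so the trace identity determines it completely. Finally $ \overline{z} _h = P _{ \overline{ \mathfrak{H} } _h } z _h = 0 $, which completes uniqueness and hence existence.
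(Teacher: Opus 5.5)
Your overall strategy matches the paper's proof. You reduce to $f = 0$, invoke \cref{l:dirac_properties}, and test with $w_h = z_h^+ - z_h^-$ and $\widehat{w}_h^{\mathrm{tan}} = \widehat{z}_h^{+,\mathrm{tan}} - \widehat{z}_h^{-,\mathrm{tan}}$. You then deduce $\widehat{z}_h^{\mathrm{tan}} = z_h^{\mathrm{tan}}$, conformity, $\mathrm{D} z_h = 0$, and $z_h = 0$ from \eqref{e:dirac_q}. Everything after the energy identity is fine. But the energy step fails as you state it: \emph{subtracting} \eqref{e:dirac_wtan} from \eqref{e:dirac_w} produces neither the cancellation nor the definite form you display. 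Splitting by degree, the cross terms are
\begin{align*}
  \langle \widehat{z}_h^{\mathrm{tan}} , w_h^{\mathrm{nor}} \rangle_{\partial \mathcal{T}_h} &= \sum_k (-1)^{k+1} \langle \widehat{z}_h^{k,\mathrm{tan}} , z_h^{k+1,\mathrm{nor}} \rangle_{\partial \mathcal{T}_h} , \\
  \langle z_h^{\mathrm{nor}} , \widehat{w}_h^{\mathrm{tan}} \rangle_{\partial \mathcal{T}_h} &= \sum_k (-1)^{k} \langle z_h^{k+1,\mathrm{nor}} , \widehat{z}_h^{k,\mathrm{tan}} \rangle_{\partial \mathcal{T}_h} .
\end{align*}
They cancel in the sum of the two equations but double in the difference. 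Now set $\epsilon^k \coloneqq \widehat{z}_h^{k,\mathrm{tan}} - z_h^{k,\mathrm{tan}}$. The penalty contributions are $\sum_k (-1)^k \langle \alpha^k \epsilon^k , z_h^{k,\mathrm{tan}} \rangle_{\partial \mathcal{T}_h}$ from \eqref{e:dirac_w} and $-\sum_k (-1)^k \langle \alpha^k \epsilon^k , \widehat{z}_h^{k,\mathrm{tan}} \rangle_{\partial \mathcal{T}_h}$ from \eqref{e:dirac_wtan}. Their sum is $-\sum_k (-1)^k \langle \alpha^k \epsilon^k , \epsilon^k \rangle_{\partial \mathcal{T}_h}$, which is exactly the definite expression you wrote. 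Their difference is $\sum_k (-1)^k \bigl( \langle \alpha^k \widehat{z}_h^{k,\mathrm{tan}} , \widehat{z}_h^{k,\mathrm{tan}} \rangle_{\partial \mathcal{T}_h} - \langle \alpha^k z_h^{k,\mathrm{tan}} , z_h^{k,\mathrm{tan}} \rangle_{\partial \mathcal{T}_h} \bigr)$. That is a difference of two definite forms, hence indefinite, so \cref{a:alternating-sign} yields nothing.

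The fix is to add the two equations, or equivalently to subtract but test with $-\widehat{w}_h^{\mathrm{tan}}$. This is exactly the paper's route. It first adds \eqref{e:dirac_w} and \eqref{e:dirac_wtan}, so that the penalty appears as $-\bigl\langle \alpha ( \widehat{z}_h^{\mathrm{tan}} - z_h^{\mathrm{tan}} ) , \widehat{w}_h^{\mathrm{tan}} - w_h^{\mathrm{tan}} \bigr\rangle_{\partial \mathcal{T}_h}$, and only then substitutes the test functions. With that correction your proof coincides with the paper's. Using \eqref{e:dirac_ns_w} rather than \eqref{e:dirac_w} for the final reduction to $( \mathrm{D} z_h , w_h )_{\mathcal{T}_h} = 0$ is an inessential variation. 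Your remark that $\widehat{V}_h^{\mathrm{tan}}$ splits by degree, which makes the test function admissible, is a worthwhile detail the paper leaves implicit.
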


\begin{proof}
  Since \eqref{e:dirac} is a square system, it suffices to show that
  $ f = 0 $ has only the trivial solution. \Cref{l:dirac_properties}
  immediately gives $ \overline{p} _h = 0 $ and $ p _h = 0 $, while
  $ \overline{z} _h = P _{ \overline{ \mathfrak{H} } _h } z _h $
  clearly vanishes if $ z _h $ does. It therefore remains to show that
  $ z _h = 0 $ and $ \widehat{ z } _h ^{\mathrm{tan}} = 0 $.

  First, observe that adding \eqref{e:dirac_w} and \eqref{e:dirac_wtan}
  gives
  \begin{equation*}
    ( z _h , \delta  w _h ) _{ \mathcal{T} _h } + ( \delta z _h , w _h ) _{ \mathcal{T} _h } + \langle \widehat{ z } _h ^{\mathrm{tan}} , w _h ^{\mathrm{nor}} \rangle _{ \partial \mathcal{T} _h } + \langle z _h ^{\mathrm{nor}}, \widehat{ w } _h ^{\mathrm{tan}} \rangle _{ \partial \mathcal{T} _h }
    - \bigl\langle \alpha ( \widehat{ z } _h ^{\mathrm{tan}} - z _h ^{\mathrm{tan}} ) , \widehat{ w } _h ^{\mathrm{tan}} - w _h ^{\mathrm{tan}} \bigr\rangle _{ \partial \mathcal{T} _h } = 0.
  \end{equation*}
  Similarly to the proof of \cref{t:dirac_local_eu}, we
  take $ w _h = z _h ^+ - z _h ^- $ and
  $ \widehat{ w } _h ^{\mathrm{tan}} = \widehat{ z } _h ^{+,
    \mathrm{tan}} - \widehat{ z } _h ^{-, \mathrm{tan}} $. The
  $ \pm ( z _h ^\mp , \delta z _h ^\pm ) _{ \mathcal{T} _h } $ and
  $ \pm ( \delta z _h ^\mp , z _h ^\pm ) _{ \mathcal{T} _h } $ terms
  cancel, as do the
  $ \pm \langle \widehat{ z } _h ^{\mp, \mathrm{tan}} , z _h ^{ \pm,
    \mathrm{nor}} \rangle _{ \partial \mathcal{T} _h } $ and
  $ \pm \langle z _h ^{\mp, \mathrm{nor}} , \widehat{ z } _h ^{ \pm,
    \mathrm{tan}} \rangle _{ \partial \mathcal{T} _h } $ terms,
  leaving only
  \begin{equation*}
    \bigl\langle \alpha ^+ ( \widehat{ z } _h ^{+, \mathrm{tan}} - z _h ^{+, \mathrm{tan}} ) , \widehat{ z } _h ^{+, \mathrm{tan}} - z _h ^{+, \mathrm{tan}} \bigr\rangle _{ \partial \mathcal{T} _h } - \bigl\langle \alpha ^- ( \widehat{ z } _h ^{-, \mathrm{tan}} - z _h ^{-, \mathrm{tan}} ) , \widehat{ z } _h ^{-, \mathrm{tan}} - z _h ^{-, \mathrm{tan}} \bigr\rangle _{ \partial \mathcal{T} _h } = 0 .
  \end{equation*}
  The alternating-sign assumption implies
  $ \widehat{ z } _h ^{\mathrm{tan}} = z _h ^{\mathrm{tan}} $, and
  consequently
  $ \widehat{ z } _h ^{\mathrm{nor}} = z _h ^{\mathrm{nor}} $ as
  well. Since $ \jump{ \widehat{ z } _h ^{\mathrm{tan}} } = 0 $ by
  definition of $ \widehat{ V } _h ^{\mathrm{tan}} $, and
  $ \jump{ \widehat{ z } _h ^{\mathrm{nor}} } = 0 $ by
  \cref{l:dirac_properties}, we now have
  $ z _h \in H \Lambda (\Omega) \cap \mathring{ H } ^\ast \Lambda
  (\Omega) $. Equation \eqref{e:dirac_w} therefore simplifies to
  \begin{equation*}
    ( z _h , \delta  w _h ) _{ \mathcal{T} _h } + ( \delta z _h , w _h ) _{ \mathcal{T} _h } + \langle z _h ^{\mathrm{tan}} , w _h ^{\mathrm{nor}} \rangle _{ \partial \mathcal{T} _h } = 0 , \quad \forall w _h \in W _h ,
  \end{equation*}
  and integrating by parts gives
  \begin{equation*}
    ( \mathrm{D} z _h , w _h ) _{ \mathcal{T} _h } = 0 , \quad \forall w _h \in W _h .
  \end{equation*}
  Taking $ w _h = \mathrm{D} z _h $ implies $ \mathrm{D} z _h = 0 $,
  i.e., $ z _h \in \mathfrak{H} _h $. Finally, taking $ q _h = z _h $
  in \eqref{e:dirac_q} gives $ z _h = 0 $, and thus
  $ \widehat{ z } _h ^{\mathrm{tan}} = z _h ^{\mathrm{tan}} = 0 $ as
  well.
\end{proof}

\subsection{Jump-average DG formulation}

We may also express the HDG method in a more classical DG form, using
jumps and averages instead of the additional hybrid ``hat''
variables. While the hybrid formulation is preferable for
implementation (due to static condensation, ease of local assembly,
etc.), the equivalent formulation developed in this section will be
convenient for performing the subsequent error analysis. Since the
local harmonics are only included in the hybrid formulation for
purposes of defining the local solvers and performing static
condensation (see \cref{r:local_harmonics}), they are omitted from
this ``unhybridized'' formulation.

In general, the penalty $\alpha$ may take separate values on the two
sides $ e ^\pm $ of an internal facet
$ e \subset \partial \mathcal{T} _h \setminus \partial \Omega $. We
take its jump and average in the tangential-trace sense of
\cref{d:jump-avg},
\begin{alignat*}{4}
  \jump{ \alpha } _{ e ^\pm } &\coloneqq \frac{1}{2} \bigl( \alpha _{ e ^\pm } - \alpha _{ e ^\mp } \bigr) , \qquad & \avg{ \alpha } _{ e ^\pm } &\coloneqq \frac{1}{2} \bigl( \alpha _{ e ^+ } + \alpha _{ e ^- } \bigr) , \qquad & e &\not\subset \partial \Omega, \\
  \jump{ \alpha } _{e \phantom{^\pm}} &\coloneqq 0 , \qquad & \avg{ \alpha } _{e \phantom{^\pm}} &\coloneqq \alpha _e , \qquad & e &\subset \partial \Omega ,
\end{alignat*}
and we say $\alpha$ is single-valued if $ \jump{ \alpha } = 0 $. It
will also be useful to denote the reciprocal
$ \beta \coloneqq 1 / \alpha $, whose jump and average we take in the
exact same way. We will use the identities
\begin{alignat*}{2}
  \jump{ \alpha \widehat{ w } ^{\mathrm{tan}} } &= \avg{ \alpha } \jump{ \widehat{ w } ^{\mathrm{tan}} } + \jump{ \alpha } \avg{ \widehat{ w } ^{\mathrm{tan}} }, \qquad & \avg{ \alpha \widehat{ w } ^{\mathrm{tan}} } &= \avg{ \alpha } \avg{ \widehat{ w } ^{\mathrm{tan}} } + \jump{ \alpha } \jump { \widehat{ w } ^{\mathrm{tan}} } , \\
  \jump{ \beta \widehat{ w } ^{\mathrm{nor}} } &= \avg{ \beta } \jump{ \widehat{ w } ^{\mathrm{nor}} } + \jump{ \beta } \avg{ \widehat{ w } ^{\mathrm{nor}} } , \qquad & \avg{ \beta \widehat{ w } ^{\mathrm{nor}} } &= \avg{ \beta } \avg{ \widehat{ w } ^{\mathrm{nor}} } + \jump{ \beta } \jump { \widehat{ w } ^{\mathrm{nor}} } ,
\end{alignat*}
which are straightforward to verify from the definitions, for all
$ \widehat{ w } ^{\mathrm{tan}} , \widehat{ w } ^{\mathrm{nor}} \in L
^2 \Lambda ( \partial \mathcal{T} _h ) $.

\begin{lemma}
  \label{l:dg_traces}
  The approximate traces for the HDG method satisfy
  \begin{align*}
    \widehat{ z } _h ^{\mathrm{tan}} &= \avg{ z _h ^{\mathrm{tan}} } + \frac{ 1 }{ \avg{\alpha} } \jump{ z _h ^{\mathrm{nor}} } + \frac{ \jump{\alpha} }{ \avg{ \alpha } } \jump{ z _h ^{\mathrm{tan}} } , \\
    \widehat{ z } _h ^{\mathrm{nor}} &= \avg{ z _h ^{\mathrm{nor}} } + \frac{ 1 }{ \avg{\beta} } \jump{ z _h ^{\mathrm{tan}} } + \frac{ \jump{\beta} }{ \avg{ \beta } } \jump{ z _h ^{\mathrm{nor}} } .
  \end{align*}
\end{lemma}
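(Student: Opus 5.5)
The plan is to decompose the defining relation for $\widehat{z}_h^{\mathrm{nor}}$ into its jump and average parts and use the two structural facts we already know: $\jump{\widehat{z}_h^{\mathrm{tan}}} = 0$, since $\widehat{z}_h^{\mathrm{tan}} \in \widehat{V}_h^{\mathrm{tan}}$, and $\jump{\widehat{z}_h^{\mathrm{nor}}} = 0$, by \cref{l:dirac_properties}. The first gives $\widehat{z}_h^{\mathrm{tan}} = \avg{\widehat{z}_h^{\mathrm{tan}}}$, and the second gives $\widehat{z}_h^{\mathrm{nor}} = \avg{\widehat{z}_h^{\mathrm{nor}}}$. So it suffices to compute these two averages.

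\textbf{Tangential formula.} Write the definition as
\begin{equation*}
  \alpha \widehat{z}_h^{\mathrm{tan}} = \alpha z_h^{\mathrm{tan}} + z_h^{\mathrm{nor}} - \widehat{z}_h^{\mathrm{nor}} .
\end{equation*}
The left-hand side is a tangential-type quantity, while $z_h^{\mathrm{nor}}$ and $\widehat{z}_h^{\mathrm{nor}}$ are normal traces. By \cref{d:jump-avg}, the normal jump is tangentially single-valued, and more generally the tangential jump of a normal trace equals its normal average. I would make this precise by checking from the definitions that, facet by facet, $\jump{\cdot}$ in the tangential sense applied to a normal trace $\widehat{w}^{\mathrm{nor}}$ gives $\avg{\widehat{w}^{\mathrm{nor}}}$, and the tangential average gives $\jump{\widehat{w}^{\mathrm{nor}}}$, including on $\partial\Omega$.

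Apply the tangential jump to both sides. On the left, using the product identity with $\jump{\widehat{z}_h^{\mathrm{tan}}}=0$, we get $\jump{\alpha}\avg{\widehat{z}_h^{\mathrm{tan}}} = \jump{\alpha}\widehat{z}_h^{\mathrm{tan}}$. Instead, apply the tangential average: the left becomes $\avg{\alpha}\widehat{z}_h^{\mathrm{tan}}$. The right becomes
\begin{equation*}
  \avg{\alpha}\avg{z_h^{\mathrm{tan}}} + \jump{\alpha}\jump{z_h^{\mathrm{tan}}} + \jump{z_h^{\mathrm{nor}}} - \jump{\widehat{z}_h^{\mathrm{nor}}} .
\end{equation*}
The last term vanishes by \cref{l:dirac_properties}. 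Dividing by $\avg{\alpha}$ gives the first formula.

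Here $\avg{\alpha}\neq 0$: on a given facet and form degree, both one-sided values of $\alpha^k$ have sign $(-1)^k$ times a fixed sign, by \cref{a:alternating-sign}. The same argument shows $\avg{\beta}\neq 0$.

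\textbf{Normal formula.} Symmetrically, multiply the definition by $\beta = 1/\alpha$:
\begin{equation*}
  \beta \widehat{z}_h^{\mathrm{nor}} = \beta z_h^{\mathrm{nor}} - \widehat{z}_h^{\mathrm{tan}} + z_h^{\mathrm{tan}} .
\end{equation*}
Take the normal average of both sides. Using the $\beta$-product identity and $\jump{\widehat{z}_h^{\mathrm{nor}}}=0$, the left becomes $\avg{\beta}\widehat{z}_h^{\mathrm{nor}}$. On the right:
\begin{itemize}
  \item $\avg{\beta z_h^{\mathrm{nor}}} = \avg{\beta}\avg{z_h^{\mathrm{nor}}} + \jump{\beta}\jump{z_h^{\mathrm{nor}}}$;
  \item the normal average of a tangential trace is its tangential jump, so $\widehat{z}_h^{\mathrm{tan}}$ contributes $\jump{\widehat{z}_h^{\mathrm{tan}}} = 0$;
  \item $z_h^{\mathrm{tan}}$ contributes $\jump{z_h^{\mathrm{tan}}}$.
\end{itemize}
Dividing by $\avg{\beta}$ gives the second formula.

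\textbf{Main obstacle.} The main obstacle is bookkeeping rather than ideas. One must verify the cross-type identities between tangential and normal jumps and averages, including the boundary-facet conventions: on $\partial\Omega$ the tangential average is the identity and the normal average vanishes, consistent with the normal jump being the identity there and the tangential jump vanishing. One must also track the sign and orientation conventions so that no spurious sign appears. I would verify these facet by facet, separately for interior facets ($e^\pm$) and for boundary facets.
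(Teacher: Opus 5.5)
Your proposal is correct and is essentially the paper's argument: the paper applies the normal jump (which coincides with the tangential average) to the defining relation for $\widehat{z}_h^{\mathrm{nor}}$, uses the product identity with $\jump{\widehat{z}_h^{\mathrm{tan}}}=0$ and $\jump{\widehat{z}_h^{\mathrm{nor}}}=0$, divides by $\avg{\alpha}$, and then does the analogous computation on $\widehat{z}_h^{\mathrm{tan}} = z_h^{\mathrm{tan}} - \beta(\widehat{z}_h^{\mathrm{nor}} - z_h^{\mathrm{nor}})$. Your explicit check that $\avg{\alpha},\avg{\beta}\neq 0$ under the alternating-sign assumption is a detail the paper leaves implicit.
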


\begin{proof}
  Recalling that normal jump has the same definition as tangential average, we
  have
  \begin{align*}
    \jump{ \widehat{ z } _h ^{\mathrm{nor}} }
    &= \jump{ z _h ^{\mathrm{nor}} } - \bigavg{ \alpha ( \widehat{ z } _h ^{\mathrm{tan}} - z _h ^{\mathrm{tan}} ) } \\
    &= \jump{ z _h ^{\mathrm{nor}} } - \avg{\alpha}\avg{\widehat{ z } _h ^{\mathrm{tan}} - z _h ^{\mathrm{tan}} } - \jump{\alpha}\jump{ \widehat{ z } _h ^{\mathrm{tan}} - z _h ^{\mathrm{tan}} } \\
    &= \jump{ z _h ^{\mathrm{nor}} } - \avg{\alpha} \widehat{ z } _h ^{\mathrm{tan}} + \avg{\alpha} \avg{ z _h ^{\mathrm{tan}} } + \jump{\alpha}\jump{z _h ^{\mathrm{tan}}} .
  \end{align*}
  Since $ \jump{ \widehat{ z } _h ^{\mathrm{nor}} } = 0 $, dividing
  by $ \avg{\alpha} $ and rearranging gives the first
  identity. Likewise, observing that
  $ \widehat{ z } _h ^{\mathrm{tan}} = z _h ^{\mathrm{tan}} - \beta
  ( \widehat{ z } _h ^{\mathrm{nor}} - z _h ^{\mathrm{nor}} ) $
  satisfies $ \jump{ \widehat{ z } _h ^{\mathrm{tan}} } = 0 $, a
  similar calculation gives the second identity.
\end{proof}

This allows us to eliminate the hybrid variables to obtain the
following equivalent formulation.

\begin{proposition}
  \label{p:dg}
  The HDG method is equivalent to finding $ z _h \in W _h $ and
  $ p _h \in \mathfrak{H} _h $ satisfying
  \begin{subequations}
    \label{e:dirac_dg}
    \begin{alignat}{2}
      ( z _h , \delta  w _h ) _{ \mathcal{T} _h } + ( \delta z _h , w _h ) _{ \mathcal{T} _h } + ( p _h , w _h ) _{ \mathcal{T} _h } \qquad \qquad \label{e:dirac_dg_w} \\
      {}+ \bigl\langle \avg{ z _h ^{\mathrm{tan}} }, \jump{ w _h ^{\mathrm{nor}} } \bigr\rangle _{ \partial \mathcal{T} _h } + \bigl\langle \jump{ z _h ^{\mathrm{nor}} } , \avg{ w _h ^{\mathrm{tan}} } \bigr\rangle _{ \partial \mathcal{T} _h } \notag \\
      {}+ \biggl\langle \frac{ 1 }{ \avg{\alpha} } \jump{ z _h ^{\mathrm{nor}} } + \frac{ \jump{\alpha} }{ \avg{\alpha} } \jump{ z _h ^{\mathrm{tan}} } , \jump{ w _h ^{\mathrm{nor}} } \biggr\rangle _{ \partial \mathcal{T} _h } \notag \\
      {}- \biggl\langle \frac{ 1 }{ \avg{\beta} } \jump{ z _h ^{\mathrm{tan}} } + \frac{ \jump{\beta} }{ \avg{\beta} } \jump{ z _h ^{\mathrm{nor}} } , \jump{ w _h ^{\mathrm{tan}} } \biggr\rangle _{ \partial \mathcal{T} _h }
       &= ( f, w _h ) _{ \mathcal{T} _h } , \quad &\forall w _h &\in W _h , \notag \\
      ( z _h , q _h ) _{ \mathcal{T} _h } &= 0 , \quad &\forall q _h &\in \mathfrak{H} _h. \label{e:dirac_dg_q}
    \end{alignat}
  \end{subequations}
\end{proposition}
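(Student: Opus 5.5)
The plan is to prove both directions of the equivalence: a solution of the HDG system \eqref{e:dirac} yields a solution of \eqref{e:dirac_dg}, and conversely every solution of \eqref{e:dirac_dg} arises this way. For the forward direction, let $(z_h,\overline{p}_h,\overline{z}_h,p_h,\widehat{z}_h^{\mathrm{tan}})$ solve \eqref{e:dirac}. By \cref{l:dirac_properties} we have $\overline{p}_h=0$. I will use the form \eqref{e:dirac_ns_w} rather than \eqref{e:dirac_w}. Integrating $(z_h,\mathrm{D}w_h)_{\mathcal{T}_h}$ by parts via \eqref{e:ibp_broken} gives $(z_h,\delta w_h)_{\mathcal{T}_h}+(\delta z_h,w_h)_{\mathcal{T}_h}$, together with boundary terms $\langle z_h^{\mathrm{nor}},w_h^{\mathrm{tan}}\rangle_{\partial\mathcal{T}_h}-\langle z_h^{\mathrm{tan}},w_h^{\mathrm{nor}}\rangle_{\partial\mathcal{T}_h}$, with signs to be checked. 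The equation therefore becomes
\[
(z_h,\delta w_h)_{\mathcal{T}_h}+(\delta z_h,w_h)_{\mathcal{T}_h}+(p_h,w_h)_{\mathcal{T}_h}+\langle \widehat{z}_h^{\mathrm{tan}},w_h^{\mathrm{nor}}\rangle_{\partial\mathcal{T}_h}-\langle \widehat{z}_h^{\mathrm{nor}}-z_h^{\mathrm{nor}},w_h^{\mathrm{tan}}\rangle_{\partial\mathcal{T}_h}=(f,w_h)_{\mathcal{T}_h}.
\]

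Next, split each boundary pairing with \cref{p:jump-avg}. Since $\jump{\widehat z_h^{\mathrm{tan}}}=0$, we have $\langle \widehat{z}_h^{\mathrm{tan}},w_h^{\mathrm{nor}}\rangle=\langle \widehat z_h^{\mathrm{tan}},\jump{w_h^{\mathrm{nor}}}\rangle$, using that $\widehat z_h^{\mathrm{tan}}$ equals its own average. Substituting the first identity of \cref{l:dg_traces} then gives exactly the $\avg{z_h^{\mathrm{tan}}}$ term and the $1/\avg\alpha$, $\jump\alpha/\avg\alpha$ terms paired with $\jump{w_h^{\mathrm{nor}}}$. Since $\jump{\widehat z_h^{\mathrm{nor}}}=0$ by \cref{l:dirac_properties}, $\widehat z_h^{\mathrm{nor}}$ is its own normal average. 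The second pairing then splits as
\[
\langle \widehat z_h^{\mathrm{nor}}-\avg{z_h^{\mathrm{nor}}},\jump{w_h^{\mathrm{tan}}}\rangle-\langle \jump{z_h^{\mathrm{nor}}},\avg{w_h^{\mathrm{tan}}}\rangle.
\]
Inserting the second identity of \cref{l:dg_traces} produces the $1/\avg\beta$ and $\jump\beta/\avg\beta$ terms, and the minus sign in front turns the second piece into $+\langle\jump{z_h^{\mathrm{nor}}},\avg{w_h^{\mathrm{tan}}}\rangle$. This matches \eqref{e:dirac_dg_w}. Equation \eqref{e:dirac_dg_q} is \eqref{e:dirac_q} verbatim.

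For the converse, \eqref{e:dirac_dg} and the reduced HDG system should both be square and uniquely solvable, so equivalence of solutions follows from the forward direction. The simplest route is to show directly that \eqref{e:dirac_dg} with $f=0$ forces $z_h=0$ and $p_h=0$. Alternatively, I can construct an HDG solution from a solution $(z_h,p_h)$ of \eqref{e:dirac_dg}: define $\widehat z_h^{\mathrm{tan}}$ by the formula in \cref{l:dg_traces}, set $\overline{p}_h=0$ and $\overline{z}_h=P_{\overline{\mathfrak H}_h}z_h$, and reverse the computation above. This requires checking that $\widehat z_h^{\mathrm{tan}}\in\widehat V_h^{\mathrm{tan}}$, which holds because it is built from a tangential average and normal jumps (both tangentially single-valued) with piecewise-constant coefficients. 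It also requires checking that the resulting $\widehat z_h^{\mathrm{nor}}$ agrees with the second formula of \cref{l:dg_traces} and has $\jump{\widehat z_h^{\mathrm{nor}}}=0$, which yields \eqref{e:dirac_ns_wtan}. Since \cref{t:dirac_global_eu} already gives HDG uniqueness, I would take the cleaner path: show that the map from HDG solutions to DG solutions is injective and that \eqref{e:dirac_dg} has the same uniqueness, via an energy argument with $w_h=z_h^+-z_h^-$.

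The main obstacle is the sign and orientation bookkeeping. This includes the consistency of the two formulas of \cref{l:dg_traces}, the algebra relating $\avg\alpha,\jump\alpha$ to $\avg\beta,\jump\beta$, and the boundary-facet conventions where averages or jumps vanish. I expect to verify these facet by facet, treating interior and boundary facets separately.
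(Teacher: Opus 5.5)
Your forward computation is the paper's proof: with $\overline p_h=0$, you split $\langle\widehat z_h^{\mathrm{tan}},w_h^{\mathrm{nor}}\rangle_{\partial\mathcal{T}_h}$ and $-\langle\widehat z_h^{\mathrm{nor}}-z_h^{\mathrm{nor}},w_h^{\mathrm{tan}}\rangle_{\partial\mathcal{T}_h}=\langle\alpha(\widehat z_h^{\mathrm{tan}}-z_h^{\mathrm{tan}}),w_h^{\mathrm{tan}}\rangle_{\partial\mathcal{T}_h}$ with \cref{p:jump-avg} and substitute \cref{l:dg_traces}. The paper leaves the converse implicit in this elimination, so your uniqueness argument for \eqref{e:dirac_dg} is a sound addition rather than a different route; it works once you first get $p_h=0$ by testing with $w_h\in\mathfrak{H}_h$ and note that the $\jump{\alpha}/\avg{\alpha}$ cross terms pair a $(k-1)$-form tangential jump with a $k$-form normal jump, so they drop out of $a(z_h^\pm,z_h^\pm)$.

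One slip: rewriting $(z_h,\mathrm{D}w_h)_{\mathcal{T}_h}$ as $(z_h,\delta w_h)_{\mathcal{T}_h}+(\delta z_h,w_h)_{\mathcal{T}_h}$ only moves the $\mathrm{d}$ off $w_h$, so the sole boundary term is $+\langle z_h^{\mathrm{nor}},w_h^{\mathrm{tan}}\rangle_{\partial\mathcal{T}_h}$, with no $-\langle z_h^{\mathrm{tan}},w_h^{\mathrm{nor}}\rangle_{\partial\mathcal{T}_h}$ term. That is what your displayed equation actually uses, so the display is correct as written.
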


\begin{proof}
  Starting with \eqref{e:dirac}, we eliminate the local harmonic
  variables and equations \eqref{e:dirac_qbar}--\eqref{e:dirac_wbar}
  using \cref{l:dirac_properties}, and we eliminate the approximate
  trace variables and equation \eqref{e:dirac_wtan} using
  \cref{l:dg_traces}. Combining \cref{p:jump-avg,l:dg_traces}, we have
  \begin{equation*}
    \langle \widehat{ z } _h ^{\mathrm{tan}} , w _h ^{\mathrm{nor}} \rangle _{ \partial \mathcal{T} _h } = \biggl\langle \avg{ z _h ^{\mathrm{tan}} } + \frac{ 1 }{ \avg{\alpha} } \jump{ z _h ^{\mathrm{nor}} } + \frac{ \jump{\alpha} }{ \avg{ \alpha } } \jump{ z _h ^{\mathrm{tan}} } , \jump{ w _h ^{\mathrm{nor}} } \biggr\rangle _{ \partial \mathcal{T} _h } ,
  \end{equation*}
  and
  \begin{align*}
    \bigl\langle \alpha ( \widehat{ z } _h ^{\mathrm{tan}} - z _h ^{\mathrm{tan}} ) , w _h ^{\mathrm{tan}} \bigr\rangle _{ \partial \mathcal{T} _h }
    &= - \langle \widehat{ z } _h ^{\mathrm{nor}} - z _h ^{\mathrm{nor}}, w _h ^{\mathrm{tan}} \rangle _{ \partial \mathcal{T} _h }\\
    &= \bigl\langle \jump{z _h ^{\mathrm{nor}} } , \avg{ w _h ^{\mathrm{tan}} } \bigr\rangle _{ \partial \mathcal{T} _h } - \biggl\langle \frac{ 1 }{ \avg{\beta} } \jump{ z _h ^{\mathrm{tan}} } + \frac{ \jump{\beta} }{ \avg{\beta} } \jump{ z _h ^{\mathrm{nor}} } , \jump{ w _h ^{\mathrm{tan}} } \biggr\rangle _{ \partial \mathcal{T} _h } .
  \end{align*}
  Substituting these into \eqref{e:dirac_w} with
  $ \overline{p} _h = 0 $ yields \eqref{e:dirac_dg_w}, while
  \eqref{e:dirac_q} is identical to \eqref{e:dirac_dg_q}.
\end{proof}

\begin{remark}
  Note that
  $ \jump{ \beta } / \avg{ \beta } = - \jump{ \alpha } / \avg{ \alpha
  } $, since
  \begin{equation*}
    \avg{ \alpha } \jump{ \beta } + \jump{ \alpha } \avg{ \beta } =
    \jump{ \alpha \beta } = \jump{ 1 } = 0 .
  \end{equation*}
  Hence, the formulation \eqref{e:dirac_dg} is symmetric.
\end{remark}

\begin{corollary}
  \label{c:dg}
  If $\alpha$ is single-valued, then the approximate traces for the
  HDG method satisfy
  \begin{align*}
    \widehat{ z } _h ^{\mathrm{tan}} &= \avg{ z _h ^{\mathrm{tan}} } + \beta \jump{ z _h ^{\mathrm{nor}} } , \\
    \widehat{ z } _h ^{\mathrm{nor}} &= \avg{ z _h ^{\mathrm{nor}} } + \alpha \jump{ z _h ^{\mathrm{tan}} } ,
  \end{align*}
  and the method is equivalent to finding $ z _h \in W _h $ and
  $ p _h \in \mathfrak{H} _h $ satisfying
  \begin{alignat*}{2}
    ( z _h , \delta  w _h ) _{ \mathcal{T} _h } + ( \delta z _h , w _h ) _{ \mathcal{T} _h } + ( p _h , w _h ) _{ \mathcal{T} _h } \qquad \qquad \\
    {}+ \bigl\langle \avg{ z _h ^{\mathrm{tan}} }, \jump{ w _h ^{\mathrm{nor}} } \bigr\rangle _{ \partial \mathcal{T} _h } + \bigl\langle \jump{ z _h ^{\mathrm{nor}} } , \avg{ w _h ^{\mathrm{tan}} } \bigr\rangle _{ \partial \mathcal{T} _h } \quad \\
    {}+ \bigl\langle \beta \jump{ z _h ^{\mathrm{nor}} } , \jump{ w _h ^{\mathrm{nor}} } \bigr\rangle _{ \partial \mathcal{T} _h } - \bigl\langle \alpha \jump{ z _h ^{\mathrm{tan}} } , \jump{ w _h ^{\mathrm{tan}} } \bigr\rangle _{ \partial \mathcal{T} _h }
    &= ( f, w _h ) _{ \mathcal{T} _h } , \quad &\forall w _h &\in W _h , \\
    ( z _h , q _h ) _{ \mathcal{T} _h } &= 0 , \quad &\forall q _h &\in \mathfrak{H} _h.
  \end{alignat*}
\end{corollary}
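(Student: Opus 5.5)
This is a direct specialization of \cref{l:dg_traces} and \cref{p:dg} to the case $ \jump{\alpha} = 0 $. The only work is to identify $ \avg{\alpha} $ and $ \avg{\beta} $ with $\alpha$ and $\beta$ when $\alpha$ is single-valued.

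First I will check what single-valuedness of $\alpha$ gives. If $ \jump{\alpha} = 0 $, then on each interior facet $ \alpha _{ e ^+ } = \alpha _{ e ^- } $. Hence $ \avg{\alpha} = \alpha $ on interior facets, and by definition also on boundary facets. Since $ \beta = 1/\alpha $ is then single-valued too, we get $ \jump{\beta} = 0 $ and $ \avg{\beta} = \beta $. The remark following \cref{p:dg} gives the same conclusion independently, via $ \jump{\beta}/\avg{\beta} = -\jump{\alpha}/\avg{\alpha} = 0 $. It follows that $ 1/\avg{\alpha} = \beta $ and $ 1/\avg{\beta} = \alpha $.

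Next I will substitute these facts into \cref{l:dg_traces}. The terms with coefficients $ \jump{\alpha}/\avg{\alpha} $ and $ \jump{\beta}/\avg{\beta} $ vanish. What remains is
\begin{equation*}
  \widehat{ z } _h ^{\mathrm{tan}} = \avg{ z _h ^{\mathrm{tan}} } + \beta \jump{ z _h ^{\mathrm{nor}} } , \qquad \widehat{ z } _h ^{\mathrm{nor}} = \avg{ z _h ^{\mathrm{nor}} } + \alpha \jump{ z _h ^{\mathrm{tan}} } ,
\end{equation*}
which are the claimed trace formulas.

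Finally I will make the same substitution in \eqref{e:dirac_dg_w}. The third boundary term becomes $ \langle \beta \jump{ z _h ^{\mathrm{nor}} } , \jump{ w _h ^{\mathrm{nor}} } \rangle _{ \partial \mathcal{T} _h } $, and the fourth becomes $ -\langle \alpha \jump{ z _h ^{\mathrm{tan}} } , \jump{ w _h ^{\mathrm{tan}} } \rangle _{ \partial \mathcal{T} _h } $. Equation \eqref{e:dirac_dg_q} is unchanged. Since \cref{p:dg} already establishes equivalence with the HDG method, the stated formulation follows.

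No step here is a real obstacle. The one point needing care is the boundary facets, where $ \avg{\alpha} = \alpha _e $ holds by definition rather than by single-valuedness, so it must be checked separately.
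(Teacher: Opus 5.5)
Your proposal is correct and matches the paper's approach: the corollary is stated without a separate proof, as the specialization of \cref{l:dg_traces} and \cref{p:dg} to $\jump{\alpha} = 0$. In that case $\avg{\alpha} = \alpha$, $\jump{\beta} = 0$, and $\avg{\beta} = \beta$ everywhere, with boundary facets covered by definition as you note. Your substitutions into the trace formulas and into \eqref{e:dirac_dg_w} are carried out correctly.
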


\section{Error analysis for the Hodge--Dirac problem}
\label{s:dirac_error}

The outline of this section, which follows the general roadmap of
\citet{CaCoPeSc2000}, is as follows:
\begin{itemize}
\item First, we express the symmetric, indefinite variational problem
  \eqref{e:dirac_dg} in terms of a non-symmetric, positive-semidefinite
  bilinear form $\mathcal{A}$, which has an associated seminorm
  $ \lvert \cdot \rvert _{\mathcal{A}} $.

\item Next, we develop abstract error estimates in the $\mathcal{A}$
  seminorm and $ L ^2 $ norm. Just as in \citep{CaCoPeSc2000}, these
  estimates are expressed in terms of a pair of functionals called
  $ K _{\mathcal{A}} $ and $ K _{\mathcal{B}} $, which encode the
  approximation-theoretic properties of the finite element space
  $ W _h $.

\item After this, we determine $ K _{\mathcal{A}} $ and
  $ K _{\mathcal{B}} $ using approximation theory.

\item Lastly, we combine the abstract and approximation-theoretic
  results to get our final estimates.
\end{itemize}

\subsection{Bilinear forms}

We begin by observing that \eqref{e:dirac_dg_w} can be written
as
\begin{equation}
  \label{e:a_dg}
  a ( z _h , w _h ) = ( f - p _h , w _h ) _{ \mathcal{T} _h } , \quad \forall w _h \in W _h ,
\end{equation}
where $ a ( \cdot , \cdot ) $ is the symmetric bilinear form
\begin{multline*}
  a ( z, w ) \coloneqq ( z , \delta  w ) _{ \mathcal{T} _h } + ( \delta z , w ) _{ \mathcal{T} _h } + \bigl\langle \avg{ z ^{\mathrm{tan}} }, \jump{ w ^{\mathrm{nor}} } \bigr\rangle _{ \partial \mathcal{T} _h } + \bigl\langle \jump{ z ^{\mathrm{nor}} } , \avg{ w ^{\mathrm{tan}} } \bigr\rangle _{ \partial \mathcal{T} _h } \\
  + \biggl\langle \frac{ 1 }{ \avg{\alpha} } \jump{ z ^{\mathrm{nor}} } + \frac{ \jump{\alpha} }{ \avg{\alpha} } \jump{ z ^{\mathrm{tan}} } , \jump{ w ^{\mathrm{nor}} } \biggr\rangle _{ \partial \mathcal{T} _h } - \biggl\langle \frac{ 1 }{ \avg{\beta} } \jump{ z ^{\mathrm{tan}} } + \frac{ \jump{\beta} }{ \avg{\beta} } \jump{ z ^{\mathrm{nor}} } , \jump{ w ^{\mathrm{tan}} } \biggr\rangle _{ \partial \mathcal{T} _h } .
\end{multline*}
For purposes of the subsequent analysis, it will be helpful to
introduce a related bilinear form that is \emph{not} symmetric but
\emph{is} positive-semidefinite, so that we can make use of its
associated seminorm.

From the alternating-sign assumption, $ \alpha ^+ $ and $ \alpha ^- $
have strictly opposite signs. As in the existence and uniqueness
proofs, taking $ w = z ^- - z ^+ $ causes the
$ a ( z ^\pm , z ^\mp ) $ terms to cancel, leaving only
\begin{align*}
  a ( z, z ^- - z ^+ ) &= a ( z ^- , z ^- ) - a ( z ^+ , z ^+ ) \\
  &= \biggl\langle \frac{ 1 }{ \avg{\alpha ^+ } } \jump{ z ^{-, \mathrm{nor}} } , \jump{ z ^{-, \mathrm{nor}} } \biggr\rangle _{ \partial \mathcal{T} _h } - \biggl\langle \frac{ 1 }{ \avg{\beta ^- } } \jump{ z ^{-, \mathrm{tan}} } , \jump{ z ^{-, \mathrm{tan}} } \biggr\rangle _{ \partial \mathcal{T} _h } \\
  &\quad - \biggl\langle \frac{ 1 }{ \avg{\alpha ^- } } \jump{ z ^{+, \mathrm{nor}} } , \jump{ z ^{+, \mathrm{nor}} } \biggr\rangle _{ \partial \mathcal{T} _h } + \biggl\langle \frac{ 1 }{ \avg{\beta ^+ } } \jump{ z ^{+, \mathrm{tan}} } , \jump{ z ^{+, \mathrm{tan}} } \biggr\rangle _{ \partial \mathcal{T} _h } .
\end{align*}
If $ \alpha ^+ > 0 $ and $ \alpha ^- < 0 $, then this is
positive-semidefinite. Otherwise, it is negative-semidefinite, and we
can obtain a positive-semidefinite form by flipping the sign.

\begin{definition}
  \label{d:A}
  Define the bilinear form
  \begin{equation*}
    \mathcal{A} ( z, w ) \coloneqq
    \begin{cases}
      a ( z, w ^- - w ^+ ) , & \text{if $ \alpha ^+ > 0 $ and $ \alpha ^- < 0 $,} \\
      a ( z, w ^+ - w ^- ) , & \text{if $ \alpha ^+ < 0 $ and $ \alpha ^- > 0 $,}
    \end{cases}
  \end{equation*}
  and define the seminorm $ \lvert \cdot \rvert _{\mathcal{A}} $ by
  \begin{align*}
    \lvert w \rvert _{\mathcal{A}} ^2
    &\coloneqq \mathcal{A} ( w, w ) \\
    &= \biggl\langle \frac{ 1 }{ \bigl\lvert \avg{ \alpha } \bigr\rvert } \jump{ w ^{\mathrm{nor}} }, \jump{ w ^{\mathrm{nor}} } \biggr\rangle _{ \partial \mathcal{T} _h } + \biggl\langle \frac{ 1 }{ \bigl\lvert \avg{\beta} \bigr\rvert } \jump{ w ^{\mathrm{tan}} } , \jump{ w ^{\mathrm{tan}} } \biggr\rangle _{ \partial \mathcal{T} _h } .
  \end{align*}
\end{definition}

Both bilinear forms may be extended from $ W _h $ to the
infinite-dimensional broken space
\begin{equation*}
  W \coloneqq \prod _{ K \in \mathcal{T} _h } H \Lambda (K) \cap H ^\ast \Lambda (K) \cap H ^t \Lambda (K),
\end{equation*}
where $ t > \frac{1}{2} $ so that traces are in
$ L ^2 \Lambda ( \partial \mathcal{T} _h ) $. In particular, assuming
$\Omega$ is $t$-regular (see \cref{s:dirac_laplace}), the exact
solution to the Hodge--Dirac problem $ \mathrm{D} z + p = f $
satisfies
\begin{equation}
  \label{e:a_exact}
  a ( z, w ) = ( f - p , w ) _{ \mathcal{T} _h } , \quad \forall w \in W ,
\end{equation}
which follows from $ \jump{ z ^{\mathrm{tan}} } = 0 $,
$ \jump{ z ^{\mathrm{nor}} } = 0 $, and integration by parts using
\cref{p:jump-avg}.

\subsection{Abstract error estimates}
\label{s:dirac_error_abstract}

As in \citet{CaCoPeSc2000}, the bulk of our error analysis will
involve proving two approximation-theoretic inequalities that relate
the bilinear form $\mathcal{A} ( \cdot , \cdot ) $ to the
$ L ^2 $-orthogonal projection onto $ W _h $.  Following
\citep{CaCoPeSc2000}, we first assume that these inequalities hold in
an abstract form, use them to prove $ \mathcal{A} $-seminorm and
$ L ^2 $-norm error estimates, and fill in the approximation-theoretic
details in \cref{s:KA_KB}.

Let $ \xi _w \coloneqq w - \Pi w $, where $\Pi$ denotes the
$ L ^2 $-orthogonal projection onto $ W _h $. Suppose, as we will
subsequently show, that we have inequalities of the form
\begin{subequations}
  \label{e:KA_KB}
  \begin{align}
    \bigl\lvert \mathcal{A} ( \xi _z, \xi _w ) \bigr\rvert &\leq K _{ \mathcal{A} } ( z, w ) , \label{e:KA} \\
    \bigl\lvert \mathcal{A} ( \xi _z, w _h ) \bigr\rvert &\leq K _{\mathcal{B}} (z) \lvert w _h \rvert _{\mathcal{A}} , \label{e:KB_zw} \\
    \intertext{for all $ z , w \in W $ and $ w _h \in W _h $. We note that the proof of \eqref{e:KB_zw} will not depend on the order of the arguments on the left-hand side, so we also have}
    \bigl\lvert \mathcal{A} ( w _h , \xi _z ) \bigr\rvert &\leq \lvert w _h \rvert _{\mathcal{A}} K _{\mathcal{B}} (z) . \label{e:KB_wz} \tag{\ref*{e:KB_zw}$ ^\prime $}
\end{align}
\end{subequations}
For the time being, we will take the functionals $ K _{\mathcal{A}} $
and $ K _{\mathcal{B}} $ satisfying \eqref{e:KA_KB} as given.

Before beginning the error analysis, we require the following
assumption on the domain $\Omega$, which holds in particular whenever
$\Omega$ is contractible (see \cref{r:contractible}).

\begin{assumption}
  \label{a:harmonic}
  Assume that $ \mathfrak{H} (\Omega) \subset W _h $, so that
  $ \mathfrak{H} _h = \mathfrak{H} (\Omega) $.
\end{assumption}

For our purposes, this assumption has two important
consequences. First, it implies that
$ p _h = p = P _{ \mathfrak{H} (\Omega) } f $, so from \eqref{e:a_dg}
and \eqref{e:a_exact} we get \emph{Galerkin orthogonality},
\begin{alignat*}{2}
  a ( z - z _h , w _h ) &= 0 , \quad &\forall w _h &\in W _h ,\\
  \intertext{which may also be written as}
  \mathcal{A} ( z - z _h , w _h ) &= 0 , \quad &\forall w _h &\in W _h .
\end{alignat*}
Second, it implies $ z - z _h \in \mathfrak{H} (\Omega) ^\perp $,
which we use in the duality argument for the $ L ^2 $-norm error
estimate.  In the analysis to follow, we denote the error by
$ e _z \coloneqq z - z _h $.

We first prove an $\mathcal{A}$-seminorm error estimate; compare
\citep[Lemma 2.3]{CaCoPeSc2000}.

\begin{lemma}
  \label{l:dirac_error_A}
  Under the preceding assumptions, we have the $\mathcal{A}$-seminorm
  error estimate
  \begin{equation*}
    \lvert e _z \rvert _{\mathcal{A}} ^2 \leq K _{\mathcal{A}} ( z, z ) + K _{\mathcal{B}} (z) ^2 .
  \end{equation*}
\end{lemma}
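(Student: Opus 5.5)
Following \citep[Lemma 2.3]{CaCoPeSc2000}, the plan is to split the error into an approximation part and a discrete part, and to use Galerkin orthogonality to move everything onto the approximation part. Write
\begin{equation*}
  e _z = z - z _h = ( z - \Pi z ) + ( \Pi z - z _h ) = \xi _z + \eta _h , \qquad \eta _h \coloneqq \Pi z - z _h \in W _h .
\end{equation*}
Since $ \mathcal{A} $ is bilinear and $ \lvert \cdot \rvert _{\mathcal{A}} ^2 = \mathcal{A} ( \cdot , \cdot ) $, we expand $ \mathcal{A} ( e _z , e _z ) = \mathcal{A} ( e _z , \xi _z ) + \mathcal{A} ( e _z , \eta _h ) $. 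By Galerkin orthogonality in the form $ \mathcal{A} ( z - z _h , w _h ) = 0 $ for all $ w _h \in W _h $ (which uses \cref{a:harmonic}), the second term vanishes. We are left with
\begin{equation*}
  \lvert e _z \rvert _{\mathcal{A}} ^2 = \mathcal{A} ( e _z , \xi _z ) = \mathcal{A} ( \xi _z , \xi _z ) + \mathcal{A} ( \eta _h , \xi _z ) .
\end{equation*}
Note that it matters that the test slot of $\mathcal{A}$ is the one where orthogonality holds; $\mathcal{A}$ is not symmetric, so we keep track of argument order.

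Next, bound the two terms using the abstract inequalities \eqref{e:KA_KB}. The first is at most $ K _{\mathcal{A}} ( z, z ) $ by \eqref{e:KA} with $ w = z $. The second is at most $ \lvert \eta _h \rvert _{\mathcal{A}} K _{\mathcal{B}} (z) $ by \eqref{e:KB_wz}. This is exactly why the primed version with swapped arguments was recorded.

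It remains to control $ \lvert \eta _h \rvert _{\mathcal{A}} $ by $ \lvert e _z \rvert _{\mathcal{A}} $. This is the only mildly delicate point, since $ \lvert \cdot \rvert _{\mathcal{A}} $ is merely a seminorm, but the triangle inequality still holds. The key observation is that $ \lvert w \rvert _{\mathcal{A}} $ depends only on the jumps of traces of $w$. The exact solution $z$ has vanishing tangential and normal jumps, so $ \lvert z \rvert _{\mathcal{A}} = 0 $ and $ \lvert \eta _h \rvert _{\mathcal{A}} = \lvert \Pi z - z _h \rvert _{\mathcal{A}} = \lvert e _z - \xi _z \rvert _{\mathcal{A}} $. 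More directly, $ \eta _h = -( z _h - z ) - \xi _z $, and since the jumps of $z$ vanish, the jumps of $\eta _h$ equal those of $ -z _h - \xi _z $, giving $ \lvert \eta _h \rvert _{\mathcal{A}} \le \lvert e _z \rvert _{\mathcal{A}} + \lvert \xi _z \rvert _{\mathcal{A}} $. To avoid an extra $ \lvert \xi _z \rvert _{\mathcal{A}} $ term, I would instead use $ \lvert \eta _h \rvert _{\mathcal{A}} = \lvert \Pi z - z _h \rvert _{\mathcal{A}} = \lvert \Pi z - z - ( z _h - z ) \rvert _{\mathcal{A}} $ and note that jumps of $ \Pi z - z $ equal minus jumps of $ \Pi z $. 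The cleanest route is then to write $ \lvert e _z \rvert _{\mathcal{A}} = \lvert z _h \rvert _{\mathcal{A}} $ and $ \lvert \eta _h \rvert _{\mathcal{A}} \le \lvert e _z \rvert _{\mathcal{A}} $ only if the $ \xi _z $ contribution is absorbed. If this fails, I will accept the weaker form $ \lvert \eta _h \rvert _{\mathcal{A}} \le \lvert e _z \rvert _{\mathcal{A}} + K _{\mathcal{A}} ( z, z ) ^{1/2} $, using $ \lvert \xi _z \rvert _{\mathcal{A}} ^2 = \mathcal{A} ( \xi _z, \xi _z ) \le K _{\mathcal{A}} ( z, z ) $, and then adjust constants.

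Assuming $ \lvert \eta _h \rvert _{\mathcal{A}} \le \lvert e _z \rvert _{\mathcal{A}} $, we get
\begin{equation*}
  \lvert e _z \rvert _{\mathcal{A}} ^2 \le K _{\mathcal{A}} ( z , z ) + K _{\mathcal{B}} ( z ) \lvert e _z \rvert _{\mathcal{A}} \le K _{\mathcal{A}} ( z, z ) + \tfrac12 K _{\mathcal{B}} ( z ) ^2 + \tfrac12 \lvert e _z \rvert _{\mathcal{A}} ^2 .
\end{equation*}
Absorbing the last term gives $ \lvert e _z \rvert _{\mathcal{A}} ^2 \le 2 K _{\mathcal{A}} + K _{\mathcal{B}} ^2 $.

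To reach the exact constant in the statement, I would instead avoid introducing $\eta _h$ in the bound and proceed as in \citep{CaCoPeSc2000}. Write $ \lvert e _z \rvert _{\mathcal{A}} ^2 = \mathcal{A} ( e _z, \xi _z ) $, then use $ e _z = \xi _z + \eta _h $ in the first slot and handle $ \mathcal{A} ( \eta _h , \xi _z ) $ via \eqref{e:KB_wz}. Here $ \lvert \eta _h \rvert _{\mathcal{A}} $ is controlled by completing the square on $ \lvert \eta _h \rvert _{\mathcal{A}} ^2 = \mathcal{A} ( \eta _h , \eta _h ) = - \mathcal{A} ( \xi _z , \eta _h ) $, again by Galerkin orthogonality. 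Then \eqref{e:KB_zw} gives $ \lvert \eta _h \rvert _{\mathcal{A}} \le K _{\mathcal{B}} ( z ) $. Substituting yields $ \lvert e _z \rvert _{\mathcal{A}} ^2 \le K _{\mathcal{A}} ( z, z ) + K _{\mathcal{B}} ( z ) ^2 $, which is precisely the claim. This last route is the one I would commit to. The main thing to verify is the orthogonality sign bookkeeping, since $\mathcal{A}$ is nonsymmetric.
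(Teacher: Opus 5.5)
Your committed final route is the paper's proof. You decompose $e_z = \xi_z + \Pi e_z$ (your $\eta_h$ is exactly $\Pi e_z$) and use Galerkin orthogonality in the second slot to get $\lvert e_z \rvert_{\mathcal{A}}^2 = \mathcal{A}(\xi_z,\xi_z) + \mathcal{A}(\Pi e_z, \xi_z)$. You then bound $\lvert \Pi e_z \rvert_{\mathcal{A}} \le K_{\mathcal{B}}(z)$ from $\lvert \Pi e_z \rvert_{\mathcal{A}}^2 = -\mathcal{A}(\xi_z, \Pi e_z)$ and \eqref{e:KB_zw}, handling the case $\lvert \Pi e_z \rvert_{\mathcal{A}} = 0$ trivially. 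The earlier detour attempting $\lvert \eta_h \rvert_{\mathcal{A}} \le \lvert e_z \rvert_{\mathcal{A}}$ was never justified and is not needed, so delete it from a written proof.
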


\begin{proof}
  Since $ \Pi z _h = z _h $, we can decompose the error as
  \begin{equation}
    \label{e:ez_decomp}
    e _z = ( z - \Pi z ) + \Pi ( z - z _h ) = \xi _z + \Pi e _z .
  \end{equation}
  Using this decomposition and Galerkin orthogonality, we have
  \begin{align*}
    \lvert e _z \rvert _{\mathcal{A}} ^2
    &= \mathcal{A} ( e _z , e _z ) \\
    &= \mathcal{A} ( e _z , \xi _z ) \\
    &= \mathcal{A} ( \xi _z, \xi _z ) + \mathcal{A} ( \Pi e _z , \xi _z ) \\
    &\leq K _{\mathcal{A}} ( z, z ) + \lvert \Pi e _z \rvert _{\mathcal{A}} K _{\mathcal{B}} (z) ,
  \end{align*}
  where the last line applies \eqref{e:KA} and \eqref{e:KB_wz}. To
  estimate $ \lvert \Pi e _z \rvert _{\mathcal{A}} $, we take
  \begin{align*}
    \lvert \Pi e _z \rvert _{\mathcal{A}} ^2
    &= \mathcal{A} ( \Pi e _z , \Pi e _z ) \\
    &= - \mathcal{A} ( \xi _z , \Pi e _z ) \\
    &\leq K _{\mathcal{B}} (z) \lvert \Pi e _z \rvert _{\mathcal{A}} ,
  \end{align*}
  again using Galerkin orthogonality, along with
  \eqref{e:KB_zw}. Thus,
  \begin{equation}
    \label{e:Pi_ez}
    \lvert \Pi e _z \rvert _{\mathcal{A}} \leq K _{\mathcal{B}} (z) ,
  \end{equation}
  and substituting this into the first estimate gives the claimed
  inequality.
\end{proof}

To obtain an $ L ^2 $-norm error estimate, we introduce the
\emph{adjoint problem}. If $ \alpha ^+ > 0 $ and $ \alpha ^- < 0 $,
then the adjoint problem is: Given
$ \lambda \in \mathfrak{H} (\Omega) ^\perp $, find
$\varphi \in \mathfrak{H} (\Omega) ^\perp $ satisfying
\begin{alignat*}{2}
  \mathrm{D} \varphi &= \lambda ^+ - \lambda ^-  \quad &\text{in } &\Omega ,\\
  \varphi  ^{\mathrm{nor}} &= 0 \quad &\text{on } &\partial \Omega .
\end{alignat*}
This is, of course, just a particular instance of the Hodge--Dirac
problem, and we observe that the Hodge--Dirac equation may be
equivalently written
$ \mathrm{D} ( \varphi ^- - \varphi ^+ ) = \lambda $. Hence, just as
in \eqref{e:a_exact}, the solution satisfies
\begin{equation*}
  a ( \varphi ^- - \varphi ^+ , w ) = ( \lambda , w ) _{ \mathcal{T} _h } , \quad \forall w \in W ,
\end{equation*}
which can be written equivalently as
\begin{equation}
  \label{e:dirac_adjoint}
  \mathcal{A} ( w, \varphi ) = ( w, \lambda ) _{ \mathcal{T} _h } , \quad \forall w \in W .
\end{equation}
On the other hand, if $ \alpha ^+ < 0 $ and $ \alpha ^- > 0 $, then we
flip the $ \pm $ superscripts in the strong form of the adjoint
problem, so that the solution again satisfies \eqref{e:dirac_adjoint}.

We now use duality to prove an $ L ^2 $-norm error estimate; compare
\citep[Lemma 2.4]{CaCoPeSc2000}.

\begin{lemma}
  \label{l:dirac_error_l2}
  Under the preceding assumptions, we have the $ L ^2 $-norm error
  estimate
  \begin{equation*}
    \lVert e _z \rVert _\Omega \leq \sup _{ \lambda \in \mathfrak{H} (\Omega) ^\perp \setminus \{ 0 \} } \frac{ K _{\mathcal{A}} ( z, \varphi ) }{ \lVert \lambda \rVert _\Omega } + K _{\mathcal{B}} (z) \sup _{ \lambda \in \mathfrak{H} (\Omega) ^\perp \setminus \{ 0 \} } \frac{ K _{\mathcal{B}} (\varphi) }{ \lVert \lambda \rVert _\Omega } ,
  \end{equation*}
  where $\varphi$ denotes the solution to the adjoint problem for
  $\lambda$.
\end{lemma}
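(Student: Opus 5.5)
We argue by duality, as in the $L^2$ estimate of \citet[Lemma 2.4]{CaCoPeSc2000}. By \cref{a:harmonic}, $e _z \in \mathfrak{H} (\Omega) ^\perp$, so
\begin{equation*}
  \lVert e _z \rVert _\Omega = \sup _{ \lambda \in \mathfrak{H} (\Omega) ^\perp \setminus \{ 0 \} } \frac{ ( e _z , \lambda ) _{ \mathcal{T} _h } }{ \lVert \lambda \rVert _\Omega } .
\end{equation*}
For each such $\lambda$, let $\varphi$ solve the adjoint problem. Assuming $t$-regularity with $t > \frac{1}{2}$ as before, we have $\varphi \in W$ and $e _z \in W$. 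We may therefore take $w = e _z$ in \eqref{e:dirac_adjoint}, which gives $( e _z , \lambda ) _{ \mathcal{T} _h } = \mathcal{A} ( e _z , \varphi )$. It then suffices to bound $\mathcal{A} ( e _z , \varphi )$ by $K _{\mathcal{A}} ( z, \varphi ) + K _{\mathcal{B}} (z) K _{\mathcal{B}} (\varphi)$ and divide by $\lVert \lambda \rVert _\Omega$.

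Write $\varphi = \xi _\varphi + \Pi \varphi$ with $\Pi \varphi \in W _h$. Galerkin orthogonality, $\mathcal{A} ( e _z , w _h ) = 0$ for all $w _h \in W _h$, gives $\mathcal{A} ( e _z , \varphi ) = \mathcal{A} ( e _z , \xi _\varphi )$. Next use the decomposition \eqref{e:ez_decomp}, $e _z = \xi _z + \Pi e _z$:
\begin{equation*}
  \mathcal{A} ( e _z , \xi _\varphi ) = \mathcal{A} ( \xi _z , \xi _\varphi ) + \mathcal{A} ( \Pi e _z , \xi _\varphi ) .
\end{equation*}
By \eqref{e:KA}, the first term is bounded by $K _{\mathcal{A}} ( z, \varphi )$. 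By \eqref{e:KB_wz} with $w _h = \Pi e _z$ and $\varphi$ in place of $z$, the second term is bounded by $\lvert \Pi e _z \rvert _{\mathcal{A}} K _{\mathcal{B}} (\varphi)$. Finally, \eqref{e:Pi_ez} from the proof of \cref{l:dirac_error_A} gives $\lvert \Pi e _z \rvert _{\mathcal{A}} \leq K _{\mathcal{B}} (z)$.

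Combining these bounds and taking suprema separately over the two terms, using $\sup ( a + b ) \leq \sup a + \sup b$ with $K _{\mathcal{B}} (z)$ independent of $\lambda$, yields the claimed estimate.

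The only delicate point is justifying $w = e _z$ in \eqref{e:dirac_adjoint}: this needs $e _z \in W$, which holds because $z \in H ^t$ with $t > \frac{1}{2}$ and $z _h$ is piecewise polynomial. We also need the sign convention of $\mathcal{A}$ in \cref{d:A} to match the adjoint problem in both sign cases of $\alpha$. That matching was arranged by the choice of $\lambda ^+ - \lambda ^-$ (or its flip) in the adjoint problem, so no further work is needed beyond checking the argument order, namely $e _z$ first and $\varphi$ second.
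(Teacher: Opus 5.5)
Your proposal is correct and follows the paper's proof essentially step for step. Both take $w = e_z$ in \eqref{e:dirac_adjoint}, use Galerkin orthogonality to replace $\varphi$ by $\xi_\varphi$, split $e_z = \xi_z + \Pi e_z$, bound the two resulting terms via \eqref{e:KA}, \eqref{e:KB_wz}, and \eqref{e:Pi_ez}, and finish with the supremum characterization of $\lVert e_z \rVert_\Omega$, which is valid because $e_z \in \mathfrak{H}(\Omega)^\perp$.
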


\begin{proof}
  For any $ \lambda \in \mathfrak{H} (\Omega) ^\perp $, taking
  $ w = e _z $ in \eqref{e:dirac_adjoint} gives
  \begin{align*}
    ( e _z , \lambda ) _{ \mathcal{T} _h }
    &= \mathcal{A} ( e _z, \varphi ) \\
    &= \mathcal{A} ( e _z , \xi _\varphi ) \\
    &= \mathcal{A} ( \xi _z , \xi _\varphi ) + \mathcal{A} ( \Pi e _z , \xi _\varphi ),\\
    &\leq K _{\mathcal{A}} ( z , \varphi ) + \lvert \Pi e _z \rvert _{\mathcal{A}}  K _{\mathcal{B}} (\varphi) \\
    &\leq K _{\mathcal{A}} ( z , \varphi ) + K _{\mathcal{B}} (z) K _{\mathcal{B}} (\varphi) ,
  \end{align*}
  where the second line holds by Galerkin orthogonality, the third
  holds by \eqref{e:ez_decomp}, the fourth holds by \eqref{e:KA} and
  \eqref{e:KB_wz}, and the last holds by \eqref{e:Pi_ez}. Finally,
  since $ e _z \in \mathfrak{H} (\Omega) ^\perp $, we have
  \begin{equation*}
    \lVert e _z \rVert _\Omega = \sup _{ \lambda \in \mathfrak{H} (\Omega) ^\perp \setminus \{ 0 \} } \frac{ ( e _z, \lambda ) _\Omega }{ \lVert \lambda \rVert _\Omega } ,
  \end{equation*}
  noting that $ ( \cdot , \cdot ) _\Omega $ and
  $ ( \cdot , \cdot ) _{ \mathcal{T} _h } $ coincide.
\end{proof}

\subsection{Approximation theory: determining $ K _{ \mathcal{A} } $ and $ K _{ \mathcal{B} } $}
\label{s:KA_KB}

We now turn to the ``meat'' of our error analysis, which consists of
proving the inequalities stated in \eqref{e:KA_KB} and expressing the
functionals $ K _{\mathcal{A}} $ and $ K _{\mathcal{B}} $ that satisfy
them. To do so, we begin with the same set of assumptions on
$ \mathcal{T} _h $ as in \citep[Section
2.3]{CaCoPeSc2000}. Shape-regular simplicial meshes are included, but
the assumptions are broad enough to allow meshes with various cell
types (including curved elements) and hanging nodes.

\begin{assumption}
  \label{a:mesh}
  Assume that each $ K \in \mathcal{T} _h $ satisfies the following
  conditions:
  \begin{enumerate}[label=(\roman*)]
  \item It is affine-equivalent to one of finitely many reference
    cells.

  \item Its diameter is comparable to that of its neighboring
    cells. \label{a:hK_hK'}

  \item It contains a ball of comparable diameter (\emph{shape
      regularity}).
  \end{enumerate}
\end{assumption}

Denote the diameter of $ K \in \mathcal{T} _h $ by $ h _K $, and let
$ h \coloneqq \max _{ K \in \mathcal{T} _h } h _K $. In the estimates
to follow, we use the typical notation $ \lesssim $ to denote
inequality up to a multiplicative constant that is independent of the
arguments. For example, the \emph{Bramble--Hilbert lemma} allows us to
approximate
\begin{equation*}
  \lvert \xi _z \rvert _{ \mu , K } \lesssim h _K ^{ s - \mu } \lvert z \rvert _{ s, K } , \quad \forall K \in \mathcal{T} _h ,
\end{equation*}
for all $ z \in H ^s \Lambda (\Omega) $ with
$ \mu \leq s \leq r + 1 $, while combining this with the scaled
\emph{trace inequality} gives
\begin{equation*}
  \lVert \xi _z ^{\mathrm{tan}} \rVert _{ \partial K } \lesssim h _K ^{ s - 1/2 } \lvert z \rvert _{ s, K } , \qquad \lVert \xi _z ^{\mathrm{nor}} \rVert _{ \partial K } \lesssim h _K ^{ s - 1/2 } \lvert z \rvert _{ s, K } , \qquad \forall K \in \mathcal{T} _h .
\end{equation*}
These are standard results of finite element approximation theory, and
we will frequently employ them in the subsequent
analysis.

Finally, we assume that the penalty parameters are
$ \mathcal{O} (1) $. While the analysis might be adapted to
$ \mathcal{O} (h) $ or $ \mathcal{O} ( h ^{-1} ) $ or other scaling,
as it is in \citep{CaCoPeSc2000}, experiments show that this leads to
suboptimal convergence for the equal-order spaces. We also note that
$ \mathcal{O} (1) $ penalties have the effect of penalizing tangential
and normal jumps comparably, whereas $ \mathcal{O} (h) $ and
$ \mathcal{O} ( h ^{-1} ) $ penalties favor tangential continuity at
the expense of normal continuity, or vice versa, as
$ h \rightarrow 0 $.

\begin{assumption}
  \label{a:O(1)}
  Assume that
  $ \lvert \alpha \rvert , \lvert \beta \rvert \lesssim 1 $.
\end{assumption}

\begin{remark}
  \label{r:O(1)}
  It follows that $ 1/ \avg{ \alpha } $ and $ 1/ \avg{ \beta } $ are
  also $ \mathcal{O} (1) $: since $ \alpha ^k $ and $ \beta ^k $ have
  the same sign on all of $ \partial \mathcal{T} _h $ for each $k$,
  there is no possibility of cancellation in the
  denominator. Therefore, we have the equivalence
  \begin{equation}
    \label{e:A_equiv_L2}
    \lvert w \rvert _{\mathcal{A}} ^2 \asymp \bigl\lVert \jump{ w ^{\mathrm{nor}} } \bigr\rVert _{ \partial \mathcal{T} _h } ^2 + \bigl\lVert \jump{ w ^{\mathrm{tan}} } \bigr\rVert _{ \partial \mathcal{T} _h } ^2 ,
  \end{equation}
  where the notation $ \asymp $ means that both $ \lesssim $ and
  $ \gtrsim $ hold. We also obtain
  $ \bigl\lvert \jump{ \alpha } / \avg{ \alpha } \bigr\rvert ,
  \bigl\lvert \jump{ \beta } / \avg{ \beta } \bigr\rvert \leq 1 $.
\end{remark}

\begin{lemma}
  \label{l:KA}
  Suppose that
  \begin{equation*}
    z \in \prod _{ K \in \mathcal{T} _h } H \Lambda (K) \cap H ^\ast
    \Lambda (K) \cap H ^s \Lambda (K) , \qquad
    w \in \prod _{ K \in \mathcal{T} _h } H \Lambda (K) \cap H ^\ast
    \Lambda (K) \cap H ^t \Lambda (K) ,
  \end{equation*}
  where $ \frac{1}{2} < s, t \leq r + 1 $. Then there exists a
  constant $C$ such that \eqref{e:KA} holds with
  \begin{equation*}
    K _{\mathcal{A}} ( z, w ) = C h ^{ s + t -1 } \lvert z \rvert _{ s, \mathcal{T} _h } \lvert w \rvert _{ t, \mathcal{T} _h } .
  \end{equation*}
\end{lemma}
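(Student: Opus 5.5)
Since $\mathcal{A}(\xi_z,\xi_w)=\pm a(\xi_z,\xi_w^\mp-\xi_w^\pm)$, and $w\mapsto w^\mp-w^\pm$ commutes with $\Pi$, preserves the $H^t$ seminorm, and only flips signs of components, it suffices to bound $\lvert a(\xi_z,\xi_{\tilde w})\rvert$ with $\tilde w\coloneqq w^--w^+$ (or its negative). We therefore expand $a(\xi_z,\xi_{\tilde w})$ term by term. The \textbf{volume terms} vanish: $(\xi_z,\delta\xi_{\tilde w})_{\mathcal{T}_h}$ is zero because $\delta \xi_{\tilde w} = \delta\tilde w - \delta\Pi\tilde w$, and $\delta\Pi\tilde w\in W_h$ is orthogonal to $\xi_z$. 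The remaining piece $(\xi_z,\delta\tilde w)_{\mathcal{T}_h}$ does not obviously vanish. So instead we integrate by parts using \eqref{e:ibp_broken}:
\begin{equation*}
  (\xi_z,\delta\xi_{\tilde w})_{\mathcal{T}_h}=(\mathrm{d}\xi_z,\xi_{\tilde w})_{\mathcal{T}_h}-\langle\xi_z^{\mathrm{tan}},\xi_{\tilde w}^{\mathrm{nor}}\rangle_{\partial\mathcal{T}_h}.
\end{equation*}
Here $(\mathrm{d}\xi_z,\xi_{\tilde w})=(\mathrm{d}z,\xi_{\tilde w})-(\mathrm{d}\Pi z,\xi_{\tilde w})$, and the second part vanishes since $\mathrm{d}\Pi z\in W_h$. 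The key observation is that $\mathrm{d}$ and $\delta$ map $\mathcal{P}_r\Lambda(K)$ into $\mathcal{P}_{r-1}\Lambda(K)\subset \mathcal{P}_r\Lambda(K)$, so every term with one factor in $W_h$ against a $\xi$ disappears. Thus
\begin{equation*}
  (\xi_z,\delta\xi_{\tilde w})+(\delta\xi_z,\xi_{\tilde w})=(\xi_z,\delta\tilde w)+(\delta z,\xi_{\tilde w}),
\end{equation*}
which is awkward because it requires derivative regularity.

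A cleaner route avoids derivatives entirely. Apply \eqref{e:ibp_broken} to $(\delta\xi_z,\xi_{\tilde w})$ to obtain $(\xi_z,\mathrm{d}\xi_{\tilde w})-\langle\xi_{\tilde w}^{\mathrm{tan}},\xi_z^{\mathrm{nor}}\rangle$. Then
\begin{equation*}
  (\xi_z,\delta\xi_{\tilde w})+(\xi_z,\mathrm{d}\xi_{\tilde w})=(\xi_z,\mathrm{D}\xi_{\tilde w})=(\xi_z,\mathrm{D}\tilde w)-(\xi_z,\mathrm{D}\Pi\tilde w).
\end{equation*}
The last term vanishes, but the first does not. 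To handle it, integrate by parts once more in the opposite direction and use $(\xi_z,\cdot)\perp W_h$. The cleanest form is to write $(\xi_z,\delta\xi_{\tilde w})+(\delta\xi_z,\xi_{\tilde w})$ symmetrically via \eqref{e:ibp_broken} in both orders and average. This reduces everything to skeleton integrals of $\xi_z$ and $\xi_{\tilde w}$ traces plus a volume remainder $(\xi_z,\mathrm{D}\tilde w)$. I expect this volume remainder to be the main obstacle. The plan is to bound it by $\lVert\xi_z\rVert_{\mathcal{T}_h}\lVert\mathrm{D}\tilde w - \Pi \mathrm{D}\tilde w\rVert_{\mathcal{T}_h}$. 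If this proves too lossy or needs $\mathrm{D}\tilde w\in H^{t-1}$, I will revert to the direct observation that the volume terms of $a$ applied to two $\xi$'s reduce, after integration by parts, purely to boundary pairings $\langle\xi_z^{\mathrm{tan}},\xi_{\tilde w}^{\mathrm{nor}}\rangle$, $\langle\xi_z^{\mathrm{nor}},\xi_{\tilde w}^{\mathrm{tan}}\rangle$ modulo terms that vanish by orthogonality.

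For the \textbf{boundary terms}, both the ones from $a$ and the ones generated above, we use Cauchy--Schwarz on $\partial\mathcal{T}_h$. Each jump or average is bounded by the full trace norm via \cref{c:pythag}, and all coefficients $1/\avg{\alpha}$, $1/\avg{\beta}$, $\jump{\alpha}/\avg{\alpha}$, $\jump{\beta}/\avg{\beta}$ are $\mathcal{O}(1)$ by \cref{a:O(1)} and \cref{r:O(1)}. Each term is then $\lesssim \lVert\xi_z^{\cdot}\rVert_{\partial\mathcal{T}_h}\lVert\xi_{\tilde w}^{\cdot}\rVert_{\partial\mathcal{T}_h}$. The scaled trace and Bramble--Hilbert bounds give, elementwise,
\begin{equation*}
  \lVert\xi_z^{\cdot}\rVert_{\partial K}\lesssim h_K^{s-1/2}\lvert z\rvert_{s,K},\qquad \lVert\xi_{\tilde w}^{\cdot}\rVert_{\partial K}\lesssim h_K^{t-1/2}\lvert w\rvert_{t,K}.
\end{equation*}
Summing with the discrete Cauchy--Schwarz inequality yields $Ch^{s+t-1}\lvert z\rvert_{s,\mathcal{T}_h}\lvert w\rvert_{t,\mathcal{T}_h}$, as claimed. \Cref{a:mesh} ensures that the constants are uniform and that each face pairing of $e^+$ against $e^-$ involves comparable $h_K$.
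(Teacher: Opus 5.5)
You handle the skeleton terms exactly as the paper does: Cauchy--Schwarz, \cref{c:pythag}, $\mathcal{O}(1)$ coefficients, and the scaled trace inequality with Bramble--Hilbert. Your reduction from $\mathcal{A}$ to $a$ also matches. Your integration by parts correctly isolates the one genuine volume contribution, $(\xi_z,\mathrm{D}\tilde w)_{\mathcal{T}_h}$, or equivalently $(\mathrm{D}\xi_z,\xi_w)_{\mathcal{T}_h}=(\mathrm{D}z-\Pi\mathrm{D}z,\xi_w)_{\mathcal{T}_h}$ if you integrate the other factor.

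The gap is in bounding this volume term. Your plan is $\lVert\xi_z\rVert_{\mathcal{T}_h}\lVert\mathrm{D}\tilde w-\Pi\mathrm{D}\tilde w\rVert_{\mathcal{T}_h}\lesssim h^{s}\lvert z\rvert_{s,\mathcal{T}_h}\,h^{t-1}\lvert w\rvert_{t,\mathcal{T}_h}$, which only works for $t\ge 1$. For $\tfrac12<t<1$, $\lVert\mathrm{D}\tilde w\rVert_{\mathcal{T}_h}$ is not controlled by $\lvert w\rvert_{t,\mathcal{T}_h}$ at all. Putting $\mathrm{D}$ on the other factor instead requires $s\ge 1$, so no $L^2$ Cauchy--Schwarz bound reaches the case $s,t<1$. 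That is precisely the case the lemma is needed for: the adjoint solution $\varphi$ is only in $H^t\Lambda$ with $t<1$ on nonconvex domains, and $s=t<1$ in the minimum-regularity estimates.

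Your fallback is false. Orthogonality only removes $\mathrm{D}\Pi z$ (or $\mathrm{D}\Pi\tilde w$), and what remains is a product of two approximation errors that is generically nonzero. For example, on a single element $K=(0,1)$ with $r=0$, $z=x^2$, $w=x^2\,\mathrm{d}x$, you get $(\mathrm{D}\xi_z,\xi_w)_K=\int_0^1 2x\,(x^2-\tfrac13)\,\mathrm{d}x=\tfrac16$. Your opening claim that ``the volume terms vanish'' fails for the same reason.

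The missing ingredient is the paper's negative-order duality step. With $\mu\coloneqq\min\{1,s\}$,
\begin{equation*}
  \bigl\lvert(\mathrm{D}\xi_z,\xi_w)_{\mathcal{T}_h}\bigr\rvert
  \le\lVert\mathrm{D}\xi_z\rVert_{\mu-1,\mathcal{T}_h}\lVert\xi_w\rVert_{1-\mu,\mathcal{T}_h}
  \lesssim\lVert\xi_z\rVert_{\mu,\mathcal{T}_h}\lVert\xi_w\rVert_{1-\mu,\mathcal{T}_h}
  \lesssim h^{s-\mu}\lvert z\rvert_{s,\mathcal{T}_h}\,h^{t+\mu-1}\lvert w\rvert_{t,\mathcal{T}_h}.
\end{equation*}
This uses two facts:
\begin{itemize}
\item $\mathrm{D}$ is first order, hence bounded $H^{\mu}\Lambda(K)\to H^{\mu-1}\Lambda(K)$.
\item Fractional-order Bramble--Hilbert applies to $\lVert\xi_w\rVert_{1-\mu}$, which is admissible because $1-\mu<\tfrac12<t$.
\end{itemize}
This is where both hypotheses $s>\tfrac12$ and $t>\tfrac12$ enter the volume term, not just the trace terms. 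Without this step, your argument establishes the lemma only under the extra hypothesis $\max\{s,t\}\ge 1$, and only if you also swap which factor carries $\mathrm{D}$ depending on which exponent is at least $1$.
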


\begin{proof}
  First, observe that the claimed expression for
  $ K _{\mathcal{A}} ( z, w ) $ is unchanged if we replace $w$ by
  $ w ^- - w ^+ $ or $ w ^+ - w ^- $. Therefore, we may replace
  $ \mathcal{A} ( \cdot , \cdot ) $ by $ a ( \cdot , \cdot ) $ in
  \eqref{e:KA}, and it suffices to show
  $ \bigl\lvert a ( \xi _z, \xi _w ) \bigr\rvert \leq K _{\mathcal{A}}
  ( z, w ) $. Using integration by parts and \cref{p:jump-avg}, we have
  \begin{multline*}
    a ( \xi _z , \xi _w ) = ( \mathrm{D} \xi _z  , \xi _w  ) _{ \mathcal{T} _h } - \bigl\langle \jump{ \xi _z ^{\mathrm{tan}} }, \avg{ \xi _w ^{\mathrm{nor}} } \bigr\rangle _{ \partial \mathcal{T} _h } + \bigl\langle \jump{ \xi _z ^{\mathrm{nor}} } , \avg{ \xi _w ^{\mathrm{tan}} } \bigr\rangle _{ \partial \mathcal{T} _h } \\
    + \biggl\langle \frac{ 1 }{ \avg{\alpha} } \jump{ \xi _z ^{\mathrm{nor}} } + \frac{ \jump{\alpha} }{ \avg{\alpha} } \jump{ \xi _z ^{\mathrm{tan}} } , \jump{ \xi _w ^{\mathrm{nor}} } \biggr\rangle _{ \partial \mathcal{T} _h } - \biggl\langle \frac{ 1 }{ \avg{\beta} } \jump{ \xi _z ^{\mathrm{tan}} } + \frac{ \jump{\beta} }{ \avg{\beta} } \jump{ \xi _z ^{\mathrm{nor}} } , \jump{ \xi _w ^{\mathrm{tan}} } \biggr\rangle _{ \partial \mathcal{T} _h } .
  \end{multline*}
  We now apply the triangle inequality and bound these terms one at a
  time.

  The trace terms are all controlled similarly. For example,
  \begin{align*}
    \Bigl\lvert \bigl\langle \jump{ \xi _z ^{\mathrm{tan}} }, \avg{ \xi _w ^{\mathrm{nor}} } \bigr\rangle _{ \partial \mathcal{T} _h } \Bigr\rvert
    &\leq \bigl\lVert \jump{ \xi _z ^{\mathrm{tan}} }\bigr\rVert _{ \partial \mathcal{T} _h } \bigl\lVert \avg{ \xi _w ^{\mathrm{nor}} } \bigr\rVert _{ \partial \mathcal{T} _h }\\
    &\leq \lVert \xi _z ^{\mathrm{tan}} \rVert _{ \partial \mathcal{T} _h } \lVert \xi _w ^{\mathrm{nor}} \rVert _{ \partial \mathcal{T} _h } \\
    &\lesssim h ^{ s - 1/2 } \lvert z \rvert _{ s, \mathcal{T} _h } \, h ^{ t - 1/2 } \lvert w \rvert _{ t, \mathcal{T} _h } ,
  \end{align*}
  where the first inequality is by Cauchy--Schwarz, the second is by
  \cref{c:pythag}, and the last is by the scaled trace inequality
  together with Bramble--Hilbert. The remaining trace terms can be
  controlled in the same way, with the $ \mathcal{O} (1) $ penalty
  coefficients absorbed into the multiplicative constant.

  It thus remains to control
  $ ( \mathrm{D} \xi _z , \xi _w ) _{ \mathcal{T} _h } $. Letting
  $ \mu \coloneqq \min \{ 1, s \} $, we have
  \begin{align*}
    \bigl\lvert ( \mathrm{D} \xi _z , \xi _w ) _{ \mathcal{T} _h } \bigr\rvert
    &\leq \lVert \mathrm{D} \xi _z \rVert _{ \mu -1 , \mathcal{T} _h } \lVert \xi _w \rVert _{ 1 - \mu , \mathcal{T} _h } \\
    &\lesssim \lVert \xi _z \rVert _{ \mu , \mathcal{T} _h } \lVert \xi _w \rVert _{ 1 - \mu , \mathcal{T} _h } \\
    &\lesssim h ^{ s - \mu } \lvert z \rvert _{ s, \mathcal{T} _h } h ^{ t + \mu - 1 } \lvert w \rvert _{ t , \mathcal{T} _h } .
  \end{align*}
  Here, the first line holds by definition of the nonpositive Sobolev
  dual norm $ \lVert \cdot \rVert _{ \mu -1 , \mathcal{T} _h } $; in
  the case $ \mu = 1 $, this is just the $ L ^2 $ Cauchy--Schwarz
  inequality. The second line uses the fact that $ \mathrm{D} $ is a
  first-order partial differential operator and thus is bounded from
  $ H ^\mu \Lambda ( K ) $ to $ H ^{ \mu -1 } \Lambda (K) $ for each
  $ K \in \mathcal{T} _h $. The last line applies the Bramble--Hilbert
  lemma, noting that $ \mu \leq s $ and $ 1 - \mu < \frac{1}{2} < t $.
\end{proof}

\begin{lemma}
  \label{l:KB}
  Suppose
  $z \in \prod _{ K \in \mathcal{T} _h } H \Lambda (K) \cap H ^\ast
  \Lambda (K) \cap H ^s \Lambda (K) $, where
  $ \frac{1}{2} < s \leq r + 1 $, and $ w _h \in W _h $. Then there
  exists a constant $C$ such that \eqref{e:KB_zw} and \eqref{e:KB_wz}
  hold with
  \begin{equation*}
    K _{\mathcal{B}} (z) = C h ^{ s - 1/2 } \lvert z \rvert _{ s , \mathcal{T} _h } .
  \end{equation*}
\end{lemma}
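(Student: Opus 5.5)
As in the proof of \cref{l:KA}, replacing $w _h$ by $w _h ^- - w _h ^+$ or $w _h ^+ - w _h ^-$ changes neither $w _h \in W _h$ nor $\lvert w _h \rvert _{\mathcal{A}}$. Indeed, the jumps of the even and odd parts enter $\lvert \cdot \rvert _{\mathcal{A}}$ separately, so flipping the sign of one part leaves the seminorm unchanged. It therefore suffices to bound $\bigl\lvert a ( \xi _z , w _h ) \bigr\rvert$ and $\bigl\lvert a ( w _h , \xi _z ) \bigr\rvert$. Since $a$ is symmetric, these are the same quantity, which explains why \eqref{e:KB_wz} comes for free.

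For $a ( \xi _z , w _h )$, I would not integrate by parts onto $\xi _z$, because that would produce $( \mathrm{D} \xi _z , w _h )$, and this is not small. Instead I keep the volume terms in the form $( \xi _z , \delta w _h ) _{ \mathcal{T} _h } + ( \delta \xi _z , w _h ) _{ \mathcal{T} _h }$ and move only the second derivative onto $w _h$. By \eqref{e:ibp_broken}, $( \delta \xi _z , w _h ) = ( \xi _z , \mathrm{d} w _h ) - \langle w _h ^{\mathrm{tan}} , \xi _z ^{\mathrm{nor}} \rangle$, so the volume part becomes $( \xi _z , \mathrm{D} w _h ) _{ \mathcal{T} _h }$ plus a boundary term. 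Since $\mathrm{D} w _h \in W _h$ (applying $\mathrm{D}$ to piecewise polynomials of degree $r$ stays in $W _h$) and $\xi _z \perp W _h$, the volume term vanishes. This $L ^2$-orthogonality is the key step.

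What remains is a sum of skeleton terms. By \cref{p:jump-avg} I split $\langle w _h ^{\mathrm{tan}} , \xi _z ^{\mathrm{nor}} \rangle$ into jump/average pairings. The term $\langle \avg{ w _h ^{\mathrm{tan}} } , \jump{ \xi _z ^{\mathrm{nor}} } \rangle$ cancels against $\langle \jump{ \xi _z ^{\mathrm{nor}} } , \avg{ w _h ^{\mathrm{tan}} } \rangle$ already present in $a$, leaving $- \langle \jump{ w _h ^{\mathrm{tan}} } , \avg{ \xi _z ^{\mathrm{nor}} } \rangle$. The result is
\begin{multline*}
  a ( \xi _z , w _h ) = - \bigl\langle \avg{ \xi _z ^{\mathrm{nor}} } , \jump{ w _h ^{\mathrm{tan}} } \bigr\rangle _{ \partial \mathcal{T} _h } + \bigl\langle \avg{ \xi _z ^{\mathrm{tan}} } , \jump{ w _h ^{\mathrm{nor}} } \bigr\rangle _{ \partial \mathcal{T} _h } \\
  + \biggl\langle \frac{ 1 }{ \avg{\alpha} } \jump{ \xi _z ^{\mathrm{nor}} } + \frac{ \jump{\alpha} }{ \avg{\alpha} } \jump{ \xi _z ^{\mathrm{tan}} } , \jump{ w _h ^{\mathrm{nor}} } \biggr\rangle _{ \partial \mathcal{T} _h } - \biggl\langle \frac{ 1 }{ \avg{\beta} } \jump{ \xi _z ^{\mathrm{tan}} } + \frac{ \jump{\beta} }{ \avg{\beta} } \jump{ \xi _z ^{\mathrm{nor}} } , \jump{ w _h ^{\mathrm{tan}} } \biggr\rangle _{ \partial \mathcal{T} _h } .
\end{multline*}
Every term now pairs a trace quantity of $\xi _z$ with a jump of $w _h$.

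Each term is then bounded in the same way, using Cauchy--Schwarz, \cref{c:pythag}, and \cref{r:O(1)}. The coefficients are $\mathcal{O}(1)$, and each jump or average of $\xi _z$ is dominated by $\lVert \xi _z ^{\mathrm{tan}} \rVert _{ \partial \mathcal{T} _h }$ or $\lVert \xi _z ^{\mathrm{nor}} \rVert _{ \partial \mathcal{T} _h }$. These in turn are $\lesssim h ^{ s - 1/2 } \lvert z \rvert _{ s , \mathcal{T} _h }$ by the scaled trace inequality together with Bramble--Hilbert, which applies since $s > \frac12$. Finally, $\lVert \jump{ w _h ^{\mathrm{tan}} } \rVert + \lVert \jump{ w _h ^{\mathrm{nor}} } \rVert \lesssim \lvert w _h \rvert _{\mathcal{A}}$ by \eqref{e:A_equiv_L2}.

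The main obstacle is the bookkeeping in the integration by parts and the jump/average splitting. One must check that every surviving boundary pairing involves a jump of $w _h$ rather than an average. In particular, the $\avg{ w _h ^{\mathrm{tan}} }$ term has to cancel exactly, and the boundary conventions of \cref{d:jump-avg} on $\partial \Omega$ must be handled consistently there. Averages of $w _h$ are not controlled by $\lvert w _h \rvert _{\mathcal{A}}$, so any leftover average term would break the argument.
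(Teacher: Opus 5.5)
Your proposal is correct and follows essentially the same route as the paper. You reduce from $\mathcal{A}$ to the symmetric form $a$, integrate by parts only the $\delta \xi _z$ term so that the volume contribution becomes $( \xi _z , \mathrm{D} w _h ) _{ \mathcal{T} _h } = 0$, and then bound the remaining pairings of $\xi _z$-traces against jumps of $w _h$ using Cauchy--Schwarz, \cref{c:pythag}, the scaled trace inequality, and \eqref{e:A_equiv_L2}.

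One small point needs an extra line. For \eqref{e:KB_wz}, the parity flip in $\mathcal{A} ( w _h , \xi _z ) = \pm a ( w _h , \xi _z ^- - \xi _z ^+ )$ falls on $\xi _z$, not on $w _h$. So you also need, as the paper notes, that $\xi _z ^- - \xi _z ^+ = \xi _{ z ^- - z ^+ }$ (since $\Pi$ preserves form degree) and that $\lvert z ^- - z ^+ \rvert _{ s, \mathcal{T} _h } = \lvert z \rvert _{ s, \mathcal{T} _h }$. That is, $K _{\mathcal{B}}$ is invariant under the flip.
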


\begin{proof}
  As in the proof of \cref{l:KA}, we begin by observing that
  $ \lvert w _h \rvert _{\mathcal{A}} $ and the claimed form of
  $ K _{\mathcal{B}} (z) $ are both unchanged by flipping the signs of
  the even- or odd-degree components. Hence, we may replace
  $ \mathcal{A} ( \cdot , \cdot ) $ by $ a ( \cdot , \cdot ) $, and it
  suffices to prove
  \begin{equation*}
    \bigl\lvert a ( \xi _z, w _h ) \bigr\rvert \leq K _{\mathcal{B}} (z) \lvert w _h \rvert _{\mathcal{A}} .
  \end{equation*}
  Since $ a ( \cdot , \cdot ) $ is symmetric, this suffices to prove
  both \eqref{e:KB_zw} and \eqref{e:KB_wz}. Similarly to the proof of
  \cref{l:KA}, we integrate by parts and apply \cref{p:jump-avg} to
  write
  \begin{multline*}
    a ( \xi _z , w _h ) = ( \xi _z  , \mathrm{D} w _h  ) _{ \mathcal{T} _h } + \bigl\langle \avg{ \xi _z ^{\mathrm{tan}} }, \jump{ w _h ^{\mathrm{nor}} } \bigr\rangle _{ \partial \mathcal{T} _h } - \bigl\langle \avg{ \xi _z ^{\mathrm{nor}} } , \jump{ w _h ^{\mathrm{tan}} } \bigr\rangle _{ \partial \mathcal{T} _h } \\
    + \biggl\langle \frac{ 1 }{ \avg{\alpha} } \jump{ \xi _z ^{\mathrm{nor}} } + \frac{ \jump{\alpha} }{ \avg{\alpha} } \jump{ \xi _z ^{\mathrm{tan}} } , \jump{ w _h ^{\mathrm{nor}} } \biggr\rangle _{ \partial \mathcal{T} _h } - \biggl\langle \frac{ 1 }{ \avg{\beta} } \jump{ \xi _z ^{\mathrm{tan}} } + \frac{ \jump{\beta} }{ \avg{\beta} } \jump{ \xi _z ^{\mathrm{nor}} } , \jump{ w _h ^{\mathrm{tan}} } \biggr\rangle _{ \partial \mathcal{T} _h } .
  \end{multline*}
  The first term vanishes since $ \xi _z $ is orthogonal to
  $ \mathrm{D} w _h \in W _h $. Collecting the remaining terms gives
  \begin{align*}
    a ( \xi _z, w _h ) &= \biggl\langle \avg{\xi _z ^{\mathrm{tan}}} + \frac{ 1 }{ \avg{\alpha} } \jump{ \xi _z ^{\mathrm{nor}} } + \frac{ \jump{\alpha} }{ \avg{\alpha} } \jump{ \xi _z ^{\mathrm{tan}} } , \jump{ w _h ^{\mathrm{nor}} } \biggr\rangle _{ \partial \mathcal{T} _h } \\
    &\quad - \biggl\langle \avg{ \xi _z ^{\mathrm{nor}} } + \frac{ 1 }{ \avg{\beta} } \jump{ \xi _z ^{\mathrm{tan}} } + \frac{ \jump{\beta} }{ \avg{\beta} } \jump{ \xi _z ^{\mathrm{nor}} } , \jump{ w _h ^{\mathrm{tan}} } \biggr\rangle _{ \partial \mathcal{T} _h } .
  \end{align*}
  Applying Cauchy--Schwarz and the triangle inequality, and absorbing
  the penalty coefficients into the multiplicative constant, gives
  \begin{align*}
    \bigl\lvert a ( \xi _z, w _h ) \bigr\rvert
    &\lesssim \Bigl( \bigl\lVert \avg{\xi _z ^{\mathrm{tan}}} \bigr\rVert _{ \partial \mathcal{T} _h } + \bigl\lVert \jump{\xi _z ^{\mathrm{nor}}} \bigr\rVert _{ \partial \mathcal{T} _h } + \bigl\lVert \jump{\xi _z ^{\mathrm{tan}}} \bigr\rVert _{ \partial \mathcal{T} _h } \Bigr) \bigl\lVert \jump{ w _h ^{\mathrm{nor}} } \bigr\rVert _{ \partial \mathcal{T} _h } \\
    &\quad + \Bigl( \bigl\lVert \avg{\xi _z ^{\mathrm{nor}}} \bigr\rVert _{ \partial \mathcal{T} _h } + \bigl\lVert \jump{\xi _z ^{\mathrm{tan}}} \bigr\rVert _{ \partial \mathcal{T} _h } + \bigl\lVert \jump{\xi _z ^{\mathrm{nor}}} \bigr\rVert _{ \partial \mathcal{T} _h } \Bigr) \bigl\lVert \jump{ w _h ^{\mathrm{tan}} } \bigr\rVert _{ \partial \mathcal{T} _h } \\
    &\lesssim \Bigl( \lVert \xi _z ^{\mathrm{tan}} \rVert ^2 _{ \partial \mathcal{T} _h } + \lVert \xi _z ^{\mathrm{nor}} \rVert ^2 _{ \partial \mathcal{T} _h } \Bigr) ^{ 1/2 } \Bigl( \bigl\lVert \jump{ w _h ^{\mathrm{nor}} } \bigr\rVert _{ \partial \mathcal{T} _h } ^2 + \bigl\lVert \jump{ w _h ^{\mathrm{tan}} } \bigr\rVert _{ \partial \mathcal{T} _h } ^2 \Bigr) ^{ 1/2 } \\
    &\lesssim h ^{ s - 1/2 } \lvert z \rvert _{ s, \mathcal{T} _h } \lvert w _h \rvert _{\mathcal{A}} ,
  \end{align*}
  where the second estimate is by discrete Cauchy--Schwarz and
  \cref{c:pythag}, and the last is by the scaled trace inequality
  together with Bramble--Hilbert and \eqref{e:A_equiv_L2}.
\end{proof}

\subsection{Final error estimates}

We now have the necessary ingredients to state the error estimates
implied by our analysis, which we do by combining the abstract and
approximation-theoretic results.

\begin{theorem}
  \label{t:dirac_error}
  Let $ ( z, p ) $ solve the Hodge--Dirac problem on $\Omega$, and let
  $ ( z _h , p _h ) $ be the degree-$r$ HDG solution in the form
  \eqref{e:dirac_dg}. Suppose $\Omega$ is $t$-regular and
  $ z \in H ^s \Lambda (\Omega) $, where
  $ \frac{1}{2} < t \leq s \leq r + 1 $. Under the preceding
  assumptions, the error satisfies the estimates
  \begin{equation*}
    \lvert z - z _h \rvert _{\mathcal{A}} \lesssim h ^{ s - 1/2 } \lvert z \rvert _{s, \Omega} , \qquad \lVert z - z _h \rVert _\Omega \lesssim h ^{ s + t -1 } \lvert z \rvert _{s, \Omega} .
  \end{equation*}
\end{theorem}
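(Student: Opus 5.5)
The plan is to combine the abstract estimates of \cref{l:dirac_error_A,l:dirac_error_l2} with the explicit functionals from \cref{l:KA,l:KB}. The exact solution lies in $H\Lambda(\Omega)\cap\mathring{H}^\ast\Lambda(\Omega)$, so its restriction to each $K$ lies in $H\Lambda(K)\cap H^\ast\Lambda(K)\cap H^s\Lambda(K)$. Since $s>\frac12$, the lemmas apply with $\lvert z\rvert_{s,\mathcal{T}_h}\le\lvert z\rvert_{s,\Omega}$.

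\textbf{$\mathcal{A}$-seminorm estimate.} Take \cref{l:KA} with both exponents equal to $s$, which gives $K_{\mathcal{A}}(z,z)=Ch^{2s-1}\lvert z\rvert_{s,\Omega}^2$. Take \cref{l:KB}, which gives $K_{\mathcal{B}}(z)^2=C^2h^{2s-1}\lvert z\rvert_{s,\Omega}^2$. Inserting both into \cref{l:dirac_error_A} and taking square roots yields the first estimate.

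\textbf{$L^2$ estimate.} Fix $\lambda\in\mathfrak{H}(\Omega)^\perp$ and let $\varphi$ solve the adjoint problem. Its right-hand side $\pm(\lambda^+-\lambda^-)$ has $L^2$ norm $\lVert\lambda\rVert_\Omega$. One point needs care: this right-hand side must lie in $\mathfrak{H}(\Omega)^\perp$ for the adjoint problem to be solvable with $p=0$. This holds because $\mathrm{D}$ maps even forms to odd forms and vice versa, so $\mathfrak{H}(\Omega)$ splits into even and odd parts, and flipping the sign of the odd part of $\lambda$ preserves orthogonality. (If needed, one can simply use the Hodge--Dirac problem with its harmonic part, which vanishes here.) Then:
\begin{itemize}
\item Poincar\'e's inequality gives $\lVert\varphi\rVert_\Omega\lesssim\lVert\lambda\rVert_\Omega$, and $\lVert\mathrm{d}\varphi\rVert_\Omega+\lVert\delta\varphi\rVert_\Omega\lesssim\lVert\mathrm{D}\varphi\rVert_\Omega$ by orthogonality of exact and coexact parts.
\item $t$-regularity then gives $\lvert\varphi\rvert_{t,\Omega}\le\lVert\varphi\rVert_{t,\Omega}\lesssim\lVert\lambda\rVert_\Omega$.
\item Since $\frac12<t\le r+1$, \cref{l:KA} with exponents $(s,t)$ gives $K_{\mathcal{A}}(z,\varphi)\lesssim h^{s+t-1}\lvert z\rvert_{s,\Omega}\lVert\lambda\rVert_\Omega$.
\item \cref{l:KB} gives $K_{\mathcal{B}}(\varphi)\lesssim h^{t-1/2}\lVert\lambda\rVert_\Omega$.
\end{itemize}
Substituting these into \cref{l:dirac_error_l2}, the product term is $K_{\mathcal{B}}(z)\,K_{\mathcal{B}}(\varphi)/\lVert\lambda\rVert_\Omega\lesssim h^{s-1/2}h^{t-1/2}\lvert z\rvert_{s,\Omega}=h^{s+t-1}\lvert z\rvert_{s,\Omega}$. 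Taking suprema over $\lambda$ then gives the second bound.

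\textbf{Main obstacle.} The only nonroutine step is the regularity estimate $\lVert\varphi\rVert_{t,\Omega}\lesssim\lVert\lambda\rVert_\Omega$. It needs the control of the full $H\Lambda\cap H^\ast\Lambda$ norm of $\varphi$ by $\lVert\mathrm{D}\varphi\rVert_\Omega$ described above, together with $\varphi\in\mathfrak{H}(\Omega)^\perp$. Everything else is direct substitution. The hypothesis $t\le s$ is not actually needed for the argument, only to state the final rate cleanly.
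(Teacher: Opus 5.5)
Your proposal is correct and follows the same route as the paper. You substitute the functionals from \cref{l:KA,l:KB} into the abstract estimates of \cref{l:dirac_error_A,l:dirac_error_l2}, and you use $t$-regularity together with Poincar\'e's inequality to get $\lvert \varphi \rvert_{t,\Omega} \lesssim \lVert \lambda \rVert_\Omega$ for the adjoint solution. Your extra checks fill in details the paper leaves implicit: that $\lambda^+ - \lambda^-$ lies in $\mathfrak{H}(\Omega)^\perp$ because $\mathfrak{H}(\Omega)$ splits into even and odd parts, and that $\lVert \mathrm{d}\varphi \rVert_\Omega^2 + \lVert \delta\varphi \rVert_\Omega^2 = \lVert \mathrm{D}\varphi \rVert_\Omega^2$.
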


\begin{proof}
  The $\mathcal{A}$-seminorm error estimate follows from the abstract
  estimate in \cref{l:dirac_error_A}, where \cref{l:KA,l:KB} give
  \begin{equation*}
    K _{\mathcal{A}} ( z, z ) = C h ^{ 2 s -1 } \lvert z \rvert _{s, \Omega} ^2 , \qquad K _{\mathcal{B}} (z) = C h ^{ s - 1/2 } \lvert z \rvert _{s, \Omega} .
  \end{equation*}
  For the $ L ^2 $-norm estimate, a solution to the adjoint problem
  satisfies $ \varphi \in H ^t \Lambda (\Omega) $ by $t$-regularity,
  so \cref{l:KA,l:KB} give
  \begin{equation*}
    K _{\mathcal{A}} ( z, \varphi ) = C h ^{ s + t -1 } \lvert z \rvert _{s, \Omega} \lvert \varphi \rvert _{t, \Omega} , \qquad K _{\mathcal{B}} ( \varphi ) = C h ^{ t - 1/2 } \lvert \varphi \rvert _{t, \Omega} .
  \end{equation*}
  Moreover, by $t$-regularity and Poincar\'e's inequality, we have
  $ \lvert \varphi \rvert _{t, \Omega} \lesssim \lVert \mathrm{D} \varphi \rVert
  _\Omega = \lVert \lambda \rVert _\Omega $ for all
  $ \lambda \in \mathfrak{H} (\Omega) ^\perp $. Hence, the
  $ L ^2 $-norm error estimate follows from the abstract estimate in
  \cref{l:dirac_error_l2}.
\end{proof}

We conclude this section by analyzing the error of the approximate
traces $ \widehat{ z } _h ^{\mathrm{tan}} $ and
$ \widehat{ z } _h ^{\mathrm{nor}} $, which we estimate in the scaled
norm
$ \lVERT \cdot \rVERT _{ \partial \mathcal{T} _h } ^2 \coloneqq \sum
_{ K \in \mathcal{T} _h } h _K \lVert \cdot \rVert _{ \partial K } ^2
$. This norm has the same scaling properties as
$ \lVert \cdot \rVert _{ \mathcal{T} _h } $, and applying the scaled
trace inequality term-by-term gives
\begin{equation*}
  \lVERT w ^{\mathrm{tan}} \rVERT _{ \partial \mathcal{T} _h } \lesssim \lVert w \rVert _{ \mathcal{T} _h } + h ^t \lvert w \rvert _{ t, \mathcal{T} _h } , \qquad \lVERT w ^{\mathrm{nor}} \rVERT _{ \partial \mathcal{T} _h } \lesssim \lVert w \rVert _{ \mathcal{T} _h } + h ^t \lvert w \rvert _{ t, \mathcal{T} _h } , \qquad \forall w \in W .
\end{equation*}
Before proceeding to the trace estimates, we first prove a lemma
showing that \cref{c:pythag} allows us to control jumps and averages
by ordinary traces in
$ \lVERT \cdot \rVERT _{ \partial \mathcal{T} _h } $, just as we have
done in $ \lVert \cdot \rVert _{ \partial \mathcal{T} _h } $.

\begin{lemma}
  \label{l:pythag_triple_norm}
  If $ \mathcal{T} _h $ satisfies
  \hyperref[a:hK_hK']{\cref*{a:mesh}\ref*{a:hK_hK'}}, then for all
  $ \widehat{ w } ^{\mathrm{tan}} , \widehat{ w } ^{\mathrm{nor}} \in
  L ^2 \Lambda ( \partial \mathcal{T} _h ) $,
  \begin{align*}
    \lVERT \widehat{ w } ^{\mathrm{tan}} \rVERT ^2 _{ \partial \mathcal{T} _h } &\asymp \biglVERT \avg{ \widehat{ w } ^{\mathrm{tan}} } \bigrVERT ^2 _{ \partial \mathcal{T} _h } + \biglVERT \jump{ \widehat{ w } ^{\mathrm{tan}} } \bigrVERT _{ \partial \mathcal{T} _h } ^2 , \\
    \lVERT \widehat{ w } ^{\mathrm{nor}} \rVERT ^2 _{ \partial \mathcal{T} _h } &\asymp \biglVERT \avg{ \widehat{ w } ^{\mathrm{nor}} } \bigrVERT ^2 _{ \partial \mathcal{T} _h } + \biglVERT \jump{ \widehat{ w } ^{\mathrm{nor}} } \bigrVERT _{ \partial \mathcal{T} _h } ^2 .
  \end{align*}
\end{lemma}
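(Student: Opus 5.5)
The plan is to reduce the statement to \cref{c:pythag}, applied facet-by-facet, by showing that the weights $ h _K $ are essentially constant across each interface. The triple norm is a weighted sum over the disjoint pieces of $ \partial \mathcal{T} _h $:
\begin{equation*}
  \lVERT \cdot \rVERT _{ \partial \mathcal{T} _h } ^2 = \sum _K h _K \lVert \cdot \rVert _{ \partial K } ^2 .
\end{equation*}
Equivalently, we attach to each facet copy $ e ^\pm \subset \partial K ^\pm $ the weight $ h _{ K ^\pm } $. Jumps and averages only mix the values on the two copies $ e ^+ , e ^- $ of a single interface, or act trivially (identity or zero) on boundary facets. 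So both sides of each claimed equivalence decompose as sums of contributions from individual interfaces $ e $ and boundary facets, and it suffices to prove the equivalence for each such contribution, with constants independent of $e$.

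For a boundary facet $ e \subset \partial \Omega $ there is only one copy, on some $ \partial K $, with weight $ h _K $. By \cref{d:jump-avg}, either the average equals the trace and the jump vanishes, or vice versa. Hence the identity $ \lVert \widehat{ w } \rVert _e ^2 = \lVert \avg{ \widehat{ w } } \rVert _e ^2 + \lVert \jump{ \widehat{ w } } \rVert _e ^2 $ holds exactly, and multiplying by $ h _K $ gives equality.

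For an interior facet $ e $ with copies on $ \partial K ^+ $ and $ \partial K ^- $, \cref{a:mesh}\ref{a:hK_hK'} gives $ h _{ K ^+ } \asymp h _{ K ^- } $, with a uniform constant. Set $ h _e \coloneqq \min \{ h _{ K ^+ } , h _{ K ^- } \} $; then both weights lie between $ h _e $ and $ C h _e $. Therefore, for any trace quantity $ g $,
\begin{equation*}
  h _{ K ^+ } \lVert g \rVert _{ e ^+ } ^2 + h _{ K ^- } \lVert g \rVert _{ e ^- } ^2 \asymp h _e \bigl( \lVert g \rVert _{ e ^+ } ^2 + \lVert g \rVert _{ e ^- } ^2 \bigr) .
\end{equation*}
Apply this to $ g = \widehat{ w } $, to $ g = \avg{ \widehat{ w } } $, and to $ g = \jump{ \widehat{ w } } $. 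The statement then reduces to the unweighted identity on the pair $ e ^+ \sqcup e ^- $:
\begin{equation*}
  \lVert \widehat{ w } \rVert ^2 _{ e ^+ \sqcup e ^- } = \lVert \avg{ \widehat{ w } } \rVert ^2 _{ e ^+ \sqcup e ^- } + \lVert \jump{ \widehat{ w } } \rVert ^2 _{ e ^+ \sqcup e ^- } .
\end{equation*}
This is the localized version of \cref{c:pythag}. It holds because the proof of \cref{p:jump-avg} is purely facet-local: on each $ e ^\pm $ pair it is just $ \frac{1}{2} ( a + b ) ^2 + \frac{1}{2} ( a - b ) ^2 = a ^2 + b ^2 $ in each version, after accounting for the orientation sign conventions for tangential and normal traces. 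Summing over all facets then yields both equivalences.

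The only point needing care is this localization. Namely, we must confirm that \cref{c:pythag} holds facet-by-facet rather than only globally. This follows directly from \cref{d:jump-avg}, since the jump and average on $ e ^\pm $ depend only on the values on $ e ^+ $ and $ e ^- $. We should also state explicitly that the comparability constants come only from \cref{a:mesh}\ref{a:hK_hK'}, so they are uniform in $h$.
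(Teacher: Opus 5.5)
Your proposal is correct and follows essentially the same route as the paper's proof. Both arguments apply \cref{c:pythag} locally on each interior pair $e^\pm$, use the comparability $h_{K^+} \asymp h_{K^-}$ from \cref{a:mesh} to pass to the weighted norms, note exact equality on boundary facets, and sum; your auxiliary weight $h_e = \min\{h_{K^+}, h_{K^-}\}$ and your explicit check that the Pythagorean identity holds facet by facet only spell out a step the paper performs implicitly.
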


\begin{proof}
  Both statements are proved in the same way, so we only consider the
  tangential-trace case. First, applying \cref{c:pythag} to the
  restriction $ \widehat{ w } ^{\mathrm{tan}} \rvert _{ e ^\pm } $ with
  $ e = \partial K ^+ \cap \partial K ^- $, we obtain
  \begin{align*}
    \lVert \widehat{ w } ^{\mathrm{tan}} \rVert _{ e ^+ } ^2 + \lVert \widehat{ w } ^{\mathrm{tan}} \rVert _{ e ^- } ^2
    &= \bigl\lVert \avg{ \widehat{ w } ^{\mathrm{tan}} } \bigr\rVert_{ e ^+ }  ^2 + \bigl\lVert \jump{ \widehat{ w } ^{\mathrm{tan}} } \bigr\rVert_{ e ^+ } ^2 \\
    &\quad + \bigl\lVert \avg{ \widehat{ w } ^{\mathrm{tan}} } \bigr\rVert_{ e ^- } ^2 + \bigl\lVert \jump{ \widehat{ w } ^{\mathrm{tan}} } \bigr\rVert_{ e ^- }  ^2 .
  \end{align*}
  Since $ h _{K ^+} \asymp h _{ K ^- } $ by
  \hyperref[a:hK_hK']{\cref*{a:mesh}\ref*{a:hK_hK'}}, this implies
  \begin{align*}
    h _{ K ^+ } \lVert \widehat{ w } ^{\mathrm{tan}} \rVert _{ e ^+ } ^2 + h _{ K ^- } \lVert \widehat{ w } ^{\mathrm{tan}} \rVert _{ e ^- } ^2
    &\asymp h _{ K ^+ } \Bigl( \bigl\lVert \avg{ \widehat{ w } ^{\mathrm{tan}} } \bigr\rVert_{ e ^+ }  ^2 + \bigl\lVert \jump{ \widehat{ w } ^{\mathrm{tan}} } \bigr\rVert_{ e ^+ } ^2 \Bigr) \\
    &\quad + h _{ K ^- } \Bigl( \bigl\lVert \avg{ \widehat{ w } ^{\mathrm{tan}} } \bigr\rVert_{ e ^- }  ^2 + \bigl\lVert \jump{ \widehat{ w } ^{\mathrm{tan}} } \bigr\rVert_{ e ^- } ^2 \Bigr).
  \end{align*}
  On the other hand, for $ e = \partial K \cap \partial \Omega $, we
  immediately have
  \begin{equation*}
    h _K \lVert \widehat{ w } ^{\mathrm{tan}} \rVert _e ^2 = h _K \Bigl( \bigl\lVert \avg{ \widehat{ w } ^{\mathrm{tan}} } \bigr\rVert_e ^2 + \bigl\lVert \jump{ \widehat{ w } ^{\mathrm{tan}} } \bigr\rVert_e ^2 \Bigr) .
  \end{equation*}
  Therefore, summing over all $ e \subset \partial \mathcal{T} _h $
  gives
  \begin{align*}
    \lVERT \widehat{ w } ^{\mathrm{tan}} \rVERT _{ \partial \mathcal{T} _h } ^2
    &= \sum _{ K \in \mathcal{T} _h } \sum _{ e \subset \partial K } h _K \lVert \widehat{ w } ^{\mathrm{tan}} \rVert _e ^2 \\
    &\asymp \sum _{ K \in \mathcal{T} _h } \sum _{ e \subset \partial K } h _K \Bigl( \bigl\lVert \avg{ \widehat{ w } ^{\mathrm{tan}} } \bigr\rVert_e ^2 + \bigl\lVert \jump{ \widehat{ w } ^{\mathrm{tan}} } \bigr\rVert_e ^2 \Bigr) \\
    &= \biglVERT \avg{ \widehat{ w } ^{\mathrm{tan}} } \bigrVERT ^2 _{ \partial \mathcal{T} _h } + \biglVERT \jump{ \widehat{ w } ^{\mathrm{tan}} } \bigrVERT _{ \partial \mathcal{T} _h } ^2 ,
  \end{align*}
  as claimed.
\end{proof}

\begin{theorem}
  \label{t:dirac_trace_error}
  Under the hypotheses of \cref{t:dirac_error}, the approximate traces
  satisfy
  \begin{equation*}
    \lVERT z ^{\mathrm{tan}} - \widehat{ z } _h ^{\mathrm{tan}} \rVERT _{ \partial \mathcal{T} _h } + \lVERT z ^{\mathrm{nor}} - \widehat{ z } _h ^{\mathrm{nor}} \rVERT _{ \partial \mathcal{T} _h } \lesssim h ^{ s + t -1 } \lvert z \rvert _{s, \Omega}.
  \end{equation*}
\end{theorem}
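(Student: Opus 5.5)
The plan is to reduce the trace errors to quantities already controlled by \cref{t:dirac_error}, using the explicit trace formulas of \cref{l:dg_traces}. Since the exact solution satisfies $\jump{z^{\mathrm{tan}}} = 0$ and $\jump{z^{\mathrm{nor}}} = 0$, we have $z^{\mathrm{tan}} = \avg{z^{\mathrm{tan}}}$ and $z^{\mathrm{nor}} = \avg{z^{\mathrm{nor}}}$. Also $\jump{z_h^{\mathrm{tan}}} = -\jump{e_z^{\mathrm{tan}}}$ and $\jump{z_h^{\mathrm{nor}}} = -\jump{e_z^{\mathrm{nor}}}$. Subtracting the formulas of \cref{l:dg_traces} therefore gives
\begin{align*}
  z^{\mathrm{tan}} - \widehat{z}_h^{\mathrm{tan}} &= \avg{e_z^{\mathrm{tan}}} + \frac{1}{\avg{\alpha}} \jump{e_z^{\mathrm{nor}}} + \frac{\jump{\alpha}}{\avg{\alpha}} \jump{e_z^{\mathrm{tan}}} , \\
  z^{\mathrm{nor}} - \widehat{z}_h^{\mathrm{nor}} &= \avg{e_z^{\mathrm{nor}}} + \frac{1}{\avg{\beta}} \jump{e_z^{\mathrm{tan}}} + \frac{\jump{\beta}}{\avg{\beta}} \jump{e_z^{\mathrm{nor}}} .
\end{align*}
By \cref{r:O(1)}, all coefficients are $\mathcal{O}(1)$. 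It thus suffices to bound, in $\lVERT \cdot \rVERT_{\partial\mathcal{T}_h}$, the averages and jumps of $e_z^{\mathrm{tan}}$ and $e_z^{\mathrm{nor}}$.

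For the jump terms, I would use $h_K \le h$ to get $\lVERT \jump{e_z^{\mathrm{tan}}} \rVERT_{\partial\mathcal{T}_h} \le h^{1/2} \lVert \jump{e_z^{\mathrm{tan}}} \rVert_{\partial\mathcal{T}_h}$, and similarly for the normal jump. Then \eqref{e:A_equiv_L2} and the $\mathcal{A}$-seminorm estimate of \cref{t:dirac_error} bound both jump terms by $h^{1/2}\lvert e_z\rvert_{\mathcal{A}} \lesssim h^{s}\lvert z\rvert_{s,\Omega}$. Since $t \le 1$, this is at most $h^{s+t-1}\lvert z\rvert_{s,\Omega}$ (for $h \lesssim 1$).

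For the average terms, \cref{l:pythag_triple_norm} gives $\lVERT \avg{e_z^{\mathrm{tan}}} \rVERT_{\partial\mathcal{T}_h} \lesssim \lVERT e_z^{\mathrm{tan}} \rVERT_{\partial\mathcal{T}_h}$, and likewise for the normal trace. I would then split $e_z = \xi_z + \Pi e_z$ as in \eqref{e:ez_decomp}.
\begin{itemize}
\item For the projection error, the scaled trace inequality with Bramble--Hilbert gives $\lVERT \xi_z^{\mathrm{tan}} \rVERT_{\partial\mathcal{T}_h} \lesssim h^{1/2} h^{s-1/2}\lvert z\rvert_{s,\Omega} = h^s\lvert z\rvert_{s,\Omega}$.
\item For the discrete part, $\Pi e_z \in W_h$ is piecewise polynomial. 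The discrete inverse trace inequality $h_K^{1/2}\lVert w_h\rVert_{\partial K} \lesssim \lVert w_h\rVert_K$ then gives $\lVERT (\Pi e_z)^{\mathrm{tan}} \rVERT_{\partial\mathcal{T}_h} \lesssim \lVert \Pi e_z\rVert_\Omega \le \lVert e_z\rVert_\Omega$. This inequality holds by affine equivalence to finitely many reference cells and shape regularity (\cref{a:mesh}).
\item The $L^2$ estimate of \cref{t:dirac_error} bounds $\lVert e_z\rVert_\Omega$ by $h^{s+t-1}\lvert z\rvert_{s,\Omega}$.
\end{itemize}
The normal traces are handled identically.

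Combining these bounds yields the claim. The only step requiring an ingredient not yet stated in the paper is the inverse trace inequality for $\Pi e_z$, which is a standard consequence of \cref{a:mesh}. The main point to get right is using the splitting $\xi_z + \Pi e_z$ rather than the continuous scaled trace inequality on $e_z$ directly. The direct route would require an $H^t$-seminorm bound on $e_z$ that we do not have. The splitting is also where the rate $h^{s+t-1}$ (rather than $h^s$) enters and becomes the bottleneck.
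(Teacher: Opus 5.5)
Your proof is correct and follows the paper's argument. It uses the same trace-error formulas from \cref{l:dg_traces} and the same splitting $e_z = \xi_z + \Pi e_z$, with Bramble--Hilbert for $\xi_z$ and a scaling (inverse trace) estimate plus the $L^2$ bound of \cref{t:dirac_error} for $\Pi e_z$. The only difference is in the jump terms: you bound them by $h^{1/2}\lvert e_z\rvert_{\mathcal{A}} \lesssim h^{s}\lvert z\rvert_{s,\Omega}$, as the paper does in \cref{t:dirac_error_n=2}, whereas the paper folds them into $\lVERT e_z^{\mathrm{tan}}\rVERT_{\partial\mathcal{T}_h} + \lVERT e_z^{\mathrm{nor}}\rVERT_{\partial\mathcal{T}_h}$ via \cref{l:pythag_triple_norm}. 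Both are valid, and the final rate is unchanged because the average terms dominate at $h^{s+t-1}$.
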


\begin{proof}
  The errors
  $ \widehat{ e } _z ^{\mathrm{tan}} \coloneqq z ^{\mathrm{tan}} -
  \widehat{ z } _h ^{\mathrm{tan}} $ and
  $ \widehat{ e } _z ^{\mathrm{nor}} \coloneqq z ^{\mathrm{nor}} -
  \widehat{ z } _h ^{\mathrm{nor}} $ satisfy
  \begin{align*}
    \widehat{ e } _z ^{\mathrm{tan}}
    &= \avg{ e _z ^{\mathrm{tan}} } + \frac{ 1 }{ \avg{\alpha} } \jump{ e _z ^{\mathrm{nor}} } + \frac{ \jump{\alpha} }{ \avg{ \alpha } } \jump{ e _z ^{\mathrm{tan}} } , \\
    \widehat{ e } _z ^{\mathrm{nor}}
    &= \avg{ e _z ^{\mathrm{nor}} } + \frac{ 1 }{ \avg{\beta} } \jump{ e _z ^{\mathrm{tan}} } + \frac{ \jump{\beta} }{ \avg{ \beta } } \jump{ e _z ^{\mathrm{nor}} } ,
  \end{align*}
  by \cref{l:dg_traces} together with
  $ \jump{ z ^{\mathrm{tan}} } = 0 $ and
  $ \jump{ z ^{\mathrm{nor}} } = 0 $. Taking
  $ \lVERT \cdot \rVERT _{ \partial \mathcal{T} _h } $, applying the
  triangle inequality, and absorbing the penalty coefficients into the
  multiplicative constant, we obtain
  \begin{align*}
    \lVERT \widehat{ e } _z ^{\mathrm{tan}} \rVERT _{ \partial \mathcal{T} _h }
    + \lVERT \widehat{ e } _z ^{\mathrm{nor}} \rVERT _{ \partial \mathcal{T} _h }
    &\lesssim \biglVERT \avg{ e _z ^{\mathrm{tan}} } \bigrVERT _{ \partial \mathcal{T} _h } + \biglVERT \avg{ e _z ^{\mathrm{nor}} } \bigrVERT _{ \partial \mathcal{T} _h } + \biglVERT \jump{ e _z ^{\mathrm{nor}} } \bigrVERT _{ \partial \mathcal{T} _h } + \biglVERT \jump{ e _z ^{\mathrm{tan}} } \bigrVERT _{ \partial \mathcal{T} _h } \\
    &\lesssim \lVERT e _z ^{\mathrm{tan}} \rVERT _{ \partial \mathcal{T} _h } + \lVERT e _z ^{\mathrm{nor}} \rVERT _{ \partial \mathcal{T} _h } ,
  \end{align*}
  where the second line is by \cref{l:pythag_triple_norm}. Now, by
  \eqref{e:ez_decomp} and the triangle inequality, we have
  \begin{equation*}
    \lVERT e _z ^{\mathrm{tan}} \rVERT _{ \partial \mathcal{T} _h } \leq \lVERT \xi  _z ^{\mathrm{tan}} \rVERT _{ \partial \mathcal{T} _h } + \lVERT \Pi e _z ^{\mathrm{tan}} \rVERT _{ \partial \mathcal{T} _h } .
  \end{equation*}
  As before, we can apply the scaled trace inequality and
  Bramble--Hilbert to get
  \begin{equation*}
    \lVERT \xi  _z ^{\mathrm{tan}} \rVERT _{ \partial \mathcal{T} _h } \lesssim h ^s \lvert z \rvert _{s, \Omega} ,
  \end{equation*}
  and since $ \Pi e _z \in W _h $, a scaling argument and
  \cref{t:dirac_error} give
  \begin{equation*}
    \lVERT \Pi e _z ^{\mathrm{tan}} \rVERT _{ \partial \mathcal{T} _h }
    \lesssim \lVert \Pi e _z \rVert _{ \mathcal{T} _h } \leq \lVert e _z \rVert _\Omega \lesssim h ^{ s + t -1 } \lvert z \rvert _{s, \Omega} .
  \end{equation*}
  This last term dominates the previous one since $ t \leq 1 $, so
  adding them gives
  $ \lVERT e _z ^{\mathrm{tan}} \rVERT _{ \partial \mathcal{T} _h }
  \lesssim h ^{ s + t -1 } \lvert z \rvert _{s, \Omega} $. The same
  bound can be obtained for
  $ \lVERT e _z ^{\mathrm{nor}} \rVERT _{ \partial \mathcal{T} _h } $,
  which completes the proof.
\end{proof}

\begin{remark}
  We close this section by noting the specific rates of convergence
  indicated by our results in two different regularity scenarios:
  \begin{itemize}
  \item \emph{maximum regularity}: If $\Omega$ is $1$-regular (e.g.,
    convex) and $ z \in H ^{ r + 1 } \Lambda (\Omega) $, then
    \begin{equation*}
      \lvert z - z _h \rvert _{\mathcal{A}} \lesssim h ^{ r + 1/2 } \lvert z \rvert _{r+1, \Omega} , \qquad \lVert z - z _h \rVert _\Omega \lesssim h ^{ r + 1 } \lvert z \rvert _{r+1, \Omega} ,
    \end{equation*}
    and the approximate traces satisfy
    \begin{equation*}
      \lVERT z ^{\mathrm{tan}} - \widehat{ z } _h ^{\mathrm{tan}} \rVERT _{ \partial \mathcal{T} _h } + \lVERT z ^{\mathrm{nor}} - \widehat{ z } _h ^{\mathrm{nor}} \rVERT _{ \partial \mathcal{T} _h } \lesssim h ^{ r+1 } \lvert z \rvert _{r+1, \Omega} .
    \end{equation*}

  \item \emph{minimum regularity}: If $\Omega$ is $t$-regular with
    $ \frac{1}{2} < t \leq 1 $ and we merely have
    $ z \in H ^t \Lambda (\Omega) $, then
    \begin{equation*}
      \lvert z - z _h \rvert _{\mathcal{A}} \lesssim h ^{ t - 1/2 } \lvert z \rvert _{t, \Omega} , \qquad \lVert z - z _h \rVert _\Omega \lesssim h ^{ 2 t - 1 } \lvert z \rvert _{t, \Omega} ,
    \end{equation*}
    and the approximate traces satisfy
    \begin{equation*}
      \lVERT z ^{\mathrm{tan}} - \widehat{ z } _h ^{\mathrm{tan}} \rVERT _{ \partial \mathcal{T} _h } + \lVERT z ^{\mathrm{nor}} - \widehat{ z } _h ^{\mathrm{nor}} \rVERT _{ \partial \mathcal{T} _h } \lesssim h ^{ 2t - 1 } \lvert z \rvert _{t, \Omega} .
    \end{equation*}
    These worst-case rates are always positive but approach zero as
    $t$ approaches $ \frac{1}{2} $. (This gives another reason why we
    estimate the approximate traces in the
    $ \lVERT \cdot \rVERT _{ \partial \mathcal{T} _h } $ norm: without
    accounting for scaling, we might appear to have negative rates of
    convergence.)
  \end{itemize}
\end{remark}

\subsection{Improved estimates in dimension two}

Observe that we always have
\begin{equation*}
  H \Lambda ^0 (\Omega) = H ^1 \Lambda ^0 (\Omega) , \qquad H ^\ast \Lambda ^n (\Omega) = H ^1 \Lambda ^n (\Omega) ,
\end{equation*}
even if $\Omega$ fails to be $1$-regular at other degrees. When
$ n = 2 $, we can use this to obtain an improved estimate for
$ z _h ^1 $ and its approximate traces.

\begin{theorem}
  \label{t:dirac_error_n=2}
  In dimension $ n = 2 $, under the hypotheses of
  \cref{t:dirac_error}, we have the improved $1$-form error estimates
  \begin{equation*}
    \lVert z ^1 - z _h ^1 \rVert _\Omega \lesssim h ^s \lvert z \rvert _{ s, \Omega } , \qquad \lVERT z ^{1, \mathrm{tan}} - \widehat{ z } _h ^{1, \mathrm{tan}} \rVERT _{ \partial \mathcal{T} _h } + \lVERT z ^{1, \mathrm{nor}} - \widehat{ z } _h ^{1, \mathrm{nor}} \rVERT _{ \partial \mathcal{T} _h } \lesssim h ^s \lvert z \rvert _{ s, \Omega } .
  \end{equation*}
\end{theorem}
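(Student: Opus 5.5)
We rerun the duality argument of \cref{l:dirac_error_l2}, now testing only against $1$-form data. Take $\lambda \in \mathfrak{H}(\Omega)^\perp$ supported in degree $1$, i.e., $\lambda = \lambda^1$; since $(e_z, \lambda)_\Omega = (e_z^1, \lambda^1)_\Omega$ and $e_z \in \mathfrak{H}(\Omega)^\perp$, taking the supremum over such $\lambda$ controls $\lVert e_z^1 \rVert_\Omega$. (If $\mathfrak{H}^1(\Omega) \neq 0$, we split $e_z^1$ into its projection onto $\mathfrak{H}^1(\Omega)$, which vanishes because $e_z \perp \mathfrak{H}(\Omega)$ degreewise, and its orthogonal part.) The adjoint solution then satisfies $\mathrm{D}\varphi = \pm\lambda^1$, so $\mathrm{d}\varphi^0 + \delta\varphi^2 = \pm\lambda^1$, and $\varphi^1$ satisfies $\delta\varphi^1 = 0$ and $\mathrm{d}\varphi^1 = 0$ with $\varphi^{1,\mathrm{nor}} = 0$. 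Hence $\varphi^1 \in \mathfrak{H}^1(\Omega) \cap \mathfrak{H}(\Omega)^\perp = 0$. Thus $\varphi = \varphi^0 + \varphi^2$, with $\varphi^0 \in H\Lambda^0 = H^1\Lambda^0$ and $\varphi^2 \in H^\ast\Lambda^2 = H^1\Lambda^2$ in dimension two. Poincar\'e's inequality gives $\lVert \varphi \rVert_{1,\Omega} \lesssim \lVert \mathrm{D}\varphi \rVert_\Omega + \lVert \varphi \rVert_\Omega \lesssim \lVert \lambda \rVert_\Omega$. So the adjoint problem effectively has $t = 1$ regularity, regardless of whether $\Omega$ is convex.

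Applying \cref{l:KA,l:KB} with $t = 1$ for $\varphi$ gives $K_{\mathcal{A}}(z,\varphi) \lesssim h^{s}\lvert z\rvert_{s}\lVert\lambda\rVert$ and $K_{\mathcal{B}}(z)K_{\mathcal{B}}(\varphi) \lesssim h^{s-1/2}h^{1/2}\lvert z\rvert_s\lVert\lambda\rVert$. The chain in the proof of \cref{l:dirac_error_l2} then yields $(e_z^1,\lambda^1) \lesssim h^s\lvert z\rvert_{s,\Omega}\lVert\lambda\rVert_\Omega$, which is the first estimate. The main point to check is that the regularity used in \cref{l:KA} only enters through $\lvert\varphi\rvert_{t}$, which is valid because $\varphi \in H^1$ globally.

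For the traces, we follow \cref{t:dirac_trace_error}, restricted to degree $1$. By \cref{l:dg_traces}, the degree-$1$ components of $\widehat{e}_z^{\mathrm{tan}}$ and $\widehat{e}_z^{\mathrm{nor}}$ involve only $e_z^{1,\mathrm{tan}}$ and $e_z^{1,\mathrm{nor}}$, since the penalty acts degreewise and jumps and averages preserve degree. Indeed, $\widehat{z}^{1,\mathrm{tan}}$ involves $\jump{z^{\mathrm{nor}}}$ of the $2$-form component, whose normal trace is a $1$-form on the facet. This is the expected obstacle: $\widehat{z}_h^{1,\mathrm{tan}}$ couples to $\jump{e_z^{2,\mathrm{nor}}}$, and $\widehat{z}_h^{1,\mathrm{nor}}$ couples to $\jump{e_z^{0,\mathrm{tan}}}$. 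We handle this by bounding these jumps in the scaled norm by $h^{1/2}\lvert e_z\rvert_{\mathcal{A}} \lesssim h^{s}\lvert z\rvert_s$, using $h_K \le h$, \eqref{e:A_equiv_L2}, and \cref{t:dirac_error}. The remaining terms $\lVERT \avg{e_z^{1,\cdot}} \rVERT$ and $\lVERT \jump{e_z^{1,\cdot}} \rVERT$ are bounded as in \cref{t:dirac_trace_error} by $\lVERT \xi_z^{1} \rVERT + \lVERT (\Pi e_z)^1 \rVERT \lesssim h^s\lvert z\rvert_s + \lVert e_z^1\rVert_\Omega$. Here we use that $\Pi$ commutes with degree projection, and we apply the first estimate to conclude.
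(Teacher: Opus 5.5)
Your proposal is correct and follows the paper's proof essentially step for step: you restrict the duality argument to $1$-form data, show $\varphi^1 = 0$ so that $\varphi = \varphi^0 \oplus \varphi^2 \in H^1\Lambda(\Omega)$ with $\lvert\varphi\rvert_{1,\Omega} \lesssim \lVert\lambda\rVert_\Omega$, apply \cref{l:KA,l:KB} with $t = 1$ for $\varphi$, and for the traces bound the cross-degree jumps $\jump{e_z^{2,\mathrm{nor}}}$ and $\jump{e_z^{0,\mathrm{tan}}}$ by $h^{1/2}\lvert e_z\rvert_{\mathcal{A}}$. The only blemish is your opening claim that the degree-$1$ trace errors involve only $e_z^{1,\mathrm{tan}}$ and $e_z^{1,\mathrm{nor}}$; this is false, and your next sentences correctly retract it.
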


\begin{proof}
  We apply the duality argument in \cref{l:dirac_error_l2} to the case
  where $\lambda$ is restricted to be a $1$-form, obtaining
  \begin{align*}
    \lVert e _z ^1 \rVert _\Omega
    &= \sup _{ \lambda \in \mathfrak{H} ^1 (\Omega) ^\perp \setminus \{ 0 \} } \frac{ ( e _z, \lambda ) _\Omega }{ \lVert \lambda \rVert _\Omega } \\
    &\leq \sup _{ \lambda \in \mathfrak{H} ^1 (\Omega) ^\perp \setminus \{ 0 \} } \frac{ K _{\mathcal{A}} ( z, \varphi ) }{ \lVert \lambda \rVert _\Omega } + K _{\mathcal{B}} (z) \sup _{ \lambda \in \mathfrak{H} ^1 (\Omega) ^\perp \setminus \{ 0 \} } \frac{ K _{\mathcal{B}} (\varphi) }{ \lVert \lambda \rVert _\Omega } .
  \end{align*}
  For any such $\lambda$, the solution to the adjoint problem has only
  $0$- and $2$-form components, since
  $ \varphi ^1 \in \mathfrak{H} ^1 (\Omega) ^\perp $ and
  $ \mathrm{D} \varphi ^1 = 0 $ together imply $ \varphi ^1 = 0
  $. Thus,
  $ \varphi = \varphi ^0 \oplus \varphi ^2 \in H ^1 \Lambda (\Omega) $
  and
  $ \lvert \varphi \rvert _1 = \lVert \mathrm{D} \varphi \rVert
  _\Omega = \lVert \lambda \rVert _\Omega $. \Cref{l:KA,l:KB}
  therefore give
  \begin{equation*}
    K _{\mathcal{A}} ( z, \varphi ) = C h ^s \lvert z \rvert _{s, \Omega} \lvert \varphi \rvert _{1, \Omega} , \qquad K _{\mathcal{B}} ( z ) = C h ^{s - 1/2} \lvert z \rvert _{s, \Omega} , \qquad K _{\mathcal{B}} ( \varphi ) = C h ^{1/2} \lvert \varphi \rvert _{1, \Omega} ,
  \end{equation*}
  and substituting these above proves the first claim.

  For the approximate traces, we begin by writing
  \begin{align*}
    \widehat{ e } _z ^{1, \mathrm{tan}}
    &= \avg{ e _z ^{1, \mathrm{tan}} } + \frac{ 1 }{ \avg{\alpha ^1} } \jump{ e _z ^{2, \mathrm{nor}} } + \frac{ \jump{\alpha ^1 } }{ \avg{ \alpha ^1 } } \jump{ e _z ^{1, \mathrm{tan}} } , \\
    \widehat{ e } _z ^{1, \mathrm{nor}}
    &= \avg{ e _z ^{1, \mathrm{nor}} } + \frac{ 1 }{ \avg{\beta ^0} } \jump{ e _z ^{0, \mathrm{tan}} } + \frac{ \jump{\beta ^0} }{ \avg{ \beta ^0} } \jump{ e _z ^{1, \mathrm{nor}} } .
  \end{align*}
  In the proof of \cref{t:dirac_trace_error}, we used
  \cref{l:pythag_triple_norm} to control both the jumps and
  averages. Here we use
  $ \biglVERT \jump{ \cdot } \bigrVERT _{ \partial \mathcal{T} _h }
  \lesssim h ^{ 1/2 } \lvert \cdot \rvert _{\mathcal{A}} $ for the
  jumps and \cref{l:pythag_triple_norm} for the averages, thereby
  avoiding the possibly suboptimal $ L ^2 $-norm estimates for
  $ e _z ^0 $ and $ e _z ^2 $. Doing this gives
  \begin{equation*}
    \lVERT \widehat{ e } _z ^{1, \mathrm{tan}} \rVERT _{ \partial \mathcal{T} _h } +     \lVERT \widehat{ e } _z ^{1, \mathrm{nor}} \rVERT _{ \partial \mathcal{T} _h } \lesssim \lVERT e _z ^{1, \mathrm{tan}} \rVERT _{ \partial \mathcal{T} _h } + \lVERT e _z ^{1, \mathrm{nor}} \rVERT _{ \partial \mathcal{T} _h } + h ^{ 1/2 } \lvert e _z \rvert _{\mathcal{A}} .
  \end{equation*}
  Writing $ e _z ^1 = \xi _z ^1 + \Pi e _z ^1 $, the trace inequality and
  Bramble--Hilbert give
  \begin{equation*}
    \lVERT \xi _z ^{1, \mathrm{tan}} \rVERT _{ \partial \mathcal{T} _h } + \lVERT \xi _z ^{1, \mathrm{nor}} \rVERT _{ \partial \mathcal{T} _h } \lesssim h ^s \lvert z \rvert _{ s, \Omega } ,
  \end{equation*}
  while a scaling argument and the improved $ L ^2 $-norm estimate
  give
  \begin{equation*}
    \lVERT \Pi e _z ^{1, \mathrm{tan}} \rVERT _{ \partial \mathcal{T} _h } + \lVERT \Pi e _z ^{1, \mathrm{nor}} \rVERT _{ \partial \mathcal{T} _h } \lesssim \lVert \Pi e _z ^1 \rVert _{ \mathcal{T} _h } \leq \lVert e _z ^1 \rVert _\Omega \lesssim h ^s \lvert z \rvert _{ s , \Omega } ,
  \end{equation*}
  and \cref{t:dirac_error} gives
  \begin{equation*}
    h ^{ 1/2 } \lvert e _z \rvert _{\mathcal{A}} \lesssim h ^s \lvert z \rvert _{ s, \Omega } .
  \end{equation*}
  Substituting these above completes the proof.
\end{proof}

\section{HDG methods and error analysis for the Hodge--Laplace problem}
\label{s:laplace}

We now turn our attention to the Hodge--Laplace problem, the
Hodge--Dirac problem's better-known cousin. In fact, many of the
methods, results, and analyses we now present for the Hodge--Laplace
problem are straightforward adaptations or applications of those just
developed for Hodge--Dirac; much of the heavy lifting has already been
done in the preceding sections. We therefore proceed quickly through
the parts that are essentially repetitions of earlier arguments, while
drawing focus to the new ingredients that are specific to
Hodge--Laplace.

\subsection{Weak formulation}

Given $ f \in L ^2 \Lambda ^k (\Omega) $, the first-order form
\eqref{e:laplace_first-order} of the $k$-form Hodge--Laplace problem
may be written as
\begin{equation*}
  \mathrm{D} ( \sigma \oplus u \oplus \rho ) + p = \sigma \oplus f \oplus \rho .
\end{equation*}
This would be a Hodge--Dirac problem if not for the $\sigma$ and
$\rho$ terms on the right-hand side. The following HDG weak
formulation is essentially \eqref{e:dirac_ns} with
$ z _h = \sigma _h \oplus u _h \oplus \rho _h $,
$ \widehat{ z } _h ^{\mathrm{tan}} = \widehat{ \sigma } _h
^{\mathrm{tan}} \oplus \widehat{ u } _h ^{\mathrm{tan}} $, and
$ \widehat{ z } _h ^{\mathrm{nor}} = \widehat{ u } _h ^{\mathrm{nor}}
\oplus \widehat{ \rho } _h ^{\mathrm{nor}} $, and with the ``extra''
$\sigma _h $ and $\rho _h $ terms subtracted over to the left-hand
side. We use the same equal-order finite element spaces as for the
Hodge--Dirac problem, but here we only require forms of degrees
$ k -1 $, $k$, and $ k + 1 $, rather than forms of every degree.

\begin{definition}
  The HDG method is defined by the following weak problem: Find
  \begin{alignat*}{4}
    \text{(local variables)} \qquad \sigma _h &\in W _h ^{ k -1 } , \qquad & u _h &\in W _h ^k, \qquad & \rho _h &\in W _h ^{ k + 1 }, &\qquad \overline{p} _h &\in \overline{ \mathfrak{H} } _h ^k , \\
    \text{(global variables)} \qquad \overline{u} _h &\in \overline{ \mathfrak{H} } _h ^k , \qquad & p _h &\in \mathfrak{H} _h ^k, \qquad & \widehat{ \sigma } _h ^{\mathrm{tan}} &\in \widehat{ V } _h ^{ k -1, \mathrm{tan} } , \qquad & \widehat{ u } _h ^{\mathrm{tan}} &\in \widehat{ V } _h ^{k, \mathrm{tan}} ,
  \end{alignat*}
  satisfying
  \begin{subequations}
    \label{e:laplace_ns}
    \begin{alignat}{2}
      - ( \sigma _h , \tau _h ) _{ \mathcal{T} _h } + ( u _h , \mathrm{d} \tau _h ) _{ \mathcal{T} _h } - \langle \widehat{ u } _h ^{\mathrm{nor}} , \tau _h ^{\mathrm{tan}} \rangle _{ \partial \mathcal{T} _h } &= 0, \quad &\forall \tau _h &\in W _h ^{ k -1 } ,\\
      ( \sigma _h , \delta v _h ) _{ \mathcal{T} _h } + ( \rho _h , \mathrm{d} v _h ) _{ \mathcal{T} _h } + ( p _h - \overline{p} _h , v _h ) _{ \mathcal{T} _h } \\
      {}+ \langle \widehat{ \sigma } _h ^{\mathrm{tan}} , v _h ^{\mathrm{nor}} \rangle _{ \partial \mathcal{T} _h } - \langle \widehat{ \rho } _h ^{\mathrm{nor}} , v _h ^{\mathrm{tan}} \rangle _{ \partial \mathcal{T} _h } &= (f, v _h ) _{ \mathcal{T} _h } , \quad &\forall v _h &\in W _h ^k , \notag \\
      - ( \rho _h , \eta _h ) _{ \mathcal{T} _h } + ( u _h , \delta \eta _h ) _{ \mathcal{T} _h } + \langle \widehat{ u } _h ^{\mathrm{tan}} , \eta _h ^{\mathrm{nor}} \rangle _{ \partial \mathcal{T} _h } &= 0, \quad &\forall \eta _h &\in W _h ^{ k + 1 } ,\\
      ( \overline{u} _h - u _h , \overline{q} _h ) _{ \mathcal{T} _h } &= 0 , \quad &\forall \overline{q} _h &\in \overline{ \mathfrak{H} } _h ^k ,\\
      ( \overline{p} _h , \overline{v} _h ) _{ \mathcal{T} _h } &= 0, \quad &\forall \overline{v} _h &\in \overline{ \mathfrak{H} } _h ^k ,\\
      ( u _h , q _h ) _{ \mathcal{T} _h } &= 0, \quad &\forall q _h &\in \mathfrak{H} _h ^k ,\\
      \langle \widehat{ u } _h ^{\mathrm{nor}} , \widehat{ \tau } _h ^{\mathrm{tan}} \rangle _{ \partial \mathcal{T} _h } &= 0 , \quad &\forall \widehat{ \tau } _h ^{\mathrm{tan}} &\in \widehat{ V } _h ^{k-1, \mathrm{tan}} ,\\
      \langle \widehat{ \rho } _h ^{\mathrm{nor}} , \widehat{ v } _h ^{\mathrm{tan}} \rangle _{ \partial \mathcal{T} _h } &= 0 , \quad &\forall \widehat{ v } _h ^{\mathrm{tan}} &\in \widehat{ V } _h ^{k, \mathrm{tan}} ,
    \end{alignat}
  \end{subequations}
  where
  \begin{align*}
    \widehat{ u } _h ^{\mathrm{nor}} &\coloneqq u _h ^{\mathrm{nor}} - \alpha ^{ k -1 } ( \widehat{ \sigma } _h ^{\mathrm{tan}} - \sigma _h ^{\mathrm{tan}} ) ,\\
    \widehat{ \rho } _h ^{\mathrm{nor}} &\coloneqq \rho _h ^{\mathrm{nor}} - \alpha ^k ( \widehat{ u } _h ^{\mathrm{tan}} - u _h ^{\mathrm{tan}} ) .
  \end{align*}
\end{definition}

\begin{remark}
  This agrees with the HDG weak formulation introduced in
  \citet[Section 8]{AwFaGuSt2023}, with only minor differences in
  notation. As noted in the introduction, that paper did not establish
  existence and uniqueness results or conduct any analysis of such
  methods, with its primary focus being on hybridization of
  \emph{conforming} FEEC methods.
\end{remark}

The following is the ``Hodge--Laplace version'' of
\cref{l:dirac_properties}. We omit the proof, which is essentially
identical to the earlier one.

\begin{lemma}
  \label{l:laplace_properties}
  A solution to \eqref{e:laplace_ns} satisfies
  \begin{equation*}
    \overline{p} _h = 0 , \qquad \overline{u} _h = P _{ \overline{ \mathfrak{H} } _h } u _h , \qquad p _h = P _{ \mathfrak{H} _h } f , \qquad \jump{ \widehat{ u } _h ^{\mathrm{nor}} } = 0 , \qquad \jump{ \widehat{ \rho } _h ^{\mathrm{nor}} } = 0 .
  \end{equation*}
\end{lemma}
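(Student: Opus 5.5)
The plan is to follow the proof of \cref{l:dirac_properties} line by line, testing each equation of \eqref{e:laplace_ns} with a suitable choice of test function.

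\textbf{The local harmonic variables.} Taking $\overline{v}_h = \overline{p}_h$ in the equation $(\overline{p}_h, \overline{v}_h)_{\mathcal{T}_h} = 0$ gives $\lVert \overline{p}_h \rVert^2 = 0$, hence $\overline{p}_h = 0$. The equation $(\overline{u}_h - u_h, \overline{q}_h)_{\mathcal{T}_h} = 0$ for all $\overline{q}_h \in \overline{\mathfrak{H}}_h^k$ is precisely the defining property of the orthogonal projection, so $\overline{u}_h = P_{\overline{\mathfrak{H}}_h} u_h$.

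\textbf{The global harmonic variable.} Take $v_h = q_h \in \mathfrak{H}_h^k \subset \mathfrak{H}^k(\Omega)$ in the second equation. Such $q_h$ satisfies
\[
\mathrm{d} q_h = 0, \qquad \delta q_h = 0, \qquad \jump{q_h^{\mathrm{tan}}} = 0, \qquad \jump{q_h^{\mathrm{nor}}} = 0 .
\]
Integrating by parts with \eqref{e:ibp_broken}, we write
\[
(\sigma_h, \delta q_h)_{\mathcal{T}_h} + \langle \widehat{\sigma}_h^{\mathrm{tan}}, q_h^{\mathrm{nor}} \rangle_{\partial \mathcal{T}_h} = \langle \widehat{\sigma}_h^{\mathrm{tan}}, q_h^{\mathrm{nor}} \rangle_{\partial \mathcal{T}_h}
\]
(the volume term vanishes since $\delta q_h = 0$). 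By \cref{p:jump-avg}, this boundary term equals
\[
\bigl\langle \avg{\widehat{\sigma}_h^{\mathrm{tan}}}, \jump{q_h^{\mathrm{nor}}} \bigr\rangle_{\partial \mathcal{T}_h} + \bigl\langle \jump{\widehat{\sigma}_h^{\mathrm{tan}}}, \avg{q_h^{\mathrm{nor}}} \bigr\rangle_{\partial \mathcal{T}_h} = 0,
\]
because $\jump{q_h^{\mathrm{nor}}} = 0$ and $\widehat{\sigma}_h^{\mathrm{tan}} \in \widehat{V}_h^{k-1,\mathrm{tan}}$ has vanishing tangential jump. The remaining terms are handled similarly:
\begin{itemize}
\item $(\rho_h, \mathrm{d} q_h)_{\mathcal{T}_h} = 0$ since $\mathrm{d} q_h = 0$;
\item $\langle \widehat{\rho}_h^{\mathrm{nor}}, q_h^{\mathrm{tan}} \rangle_{\partial \mathcal{T}_h} = 0$ by the final equation of \eqref{e:laplace_ns}, because $q_h^{\mathrm{tan}}$ is single-valued and is a piecewise polynomial of degree $r$ on each facet, so $q_h^{\mathrm{tan}} \in \widehat{V}_h^{k,\mathrm{tan}}$.
\end{itemize}
With $\overline{p}_h = 0$, the second equation reduces to $(p_h, q_h)_{\mathcal{T}_h} = (f, q_h)_{\mathcal{T}_h}$ for all $q_h \in \mathfrak{H}_h^k$, which gives $p_h = P_{\mathfrak{H}_h} f$.

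\textbf{The normal jumps.} Test the last two equations of \eqref{e:laplace_ns} with $\widehat{\tau}_h^{\mathrm{tan}} = \jump{\widehat{u}_h^{\mathrm{nor}}}$ and $\widehat{v}_h^{\mathrm{tan}} = \jump{\widehat{\rho}_h^{\mathrm{nor}}}$, respectively. We must check these are admissible:
\begin{itemize}
\item Since $\alpha^{k-1}$ and $\alpha^k$ are piecewise constant, $\widehat{u}_h^{\mathrm{nor}}$ and $\widehat{\rho}_h^{\mathrm{nor}}$ are degree-$r$ polynomials on each facet.
\item A normal jump has the same form as a tangential average (\cref{d:jump-avg}), so it is tangentially single-valued.
\end{itemize}
Hence $\jump{\widehat{u}_h^{\mathrm{nor}}} \in \widehat{V}_h^{k-1,\mathrm{tan}}$ and $\jump{\widehat{\rho}_h^{\mathrm{nor}}} \in \widehat{V}_h^{k,\mathrm{tan}}$. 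By \cref{p:jump-avg}, each tested equation becomes $\bigl\lVert \jump{\cdot} \bigr\rVert^2_{\partial \mathcal{T}_h} = 0$, proving both jump identities.

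The only step requiring care is the treatment of the boundary terms in the $q_h$ test, in particular confirming that $q_h^{\mathrm{tan}}$ is a legitimate test function in $\widehat{V}_h^{k,\mathrm{tan}}$. This holds because $q_h \in W_h$ is conforming, so its tangential trace is single-valued and polynomial on each facet.
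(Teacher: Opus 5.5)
Your proposal is correct and takes the same approach as the paper, which omits this proof as a direct adaptation of \cref{l:dirac_properties}. Like that proof, you test the local-harmonic equations with $\overline{p}_h$, the $v_h$-equation with $q_h \in \mathfrak{H}_h^k$, and the two trace equations with the normal jumps $\jump{\widehat{u}_h^{\mathrm{nor}}}$ and $\jump{\widehat{\rho}_h^{\mathrm{nor}}}$; your explicit disposal of $\langle \widehat{\rho}_h^{\mathrm{nor}}, q_h^{\mathrm{tan}} \rangle_{\partial\mathcal{T}_h}$, by using $q_h^{\mathrm{tan}} \in \widehat{V}_h^{k,\mathrm{tan}}$ as a test function in the last equation, validly fills in a step the paper leaves implicit.
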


As we did for Hodge--Dirac, we may substitute the expressions for the
normal traces into \eqref{e:laplace_ns} and integrate by parts to get
\begin{subequations}
  \label{e:laplace}
  \begin{alignat}{2}
    - ( \sigma _h , \tau _h ) _{ \mathcal{T} _h } + ( \delta u _h , \tau _h ) _{ \mathcal{T} _h } +  \bigl\langle \alpha ^{ k -1 } ( \widehat{ \sigma } _h ^{\mathrm{tan}} - \sigma _h ^{\mathrm{tan}} ) , \tau _h ^{\mathrm{tan}} \bigr\rangle _{ \partial \mathcal{T} _h } &= 0, \quad &\forall \tau _h &\in W _h ^{ k -1 } , \label{e:laplace_tau} \\
    ( \sigma _h , \delta v _h ) _{ \mathcal{T} _h } + ( \delta \rho _h ,  v _h ) _{ \mathcal{T} _h } + ( p _h - \overline{p} _h , v _h ) _{ \mathcal{T} _h } \qquad \label{e:laplace_v} \\
    {}+ \langle \widehat{ \sigma } _h ^{\mathrm{tan}} , v _h ^{\mathrm{nor}} \rangle _{ \partial \mathcal{T} _h } + \bigl\langle \alpha ^k ( \widehat{ u } _h ^{\mathrm{tan}} - u _h ^{\mathrm{tan}} ) , v _h ^{\mathrm{tan}} \bigr\rangle _{ \partial \mathcal{T} _h } &= (f, v _h ) _{ \mathcal{T} _h } , \quad &\forall v _h &\in W _h ^k , \notag \\
    - ( \rho _h , \eta _h ) _{ \mathcal{T} _h } + ( u _h , \delta \eta _h ) _{ \mathcal{T} _h } + \langle \widehat{ u } _h ^{\mathrm{tan}} , \eta _h ^{\mathrm{nor}} \rangle _{ \partial \mathcal{T} _h } &= 0, \quad &\forall \eta _h &\in W _h ^{ k + 1 } , \label{e:laplace_eta} \\
    ( \overline{u} _h - u _h , \overline{q} _h ) _{ \mathcal{T} _h } &= 0 , \quad &\forall \overline{q} _h &\in \overline{ \mathfrak{H} } _h ^k , \label{e:laplace_qbar} \\
    ( \overline{p} _h , \overline{v} _h ) _{ \mathcal{T} _h } &= 0, \quad &\forall \overline{v} _h &\in \overline{ \mathfrak{H} } _h ^k , \label{e:laplace_vbar} \\
    ( u _h , q _h ) _{ \mathcal{T} _h } &= 0, \quad &\forall q _h &\in \mathfrak{H} _h ^k , \label{e:laplace_q} \\
    \bigl\langle u _h ^{\mathrm{nor}} - \alpha ^{ k -1 } ( \widehat{ \sigma } _h ^{\mathrm{tan}} - \sigma _h ^{\mathrm{tan}} ) , \widehat{ \tau } _h ^{\mathrm{tan}} \bigr\rangle _{ \partial \mathcal{T} _h } &= 0 , \quad &\forall \widehat{ \tau } _h ^{\mathrm{tan}} &\in \widehat{ V } _h ^{k-1, \mathrm{tan}} , \label{e:laplace_tautan} \\
    \bigl\langle \rho _h ^{\mathrm{nor}} - \alpha ^k ( \widehat{ u } _h ^{\mathrm{tan}} - u _h ^{\mathrm{tan}} ) , \widehat{ v } _h ^{\mathrm{tan}} \bigr\rangle _{ \partial \mathcal{T} _h } &= 0 , \quad &\forall \widehat{ v } _h ^{\mathrm{tan}} &\in \widehat{ V } _h ^{k, \mathrm{tan}} , \label{e:laplace_vtan}
  \end{alignat}
\end{subequations}
which is equivalent to \eqref{e:laplace_ns} but more obviously
symmetric.

\begin{remark}
  \label{r:laplace_alternative_hybridization}
  Since all four approximate traces are single-valued, there are
  equivalent hybridizations taking a different pair of global trace
  variables other than $ \widehat{ \sigma } _h ^{\mathrm{tan}} $,
  $ \widehat{ u } _h ^{\mathrm{tan}} $.

  For example, it is straightforward to show that we could instead
  take the global variables to be
  $ \widehat{ u } _h ^{\mathrm{nor}} $,
  $ \widehat{ \rho } _h ^{\mathrm{nor}} $. Since the local solvers
  would then have normal rather than tangential boundary conditions,
  the corresponding space of local harmonics would need to be
  $ W _h ^k \cap \prod _{ K \in \mathcal{T} _h } \mathfrak{H} ^k (K)
  $.

  A more intriguing alternative hybridization takes the global trace
  variables to be $ \widehat{ u } _h ^{\mathrm{nor}} $,
  $ \widehat{ u } _h ^{\mathrm{tan}} $. This does not require
  \emph{any} local harmonics, and static condensation produces a
  symmetric positive-definite Schur complement, modulo the nullspace
  of global harmonics; by contrast, the previous hybridizations
  generally yield saddle-point problems. This alternative
  hybridization and its properties are discussed further in
  \cref{s:laplace_uhat}.

  In the case of the two-dimensional vector Laplacian (i.e., $ k = 1 $
  and $ n = 2 $), \citet{CoNuSa2026} refer to these as the Type I,
  Type II, and Type III hybridizations, respectively.

  Finally, we note that there is a fourth possible hybridization
  taking $ \widehat{ \sigma } _h ^{\mathrm{tan}} $ and
  $ \widehat{ \rho } _h ^{\mathrm{nor}} $ as the global trace
  variables. However, the space of local harmonics would then need to
  consist of \emph{all} $k$-forms that are closed and coclosed,
  without boundary conditions on $ \partial \mathcal{T} _h $. This can
  be substantially larger than the other local harmonic spaces, so
  this fourth hybridization is considerably less practical for
  implementation than the others.
\end{remark}

\subsection{Existence and uniqueness}

Unlike with Hodge--Dirac, we do not have the freedom to choose which
of $ \alpha ^{ k -1 } $ and $ \alpha ^k $ is positive and which is
negative. The negative-definite terms
$ - ( \sigma _h , \tau _h ) _{ \mathcal{T} _h } $ and
$ - ( \rho _h , \eta _h ) _{ \mathcal{T} _h } $ in \eqref{e:laplace}
lead to the following choice, which we assume for the remainder of
\cref{s:laplace}.

\begin{assumption}
  \label{a:laplace_penalty}
  Assume $ \alpha ^{ k -1 } > 0 $ and $ \alpha ^k < 0 $.
\end{assumption}

Under this assumption, we may now prove local and global existence and
uniqueness. The proofs are closely related to those of
\cref{t:dirac_local_eu,t:dirac_global_eu}, respectively.

\begin{theorem}[local existence and uniqueness]
  \label{t:laplace_local_eu}
  Given any $ \overline{ u } _h \in \overline{ \mathfrak{H} } _h ^k $,
  $ p _h \in \mathfrak{H} _h ^k $,
  $ \widehat{ \sigma } _h ^{\mathrm{tan}} \in \widehat{ V } _h ^{k-1,
    \mathrm{tan}} $,
  $ \widehat{ u } _h ^{\mathrm{tan}} \in \widehat{ V } _h ^{k,
    \mathrm{tan}} $, and $ f \in L ^2 \Lambda ^k (\Omega) $, there
  exist unique $ \sigma _h \in W _h ^{ k -1 } $, $ u _h \in W _h ^k $,
  $ \rho _h \in W _h ^{ k + 1 } $, and
  $ \overline{p} _h \in \overline{ \mathfrak{H} } _h ^k $ satisfying
  \eqref{e:laplace_tau}--\eqref{e:laplace_qbar}.
\end{theorem}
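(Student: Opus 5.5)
Since \eqref{e:laplace_tau}--\eqref{e:laplace_qbar} form a square linear system in the local unknowns $(\sigma_h,u_h,\rho_h,\overline{p}_h)$, we only need to show that the homogeneous problem has the trivial solution. So we set $\overline{u}_h=0$, $p_h=0$, $\widehat{\sigma}_h^{\mathrm{tan}}=0$, $\widehat{u}_h^{\mathrm{tan}}=0$, $f=0$, and show that everything vanishes. The homogeneous equations are then
\begin{align*}
-(\sigma_h,\tau_h)_{\mathcal{T}_h}+(\delta u_h,\tau_h)_{\mathcal{T}_h}-\langle\alpha^{k-1}\sigma_h^{\mathrm{tan}},\tau_h^{\mathrm{tan}}\rangle_{\partial\mathcal{T}_h}&=0,\\
(\sigma_h,\delta v_h)_{\mathcal{T}_h}+(\delta\rho_h,v_h)_{\mathcal{T}_h}-(\overline{p}_h,v_h)_{\mathcal{T}_h}-\langle\alpha^{k}u_h^{\mathrm{tan}},v_h^{\mathrm{tan}}\rangle_{\partial\mathcal{T}_h}&=0,\\
-(\rho_h,\eta_h)_{\mathcal{T}_h}+(u_h,\delta\eta_h)_{\mathcal{T}_h}&=0,
\end{align*}
together with $(u_h,\overline{q}_h)_{\mathcal{T}_h}=0$ for all $\overline{q}_h\in\overline{\mathfrak{H}}_h^k$.

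\emph{Step 1: $\overline{p}_h=0$.} We take $v_h=\overline{p}_h$ in the second equation, exactly as in \cref{t:dirac_local_eu}. By definition of $\overline{\mathfrak{H}}_h^k$ we have $\mathrm{d}\overline{p}_h=0$, $\delta\overline{p}_h=0$, and $\overline{p}_h^{\mathrm{tan}}=0$. Integrating $(\delta\rho_h,\overline{p}_h)$ by parts with \eqref{e:ibp_broken} gives $(\rho_h,\mathrm{d}\overline{p}_h)-\langle\overline{p}_h^{\mathrm{tan}},\rho_h^{\mathrm{nor}}\rangle=0$. The terms $(\sigma_h,\delta\overline{p}_h)$ and $\langle\alpha^k u_h^{\mathrm{tan}},\overline{p}_h^{\mathrm{tan}}\rangle$ also vanish. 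This leaves $\lVert\overline{p}_h\rVert^2=0$.

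\emph{Step 2: energy identity.} We test with $\tau_h=-\sigma_h$, $v_h=u_h$, $\eta_h=-\rho_h$ and add the three equations. The cross terms $-(\delta u_h,\sigma_h)+(\sigma_h,\delta u_h)$ cancel. For the $\rho$ terms, $(\delta\rho_h,u_h)-(u_h,\delta\rho_h)=0$ as well. What remains is
\begin{equation*}
\lVert\sigma_h\rVert^2_{\mathcal{T}_h}+\lVert\rho_h\rVert^2_{\mathcal{T}_h}+\langle\alpha^{k-1}\sigma_h^{\mathrm{tan}},\sigma_h^{\mathrm{tan}}\rangle_{\partial\mathcal{T}_h}-\langle\alpha^{k}u_h^{\mathrm{tan}},u_h^{\mathrm{tan}}\rangle_{\partial\mathcal{T}_h}=0.
\end{equation*}
By \cref{a:laplace_penalty} every term is nonnegative. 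Hence $\sigma_h=0$, $\rho_h=0$, and $u_h^{\mathrm{tan}}=0$. The main point to verify carefully is the sign bookkeeping here, and this is exactly where the prescribed signs $\alpha^{k-1}>0$ and $\alpha^k<0$ enter.

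\emph{Step 3: $u_h=0$.} With $\sigma_h=0$, the first equation gives $(\delta u_h,\tau_h)=0$ for all $\tau_h$. Since $\delta u_h\in W_h^{k-1}$, taking $\tau_h=\delta u_h$ shows $\delta u_h=0$. With $\rho_h=0$, the third equation gives $(u_h,\delta\eta_h)=0$. Integrating by parts, and using $u_h^{\mathrm{tan}}=0$ so the boundary term drops, this becomes $(\mathrm{d}u_h,\eta_h)=0$. Taking $\eta_h=\mathrm{d}u_h\in W_h^{k+1}$ gives $\mathrm{d}u_h=0$. On each $K$, $u_h$ is therefore closed, coclosed, and has vanishing tangential trace, so $u_h\in\overline{\mathfrak{H}}_h^k$. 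Finally, taking $\overline{q}_h=u_h$ in \eqref{e:laplace_qbar} (with $\overline{u}_h=0$) gives $u_h=0$.
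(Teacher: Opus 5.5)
Your proof is correct and follows the paper's argument step for step. You use the same test $v_h = \overline{p}_h$ to eliminate the local harmonic multiplier, and the same energy test $(\tau_h, v_h, \eta_h) = (-\sigma_h, u_h, -\rho_h)$ exploiting $\alpha^{k-1} > 0 > \alpha^k$. You also reach the same conclusion that $u_h$ is a local harmonic form, which \eqref{e:laplace_qbar} then forces to vanish.
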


\begin{proof}
  Since this is a square system, it suffices to show that
  \begin{subequations}
    \label{e:laplace_local}
    \begin{alignat}{2}
      - ( \sigma _h , \tau _h ) _{ \mathcal{T} _h } + ( \delta u _h , \tau _h ) _{ \mathcal{T} _h } -  \langle \alpha ^{ k -1 } \sigma _h ^{\mathrm{tan}} , \tau _h ^{\mathrm{tan}} \rangle _{ \partial \mathcal{T} _h } &= 0, \quad &\forall \tau _h &\in W _h ^{ k -1 } , \label{e:laplace_local_tau} \\
      ( \sigma _h , \delta v _h ) _{ \mathcal{T} _h } + ( \delta \rho _h ,  v _h ) _{ \mathcal{T} _h } - ( \overline{p} _h , v _h ) _{ \mathcal{T} _h } - \langle \alpha ^k u _h ^{\mathrm{tan}} , v _h ^{\mathrm{tan}} \rangle _{ \partial \mathcal{T} _h } &= 0 , \quad &\forall v _h &\in W _h ^k , \label{e:laplace_local_v} \\
      - ( \rho _h , \eta _h ) _{ \mathcal{T} _h } + ( u _h , \delta \eta _h ) _{ \mathcal{T} _h } &= 0, \quad &\forall \eta _h &\in W _h ^{ k + 1 } , \label{e:laplace_local_eta} \\
      - ( u _h , \overline{q} _h ) _{ \mathcal{T} _h } &= 0 , \quad &\forall \overline{q} _h &\in \overline{ \mathfrak{H} } _h ^k , \label{e:laplace_local_qbar}
    \end{alignat}
  \end{subequations}
  has only the trivial solution.

  First, taking $ v _h = \overline{p} _h $ in
  \eqref{e:laplace_local_v}, integrating by parts, and rearranging
  gives
  \begin{equation*}
    ( \overline{p} _h , \overline{p} _h ) _{ \mathcal{T} _h } = ( \sigma _h + \rho _h, \mathrm{D} \overline{p} _h ) _{ \mathcal{T} _h } - \langle \rho _h ^{\mathrm{nor}} + \alpha ^k u _h ^{\mathrm{tan}} , \overline{p} _h ^{\mathrm{tan}} \rangle _{ \partial \mathcal{T} _h } = 0 ,
  \end{equation*}
  since $ \mathrm{D} \overline{p} _h = 0 $ and
  $ \overline{p} _h ^{\mathrm{tan}} = 0 $ by definition of
  $ \overline{ \mathfrak{H} } _h ^k $. Hence, $ \overline{p} _h = 0 $.
  Next, summing
  \eqref{e:laplace_local_tau}--\eqref{e:laplace_local_eta} with
  $ \tau _h = - \sigma _h $, $ v _h = u _h $, and
  $ \eta _h = - \rho _h $ gives, after cancellation of terms,
  \begin{equation*}
    ( \sigma _h , \sigma _h ) _{ \mathcal{T} _h } + ( \rho _h , \rho _h ) _{ \mathcal{T} _h } + \langle \alpha ^{ k -1 } \sigma _h ^{\mathrm{tan}} , \sigma _h ^{\mathrm{tan}} \rangle _{ \partial \mathcal{T} _h } - \langle \alpha ^k u _h ^{\mathrm{tan}} , u _h ^{\mathrm{tan}} \rangle _{ \partial \mathcal{T} _h } = 0 .
  \end{equation*}
  By \cref{a:laplace_penalty}, the left-hand side is positive-definite
  in $ \sigma _h $, $ \rho _h $, and $ u _h ^{\mathrm{tan}} $, so we
  conclude that $ \sigma _h = 0 $, $ \rho _h = 0 $, and
  $ u _h ^{\mathrm{tan}} = 0 $. Next, taking $ \tau _h = \delta u _h $
  in \eqref{e:laplace_local_tau} implies $ \delta u _h = 0 $;
  similarly, integrating \eqref{e:laplace_local_eta} by parts and
  taking $ \eta _h = \mathrm{d} u _h $ implies
  $ \mathrm{d} u _h = 0 $. Thus,
  $ u _h \in \overline{ \mathfrak{H} } _h ^k $. Finally, taking
  $ \overline{q} _h = u _h $ in \eqref{e:laplace_local_qbar} implies
  $ u _h = 0 $, which completes the proof.
\end{proof}

\begin{theorem}[global existence and uniqueness]
  \label{t:laplace_global_eu}
  Given any $ f \in L ^2 \Lambda ^k (\Omega) $, there exists a unique
  solution to \eqref{e:laplace}.
\end{theorem}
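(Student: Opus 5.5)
Since \eqref{e:laplace} is a square linear system, it suffices to show that $f = 0$ forces the trivial solution. By \cref{l:laplace_properties}, $\overline{p}_h = 0$ and $p_h = P_{\mathfrak{H}_h} f = 0$, and $\overline{u}_h = P_{\overline{\mathfrak{H}}_h} u_h$ vanishes once $u_h$ does. So it remains to show that $\sigma_h$, $u_h$, $\rho_h$, $\widehat{\sigma}_h^{\mathrm{tan}}$, and $\widehat{u}_h^{\mathrm{tan}}$ all vanish. I would follow the proof of \cref{t:dirac_global_eu}, but use the sign pattern from \cref{t:laplace_local_eu}.

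First I form an energy identity. Sum \eqref{e:laplace_tau}, \eqref{e:laplace_v}, \eqref{e:laplace_eta}, \eqref{e:laplace_tautan}, and \eqref{e:laplace_vtan} with test functions
\begin{equation*}
  \tau_h = -\sigma_h, \quad v_h = u_h, \quad \eta_h = -\rho_h, \quad \widehat{\tau}_h^{\mathrm{tan}} = \widehat{\sigma}_h^{\mathrm{tan}}, \quad \widehat{v}_h^{\mathrm{tan}} = -\widehat{u}_h^{\mathrm{tan}}.
\end{equation*}
The signs on the hat test functions are to be fixed so that the cross terms cancel. The volume terms $(\delta u_h, \sigma_h)$ against $(\sigma_h, \delta u_h)$ cancel, and $(\delta\rho_h, u_h)$ against $(u_h, \delta\rho_h)$ cancel. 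The boundary terms $\langle \widehat{\sigma}_h^{\mathrm{tan}}, u_h^{\mathrm{nor}}\rangle$, $\langle u_h^{\mathrm{nor}}, \widehat{\sigma}_h^{\mathrm{tan}}\rangle$, $\langle \widehat{u}_h^{\mathrm{tan}}, \rho_h^{\mathrm{nor}}\rangle$, and $\langle \rho_h^{\mathrm{nor}}, \widehat{u}_h^{\mathrm{tan}}\rangle$ should pair up and cancel. The penalty terms should combine into complete squares in $\widehat{\sigma}_h^{\mathrm{tan}} - \sigma_h^{\mathrm{tan}}$ and $\widehat{u}_h^{\mathrm{tan}} - u_h^{\mathrm{tan}}$. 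The expected result is
\begin{equation*}
  \lVert \sigma_h \rVert_{\mathcal{T}_h}^2 + \lVert \rho_h \rVert_{\mathcal{T}_h}^2 + \bigl\langle \alpha^{k-1} ( \widehat{\sigma}_h^{\mathrm{tan}} - \sigma_h^{\mathrm{tan}} ), \widehat{\sigma}_h^{\mathrm{tan}} - \sigma_h^{\mathrm{tan}} \bigr\rangle_{\partial\mathcal{T}_h} - \bigl\langle \alpha^{k} ( \widehat{u}_h^{\mathrm{tan}} - u_h^{\mathrm{tan}} ), \widehat{u}_h^{\mathrm{tan}} - u_h^{\mathrm{tan}} \bigr\rangle_{\partial\mathcal{T}_h} = 0 .
\end{equation*}
The main bookkeeping obstacle is getting every sign right so that the cross terms really do cancel and the remainder is positive. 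If a sign fails, I would flip the sign of one of the hat test functions.

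Next I read off consequences. By \cref{a:laplace_penalty}, every term above is nonnegative, so $\sigma_h = 0$, $\rho_h = 0$, $\widehat{\sigma}_h^{\mathrm{tan}} = \sigma_h^{\mathrm{tan}} = 0$, and $\widehat{u}_h^{\mathrm{tan}} = u_h^{\mathrm{tan}}$. It follows that $\widehat{u}_h^{\mathrm{nor}} = u_h^{\mathrm{nor}}$. By \cref{l:laplace_properties}, $\jump{u_h^{\mathrm{nor}}} = 0$. Also $\jump{u_h^{\mathrm{tan}}} = \jump{\widehat{u}_h^{\mathrm{tan}}} = 0$ because $\widehat{u}_h^{\mathrm{tan}} \in \widehat{V}_h^{k,\mathrm{tan}}$. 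Hence $u_h \in H\Lambda(\Omega) \cap \mathring{H}^*\Lambda(\Omega)$.

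Finally I show $u_h$ is harmonic and hence zero. Equation \eqref{e:laplace_tau} with $\sigma_h = 0$ reduces to $(\delta u_h, \tau_h) = 0$; taking $\tau_h = \delta u_h$ gives $\delta u_h = 0$. Equation \eqref{e:laplace_eta} with $\rho_h = 0$ and $\widehat{u}_h^{\mathrm{tan}} = u_h^{\mathrm{tan}}$ becomes, after integrating by parts via \eqref{e:ibp_broken}, $(\mathrm{d} u_h, \eta_h) = 0$; taking $\eta_h = \mathrm{d} u_h$ gives $\mathrm{d} u_h = 0$. Thus $u_h \in W_h \cap \mathfrak{H}(\Omega) = \mathfrak{H}_h^k$. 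Taking $q_h = u_h$ in \eqref{e:laplace_q} gives $u_h = 0$. Then $\widehat{u}_h^{\mathrm{tan}} = u_h^{\mathrm{tan}} = 0$ and $\overline{u}_h = 0$, which completes the argument.
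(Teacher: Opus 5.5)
Your strategy is the paper's: you form an energy identity, deduce $\sigma_h=\rho_h=0$, $\widehat\sigma_h^{\mathrm{tan}}=0$ and $\widehat u_h^{\mathrm{tan}}=u_h^{\mathrm{tan}}$, get conformity, then $\delta u_h=\mathrm{d}u_h=0$, and finish with \eqref{e:laplace_q}. The identity you display is also the right one. The problem is that the test functions you actually commit to do not produce it. Take $\tau_h=-\sigma_h$, $v_h=u_h$, $\eta_h=-\rho_h$, $\widehat\tau_h^{\mathrm{tan}}=\widehat\sigma_h^{\mathrm{tan}}$, $\widehat v_h^{\mathrm{tan}}=-\widehat u_h^{\mathrm{tan}}$. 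Summing \eqref{e:laplace_tau}--\eqref{e:laplace_eta}, \eqref{e:laplace_tautan} and \eqref{e:laplace_vtan} with $f=0$ then gives
\begin{multline*}
\lVert\sigma_h\rVert^2_{\mathcal{T}_h}+\lVert\rho_h\rVert^2_{\mathcal{T}_h}+2\langle\widehat\sigma_h^{\mathrm{tan}},u_h^{\mathrm{nor}}\rangle_{\partial\mathcal{T}_h}-2\langle\widehat u_h^{\mathrm{tan}},\rho_h^{\mathrm{nor}}\rangle_{\partial\mathcal{T}_h}\\
-\bigl\langle\alpha^{k-1}(\widehat\sigma_h^{\mathrm{tan}}-\sigma_h^{\mathrm{tan}}),\widehat\sigma_h^{\mathrm{tan}}+\sigma_h^{\mathrm{tan}}\bigr\rangle_{\partial\mathcal{T}_h}+\bigl\langle\alpha^{k}(\widehat u_h^{\mathrm{tan}}-u_h^{\mathrm{tan}}),\widehat u_h^{\mathrm{tan}}+u_h^{\mathrm{tan}}\bigr\rangle_{\partial\mathcal{T}_h}=0 .
\end{multline*}
The trace cross terms double instead of cancelling, and the penalty terms are not squares, so this identity is sign-indefinite and yields nothing. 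Your fallback of flipping the sign of \emph{one} hat test function does not fix it. The pair $\widehat\sigma_h^{\mathrm{tan}}$/$u_h^{\mathrm{nor}}$ and the pair $\widehat u_h^{\mathrm{tan}}$/$\rho_h^{\mathrm{nor}}$ each fail independently, so both signs must be flipped.

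The choice that works, and the one the paper uses, is $\widehat\tau_h^{\mathrm{tan}}=-\widehat\sigma_h^{\mathrm{tan}}$ and $\widehat v_h^{\mathrm{tan}}=\widehat u_h^{\mathrm{tan}}$. Each hat test function must carry the same sign as the volume test function of the same form degree, just as in the proof of \cref{t:dirac_global_eu}, where $w_h=z_h^+-z_h^-$ and $\widehat w_h^{\mathrm{tan}}=\widehat z_h^{+,\mathrm{tan}}-\widehat z_h^{-,\mathrm{tan}}$. With these signs, $\langle\widehat\sigma_h^{\mathrm{tan}},u_h^{\mathrm{nor}}\rangle$ from \eqref{e:laplace_v} cancels against \eqref{e:laplace_tautan}, and $\langle\widehat u_h^{\mathrm{tan}},\rho_h^{\mathrm{nor}}\rangle$ from \eqref{e:laplace_eta} cancels against \eqref{e:laplace_vtan}. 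The penalty terms combine into $\langle\alpha^{k-1}(\widehat\sigma_h^{\mathrm{tan}}-\sigma_h^{\mathrm{tan}}),\widehat\sigma_h^{\mathrm{tan}}-\sigma_h^{\mathrm{tan}}\rangle-\langle\alpha^{k}(\widehat u_h^{\mathrm{tan}}-u_h^{\mathrm{tan}}),\widehat u_h^{\mathrm{tan}}-u_h^{\mathrm{tan}}\rangle$, exactly as in your displayed identity. With that correction, the rest of your argument is correct and coincides with the paper's proof.
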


\begin{proof}
  Since \eqref{e:laplace} is a square system, it suffices to show that
  $ f = 0 $ has only the trivial solution. \Cref{l:laplace_properties}
  gives $ \overline{p} _h = 0 $ and $ p _h = 0 $, while
  $ \overline{ u } _h $ clearly vanishes if $ u _h $ does, so it
  remains to show that $ \sigma _h $, $ u _h $, $ \rho _h $,
  $ \widehat{ \sigma } _h ^{\mathrm{tan}} $, and
  $ \widehat{ u } _h ^{\mathrm{tan}} $ all vanish.

  Take $ \tau _h = - \sigma _h $, $ v _h = u _h $,
  $ \eta _h = - \rho _h $,
  $ \widehat{ \tau } _h ^{\mathrm{tan}} = - \widehat{ \sigma } _h
  ^{\mathrm{tan}} $, and
  $ \widehat{ v } _h ^{\mathrm{tan}} = \widehat{ u } _h
  ^{\mathrm{tan}} $. Adding
  \eqref{e:laplace_tau}--\eqref{e:laplace_eta} and
  \eqref{e:laplace_tautan}--\eqref{e:laplace_vtan}, and cancelling
  terms, gives
  \begin{equation*}
    ( \sigma _h , \sigma _h ) _{ \mathcal{T} _h } + ( \rho _h , \rho _h ) _{ \mathcal{T} _h } + \bigl\langle \alpha ^{ k -1 } ( \widehat{ \sigma } _h ^{\mathrm{tan}} - \sigma _h ^{\mathrm{tan}} ) , \widehat{ \sigma } _h ^{\mathrm{tan}} - \sigma _h ^{\mathrm{tan}} \bigr\rangle _{ \partial \mathcal{T} _h } - \bigl\langle \alpha ^k ( \widehat{ u } _h ^{\mathrm{tan}} - u _h ^{\mathrm{tan}} ) , \widehat{ u } _h ^{\mathrm{tan}} - u _h ^{\mathrm{tan}} \bigr\rangle _{ \partial \mathcal{T} _h } = 0 .
  \end{equation*}
  By \cref{a:laplace_penalty}, it follows that $ \sigma _h = 0 $,
  $ \rho _h = 0 $, $ \widehat{ \sigma } _h ^{\mathrm{tan}} = 0 $, and
  $ \widehat{ u } _h ^{\mathrm{tan}} = u _h ^{\mathrm{tan}} $, and
  consequently also
  $ \widehat{ u } _h ^{\mathrm{nor}} = u _h ^{\mathrm{nor}} $. Hence,
  $ \jump{ u _h ^{\mathrm{tan}} } = 0 $ and
  $ \jump{ u _h ^{\mathrm{nor}} } = 0 $, so
  $ u _h \in H \Lambda ^k (\Omega) \cap \mathring{ H } ^\ast \Lambda
  ^k (\Omega) $.

  Accounting for the terms already shown to vanish,
  \eqref{e:laplace_tau} with $ \tau _h = \delta u _h $ now implies
  $ \delta u _h = 0 $; likewise, integrating \eqref{e:laplace_eta} by
  parts and taking $ \eta _h = \mathrm{d} u _h $ implies
  $ \mathrm{d} u _h = 0 $. Hence, $ u _h \in \mathfrak{H} _h ^k
  $. Finally, taking $ q _h = u _h $ in \eqref{e:laplace_q} implies
  $ u _h = 0 $, and thus
  $ \widehat{ u } _h ^{\mathrm{tan}} = u _h ^{\mathrm{tan}} = 0 $ as
  well.
\end{proof}

\subsection{Jump-average DG formulation}

As with the Hodge--Dirac HDG method, it will be useful to pose our
Hodge--Laplace HDG method in an ``unhybridized'' DG form, expressed in
terms of tangential and normal jumps and averages, for the subsequent
error analysis.  The following results are a straightforward
adaptation of \cref{l:dg_traces,p:dg,c:dg} to the
Hodge--Laplace problem. We omit the proofs, which largely repeat
earlier arguments.

\begin{proposition}
  The approximate traces for the Hodge--Laplace HDG method satisfy
  \begin{align*}
    \widehat{ \sigma } _h ^{\mathrm{tan}} &= \avg{ \sigma _h ^{\mathrm{tan}} } + \frac{ 1 }{ \avg{ \alpha ^{ k -1 } } } \jump{ u _h ^{\mathrm{nor}} } + \frac{ \jump{ \alpha ^{ k -1 } }}{ \avg{ \alpha ^{ k -1 } } } \jump{ \sigma _h ^{\mathrm{tan}} } ,\\
    \widehat{ u } _h ^{\mathrm{nor}} &= \avg{ u _h ^{\mathrm{nor}} } + \frac{ 1 }{ \avg{ \beta ^{ k -1 } } } \jump{ \sigma _h ^{\mathrm{tan}} } + \frac{ \jump{ \beta ^{ k -1 } }}{ \avg{ \beta ^{ k -1 } } } \jump{ u _h ^{\mathrm{nor}} } ,\\
    \widehat{ u } _h ^{\mathrm{tan}} &= \avg{ u _h ^{\mathrm{tan}} } + \frac{ 1 }{ \avg{ \alpha ^k } } \jump{ \rho _h ^{\mathrm{nor}} } + \frac{ \jump{ \alpha ^k }}{ \avg{ \alpha ^k } } \jump{ u _h ^{\mathrm{tan}} } ,\\
    \widehat{ \rho } _h ^{\mathrm{nor}} &= \avg{ \rho _h ^{\mathrm{nor}} } + \frac{ 1 }{ \avg{ \beta ^k } } \jump{ u _h ^{\mathrm{tan}} } + \frac{ \jump{ \beta ^k }}{ \avg{ \beta ^k } } \jump{ \rho _h ^{\mathrm{nor}} } ,
  \end{align*}
  and the method is equivalent to finding
  $ \sigma _h \in W _h ^{ k -1 } $, $ u _h \in W _h ^k $,
  $ \rho _h \in W _h ^{ k + 1 } $, and $ p _h \in \mathfrak{H} _h ^k $
  satisfying
  \begin{subequations}
    \label{e:laplace_dg}
    \begin{alignat}{2}
      - ( \sigma _h , \tau _h ) _{ \mathcal{T} _h } + ( \delta u _h , \tau _h ) _{ \mathcal{T} _h } + \bigl\langle \jump{ u _h ^{\mathrm{nor}} }, \avg{ \tau _h ^{\mathrm{tan}} } \bigr\rangle _{ \partial \mathcal{T} _h } \qquad \label{e:laplace_dg_tau} \\
      {}- \biggl\langle \frac{ 1 }{ \avg{ \beta ^{ k -1 } } } \jump{ \sigma _h ^{\mathrm{tan}} } + \frac{ \jump{ \beta ^{ k -1 } }}{ \avg{ \beta ^{ k -1 } } } \jump{ u _h ^{\mathrm{nor}} } , \jump{ \tau _h ^{\mathrm{tan}} } \biggr\rangle _{ \partial \mathcal{T} _h } &= 0, \quad & \forall \tau _h &\in W _h ^{ k -1 } , \notag \\
      ( \sigma _h , \delta v _h ) _{ \mathcal{T} _h } + ( \delta \rho _h , v _h ) _{ \mathcal{T} _h } + ( p _h , v _h ) _{ \mathcal{T} _h } \qquad \qquad \qquad \label{e:laplace_dg_v} \\
      {}+ \bigl\langle \avg{ \sigma _h ^{\mathrm{tan}} } , \jump{ v _h ^{\mathrm{nor}} } \bigr\rangle _{ \partial \mathcal{T} _h } + \bigl\langle \jump{ \rho _h ^{\mathrm{nor}} } , \avg{ v _h ^{\mathrm{tan}} } \bigr\rangle _{ \partial \mathcal{T} _h } \qquad \notag \\
      {}+ \biggl\langle \frac{ 1 }{ \avg{ \alpha ^{ k -1 } } } \jump{ u _h ^{\mathrm{nor}} } + \frac{ \jump{ \alpha ^{ k -1 } }}{ \avg{ \alpha ^{ k -1 } } } \jump{ \sigma _h ^{\mathrm{tan}} } , \jump{ v _h ^{\mathrm{nor}} } \biggr\rangle _{ \partial \mathcal{T} _h } \notag\\
      {}- \biggl\langle \frac{ 1 }{ \avg{ \beta ^k } } \jump{ u _h ^{\mathrm{tan}} } + \frac{ \jump{ \beta ^k }}{ \avg{ \beta ^k } } \jump{ \rho _h ^{\mathrm{nor}} } , \jump{ v _h ^{\mathrm{tan}} } \biggr\rangle _{ \partial \mathcal{T} _h } &= ( f, v _h ) _{ \mathcal{T} _h } , \quad & \forall v _h &\in W _h ^k , \notag \\
      - ( \rho _h , \eta _h ) _{ \mathcal{T} _h } + ( u _h , \delta \eta _h ) _{ \mathcal{T} _h } + \bigl\langle \avg{ u _h ^{\mathrm{tan}} }, \jump{ \eta _h ^{\mathrm{nor}} } \bigr\rangle _{ \partial \mathcal{T} _h } \qquad \label{e:laplace_dg_eta} \\
      {}+ \biggl\langle \frac{ 1 }{ \avg{ \alpha ^k } } \jump{ \rho _h ^{\mathrm{nor}} } + \frac{ \jump{ \alpha ^k }}{ \avg{ \alpha ^k } } \jump{ u _h ^{\mathrm{tan}} }, \jump{ \eta _h ^{\mathrm{nor}} } \biggr\rangle _{ \partial \mathcal{T} _h } &= 0 , \quad & \forall \eta _h &\in W _h ^{ k + 1 } , \notag \\
      ( u _h , q _h ) _{ \mathcal{T} _h } &= 0, \quad &\forall q _h &\in \mathfrak{H} _h ^k .
      \end{alignat}
  \end{subequations}
\end{proposition}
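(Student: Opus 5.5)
The approach is to mirror the proofs of \cref{l:dg_traces} and \cref{p:dg}, applied separately to the two trace pairs. The pair $(\widehat{\sigma}_h^{\mathrm{tan}}, \widehat{u}_h^{\mathrm{nor}})$ is coupled through $\alpha^{k-1}$, and the pair $(\widehat{u}_h^{\mathrm{tan}}, \widehat{\rho}_h^{\mathrm{nor}})$ is coupled through $\alpha^k$. For each pair, the inputs are the defining relation for the normal trace, e.g.\ $\widehat{u}_h^{\mathrm{nor}} = u_h^{\mathrm{nor}} - \alpha^{k-1}(\widehat{\sigma}_h^{\mathrm{tan}} - \sigma_h^{\mathrm{tan}})$, together with $\jump{\widehat{\sigma}_h^{\mathrm{tan}}} = 0$ (membership in $\widehat{V}_h^{k-1,\mathrm{tan}}$) and $\jump{\widehat{u}_h^{\mathrm{nor}}} = 0$ (from \cref{l:laplace_properties}). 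These are exactly the hypotheses used in \cref{l:dg_traces}, with $z^{\mathrm{tan}} \leftrightarrow \sigma_h^{\mathrm{tan}}$ and $z^{\mathrm{nor}} \leftrightarrow u_h^{\mathrm{nor}}$.

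First, take the normal jump of the defining relation. Since the normal jump coincides with the tangential average, the product identity $\avg{\alpha \widehat{w}^{\mathrm{tan}}} = \avg{\alpha}\avg{\widehat{w}^{\mathrm{tan}}} + \jump{\alpha}\jump{\widehat{w}^{\mathrm{tan}}}$ applies, and solving for $\widehat{\sigma}_h^{\mathrm{tan}}$ gives the first formula. Next, write $\widehat{\sigma}_h^{\mathrm{tan}} = \sigma_h^{\mathrm{tan}} - \beta^{k-1}(\widehat{u}_h^{\mathrm{nor}} - u_h^{\mathrm{nor}})$, take the tangential jump, and solve to obtain the formula for $\widehat{u}_h^{\mathrm{nor}}$. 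The same two computations with $\alpha^k$, $\beta^k$ and the pair $(u_h, \rho_h)$ give the remaining two formulas. Everything is pointwise on facets, so the boundary facets follow from the boundary conventions in \cref{d:jump-avg} exactly as before.

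Second, to obtain \eqref{e:laplace_dg}, use \cref{l:laplace_properties} to set $\overline{p}_h = 0$ and drop \eqref{e:laplace_qbar}--\eqref{e:laplace_vbar}. The trace equations \eqref{e:laplace_tautan}--\eqref{e:laplace_vtan} are encoded by the single-valuedness already used in the first step, so they can be discarded; conversely, defining the hats by these formulas recovers them, since the jump conditions hold and the hats lie in $\widehat{V}_h$ ($\alpha$ being piecewise constant). Then rewrite each boundary term in \eqref{e:laplace_tau}--\eqref{e:laplace_eta} as in the proof of \cref{p:dg}:
\begin{itemize}
\item In \eqref{e:laplace_tau}, replace $\alpha^{k-1}(\widehat{\sigma}_h^{\mathrm{tan}} - \sigma_h^{\mathrm{tan}})$ by $-(\widehat{u}_h^{\mathrm{nor}} - u_h^{\mathrm{nor}})$ and split using \cref{p:jump-avg}. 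The $\avg{u_h^{\mathrm{nor}}}$ contributions cancel, leaving $\bigl\langle \jump{u_h^{\mathrm{nor}}}, \avg{\tau_h^{\mathrm{tan}}} \bigr\rangle$ minus the penalized $\jump{\tau_h^{\mathrm{tan}}}$ term.
\item In \eqref{e:laplace_v}, treat the $\widehat{\sigma}_h^{\mathrm{tan}}$ term as the $\alpha^{k-1}$ pairing with $\jump{v_h^{\mathrm{nor}}}$, and the $\alpha^k$ term as in \cref{p:dg}.
\item In \eqref{e:laplace_eta}, $\langle \widehat{u}_h^{\mathrm{tan}}, \eta_h^{\mathrm{nor}} \rangle$ reduces to a pairing with $\jump{\eta_h^{\mathrm{nor}}}$, since $\widehat{u}_h^{\mathrm{tan}}$ is single-valued and hence orthogonal to $\avg{\eta_h^{\mathrm{nor}}}$.
\end{itemize}

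There is no real obstacle here. The only point needing care is sign bookkeeping in \eqref{e:laplace_tau}, where the boundary term enters with the opposite role relative to the Dirac case. It must be checked that integration by parts has already converted $(u_h, \mathrm{d}\tau_h)$ into $(\delta u_h, \tau_h)$ plus the $u_h^{\mathrm{nor}}$ trace, which is what produces the cancellation described above.
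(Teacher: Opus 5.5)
Your proposal is correct and follows the route the paper intends: the paper omits this proof as a direct adaptation of \cref{l:dg_traces} and \cref{p:dg}, and you apply exactly those arguments separately to the $\alpha^{k-1}$-coupled pair $(\widehat{\sigma}_h^{\mathrm{tan}}, \widehat{u}_h^{\mathrm{nor}})$ and the $\alpha^k$-coupled pair $(\widehat{u}_h^{\mathrm{tan}}, \widehat{\rho}_h^{\mathrm{nor}})$. One small point: the boundary term in \eqref{e:laplace_tau} is just the degree-$(k-1)$ component of the $\alpha$-term in \eqref{e:dirac_w}, so its sign bookkeeping is identical to the Dirac case rather than playing an opposite role.
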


\begin{corollary}
  If $ \alpha ^{ k -1 } $ and $ \alpha ^k $ are single-valued, then
  the approximate traces satisfy
  \begin{alignat*}{2}
    \widehat{ \sigma } _h ^{\mathrm{tan}} &= \avg{ \sigma _h ^{\mathrm{tan}} } + \beta ^{ k -1 } \jump{ u _h ^{\mathrm{nor}} } , \qquad & \widehat{ u } _h ^{\mathrm{tan}} &= \avg{ u _h ^{\mathrm{tan}} } + \beta ^k \jump{ \rho _h ^{\mathrm{nor}} } ,\\
    \widehat{ u } _h ^{\mathrm{nor}} &= \avg{ u _h ^{\mathrm{nor}} } + \alpha ^{ k -1 } \jump{ \sigma _h ^{\mathrm{tan}} } , \qquad & \widehat{ \rho } _h ^{\mathrm{nor}} &= \avg{ \rho _h ^{\mathrm{nor}} } + \alpha ^k \jump{ u _h ^{\mathrm{tan}} } ,
  \end{alignat*}
  and the method is equivalent to finding
  $ \sigma _h \in W _h ^{ k -1 } $, $ u _h \in W _h ^k $,
  $ \rho _h \in W _h ^{ k + 1 } $, and $ p _h \in \mathfrak{H} _h ^k $
  satisfying
  \begin{alignat*}{2}
    - ( \sigma _h , \tau _h ) _{ \mathcal{T} _h } + ( \delta u _h , \tau _h ) _{ \mathcal{T} _h } + \bigl\langle \jump{ u _h ^{\mathrm{nor}} }, \avg{ \tau _h ^{\mathrm{tan}} } \bigr\rangle _{ \partial \mathcal{T} _h } \\
    {}- \bigl\langle \alpha ^{ k -1 } \jump{ \sigma _h ^{\mathrm{tan}} } , \jump{ \tau _h ^{\mathrm{tan}} } \bigr\rangle _{ \partial \mathcal{T} _h } &= 0, \quad & \forall \tau _h &\in W _h ^{ k -1 } , \\
    ( \sigma _h , \delta v _h ) _{ \mathcal{T} _h } + ( \delta \rho _h , v _h ) _{ \mathcal{T} _h } + ( p _h , v _h ) _{ \mathcal{T} _h } \qquad \qquad \qquad \\
    {}+ \bigl\langle \avg{ \sigma _h ^{\mathrm{tan}} } , \jump{ v _h ^{\mathrm{nor}} } \bigr\rangle _{ \partial \mathcal{T} _h } + \bigl\langle \jump{ \rho _h ^{\mathrm{nor}} } , \avg{ v _h ^{\mathrm{tan}} } \bigr\rangle _{ \partial \mathcal{T} _h } \qquad \quad \\
    {}+ \bigl\langle \beta ^{ k -1 } \jump{ u _h ^{\mathrm{nor}} } , \jump{ v _h ^{\mathrm{nor}} } \bigr\rangle _{ \partial \mathcal{T} _h } - \bigl\langle \alpha ^k \jump{ u _h ^{\mathrm{tan}} } , \jump{ v _h ^{\mathrm{tan}} } \bigr\rangle _{ \partial \mathcal{T} _h } &= ( f, v _h ) _{ \mathcal{T} _h } , \quad & \forall v _h &\in W _h ^k , \\
    - ( \rho _h , \eta _h ) _{ \mathcal{T} _h } + ( u _h , \delta \eta _h ) _{ \mathcal{T} _h } + \bigl\langle \avg{ u _h ^{\mathrm{tan}} }, \jump{ \eta _h ^{\mathrm{nor}} } \bigr\rangle _{ \partial \mathcal{T} _h } \\
    {}+ \bigl\langle \beta ^k \jump{ \rho _h ^{\mathrm{nor}} }, \jump{ \eta _h ^{\mathrm{nor}} } \bigr\rangle _{ \partial \mathcal{T} _h } &= 0 , \quad & \forall \eta _h &\in W _h ^{ k + 1 } , \\
    ( u _h , q _h ) _{ \mathcal{T} _h } &= 0, \quad &\forall q _h &\in \mathfrak{H} _h ^k .
  \end{alignat*}
\end{corollary}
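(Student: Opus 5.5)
The corollary follows by specializing the preceding proposition to single-valued penalties, exactly as \cref{c:dg} follows from \cref{l:dg_traces,p:dg}. I will take the proposition's trace formulas and its DG formulation \eqref{e:laplace_dg} as given and simplify every coefficient in which $\jump{\alpha^{k-1}}$, $\jump{\alpha^k}$, $\jump{\beta^{k-1}}$ or $\jump{\beta^k}$ appears.

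First, single-valuedness means $\jump{\alpha^j} = 0$ for $j = k-1, k$. Hence $\avg{\alpha^j} = \alpha^j$ on every facet side: on interior facets the two one-sided values agree, and on boundary facets $\avg{\alpha^j} = \alpha^j_e$ by definition. Since $\beta^j = 1/\alpha^j$, the reciprocal is also single-valued, so $\jump{\beta^j} = 0$ and $\avg{\beta^j} = \beta^j$. The remark following \cref{p:dg} gives the same conclusion via $\jump{\beta}/\avg{\beta} = -\jump{\alpha}/\avg{\alpha}$.

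Substituting these into the proposition's formulas gives the trace identities directly:
\begin{itemize}
\item $1/\avg{\alpha^{k-1}} = \beta^{k-1}$, so $\widehat{\sigma}_h^{\mathrm{tan}} = \avg{\sigma_h^{\mathrm{tan}}} + \beta^{k-1}\jump{u_h^{\mathrm{nor}}}$;
\item $1/\avg{\beta^{k-1}} = \alpha^{k-1}$, so $\widehat{u}_h^{\mathrm{nor}} = \avg{u_h^{\mathrm{nor}}} + \alpha^{k-1}\jump{\sigma_h^{\mathrm{tan}}}$;
\item $\widehat{u}_h^{\mathrm{tan}}$ and $\widehat{\rho}_h^{\mathrm{nor}}$ follow in the same way with $k$ in place of $k-1$.
\end{itemize}
All the cross terms vanish because their coefficients contain a jump of $\alpha$ or $\beta$. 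The penalty can be pulled out of the products $\jump{\cdot}$ and $\avg{\cdot}$ because it is single-valued and piecewise constant, which is consistent with the product identities stated before \cref{l:dg_traces}.

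Applying the same simplifications term by term in \eqref{e:laplace_dg} yields the stated system:
\begin{itemize}
\item In \eqref{e:laplace_dg_tau}, the penalty term reduces to $-\langle \alpha^{k-1}\jump{\sigma_h^{\mathrm{tan}}}, \jump{\tau_h^{\mathrm{tan}}}\rangle_{\partial\mathcal{T}_h}$.
\item In \eqref{e:laplace_dg_v}, the two penalty terms reduce to $\langle \beta^{k-1}\jump{u_h^{\mathrm{nor}}}, \jump{v_h^{\mathrm{nor}}}\rangle_{\partial\mathcal{T}_h}$ and $-\langle \alpha^k\jump{u_h^{\mathrm{tan}}}, \jump{v_h^{\mathrm{tan}}}\rangle_{\partial\mathcal{T}_h}$.
\item In \eqref{e:laplace_dg_eta}, the penalty term reduces to $\langle \beta^k\jump{\rho_h^{\mathrm{nor}}}, \jump{\eta_h^{\mathrm{nor}}}\rangle_{\partial\mathcal{T}_h}$.
\item The harmonic orthogonality equation is unchanged.
\end{itemize}
Because the simplifications are identities on the coefficients, equivalence with \eqref{e:laplace} is inherited directly from the proposition.

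No step is a genuine obstacle. The only point requiring care is the boundary facets, where the conventions of \cref{d:jump-avg} set $\jump{\alpha} = 0$ and $\avg{\alpha} = \alpha_e$. This makes the identifications $\avg{\alpha} = \alpha$ and $1/\avg{\alpha} = \beta$ hold there as well, with nothing extra to check.
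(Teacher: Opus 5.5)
Your proposal is correct and follows the route the paper intends. The paper omits this proof as a straightforward adaptation of \cref{c:dg}, which comes from the general trace formulas and the jump-average formulation by setting $\jump{\alpha} = \jump{\beta} = 0$ and $\avg{\alpha} = \alpha$, $\avg{\beta} = \beta$ (so $1/\avg{\alpha} = \beta$, $1/\avg{\beta} = \alpha$). That is exactly your specialization of the preceding proposition, including the check on boundary facets.
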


\subsection{Error analysis}

We now analyze the error of these methods for the Hodge--Laplace
problem, following the same general outline we used in
\cref{s:dirac_error} for Hodge--Dirac.

\subsubsection{Bilinear forms}
We begin by expressing the Hodge--Laplace variational problem in terms
of the same bilinear forms used in the Hodge--Dirac error
analysis. This will allow us to use many of the analytical results we
have already established, including the approximation-theoretic
estimates for $ K _{\mathcal{A}} $ and $ K _{\mathcal{B}} $ from
\cref{s:KA_KB}. First, we write
\begin{equation*}
  z _h = \sigma _h \oplus u _h \oplus \rho _h , \qquad w _h = \tau _h \oplus v _h \oplus \eta _h ,
\end{equation*}
and observe that \eqref{e:laplace_dg} can be expressed as
\begin{equation*}
  a ( z _h , w _h ) - ( \sigma _h , \tau _h ) _{ \mathcal{T} _h } - ( \rho _h , \eta _h ) _{ \mathcal{T} _h } = ( f - p _h , w _h ) _{ \mathcal{T} _h } .
\end{equation*}
The bilinear form $ \mathcal{A} ( \cdot , \cdot ) $ replaces
$ \tau _h $ by $ - \tau _h $ and $ \eta _h $ by $ - \eta _h $, i.e.,
it flips the sign of equations \eqref{e:laplace_dg_tau} and
\eqref{e:laplace_dg_eta}. We may therefore write \eqref{e:laplace_dg}
as
\begin{equation*}
  \mathcal{A} ( z _h , w _h ) + \mathcal{C} ( z _h , w _h ) = ( f - p _h , w _h ) _{ \mathcal{T} _h } ,
\end{equation*}
where
\begin{equation*}
  \mathcal{C} ( z _h , w _h ) \coloneqq ( \sigma _h , \tau _h ) _{ \mathcal{T} _h } + ( \rho _h , \eta _h ) _{ \mathcal{T} _h } ,
\end{equation*}
i.e., $ \mathcal{C} ( \cdot , \cdot ) $ is just the $ L ^2 $ inner
product on $ W ^{ k -1 } \oplus W ^{ k + 1 } $.

\begin{remark}
  The astute reader may notice that the expression
  $ \mathcal{A} ( z _h , w _h ) $, according to \cref{d:A}, would also
  contain the extra terms
  \begin{equation*}
    - \biggl\langle \frac{ 1 }{ \avg{ \alpha ^{ k - 2 } } } \jump{ \sigma _h ^{\mathrm{nor}} } , \jump{ \tau _h ^{\mathrm{nor}} } \biggr\rangle _{ \partial \mathcal{T} _h } + \biggl\langle \frac{ 1 }{ \avg{ \beta ^{ k + 1 } } }  \jump{ \rho _h ^{\mathrm{tan}} }, \jump{ \eta _h ^{\mathrm{tan}} } \biggr\rangle _{ \partial \mathcal{T} _h } ,
  \end{equation*}
  which do not appear in \eqref{e:laplace_dg}. Such terms arise in the
  Hodge--Dirac HDG method from traces of $ ( k \pm 2 ) $-forms, but
  those form degrees are not involved in the Hodge--Laplace problem.

  To eliminate these extra terms, we formally set
  $ \beta ^{ k - 2 } = 0 $ and $ \alpha ^{ k + 1 } = 0 $. In doing so,
  we no longer have the full seminorm equivalence
  \eqref{e:A_equiv_L2}, which is used in the last step of the proof of
  \cref{l:KB}. However, we do have the modified equivalence
  \begin{equation*}
    \lvert \tau \oplus v \oplus \eta \rvert _{\mathcal{A}} ^2 \asymp \bigl\lVert \jump{ \tau ^{\mathrm{tan}} } \bigr\rVert _{ \partial \mathcal{T} _h } ^2 + \bigl\lVert \jump{ v ^{\mathrm{nor}} } \bigr\rVert _{ \partial \mathcal{T} _h } ^2 + \bigl\lVert \jump{ v ^{\mathrm{tan}} } \bigr\rVert _{ \partial \mathcal{T} _h } ^2 + \bigl\lVert \jump{ \eta ^{\mathrm{nor}} } \bigr\rVert _{ \partial \mathcal{T} _h } ^2 ,
  \end{equation*}
  which is sufficient for our purposes: the missing terms
  $ \jump{ \tau ^{\mathrm{nor}} } $ and
  $ \jump{ \eta ^{\mathrm{tan}} } $ never appear in the Hodge--Laplace
  problem, so we have no need to control them. Hence the instance of
  \cref{l:KB} we need for the Hodge--Laplace error analysis still goes
  through.
\end{remark}

Denoting $ w = \tau \oplus v \oplus \eta $, the seminorm
$ \lvert \cdot \rvert _{\mathcal{A}} $ on
$ W ^{ k -1 } \oplus W ^k \oplus W ^{ k + 1 } $ may now be written as
\begin{align*}
  \lvert w \rvert _{\mathcal{A}} ^2 &= \biggl\langle \frac{ 1 }{ \avg{ \beta ^{ k -1 } } } \jump{ \tau ^{\mathrm{tan}} } , \jump{ \tau ^{\mathrm{tan}} } \biggr\rangle _{ \partial \mathcal{T} _h } + \biggl\langle \frac{ 1 }{ \avg{ \alpha ^{ k -1 } } } \jump{ v ^{\mathrm{nor}} } , \jump{ v ^{\mathrm{nor}} } \biggr\rangle _{ \partial \mathcal{T} _h } \\
  &\quad - \biggl\langle \frac{ 1 }{ \avg{ \beta ^k } } \jump{ v ^{\mathrm{tan}} } , \jump{ v ^{\mathrm{tan}} } \biggr\rangle _{ \partial \mathcal{T} _h } - \biggl\langle \frac{ 1 }{ \avg{ \alpha ^k } } \jump{ \eta ^{\mathrm{nor}} } , \jump{ \eta ^{\mathrm{nor}} } \biggr\rangle _{ \partial \mathcal{T} _h } .
\end{align*}
Since $ \mathcal{C} ( \cdot , \cdot ) $ is a semi-inner product, it
also has an associated seminorm
$ \lvert \cdot \rvert _{\mathcal{C}} $, given by
\begin{equation*}
  \lvert w \rvert _{\mathcal{C}} ^2 \coloneqq \mathcal{C} ( w, w ) = \lVert \tau \rVert _{ \mathcal{T} _h } ^2 + \lVert \eta \rVert _{ \mathcal{T} _h } ^2 ,
\end{equation*}
and we have the Cauchy--Schwarz inequality
\begin{equation*}
  \bigl\lvert \mathcal{C} ( z, w ) \bigr\rvert \leq \lvert z \rvert _{\mathcal{
C}} \lvert w \rvert _{\mathcal{C}} .
\end{equation*}

\subsubsection{Abstract error estimates}

We next obtain abstract error estimates analogous to those in
\cref{s:dirac_error_abstract}. The functionals $ K _{\mathcal{A}} $
and $ K _{\mathcal{B}} $ have the same meaning as before, where the
inequalities \eqref{e:KA_KB} are only assumed to hold for all
$ z, w \in W ^{ k -1 } \oplus W ^k \oplus W ^{ k + 1 } $ and
$ w _h \in W _h ^{ k -1 } \oplus W _h ^k \oplus W _h ^{ k + 1 } $.

Since the Hodge--Laplace problem only involves degree-$k$ harmonic
forms, we use the following assumption, which weakens
\cref{a:harmonic} by requiring only that it hold for $k$-forms.

\begin{assumption}
  \label{a:harmonic_k}
  Assume that $ \mathfrak{H} ^k (\Omega) \subset W _h ^k $, so that
  $ \mathfrak{H} _h ^k = \mathfrak{H} ^k (\Omega) $.
\end{assumption}

Denoting the exact solution to the Hodge--Laplace problem by
$ z = \sigma \oplus u \oplus \rho $, it follows from this assumption
that we have the Galerkin orthogonality relation
\begin{equation*}
  \mathcal{A} ( z - z _h , w _h ) + \mathcal{C} ( z - z _h , w _h ) = 0 , \quad \forall w _h \in W _h ^{ k -1 } \oplus W _h ^k \oplus W _h ^{ k + 1 } ,
\end{equation*}
and furthermore that $ u - u _h \in \mathfrak{H} ^k (\Omega) ^\perp $,
which we will use in a duality argument. As before, we denote
$ e _z \coloneqq z - z _h $ and $ \xi _z \coloneqq z - \Pi z $, where
$\Pi$ is again the $ L ^2 $-orthogonal projection onto $ W _h $. Note
that $\Pi$ maps $ W ^{ k -1 } \oplus W ^k \oplus W ^{ k + 1 } $ onto
$ W _h ^{ k -1 } \oplus W _h ^k \oplus W _h ^{ k + 1 } $, since
different form degrees are $ L ^2 $-orthogonal to one another.

The following estimate is quite similar to \cref{l:dirac_error_A},
with extra terms arising from $ \mathcal{C} ( \cdot , \cdot ) $.

\begin{lemma}
  \label{l:laplace_error_A_C}
  Under the preceding assumptions, we have the $\mathcal{A}$- and
  $\mathcal{C}$-seminorm error estimates
  \begin{equation*}
    \lvert e _z \rvert _{\mathcal{A}} ^2 \leq K _{\mathcal{A}} ( z, z ) + K _{\mathcal{B}} (z) ^2 , \qquad \lvert e _z \rvert _{\mathcal{C}} ^2 \leq \lvert \xi _z \rvert _{\mathcal{C}} ^2 + K _{\mathcal{B}} (z) ^2 .
  \end{equation*}
\end{lemma}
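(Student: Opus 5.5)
We follow the proof of \cref{l:dirac_error_A}, now working with the combined form $\mathcal{A} + \mathcal{C}$, which is positive-semidefinite with associated quantity $\lvert\cdot\rvert_{\mathcal{A}}^2 + \lvert\cdot\rvert_{\mathcal{C}}^2$. Write $e_z = \xi_z + \Pi e_z$ as in \eqref{e:ez_decomp}. The key observation is that $\mathcal{C}$ is an $L^2$ inner product on the $(k\pm1)$-components, and $\xi_z$ is $L^2$-orthogonal to $W_h$. Hence $\mathcal{C}(\xi_z, w_h) = \mathcal{C}(w_h, \xi_z) = 0$ for all $w_h \in W_h^{k-1}\oplus W_h^k \oplus W_h^{k+1}$.

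First we bound the discrete part $\Pi e_z$. Galerkin orthogonality with $w_h = \Pi e_z$ gives
\begin{equation*}
  \lvert \Pi e_z\rvert_{\mathcal{A}}^2 + \lvert \Pi e_z\rvert_{\mathcal{C}}^2 = -\mathcal{A}(\xi_z,\Pi e_z) - \mathcal{C}(\xi_z,\Pi e_z) = -\mathcal{A}(\xi_z,\Pi e_z) \le K_{\mathcal{B}}(z)\,\lvert \Pi e_z\rvert_{\mathcal{A}},
\end{equation*}
using \eqref{e:KB_zw}. Since $K_{\mathcal{B}}(z)\lvert \Pi e_z\rvert_{\mathcal{A}} \le \tfrac12 K_{\mathcal{B}}(z)^2 + \tfrac12\lvert \Pi e_z\rvert_{\mathcal{A}}^2$, we get $\lvert\Pi e_z\rvert_{\mathcal{A}}^2 + \lvert \Pi e_z\rvert_{\mathcal{C}}^2 \le K_{\mathcal{B}}(z)^2$. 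This bound already controls both pieces: $\lvert\Pi e_z\rvert_{\mathcal{A}}\le K_{\mathcal{B}}(z)$ and $\lvert\Pi e_z\rvert_{\mathcal{C}}\le K_{\mathcal{B}}(z)$.

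For the $\mathcal{C}$-estimate, apply Pythagoras in the $L^2$ semi-inner product $\mathcal{C}$. Because $\mathcal{C}(\xi_z,\Pi e_z)=0$, we have $\lvert e_z\rvert_{\mathcal{C}}^2 = \lvert \xi_z\rvert_{\mathcal{C}}^2 + \lvert\Pi e_z\rvert_{\mathcal{C}}^2 \le \lvert \xi_z\rvert_{\mathcal{C}}^2 + K_{\mathcal{B}}(z)^2$, as claimed.

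For the $\mathcal{A}$-estimate, the step needing care is that Galerkin orthogonality no longer holds for $\mathcal{A}$ alone. We instead write
\begin{equation*}
  \lvert e_z\rvert_{\mathcal{A}}^2 = \mathcal{A}(e_z,\xi_z) + \mathcal{A}(e_z,\Pi e_z) = \mathcal{A}(\xi_z,\xi_z) + \mathcal{A}(\Pi e_z,\xi_z) - \mathcal{C}(e_z,\Pi e_z).
\end{equation*}
Here $\mathcal{C}(e_z,\Pi e_z) = \mathcal{C}(\Pi e_z,\Pi e_z) = \lvert\Pi e_z\rvert_{\mathcal{C}}^2 \ge 0$, again by orthogonality of $\xi_z$. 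This term therefore has a favorable sign and can be dropped. Applying \eqref{e:KA} and \eqref{e:KB_wz} to the remaining terms gives
\begin{equation*}
  \lvert e_z\rvert_{\mathcal{A}}^2 \le K_{\mathcal{A}}(z,z) + \lvert\Pi e_z\rvert_{\mathcal{A}}K_{\mathcal{B}}(z) \le K_{\mathcal{A}}(z,z) + K_{\mathcal{B}}(z)^2,
\end{equation*}
where the last step uses the bound on $\lvert\Pi e_z\rvert_{\mathcal{A}}$ from the first step.

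The main point to verify is that the sign conventions in $\mathcal{A}+\mathcal{C}$ and in the Galerkin orthogonality relation match, so that the $\mathcal{C}$ term really enters with a nonnegative sign. Our instance of \eqref{e:KB_zw}, with the modified seminorm equivalence, is the one justified in the preceding remark.
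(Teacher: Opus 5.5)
Your proof is correct and follows essentially the same argument as the paper. Both use the decomposition $e_z = \xi_z + \Pi e_z$, the $\mathcal{C}$-orthogonality of $\xi_z$ to the discrete space, Galerkin orthogonality for $\mathcal{A}+\mathcal{C}$ to get $\lvert \Pi e_z\rvert_{\mathcal{A}}^2 + \lvert \Pi e_z\rvert_{\mathcal{C}}^2 = -\mathcal{A}(\xi_z,\Pi e_z)$, and Pythagoras for the $\mathcal{C}$-estimate. The differences are only cosmetic: you bound $\Pi e_z$ first and use Young's inequality where the paper divides by $\lvert \Pi e_z\rvert_{\mathcal{A}}$, and you drop the nonnegative term $\lvert \Pi e_z\rvert_{\mathcal{C}}^2$ on the right where the paper adds it on the left.
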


\begin{proof}
  We begin by observing that $ \xi _z $ and $ \Pi e _z $ are
  $ L ^2 $-orthogonal and thus $\mathcal{C}$-orthogonal, so in
  particular
  $ \lvert \Pi e _z \rvert _{\mathcal{C}} ^2 = \mathcal{C} ( e _z ,
  \Pi e _z ) $. Using this observation and Galerkin orthogonality, we
  have
  \begin{align*}
    \lvert e _z \rvert _{\mathcal{A}} ^2
    &\leq \lvert e _z \rvert _{\mathcal{A}} ^2 + \lvert \Pi e _z \rvert _{\mathcal{C}} ^2 \\
    &= \mathcal{A} ( e _z, e _z ) + \mathcal{C} ( e _z , \Pi e _z ) \\
    &= \mathcal{A} ( e _z , \xi _z ) \\
    &= \mathcal{A} ( \xi _z, \xi _z ) + \mathcal{A} ( \Pi e _z , \xi _z ) \\
    &\leq K _{\mathcal{A}} ( z, z ) + \lvert \Pi e _z \rvert _{\mathcal{A}} K _{\mathcal{B}} (z) ,
  \end{align*}
  where the last line uses \eqref{e:KA} and \eqref{e:KB_wz}. To
  estimate $ \lvert \Pi e _z \rvert _{\mathcal{A}} $, we similarly write
  \begin{equation}
    \label{e:Pi_ez_laplace}
    \begin{aligned}
      \lvert \Pi e _z \rvert _{\mathcal{A}} ^2
      &\leq \lvert \Pi e _z \rvert _{\mathcal{A}} ^2 + \lvert \Pi e _z \rvert _{\mathcal{C}} ^2 \\
      &= \mathcal{A} ( \Pi e _z , \Pi e _z ) + \mathcal{C} ( e _z , \Pi e _z ) \\
      &= - \mathcal{A} ( \xi _z , \Pi e _z ) \\
      &\leq K _{\mathcal{B}} (z) \lvert \Pi e _z \rvert _{\mathcal{A}} ,
    \end{aligned}
  \end{equation}
  again using Galerkin orthogonality, along with
  \eqref{e:KB_zw}. Thus,
  $ \lvert \Pi e _z \rvert _{\mathcal{A}} \leq K _{\mathcal{B}} (z) $,
  and substituting this above implies the $\mathcal{A}$-seminorm
  estimate.

  Next, since $ e _z = \xi _z + \Pi e _z $ is an orthogonal sum, the
  Pythagorean theorem implies
  \begin{equation*}
    \lvert e _z \rvert _{\mathcal{C}} ^2 = \lvert \xi _z \rvert _{\mathcal{C}} ^2 + \lvert \Pi e _z \rvert _{\mathcal{C}} ^2 .
  \end{equation*}
  From \eqref{e:Pi_ez_laplace}, we have
  \begin{align*}
    \lvert \Pi e _z \rvert _{\mathcal{C}} ^2
    &\leq \lvert \Pi e _z \rvert _{\mathcal{A}} ^2 + \lvert \Pi e _z \rvert _{\mathcal{C}} ^2 \\
    &\leq K _{\mathcal{B}} (z) \lvert \Pi e _z \rvert _{\mathcal{A}} \\
    &\leq K _{\mathcal{B}} (z) ^2 ,
  \end{align*}
  which completes the proof.
\end{proof}

To describe the adjoint problem, we first recall the spaces of coexact
and exact forms
\begin{equation*}
  \mathring{ \mathfrak{B} } ^\ast _{ k - 1 } (\Omega) \coloneqq \delta \mathring{ H } ^\ast \Lambda ^k (\Omega) , \qquad \mathfrak{B} ^{ k + 1 } (\Omega) \coloneqq \mathrm{d} H \Lambda ^k (\Omega) .
\end{equation*}
Given
$ \lambda \in \mathring{ \mathfrak{B} } ^\ast _{ k -1 } (\Omega)
\oplus \mathfrak{H} ^k (\Omega) ^\perp \oplus \mathfrak{B} ^{ k + 1 }
(\Omega) $, we seek $ \varphi = \chi \oplus \psi \oplus \omega $ with
$ \psi \in \mathfrak{H} ^k (\Omega) ^\perp $ such that
\begin{alignat*}{2}
  \chi + \delta \psi &= \lambda ^{ k -1 }  \quad &\text{in } &\Omega ,\\
  -\mathrm{d} \chi - \delta \omega &= \lambda ^k \quad &\text{in } &\Omega ,\\
  \omega + \mathrm{d} \psi &= \lambda ^{ k + 1 }  \quad &\text{in } &\Omega ,\\
  \psi ^{\mathrm{nor}} &= 0 \quad &\text{on } &\partial \Omega ,\\
  \omega ^{\mathrm{nor}} &= 0 \quad &\text{on } &\partial \Omega ,
\end{alignat*}
which therefore satisfies
\begin{equation}
  \label{e:laplace_adjoint}
  \mathcal{A} ( w, \varphi ) + \mathcal{C} ( w, \varphi ) = ( w, \lambda ) _{ \mathcal{T} _h } , \quad \forall w \in W ^{ k -1 } \oplus W ^k \oplus W ^{ k + 1 } .
\end{equation}
If $ \lambda ^{ k \pm 1 } = 0 $, then this is just the Hodge--Laplace
problem $ L \psi = \lambda ^k $. However, $ \lambda ^k $ only gives us
a duality estimate for $ e _u $; by allowing $ \lambda ^{ k \pm 1 } $
to be nonzero, we will also be able to obtain duality estimates for
$ P _{ \mathring{ \mathfrak{B} ^\ast } } e _\sigma $ and
$ P _{ \mathfrak{B} } e _\rho $ that improve on the estimate for
$ \lvert e _z \rvert _{\mathcal{C}} $ in \cref{l:laplace_error_A_C}.

Although this is not a Hodge--Laplace problem in general, the solution
to the adjoint problem may still be obtained by solving two successive
Hodge--Dirac problems:
\begin{itemize}
\item First, solve
  $ - \mathrm{D} ( \chi \oplus \omega ) = \lambda ^k $.
\item Next, solve
  $ - \mathrm{D} \psi = ( \chi - \lambda ^{ k -1 } ) \oplus ( \omega -
  \lambda ^{ k + 1 } ) $.
\end{itemize}
Since the solution to the first problem has
$ \chi \in \mathfrak{H} ^{ k -1 } (\Omega) ^\perp $ and
$ \delta \chi = 0 $, it follows that
$ \chi \in \mathring{ \mathfrak{B} } ^\ast _{ k -1 } (\Omega) $, and
likewise $ \omega \in \mathfrak{B} ^{ k + 1 } (\Omega) $. The
conditions
$ \lambda ^{ k -1 } \in \mathring{ \mathfrak{B} } ^\ast _{ k -1 }
(\Omega) $ and
$ \lambda ^{ k + 1 } \in \mathfrak{B} ^{ k + 1 } (\Omega) $ then
ensure that the right-hand side of the second problem is in
$ \mathrm{D} \bigl( H \Lambda ^k (\Omega) \cap \mathring{ H } ^\ast
\Lambda ^k (\Omega) \bigr) = \mathring{ \mathfrak{B} } ^\ast _{ k -1 }
(\Omega) \oplus \mathfrak{B} ^{ k + 1 } (\Omega) $, meaning that
$\psi$ need not have any $ ( k \pm 2 ) $-form components. From
$t$-regularity of $\Omega$ and Poincar\'e's inequality, we therefore
have
\begin{equation*}
  \lVert \chi \oplus \omega \rVert _{ t, \Omega } \lesssim \lVert \lambda ^k \rVert _\Omega , \qquad \lVert \psi \rVert _{t, \Omega} \lesssim \bigl\lVert ( \chi - \lambda ^{ k -1 } ) \oplus ( \omega - \lambda ^{ k + 1 } ) \bigr\rVert _\Omega ,
\end{equation*}
which together imply
$ \lVert \varphi \rVert _{ t, \Omega } \lesssim \lVert \lambda \rVert
_\Omega $.

\begin{lemma}
  \label{l:laplace_error_l2}
  Under the preceding assumptions, we have the $ L ^2 $-norm error
  estimate
  \begin{equation*}
    \lVert P _{ \mathring{ \mathfrak{B} } ^\ast } e _\sigma \oplus e _u \oplus P _{ \mathfrak{B} } e _\rho \rVert _\Omega \leq \sup _{\lambda \neq 0} \frac{ K _{\mathcal{A}} ( z, \varphi ) }{ \lVert \lambda \rVert _\Omega } + K _{\mathcal{B}} (z) \sup _{\lambda \neq 0} \frac{ K _{\mathcal{B}} (\varphi) }{ \lVert \lambda \rVert _\Omega } + \lvert \xi _z \rvert _{\mathcal{C}} \sup _{\lambda \neq 0} \frac{ \lvert \xi _\varphi \rvert _{\mathcal{C}} }{ \lVert \lambda \rVert _\Omega } ,
  \end{equation*}
  where $\varphi$ denotes the solution to the adjoint problem for
  $ \lambda \in \mathring{ \mathfrak{B} } ^\ast _{ k -1 } (\Omega)
  \oplus \mathfrak{H} ^k (\Omega) ^\perp \oplus \mathfrak{B} ^{ k + 1
  } (\Omega) $.
\end{lemma}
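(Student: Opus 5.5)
The plan is to mimic the duality argument of \cref{l:dirac_error_l2}, using the adjoint relation \eqref{e:laplace_adjoint} and then splitting the $\mathcal{C}$ part using $L^2$-orthogonality of $\xi$ and $\Pi e$.

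Fix $\lambda \in \mathring{ \mathfrak{B} } ^\ast _{ k -1 } (\Omega) \oplus \mathfrak{H} ^k (\Omega) ^\perp \oplus \mathfrak{B} ^{ k + 1 } (\Omega)$ with adjoint solution $\varphi$, and take $w = e _z$ in \eqref{e:laplace_adjoint}. Galerkin orthogonality lets us replace $\varphi$ by $\xi _\varphi$ in both forms, since $\Pi \varphi \in W _h ^{k-1} \oplus W _h ^k \oplus W _h ^{k+1}$:
\begin{equation*}
  ( e _z , \lambda ) _{ \mathcal{T} _h } = \mathcal{A} ( e _z , \xi _\varphi ) + \mathcal{C} ( e _z , \xi _\varphi ) .
\end{equation*}
Writing $e _z = \xi _z + \Pi e _z$ as in \eqref{e:ez_decomp}, the $\mathcal{A}$ part becomes $\mathcal{A} ( \xi _z , \xi _\varphi ) + \mathcal{A} ( \Pi e _z , \xi _\varphi )$. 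By \eqref{e:KA} and \eqref{e:KB_wz}, this is bounded by $K _{\mathcal{A}} ( z, \varphi ) + \lvert \Pi e _z \rvert _{\mathcal{A}} K _{\mathcal{B}} (\varphi)$. The proof of \cref{l:laplace_error_A_C}, specifically \eqref{e:Pi_ez_laplace}, gives $\lvert \Pi e _z \rvert _{\mathcal{A}} \leq K _{\mathcal{B}} (z)$.

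For the $\mathcal{C}$ part, note that $\mathcal{C}$ is an $L^2$ inner product on the $(k\pm1)$-components. Hence $\mathcal{C} ( \Pi e _z , \xi _\varphi ) = 0$, because $\xi _\varphi$ is $L^2$-orthogonal to $W _h$. This leaves $\mathcal{C} ( \xi _z , \xi _\varphi ) \leq \lvert \xi _z \rvert _{\mathcal{C}} \lvert \xi _\varphi \rvert _{\mathcal{C}}$ by Cauchy--Schwarz. Dividing by $\lVert \lambda \rVert _\Omega$ and taking suprema then bounds $\sup _\lambda ( e _z , \lambda ) / \lVert \lambda \rVert _\Omega$ by the claimed right-hand side, with each supremum taken separately.

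It remains to identify this supremum with the left-hand side. Since $u - u _h \in \mathfrak{H} ^k (\Omega) ^\perp$, we have
\begin{equation*}
  ( e _z , \lambda ) = ( P _{ \mathring{ \mathfrak{B} } ^\ast } e _\sigma , \lambda ^{k-1} ) + ( e _u , \lambda ^k ) + ( P _{ \mathfrak{B} } e _\rho , \lambda ^{k+1} )
\end{equation*}
for $\lambda$ in the stated product space, where the projections are onto the closed subspaces $\mathring{ \mathfrak{B} } ^\ast _{k-1}$ and $\mathfrak{B} ^{k+1}$. The vector $P _{ \mathring{ \mathfrak{B} } ^\ast } e _\sigma \oplus e _u \oplus P _{ \mathfrak{B} } e _\rho$ itself lies in that product space. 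Choosing $\lambda$ equal to it (or taking the supremum) therefore yields its $L^2$ norm.

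The main subtlety is checking that \eqref{e:laplace_adjoint} holds for $w = e _z$. This requires $e _z \in W ^{ k -1 } \oplus W ^k \oplus W ^{ k + 1 }$, which needs the regularity of $z$ (so that $t > \tfrac12$ and traces make sense) together with the adjoint solution's $H^t$ regularity established above. Also note that \eqref{e:KA_KB} only needs to hold on the restricted triple of degrees, which the preceding remark on the modified seminorm equivalence guarantees.
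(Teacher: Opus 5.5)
Your proposal is correct and follows the paper's proof essentially line for line: you test \eqref{e:laplace_adjoint} with $e _z$, use Galerkin orthogonality to replace $\varphi$ by $\xi _\varphi$, and split $e _z = \xi _z + \Pi e _z$ so that $\mathcal{C} ( \Pi e _z , \xi _\varphi ) = 0$. You then bound the remaining terms via \eqref{e:KA_KB}, Cauchy--Schwarz, and \eqref{e:Pi_ez_laplace}, and identify the supremum with the norm using $e _u \in \mathfrak{H} ^k (\Omega) ^\perp$. (One small wording point: Galerkin orthogonality only gives $\mathcal{A} ( e _z , \Pi \varphi ) + \mathcal{C} ( e _z , \Pi \varphi ) = 0$ for the sum, not for each form separately, but your displayed identity uses only the sum, so nothing is lost.)
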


\begin{proof}
  For any
  $ \lambda \in \mathring{ \mathfrak{B} } ^\ast _{ k -1 } (\Omega)
  \oplus \mathfrak{H} ^k (\Omega) ^\perp \oplus \mathfrak{B} ^{ k + 1
  } (\Omega) $, taking $ w = e _z $ in \eqref{e:laplace_adjoint} gives
  \begin{align*}
    ( e _z, \lambda ) _{ \mathcal{T} _h }
    &= \mathcal{A} ( e _z, \varphi ) + \mathcal{C} ( e _z, \varphi ) \\
    &= \mathcal{A} ( e _z, \xi _\varphi ) + \mathcal{C} ( e _z, \xi _\varphi ) \\
    &= \mathcal{A} ( \xi _z, \xi _\varphi ) + \mathcal{A} ( \Pi e _z , \xi _\varphi ) + \mathcal{C} ( \xi _z , \xi _\varphi ) \\
    &\leq K _{\mathcal{A}} ( z , \varphi ) + \lvert \Pi e _z \rvert _{\mathcal{A}} K _{\mathcal{B}} (\varphi) + \lvert \xi _z \rvert _{\mathcal{C}} \lvert \xi _\varphi \rvert _{\mathcal{C}} \\
    &\leq K _{\mathcal{A}} ( z , \varphi ) + K _{\mathcal{B}} (z) K _{\mathcal{B}} (\varphi) + \lvert \xi _z \rvert _{\mathcal{C}} \lvert \xi _\varphi \rvert _{\mathcal{C}} ,
  \end{align*}
  where the second line holds by Galerkin orthogonality, the third
  holds by the orthogonality of the decomposition \eqref{e:ez_decomp},
  the fourth holds by \eqref{e:KA_KB} and Cauchy--Schwarz, and the
  last holds by \eqref{e:Pi_ez_laplace}. Finally, since
  $ e _u \in \mathfrak{H} ^k (\Omega) ^\perp $, we have
  \begin{equation*}
    \lVert P _{ \mathring{ \mathfrak{B} } ^\ast } e _\sigma \oplus e _u \oplus P _{ \mathfrak{B} } e _\rho \rVert _\Omega = \sup _{ \lambda \neq 0 } \frac{ ( e _z, \lambda ) _\Omega }{ \lVert \lambda \rVert _\Omega } ,
  \end{equation*}
  where this supremum is taken over
  $ \mathring{ \mathfrak{B} } ^\ast _{ k -1 } (\Omega) \oplus
  \mathfrak{H} ^k (\Omega) ^\perp \oplus \mathfrak{B} ^{ k + 1 }
  (\Omega) $.
\end{proof}

\subsubsection{Final error estimates}

We now combine the abstract estimates for the Hodge--Laplace problem
with the approximation-theoretic results previously obtained in
\cref{s:KA_KB}. To apply these results, we assume the same
mesh-regularity condition (\cref{a:mesh}) and that the penalty
coefficients
$ \alpha ^{ k -1 } , \alpha ^k , \beta ^{ k -1 } , \beta ^k $ are
$ \mathcal{O} (1) $.

\begin{theorem}
  \label{t:laplace_error}
  Let $ ( \sigma, u , \rho, p ) $ solve the $k$-form Hodge--Laplace
  problem on $\Omega$, and let
  $ ( \sigma _h , u _h , \rho _h , p _h ) $ be the degree-$r$ HDG
  solution in the form \eqref{e:laplace_dg}. Suppose $\Omega$ is
  $t$-regular and $ z \in H ^s \Lambda (\Omega) $, where
  $ \frac{1}{2} < t \leq s \leq r + 1 $. Under the preceding
  assumptions, the error satisfies the estimates
  \begin{subequations}
    \begin{align}
      \lvert z - z _h \rvert _{\mathcal{A}} + \lvert z - z _h \rvert _{\mathcal{C}} &\lesssim h ^{ s - 1/2 } \lvert z \rvert _{ s, \Omega } , \label{e:laplace_error_A_C} \\
      \lVert \sigma - P _{ \mathring{ \mathfrak{B} } ^\ast } \sigma _h \rVert _\Omega + \lVert u - u _h \rVert _\Omega + \lVert \rho - P _{ \mathfrak{B} } \rho _h \rVert _\Omega &\lesssim h ^{ s + t -1 } \lvert z \rvert _{ s, \Omega } , \label{e:laplace_error_l2}
    \end{align}
  \end{subequations}
  where $ z = \sigma \oplus u \oplus \rho $ and
  $ z _h = \sigma _h \oplus u _h \oplus \rho _h $.
\end{theorem}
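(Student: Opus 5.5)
The plan is to combine the abstract estimates of \cref{l:laplace_error_A_C,l:laplace_error_l2} with the approximation-theoretic functionals from \cref{l:KA,l:KB}, much as in the proof of \cref{t:dirac_error}.

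\textbf{Seminorm estimate \eqref{e:laplace_error_A_C}.} By \cref{l:KA} with $s=t$ and \cref{l:KB}, we have $K_{\mathcal{A}}(z,z) = Ch^{2s-1}\lvert z\rvert_{s,\Omega}^2$ and $K_{\mathcal{B}}(z) = Ch^{s-1/2}\lvert z\rvert_{s,\Omega}$. Here \cref{l:KB} is used in the modified form justified in the remark on the formal choices $\beta^{k-2}=0$ and $\alpha^{k+1}=0$: only the jumps $\jump{\tau^{\mathrm{tan}}}$, $\jump{v^{\mathrm{nor}}}$, $\jump{v^{\mathrm{tan}}}$, $\jump{\eta^{\mathrm{nor}}}$ of $w_h$ appear, and these are controlled by $\lvert w_h\rvert_{\mathcal{A}}$. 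For the $\mathcal{C}$-seminorm, Bramble--Hilbert gives $\lvert \xi_z\rvert_{\mathcal{C}} \lesssim h^s\lvert z\rvert_{s,\Omega} \le h^{s-1/2}\lvert z\rvert_{s,\Omega}$ for $h\lesssim 1$. Substituting these into \cref{l:laplace_error_A_C} yields \eqref{e:laplace_error_A_C}.

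\textbf{$L^2$ estimate \eqref{e:laplace_error_l2}.} For the adjoint solution, the discussion preceding \cref{l:laplace_error_l2} already gives $\lVert\varphi\rVert_{t,\Omega}\lesssim\lVert\lambda\rVert_\Omega$. Hence $K_{\mathcal{A}}(z,\varphi)\lesssim h^{s+t-1}\lvert z\rvert_{s,\Omega}\lVert\lambda\rVert_\Omega$ and $K_{\mathcal{B}}(\varphi)\lesssim h^{t-1/2}\lVert\lambda\rVert_\Omega$, so the product $K_{\mathcal{B}}(z)K_{\mathcal{B}}(\varphi)$ is also $O(h^{s+t-1})$. The last term is $\lvert\xi_z\rvert_{\mathcal{C}}\lvert\xi_\varphi\rvert_{\mathcal{C}}\lesssim h^s h^t\lvert z\rvert_{s,\Omega}\lVert\lambda\rVert_\Omega$, which is of higher order. \Cref{l:laplace_error_l2} then bounds $\lVert P_{\mathring{\mathfrak{B}}^\ast}e_\sigma\oplus e_u\oplus P_{\mathfrak{B}}e_\rho\rVert_\Omega$ by $h^{s+t-1}\lvert z\rvert_{s,\Omega}$.

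It remains to identify these projected errors with the quantities in the statement. Since $\sigma=\delta u$ with $u\in\mathring H^\ast\Lambda^k(\Omega)$, we have $\sigma\in\mathring{\mathfrak{B}}^\ast_{k-1}(\Omega)$, so $P_{\mathring{\mathfrak{B}}^\ast}e_\sigma=\sigma-P_{\mathring{\mathfrak{B}}^\ast}\sigma_h$. Likewise $\rho=\mathrm{d}u\in\mathfrak{B}^{k+1}(\Omega)$, so $P_{\mathfrak{B}}e_\rho=\rho-P_{\mathfrak{B}}\rho_h$. Each of the three norms in \eqref{e:laplace_error_l2} is therefore bounded by the norm of the orthogonal sum.

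\textbf{Main obstacle.} The delicate point is justifying the use of \cref{l:KB} with the degenerate seminorm, i.e., checking that after setting $\beta^{k-2}=\alpha^{k+1}=0$ no term in $a(\xi_z,w_h)$ pairs against a jump not controlled by $\lvert w_h\rvert_{\mathcal{A}}$. I would verify this directly from \eqref{e:laplace_dg}: the only jumps of the test function appearing there are $\jump{\tau_h^{\mathrm{tan}}}$, $\jump{v_h^{\mathrm{nor}}}$, $\jump{v_h^{\mathrm{tan}}}$, and $\jump{\eta_h^{\mathrm{nor}}}$. The orthogonality $(\xi_z,\mathrm{D}w_h)_{\mathcal{T}_h}=0$ still holds because $\Pi$ respects form degree. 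A second point to check is that $K_{\mathcal{A}}$ remains valid with the $\mathcal{C}$-part separated out: the bilinear form $\mathcal{A}$ is unchanged, so \cref{l:KA} applies verbatim.
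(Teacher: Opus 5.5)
Your proposal is correct and follows the paper's proof: you insert $K_{\mathcal{A}}$, $K_{\mathcal{B}}$ from \cref{l:KA,l:KB} (with the degenerate seminorm justified as in the preceding remark), the Bramble--Hilbert bounds for $\lvert\xi_z\rvert_{\mathcal{C}}$ and $\lvert\xi_\varphi\rvert_{\mathcal{C}}$, and the adjoint bound $\lVert\varphi\rVert_{t,\Omega}\lesssim\lVert\lambda\rVert_\Omega$ into \cref{l:laplace_error_A_C,l:laplace_error_l2}. You also spell out two steps the paper leaves implicit. The first is $P_{\mathring{\mathfrak{B}}^\ast}e_\sigma=\sigma-P_{\mathring{\mathfrak{B}}^\ast}\sigma_h$, which holds because $\sigma=\delta u\in\mathring{\mathfrak{B}}^\ast_{k-1}(\Omega)$; the analogous identity holds for $\rho$. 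The second is that $h^s\lesssim h^{s-1/2}$, which absorbs the $\lvert\xi_z\rvert_{\mathcal{C}}$ term.
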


\begin{proof}
  As in the proof of \cref{t:dirac_error}, we have
  \begin{alignat*}{3}
    K _{\mathcal{A}} ( z, z ) &= C h ^{ 2 s -1 } \lvert z \rvert _{ s , \Omega } ^2 , \qquad & K _{\mathcal{B}} (z) &= C h ^{ s - 1/2 } \lvert z \rvert _{ s, \Omega } , \qquad & \lvert \xi _z \rvert _{\mathcal{C}} &\lesssim h ^s \lvert z \rvert _{ s, \Omega } ,\\
    K _{\mathcal{A}} ( z, \varphi ) &= C h ^{ s + t - 1 } \lvert z \rvert _{ s , \Omega } \lvert \varphi \rvert _{ t , \Omega } , \qquad & K _{\mathcal{B}} (\varphi) &= C h ^{ t - 1/2 } \lvert \varphi  \rvert _{ t, \Omega } , \qquad &\lvert \xi _\varphi \rvert _{\mathcal{C}} &\lesssim h ^t \lvert \varphi \rvert _{ t, \Omega } ,
  \end{alignat*}
  where the three columns respectively use \cref{l:KA}, \cref{l:KB},
  and Bramble--Hilbert. Hence, the first estimate follows by
  \cref{l:laplace_error_A_C}, and the second follows by
  \cref{l:laplace_error_l2} together with the estimate
  $ \lvert \varphi \rvert _{ t, \Omega } \lesssim \lVert \lambda
  \rVert _\Omega $ for the adjoint problem.
\end{proof}

In general, the auxiliary variables $ \sigma _h $ and $ \rho _h $ have
$ L ^2 $-norm errors controlled by the term
$ \lvert z - z _h \rvert _{\mathcal{C}} $ in
\eqref{e:laplace_error_A_C}; only their projections
$ P _{ \mathring{ \mathfrak{B} } ^\ast } \sigma _h $ and
$ P _{ \mathfrak{B} } \rho _h $ are proved to converge at the improved
rate \eqref{e:laplace_error_l2}. However, in specific cases, we have
$ \sigma _h = P _{ \mathring{ \mathfrak{B} } ^\ast } \sigma _h $
and/or $ \rho _h = P _{ \mathfrak{B} } \rho _h $, giving improved
estimates.

\begin{corollary}
  \label{c:B_Bstar}
  Under the hypotheses of \cref{t:laplace_error}, if $ k = 1 $, then
  $ \sigma _h \in \mathring{ \mathfrak{B} } _0 ^\ast (\Omega) $
  satisfies
  \begin{equation*}
    \lVert \sigma - \sigma _h \rVert _\Omega \lesssim h ^{ s + t -1 } \lvert z \rvert _{ s, \Omega } .
  \end{equation*}
  Similarly, if $ k = n - 1 $, then
  $ \rho _h \in \mathfrak{B} ^n (\Omega) $ satisfies
  \begin{equation*}
    \lVert \rho - \rho _h \rVert _\Omega \lesssim h ^{ s + t -1 } \lvert z \rvert _{ s, \Omega } .
  \end{equation*}
\end{corollary}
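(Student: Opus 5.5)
For $k = 1$ the plan is to show directly that $\sigma _h$ is $L ^2$-orthogonal to the harmonic $0$-forms. That gives $\sigma _h = P _{ \mathring{ \mathfrak{B} } ^\ast } \sigma _h$, and the claimed bound then follows from \eqref{e:laplace_error_l2}, since $\sigma = \delta u \in \mathring{ \mathfrak{B} } ^\ast _0 (\Omega)$.

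First I identify the target space. By the Hodge decomposition $L ^2 \Lambda (\Omega) = \mathfrak{B} (\Omega) \oplus \mathring{ \mathfrak{B} } ^\ast (\Omega) \oplus \mathfrak{H} (\Omega)$, restricted to degree $0$, the exact part $\mathfrak{B} ^0 (\Omega)$ is zero. Hence
\begin{equation*}
  \mathring{ \mathfrak{B} } ^\ast _0 (\Omega) = \mathfrak{H} ^0 (\Omega) ^\perp \cap L ^2 \Lambda ^0 (\Omega) .
\end{equation*}
A harmonic $0$-form $q$ satisfies $\mathrm{d} q = 0$, and its normal trace vanishes automatically. So $\mathfrak{H} ^0 (\Omega)$ consists of the locally constant functions. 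Each such $q$ is piecewise polynomial, so $q \in W _h ^0$ and it is a legitimate test function $\tau _h = q$ in \eqref{e:laplace_dg_tau}.

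Next I evaluate \eqref{e:laplace_dg_tau} with $\tau _h = q$.
\begin{itemize}
\item Since $q$ is continuous, $\jump{ q ^{\mathrm{tan}} } = 0$ on interior facets; on $\partial \Omega$ the tangential jump is $0$ by definition. So the penalty term drops out.
\item For the same reason $\avg{ q ^{\mathrm{tan}} } = q ^{\mathrm{tan}}$, and by \cref{p:jump-avg} the remaining boundary term is $\langle \jump{ u _h ^{\mathrm{nor}} } , q ^{\mathrm{tan}} \rangle _{ \partial \mathcal{T} _h } = \langle u _h ^{\mathrm{nor}} , q ^{\mathrm{tan}} \rangle _{ \partial \mathcal{T} _h }$.
\item Integrating by parts with \eqref{e:ibp_broken} and using $\mathrm{d} q = 0$ gives $( \delta u _h , q ) _{ \mathcal{T} _h } = - \langle q ^{\mathrm{tan}} , u _h ^{\mathrm{nor}} \rangle _{ \partial \mathcal{T} _h }$.
\end{itemize}
These two boundary contributions cancel, and the equation reduces to $( \sigma _h , q ) _{ \mathcal{T} _h } = 0$. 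The only step needing care is this sign and trace bookkeeping, in particular the boundary conventions of \cref{d:jump-avg}. Note that no harmonic-space assumption in degree $0$ is needed, since the test function is available in $W _h ^0$ regardless.

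For $k = n - 1$ the claim should be essentially trivial. In degree $n$, the coexact space $\mathring{ \mathfrak{B} } ^\ast$ vanishes because there are no $(n+1)$-forms. The harmonic space $\mathfrak{H} ^n (\Omega)$ also vanishes: $\delta w = 0$ forces $\star w$ to be locally constant, and then $w ^{\mathrm{nor}} = 0$ forces it to be zero. The Hodge decomposition therefore gives $\mathfrak{B} ^n (\Omega) = L ^2 \Lambda ^n (\Omega)$. So $\rho _h \in \mathfrak{B} ^n (\Omega)$ and $P _{ \mathfrak{B} } \rho _h = \rho _h$, and \eqref{e:laplace_error_l2} yields the estimate for $\lVert \rho - \rho _h \rVert _\Omega$.
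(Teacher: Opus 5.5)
Your proposal is correct and follows essentially the same route as the paper. Both arguments use the degree-$0$ Hodge decomposition to reduce the $k=1$ case to $\sigma_h \perp \mathfrak{H}^0(\Omega)$, obtain that orthogonality by testing \eqref{e:laplace_dg_tau} with locally constant $\tau_h \in W_h^0$, and settle $k=n-1$ via $L^2\Lambda^n(\Omega)=\mathfrak{B}^n(\Omega)$. You additionally spell out the jump/average cancellation and the vanishing of $\mathfrak{H}^n(\Omega)$, which the paper leaves implicit.
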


\begin{proof}
  Since $\delta$ is trivial on $0$-forms, we have the Hodge
  decomposition
  $ L ^2 \Lambda ^0 (\Omega) = \mathring{ \mathfrak{B} } ^\ast _0
  (\Omega) \oplus \mathfrak{H} ^0 (\Omega) $. If $ k = 1 $, it
  therefore suffices to show that
  $ \sigma _h \in \mathfrak{H} ^0 (\Omega) ^\perp $. Harmonic
  $0$-forms are constant on each connected component of $\Omega$, and
  thus contained in $ W _h ^0 $, so the claim follows by taking
  $ \tau _h \in \mathfrak{H} ^0 (\Omega) $ in
  \eqref{e:laplace_dg_tau}. The $ k = n -1 $ case is immediate since
  $ L ^2 \Lambda ^n (\Omega) = \mathfrak{B} ^n (\Omega) $.
\end{proof}

\begin{remark}
  When $ k = 1 $ and $ n = 2 $, we have improved estimates for both
  auxiliary variables,
  \begin{equation*}
    \lVert \sigma - \sigma _h \rVert _\Omega + \lVert u - u _h \rVert _\Omega + \lVert \rho - \rho _h \rVert _\Omega \lesssim h ^{ s + t -1 } \lvert z \rvert _{ s, \Omega } ,
  \end{equation*}
  which sharpens the estimates in \citet{CoNuSa2026} for the
  two-dimensional vector Laplacian.
\end{remark}

Finally, we obtain estimates for the approximate traces, akin to those
in \cref{t:dirac_trace_error}.

\begin{theorem}
  Under the hypotheses of \cref{t:laplace_error}, the approximate
  traces satisfy
  \begin{align*}
    \lVERT u ^{\mathrm{tan}} - \widehat{ u } _h ^{\mathrm{tan}} \rVERT _{ \partial \mathcal{T} _h } + \lVERT u ^{\mathrm{nor}} - \widehat{ u } _h ^{\mathrm{nor}} \rVERT _{ \partial \mathcal{T} _h }
    &\lesssim \hphantom{\Biggl\{} h ^{ s + t - 1 } \lvert z \rvert _{ s, \Omega } ,\\
    \lVERT \sigma ^{\mathrm{tan}} - \widehat{ \sigma } _h ^{\mathrm{tan}} \rVERT _{ \partial \mathcal{T} _h }
    &\lesssim
      \begin{cases}
        h ^{ s + t -1 } \lvert z \rvert _{ s, \Omega } ,&  k = 1 ,\\
        h ^{ s - 1/2 } \lvert z \rvert _{ s, \Omega } ,&  k \neq 1 ,
      \end{cases}\\
    \lVERT \rho ^{\mathrm{nor}} - \widehat{ \rho } _h ^{\mathrm{nor}} \rVERT _{ \partial \mathcal{T} _h }
    &\lesssim
      \begin{cases}
        h ^{ s + t -1 } \lvert z \rvert _{ s, \Omega } ,&  k = n - 1 ,\\
        h ^{ s - 1/2 } \lvert z \rvert _{ s, \Omega } ,&  k \neq n - 1  .
      \end{cases}
  \end{align*}
\end{theorem}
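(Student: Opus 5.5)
We follow the same template as the proofs of \cref{t:dirac_trace_error,t:dirac_error_n=2}. The approach is to express each trace error through the jump-average identities, and then to control each jump or average either by ordinary traces of the error or by the $\mathcal{A}$-seminorm. Since $\jump{ \sigma ^{\mathrm{tan}} }$, $\jump{ u ^{\mathrm{tan}} }$, $\jump{ u ^{\mathrm{nor}} }$, and $\jump{ \rho ^{\mathrm{nor}} }$ all vanish for the exact solution, the trace formulas for the Hodge--Laplace method give
\begin{align*}
  \widehat{ e } _u ^{\mathrm{tan}} &= \avg{ e _u ^{\mathrm{tan}} } + \frac{1}{\avg{\alpha ^k}} \jump{ e _\rho ^{\mathrm{nor}} } + \frac{\jump{\alpha ^k}}{\avg{\alpha ^k}} \jump{ e _u ^{\mathrm{tan}} } ,\\
  \widehat{ e } _u ^{\mathrm{nor}} &= \avg{ e _u ^{\mathrm{nor}} } + \frac{1}{\avg{\beta ^{k-1}}} \jump{ e _\sigma ^{\mathrm{tan}} } + \frac{\jump{\beta ^{k-1}}}{\avg{\beta ^{k-1}}} \jump{ e _u ^{\mathrm{nor}} } .
\end{align*}
The analogous identities hold for $\widehat{ e } _\sigma ^{\mathrm{tan}}$ (with $\avg{ e _\sigma ^{\mathrm{tan}} }$ plus jumps of $e _u ^{\mathrm{nor}}$ and $e _\sigma ^{\mathrm{tan}}$) and for $\widehat{ e } _\rho ^{\mathrm{nor}}$ (with $\avg{ e _\rho ^{\mathrm{nor}} }$ plus jumps of $e _u ^{\mathrm{tan}}$ and $e _\rho ^{\mathrm{nor}}$). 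The coefficients are $\mathcal{O}(1)$ by \cref{r:O(1)}.

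Every jump appearing in these identities is among the four terms controlled by the modified seminorm equivalence for $\lvert \cdot \rvert _{\mathcal{A}}$ on $W ^{k-1} \oplus W ^k \oplus W ^{k+1}$. Rescaling by $h ^{1/2}$ therefore gives
\begin{equation*}
  \biglVERT \jump{ \cdot } \bigrVERT _{ \partial \mathcal{T} _h } \lesssim h ^{1/2} \lvert e _z \rvert _{\mathcal{A}} \lesssim h ^{s} \lvert z \rvert _{s,\Omega} ,
\end{equation*}
by \eqref{e:laplace_error_A_C}. Since $t \leq 1$, this is at least as good as $h ^{s+t-1}$. It remains to bound the averages. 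By \cref{l:pythag_triple_norm}, each average is bounded by the full trace, for example $\lVERT e _u ^{\mathrm{tan}} \rVERT _{ \partial \mathcal{T} _h }$. We split $e _u = \xi _u + \Pi e _u$. The scaled trace inequality and Bramble--Hilbert give $\lVERT \xi _u ^{\mathrm{tan}} \rVERT \lesssim h ^s \lvert z \rvert _{s,\Omega}$. An inverse/scaling estimate gives $\lVERT \Pi e _u ^{\mathrm{tan}} \rVERT \lesssim \lVert \Pi e _u \rVert \leq \lVert e _u \rVert _\Omega \lesssim h ^{s+t-1} \lvert z \rvert _{s,\Omega}$ by \eqref{e:laplace_error_l2}. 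The same argument applies to the normal trace, which proves the $u$ estimate.

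For $\sigma$ and $\rho$ we use the same argument with the best available $L ^2$ bound on the corresponding component. In general, \eqref{e:laplace_error_A_C} gives $\lVert e _\sigma \rVert _\Omega \leq \lvert e _z \rvert _{\mathcal{C}} \lesssim h ^{s-1/2} \lvert z \rvert _{s,\Omega}$, so $\lVERT \Pi e _\sigma ^{\mathrm{tan}} \rVERT \lesssim h ^{s-1/2}$. This dominates the $\xi$ and jump contributions and yields the $k \neq 1$ rate. When $k = 1$, \cref{c:B_Bstar} gives $\lVert e _\sigma \rVert _\Omega \lesssim h ^{s+t-1} \lvert z \rvert _{s,\Omega}$, and the same chain yields $h ^{s+t-1}$. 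The $\rho$ case is symmetric, using \cref{c:B_Bstar} when $k = n-1$.

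The main point of care is that the jumps must be handled through the $\mathcal{A}$-seminorm and not through the full traces. A full-trace bound for the jump of $e _\rho ^{\mathrm{nor}}$ or $e _\sigma ^{\mathrm{tan}}$ inside $\widehat{ e } _u$ would import the weaker $h ^{s-1/2}$ rate of the auxiliary variables. This is the same device used in \cref{t:dirac_error_n=2}. We also need to check that the specific jumps appearing here, namely $\jump{ e _\sigma ^{\mathrm{tan}} }$, $\jump{ e _\rho ^{\mathrm{nor}} }$, and the $u$ jumps, are exactly those retained in the modified equivalence, so that formally setting $\beta ^{k-2} = \alpha ^{k+1} = 0$ does no harm.
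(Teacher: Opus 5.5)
Your proposal is correct and follows essentially the same route as the paper's proof. You write the trace errors via the jump--average identities, bound the jumps by $h^{1/2}\lvert e_z\rvert_{\mathcal{A}}$, and bound the averages by \cref{l:pythag_triple_norm} together with the split $e = \xi + \Pi e$, so each estimate is governed by the $L^2$ error of the corresponding component from \eqref{e:laplace_error_A_C}, \eqref{e:laplace_error_l2}, or \cref{c:B_Bstar}; your explicit check that every jump appearing survives the modified seminorm equivalence (after setting $\beta^{k-2} = \alpha^{k+1} = 0$) is left implicit in the paper but is exactly what is needed.
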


\begin{proof}
  We start by writing
  \begin{align*}
    \widehat{ e } _u ^{\mathrm{tan}}
    &= \avg{ e _u ^{\mathrm{tan}} } + \frac{ 1 }{ \avg{\alpha ^k} } \jump{ e _\rho ^{\mathrm{nor}} } + \frac{ \jump{\alpha ^k} }{ \avg{ \alpha ^k } } \jump{ e _u ^{\mathrm{tan}} } , \\
    \widehat{ e } _u ^{\mathrm{nor}}
    &= \avg{ e _u ^{\mathrm{nor}} } + \frac{ 1 }{ \avg{\beta ^{ k -1 }} } \jump{ e _\sigma ^{\mathrm{tan}} } + \frac{ \jump{\beta ^{ k -1 }} }{ \avg{ \beta ^{ k -1 }} } \jump{ e _u ^{\mathrm{nor}} } .
  \end{align*}
  As in \cref{t:dirac_error_n=2}, we use
  $ \biglVERT \jump{ \cdot } \bigrVERT _{ \partial \mathcal{T} _h }
  \lesssim h ^{ 1/2 } \lvert \cdot \rvert _{\mathcal{A}} $ for the
  jumps and \cref{l:pythag_triple_norm} for the averages, thereby
  avoiding the possibly suboptimal $ L ^2 $-norm estimates for
  $ e _\sigma $ and $ e _\rho $. Doing this gives
  \begin{equation*}
    \lVERT \widehat{ e } _u ^{\mathrm{tan}} \rVERT _{ \partial \mathcal{T} _h } +     \lVERT \widehat{ e } _u ^{\mathrm{nor}} \rVERT _{ \partial \mathcal{T} _h } \lesssim \lVERT e _u ^{\mathrm{tan}} \rVERT _{ \partial \mathcal{T} _h } + \lVERT e _u ^{\mathrm{nor}} \rVERT _{ \partial \mathcal{T} _h } + h ^{ 1/2 } \lvert e _z \rvert _{\mathcal{A}} .
  \end{equation*}
  Writing $ e _u = \xi _u + \Pi e _u $, the trace inequality and
  Bramble--Hilbert give
  \begin{equation*}
    \lVERT \xi _u ^{\mathrm{tan}} \rVERT _{ \partial \mathcal{T} _h } + \lVERT \xi _u ^{\mathrm{nor}} \rVERT _{ \partial \mathcal{T} _h } \lesssim h ^s \lvert z \rvert _{ s, \Omega } ,
  \end{equation*}
  while a scaling argument and \eqref{e:laplace_error_l2} give
  \begin{equation*}
    \lVERT \Pi e _u ^{\mathrm{tan}} \rVERT _{ \partial \mathcal{T} _h } + \lVERT \Pi e _u ^{\mathrm{nor}} \rVERT _{ \partial \mathcal{T} _h } \lesssim \lVert \Pi e _u \rVert _{ \mathcal{T} _h } \leq \lVert e _u \rVert _\Omega \lesssim h ^{ s + t -1 } \lvert z \rvert _{ s , \Omega } ,
  \end{equation*}
  and \eqref{e:laplace_error_A_C} gives
  \begin{equation*}
    h ^{ 1/2 } \lvert e _z \rvert _{\mathcal{A}} \lesssim h ^s \lvert z \rvert _{ s, \Omega } .
  \end{equation*}
  Combining these, and recalling that $ t \leq 1 $ causes the
  $ \lVert e _u \rVert _\Omega $ term to dominate, proves the first
  estimate. The other two estimates are proved similarly; the
  respective errors are dominated by
  $ \lVert e _\sigma \rVert _\Omega $ and
  $ \lVert e _\rho \rVert _\Omega $, each estimated either by
  \eqref{e:laplace_error_A_C} or by \cref{c:B_Bstar}, depending on
  $k$.
\end{proof}

\begin{remark}
  We close this section by stating the best-case and worst-case rates
  of convergence indicated by our results, as regularity allows:
  \begin{itemize}
  \item \emph{maximum regularity}: If $\Omega$ is $1$-regular (e.g.,
    convex) and $ z \in H ^{ r + 1 } \Lambda (\Omega) $, then
    \begin{align*}
      \lvert z - z _h \rvert _{\mathcal{A}} + \lvert z - z _h \rvert _{\mathcal{C}} &\lesssim h ^{ r + 1/2 } \lvert z \rvert _{ r+1, \Omega } , \\
      \lVert \sigma - P _{ \mathring{ \mathfrak{B} } ^\ast } \sigma _h \rVert _\Omega + \lVert u - u _h \rVert _\Omega + \lVert \rho - P _{ \mathfrak{B} } \rho _h \rVert _\Omega &\lesssim h ^{ r + 1 } \lvert z \rvert _{ r + 1, \Omega } ,
    \end{align*}
    and the approximate traces satisfy
    \begin{align*}
      \lVERT u ^{\mathrm{tan}} - \widehat{ u } _h ^{\mathrm{tan}} \rVERT _{ \partial \mathcal{T} _h } + \lVERT u ^{\mathrm{nor}} - \widehat{ u } _h ^{\mathrm{nor}} \rVERT _{ \partial \mathcal{T} _h }
      &\lesssim \hphantom{\Biggl\{} h ^{ r+1 } \lvert z \rvert _{ r+1, \Omega } ,\\
      \lVERT \sigma ^{\mathrm{tan}} - \widehat{ \sigma } _h ^{\mathrm{tan}} \rVERT _{ \partial \mathcal{T} _h }
      &\lesssim
        \begin{cases}
          h ^{ r+1 } \lvert z \rvert _{ r+1, \Omega } ,&  k = 1 ,\\
          h ^{ r+1/2 } \lvert z \rvert _{ r+1, \Omega } ,&  k \neq 1 ,
        \end{cases}\\
      \lVERT \rho ^{\mathrm{nor}} - \widehat{ \rho } _h ^{\mathrm{nor}} \rVERT _{ \partial \mathcal{T} _h }
      &\lesssim
        \begin{cases}
          h ^{ r+1 } \lvert z \rvert _{ r+1, \Omega } ,&  k = n - 1 ,\\
          h ^{ r+1/2 } \lvert z \rvert _{ r+1, \Omega } ,&  k \neq n - 1  .
        \end{cases}
    \end{align*}
    That is, $ u _h $ and its approximate traces converge with optimal
    order $ r + 1 $, while the auxiliary variables and traces converge
    with order $ r + \frac{1}{2} $ or $ r + 1 $, depending on $k$. For
    the $1$-form Hodge--Laplace problem in dimension $2$, all fields
    converge with optimal order $ r + 1 $.

  \item \emph{minimum regularity}: If $\Omega$ is $t$-regular with
    $ \frac{1}{2} < t \leq 1 $ and we merely have
    $ z \in H ^t \Lambda (\Omega) $, then
    \begin{align*}
      \lvert z - z _h \rvert _{\mathcal{A}} + \lvert z - z _h \rvert _{\mathcal{C}} &\lesssim h ^{ t - 1/2 } \lvert z \rvert _{ t, \Omega } , \\
      \lVert \sigma - P _{ \mathring{ \mathfrak{B} } ^\ast } \sigma _h \rVert _\Omega + \lVert u - u _h \rVert _\Omega + \lVert \rho - P _{ \mathfrak{B} } \rho _h \rVert _\Omega &\lesssim h ^{ 2 t - 1 } \lvert z \rvert _{ t, \Omega } ,
    \end{align*}
    and the approximate traces satisfy
    \begin{align*}
      \lVERT u ^{\mathrm{tan}} - \widehat{ u } _h ^{\mathrm{tan}} \rVERT _{ \partial \mathcal{T} _h } + \lVERT u ^{\mathrm{nor}} - \widehat{ u } _h ^{\mathrm{nor}} \rVERT _{ \partial \mathcal{T} _h }
      &\lesssim \hphantom{\Biggl\{} h ^{ 2t - 1 } \lvert z \rvert _{ t, \Omega } ,\\
      \lVERT \sigma ^{\mathrm{tan}} - \widehat{ \sigma } _h ^{\mathrm{tan}} \rVERT _{ \partial \mathcal{T} _h }
      &\lesssim
        \begin{cases}
          h ^{ 2t - 1 } \lvert z \rvert _{ t, \Omega } ,&  k = 1 ,\\
          h ^{ t-1/2 } \lvert z \rvert _{ t, \Omega } ,&  k \neq 1 ,
        \end{cases}\\
      \lVERT \rho ^{\mathrm{nor}} - \widehat{ \rho } _h ^{\mathrm{nor}} \rVERT _{ \partial \mathcal{T} _h }
      &\lesssim
        \begin{cases}
          h ^{ 2t - 1} \lvert z \rvert _{ t, \Omega } ,&  k = n - 1 ,\\
          h ^{ t-1/2 } \lvert z \rvert _{ t, \Omega } ,&  k \neq n - 1  .
        \end{cases}
    \end{align*}
    For the $1$-form Hodge--Laplace problem in dimension $2$, all
    fields converge with order $ 2 t -1 $.

  \end{itemize}
\end{remark}

\section{An alternative hybridization for the Hodge--Laplace problem}
\label{s:laplace_uhat}

We now discuss an alternative hybridization of the Hodge--Laplace HDG
method that takes the global trace variables to be
$ \widehat{ u } _h ^{\mathrm{nor}} $ and
$ \widehat{ u } _h ^{\mathrm{tan}} $ rather than
$ \widehat{ \sigma } _h ^{\mathrm{tan}} $ and
$ \widehat{ u } _h ^{\mathrm{tan}} $. As mentioned in
\cref{r:laplace_alternative_hybridization}, this alternative
hybridization is equivalent to the one presented in
\cref{s:laplace}---in the sense that it gives the same numerical
solutions---but it has some possible advantages from an implementation
perspective. Namely, it dispenses with the need for local harmonic
forms, and static condensation yields a symmetric positive-definite
Schur complement (modulo global harmonics) rather than a saddle-point
system.

\subsection{Weak formulation}

To formulate this alternative hybridization, we first define
\begin{equation*}
  \widehat{ W } _h ^{k-1, \mathrm{nor}} \coloneqq \prod _{ e \subset \partial \mathcal{T} _h } \mathcal{P} _r \Lambda ^{ k -1 } (e) , \qquad \ringhat{V} _h ^{k-1, \mathrm{nor}} \coloneqq \bigl\{ \widehat{ v } _h ^{\mathrm{nor}} \in \widehat{ W } _h ^{k-1, \mathrm{nor}} : \jump{ \widehat{ v } _h ^{\mathrm{nor}}}  = 0 \bigr\} ,
\end{equation*}
where the ``ring'' over the latter space indicates that these traces
vanish on $ \partial \Omega $ by definition of
$ \jump{ \cdot ^{\mathrm{nor}} } $. We then consider the weak problem:
Find
\begin{alignat*}{4}
  \text{(local variables)} \qquad \sigma _h &\in W _h ^{ k -1 } , \qquad & u _h &\in W _h ^k, \qquad & \rho _h &\in W _h ^{ k + 1 }, \\
  \text{(global variables)} \qquad p _h &\in \mathfrak{H} _h ^k, \qquad & \widehat{ u } _h ^{\mathrm{nor}} &\in \ringhat{ V } _h ^{ k -1, \mathrm{nor} } , \qquad & \widehat{ u } _h ^{\mathrm{tan}} &\in \widehat{ V } _h ^{k, \mathrm{tan}} ,
\end{alignat*}
satisfying
\begin{subequations}
  \label{e:laplace_uhat_ns}
  \begin{alignat}{2}
    - ( \sigma _h , \tau _h ) _{ \mathcal{T} _h } + ( u _h , \mathrm{d} \tau _h ) _{ \mathcal{T} _h } - \langle \widehat{ u } _h ^{\mathrm{nor}} , \tau _h ^{\mathrm{tan}} \rangle _{ \partial \mathcal{T} _h } &= 0, \quad &\forall \tau _h &\in W _h ^{ k -1 } ,\\
    ( \sigma _h , \delta v _h ) _{ \mathcal{T} _h } + ( \rho _h , \mathrm{d} v _h ) _{ \mathcal{T} _h } + ( p _h , v _h ) _{ \mathcal{T} _h } \\
    {}+ \langle \widehat{ \sigma } _h ^{\mathrm{tan}} , v _h ^{\mathrm{nor}} \rangle _{ \partial \mathcal{T} _h } - \langle \widehat{ \rho } _h ^{\mathrm{nor}} , v _h ^{\mathrm{tan}} \rangle _{ \partial \mathcal{T} _h } &= (f, v _h ) _{ \mathcal{T} _h } , \quad &\forall v _h &\in W _h ^k , \notag \\
    - ( \rho _h , \eta _h ) _{ \mathcal{T} _h } + ( u _h , \delta \eta _h ) _{ \mathcal{T} _h } + \langle \widehat{ u } _h ^{\mathrm{tan}} , \eta _h ^{\mathrm{nor}} \rangle _{ \partial \mathcal{T} _h } &= 0, \quad &\forall \eta _h &\in W _h ^{ k + 1 } ,\\
    ( u _h , q _h ) _{ \mathcal{T} _h } &= 0, \quad &\forall q _h &\in \mathfrak{H} _h ^k ,\\
    -\langle \widehat{ \sigma } _h ^{\mathrm{tan}} , \widehat{ v } _h ^{\mathrm{nor}} \rangle _{ \partial \mathcal{T} _h } &= 0 , \quad &\forall \widehat{ v } _h ^{\mathrm{nor}} &\in \ringhat{ V } _h ^{k-1, \mathrm{nor}} ,\\
    \langle \widehat{ \rho } _h ^{\mathrm{nor}} , \widehat{ v } _h ^{\mathrm{tan}} \rangle _{ \partial \mathcal{T} _h } &= 0 , \quad &\forall \widehat{ v } _h ^{\mathrm{tan}} &\in \widehat{ V } _h ^{k, \mathrm{tan}} ,
  \end{alignat}
\end{subequations}
where
\begin{align*}
  \widehat{ \sigma } _h ^{\mathrm{tan}} &\coloneqq \sigma _h ^{\mathrm{tan}} - \beta ^{ k -1 } ( \widehat{ u } _h ^{\mathrm{nor}} - u _h ^{\mathrm{nor}} ) ,\\
  \widehat{ \rho } _h ^{\mathrm{nor}} &\coloneqq \rho _h ^{\mathrm{nor}} - \alpha ^k ( \widehat{ u } _h ^{\mathrm{tan}} - u _h ^{\mathrm{tan}} ) .
\end{align*}
A solution to \eqref{e:laplace_ns} clearly satisfies
\eqref{e:laplace_uhat_ns}, and vice versa, so this alternative
hybridization yields the same numerical solutions as the previous one.

Substituting the trace expressions into \eqref{e:laplace_uhat_ns} and
integrating by parts gives
\begin{subequations}
  \label{e:laplace_uhat}
  \begin{alignat}{2}
    - ( \sigma _h , \tau _h ) _{ \mathcal{T} _h } + ( u _h , \mathrm{d} \tau _h ) _{ \mathcal{T} _h } - \langle \widehat{ u } _h ^{\mathrm{nor}} , \tau _h ^{\mathrm{tan}} \rangle _{ \partial \mathcal{T} _h } &= 0, \quad &\forall \tau _h &\in W _h ^{ k -1 } , \label{e:laplace_uhat_tau} \\
    ( \mathrm{d} \sigma _h + \delta \rho _h + p _h , v _h ) _{ \mathcal{T} _h } - \bigl\langle \beta ^{ k -1 } ( \widehat{ u } _h ^{\mathrm{nor}} - u _h ^{\mathrm{nor}} ) , v _h ^{\mathrm{nor}} \bigr\rangle _{ \partial \mathcal{T} _h } \label{e:laplace_uhat_v} \\
    {}+ \bigl\langle \alpha ^k ( \widehat{ u } _h ^{\mathrm{tan}} - u _h ^{\mathrm{tan}} ) , v _h ^{\mathrm{tan}} \bigr\rangle _{ \partial \mathcal{T} _h } &= (f, v _h ) _{ \mathcal{T} _h } , \quad &\forall v _h &\in W _h ^k , \notag \\
    - ( \rho _h , \eta _h ) _{ \mathcal{T} _h } + ( u _h , \delta \eta _h ) _{ \mathcal{T} _h } + \langle \widehat{ u } _h ^{\mathrm{tan}} , \eta _h ^{\mathrm{nor}} \rangle _{ \partial \mathcal{T} _h } &= 0, \quad &\forall \eta _h &\in W _h ^{ k + 1 } , \label{e:laplace_uhat_eta} \\
    ( u _h , q _h ) _{ \mathcal{T} _h } &= 0, \quad &\forall q _h &\in \mathfrak{H} _h ^k , \label{e:laplace_uhat_q} \\
    - \bigl\langle \sigma _h ^{\mathrm{tan}} - \beta ^{ k -1 } ( \widehat{ u } _h ^{\mathrm{nor}} - u _h ^{\mathrm{nor}} ), \widehat{ v } _h ^{\mathrm{nor}} \bigr\rangle _{ \partial \mathcal{T} _h } &= 0 , \quad &\forall \widehat{ v } _h ^{\mathrm{nor}} &\in \ringhat{ V } _h ^{k-1, \mathrm{nor}} , \label{e:laplace_uhat_vnor} \\
    \bigl\langle \rho _h ^{\mathrm{nor}} - \alpha ^k ( \widehat{ u } _h ^{\mathrm{tan}} - u _h ^{\mathrm{tan}} ) , \widehat{ v } _h ^{\mathrm{tan}} \bigr\rangle _{ \partial \mathcal{T} _h } &= 0 , \quad &\forall \widehat{ v } _h ^{\mathrm{tan}} &\in \widehat{ V } _h ^{k, \mathrm{tan}} , \label{e:laplace_uhat_vtan}
  \end{alignat}
\end{subequations}
which is equivalent to \eqref{e:laplace_uhat_ns} but more obviously
symmetric.

We now show that this system, which does not include local harmonics,
nevertheless satisfies local existence and uniqueness, allowing us to
efficiently eliminate the local variables. (Global existence and
uniqueness is immediate from \cref{t:laplace_global_eu} and the
equivalence of the formulations.)

\begin{theorem}[local existence and uniqueness]
  \label{t:laplace_uhat_existence_uniqueness}
  Given any $ p _h \in \mathfrak{H} _h ^k $,
  $ \widehat{ u } _h ^{\mathrm{nor}} \in \ringhat{V} _h ^{k-1,
    \mathrm{nor}} $,
  $ \widehat{ u } _h ^{\mathrm{tan}} \in \widehat{ V } _h ^{k,
    \mathrm{tan}} $, and $ f \in L ^2 \Lambda ^k (\Omega) $, there
  exist unique $ \sigma _h \in W _h ^{ k -1 } $, $ u _h \in W _h ^k $,
  and $ \rho _h \in W _h ^{ k + 1 } $ satisfying
  \eqref{e:laplace_uhat_tau}--\eqref{e:laplace_uhat_eta}.
\end{theorem}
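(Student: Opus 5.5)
The plan is the same as for \cref{t:laplace_local_eu}. The local system \eqref{e:laplace_uhat_tau}--\eqref{e:laplace_uhat_eta} is square in $ ( \sigma _h , u _h , \rho _h ) $, so it suffices to show that the homogeneous problem has only the trivial solution. The homogeneous problem is the one with $ p _h = 0 $, $ \widehat{ u } _h ^{\mathrm{nor}} = 0 $, $ \widehat{ u } _h ^{\mathrm{tan}} = 0 $, and $ f = 0 $. It reads
\begin{alignat*}{2}
  - ( \sigma _h , \tau _h ) _{ \mathcal{T} _h } + ( u _h , \mathrm{d} \tau _h ) _{ \mathcal{T} _h } &= 0 , \quad &\forall \tau _h &\in W _h ^{ k -1 } ,\\
  ( \mathrm{d} \sigma _h + \delta \rho _h , v _h ) _{ \mathcal{T} _h } + \langle \beta ^{ k -1 } u _h ^{\mathrm{nor}} , v _h ^{\mathrm{nor}} \rangle _{ \partial \mathcal{T} _h } - \langle \alpha ^k u _h ^{\mathrm{tan}} , v _h ^{\mathrm{tan}} \rangle _{ \partial \mathcal{T} _h } &= 0 , \quad &\forall v _h &\in W _h ^k ,\\
  - ( \rho _h , \eta _h ) _{ \mathcal{T} _h } + ( u _h , \delta \eta _h ) _{ \mathcal{T} _h } &= 0 , \quad &\forall \eta _h &\in W _h ^{ k + 1 } .
\end{alignat*}

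\textbf{Energy identity.} Take $ \tau _h = - \sigma _h $, $ v _h = u _h $, $ \eta _h = - \rho _h $ and add the three equations. The cross terms cancel in pairs: $ - ( u _h , \mathrm{d} \sigma _h ) _{ \mathcal{T} _h } $ against $ ( \mathrm{d} \sigma _h , u _h ) _{ \mathcal{T} _h } $, and $ - ( u _h , \delta \rho _h ) _{ \mathcal{T} _h } $ against $ ( \delta \rho _h , u _h ) _{ \mathcal{T} _h } $. What remains is
\begin{equation*}
  \lVert \sigma _h \rVert _{ \mathcal{T} _h } ^2 + \lVert \rho _h \rVert _{ \mathcal{T} _h } ^2 + \langle \beta ^{ k -1 } u _h ^{\mathrm{nor}} , u _h ^{\mathrm{nor}} \rangle _{ \partial \mathcal{T} _h } - \langle \alpha ^k u _h ^{\mathrm{tan}} , u _h ^{\mathrm{tan}} \rangle _{ \partial \mathcal{T} _h } = 0 .
\end{equation*}
By \cref{a:laplace_penalty}, $ \beta ^{ k -1 } = 1 / \alpha ^{ k -1 } > 0 $ and $ - \alpha ^k > 0 $, so every term is nonnegative. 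Hence $ \sigma _h = 0 $, $ \rho _h = 0 $, $ u _h ^{\mathrm{nor}} = 0 $, and $ u _h ^{\mathrm{tan}} = 0 $ on every $ \partial K $.

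\textbf{Closedness and coclosedness of $ u _h $.} Integrate the first equation by parts using \eqref{e:ibp_broken}. The boundary term $ \langle \tau _h ^{\mathrm{tan}} , u _h ^{\mathrm{nor}} \rangle _{ \partial \mathcal{T} _h } $ vanishes, leaving $ ( \delta u _h , \tau _h ) _{ \mathcal{T} _h } = 0 $; taking $ \tau _h = \delta u _h \in W _h ^{ k -1 } $ gives $ \delta u _h = 0 $. Likewise the third equation becomes $ ( \mathrm{d} u _h , \eta _h ) _{ \mathcal{T} _h } = 0 $ after integrating by parts (the term $ \langle u _h ^{\mathrm{tan}} , \eta _h ^{\mathrm{nor}} \rangle $ vanishes), and taking $ \eta _h = \mathrm{d} u _h $ gives $ \mathrm{d} u _h = 0 $. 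So on each $ K $, $ u _h $ is closed and coclosed with vanishing tangential and normal traces.

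\textbf{Main step: such a $ u _h $ is zero.} This is where the argument departs from \cref{t:laplace_local_eu}, and it explains why no local harmonics are needed. There, only tangential boundary data were imposed, which leaves $ \mathring{ \mathfrak{H} } (K) $ as a kernel. Here both traces vanish. I would argue as follows.
\begin{enumerate}[label=(\roman*)]
\item Let $ \tilde{u} $ be the extension of $ u _h \rvert _K $ by zero to $ \mathbb{R}^n $.
\item Since $ u _h ^{\mathrm{tan}} = 0 $ and $ u _h ^{\mathrm{nor}} = 0 $ on $ \partial K $, \eqref{e:ibp} shows that the distributional $ \mathrm{d} \tilde{u} $ and $ \delta \tilde{u} $ are the zero extensions of $ \mathrm{d} u _h = 0 $ and $ \delta u _h = 0 $. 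Thus $ \mathrm{d} \tilde{u} = 0 $ and $ \delta \tilde{u} = 0 $ on all of $ \mathbb{R}^n $.
\item It follows that $ L \tilde{u} = ( \mathrm{d} \delta + \delta \mathrm{d} ) \tilde{u} = 0 $ in the sense of distributions, so each coefficient of $ \tilde{u} $ is harmonic.
\item By Weyl's lemma $ \tilde{u} $ is smooth. It is also compactly supported, so by the maximum principle (or Liouville) $ \tilde{u} = 0 $.
\end{enumerate}
A purely algebraic alternative also works, since $ u _h $ is a polynomial. The tangential and normal traces together give the full trace of $ u _h $ on each facet. Hence $ u _h $ vanishes on $ \partial K $, and therefore also in $ K $ because it is harmonic.

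Either way $ u _h = 0 $, which together with $ \sigma _h = 0 $ and $ \rho _h = 0 $ completes the proof.
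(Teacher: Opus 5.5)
Your proof is correct and follows the paper's: the same energy identity from $\tau_h=-\sigma_h$, $v_h=u_h$, $\eta_h=-\rho_h$ and the same integrations by parts giving $\mathrm{d}u_h=0$ and $\delta u_h=0$, and your ``algebraic alternative'' is exactly the paper's final step (vanishing tangential and normal traces give a vanishing full trace, so the componentwise-harmonic $u_h$ is zero on each $K$). Your zero-extension argument is also valid, since $u_h^{\mathrm{tan}}=0$ and $u_h^{\mathrm{nor}}=0$ are precisely what make $\mathrm{d}\tilde{u}=0$ and $\delta\tilde{u}=0$ distributionally. It avoids the pointwise decomposition of a form into tangential and normal parts, at the cost of invoking Weyl's lemma instead of the more elementary uniqueness for the Dirichlet problem on $K$.
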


\begin{proof}
  Since this is a square system, it suffices to show that
  \begin{subequations}
    \begin{alignat}{2}
      - ( \sigma _h , \tau _h ) _{ \mathcal{T} _h } + ( u _h , \mathrm{d} \tau _h ) _{ \mathcal{T} _h } &= 0, \quad &\forall \tau _h &\in W _h ^{ k -1 } , \label{e:laplace_uhat_local_tau} \\
      ( \mathrm{d} \sigma _h + \delta \rho _h , v _h ) _{ \mathcal{T} _h } + \langle \beta ^{ k -1 } u _h ^{\mathrm{nor}} , v _h ^{\mathrm{nor}} \rangle _{ \partial \mathcal{T} _h } - \langle \alpha ^k u _h ^{\mathrm{tan}} , v _h ^{\mathrm{tan}} \rangle _{ \partial \mathcal{T} _h } &= 0 , \quad &\forall v _h &\in W _h ^k , \label{e:laplace_uhat_local_v} \\
      - ( \rho _h , \eta _h ) _{ \mathcal{T} _h } + ( u _h , \delta \eta _h ) _{ \mathcal{T} _h } &= 0, \quad &\forall \eta _h &\in W _h ^{ k + 1 } , \label{e:laplace_uhat_local_eta}
    \end{alignat}
  \end{subequations}
  has only the trivial solution. Taking $ \tau _h = - \sigma _h $ in
  \eqref{e:laplace_uhat_local_tau}, $ v _h = u _h $ in
  \eqref{e:laplace_uhat_local_v}, and $ \eta _h = - \rho _h $ in
  \eqref{e:laplace_uhat_local_eta}, summing and canceling terms gives
  \begin{equation*}
    ( \sigma _h , \sigma _h ) _{ \mathcal{T} _h } + ( \rho _h , \rho _h ) _{ \mathcal{T} _h } + \langle \beta ^{ k -1 } u _h ^{\mathrm{nor}} , u _h ^{\mathrm{nor}} \rangle _{ \partial \mathcal{T} _h } - \langle \alpha ^k u _h ^{\mathrm{tan}} , u _h ^{\mathrm{tan}} \rangle _{ \partial \mathcal{T} _h } = 0 .
  \end{equation*}
  Thus, \cref{a:laplace_penalty} implies that $ \sigma _h = 0 $,
  $ \rho _h = 0 $, $ u _h ^{\mathrm{nor}} = 0 $, and
  $ u _h ^{\mathrm{tan}} = 0 $. Integrating
  \eqref{e:laplace_uhat_local_tau} by parts with
  $ \tau _h = \delta u _h $ implies $ \delta u _h = 0 $, and
  integrating \eqref{e:laplace_uhat_local_eta} by parts with
  $ \eta _h = \mathrm{d} u _h $ implies $ \mathrm{d} u _h = 0 $. It
  follows that $ L u _h = 0 $ with $ u _h ^{\mathrm{nor}} = 0 $ and
  $ u _h ^{\mathrm{tan}} = 0 $. In other words, we have
  $ \Delta u _h = 0 $ in $ \mathcal{T} _h $ and $ u _h = 0 $ on
  $ \partial \mathcal{T} _h $, and therefore $ u _h = 0 $.
\end{proof}

\subsection{Static condensation}

We now examine the form of the Schur complement system obtained via
static condensation. In addition to eliminating the local variables
using \cref{t:laplace_uhat_existence_uniqueness}, we may also
separately compute $ p _h = P _{ \mathfrak{H} _h } f $ using
\cref{l:laplace_properties} to obtain a condensed system involving
only the global trace variables. Denote the combined trace space
$ \widehat{ V } _h ^k \coloneqq \ringhat{ V } _h ^{k-1, \mathrm{nor}}
\oplus \widehat{ V } _h ^{k, \mathrm{tan}} $. Let us write
\begin{equation*}
  \sigma _h = \sigma _{\widehat{ u }} + \sigma _f , \qquad u _h = u _{\widehat{ u }} + u _f , \qquad \rho _h = \rho _{\widehat{ u }} + \rho _f ,
\end{equation*}
where
$ \sigma _{\widehat{u}} , u _{\widehat{u}} , \rho _{\widehat{u}} $ is
the solution to \eqref{e:laplace_uhat_tau}--\eqref{e:laplace_uhat_eta}
for given $ \widehat{ u } _h \in \widehat{ V } _h ^k $ with
$ p _h = 0 $ and $ f = 0 $, and where $ \sigma _f , u _f , \rho _f $
is the solution to
\eqref{e:laplace_uhat_tau}--\eqref{e:laplace_uhat_eta} for given $f$
with $ p _h = P _{ \mathfrak{H} _h } f $ and $ \widehat{ u } _h = 0
$. Substituting these into
\eqref{e:laplace_uhat_vnor}--\eqref{e:laplace_uhat_vtan}, we obtain
the system
\begin{equation}
  \label{e:ahat_Fhat}
  \widehat{ a } ( \widehat{ u } _h , \widehat{ v } _h ) = \widehat{ F } ( \widehat{ v } _h ) , \quad \forall \widehat{ v } _h \in \widehat{ V } _h ^k ,
\end{equation}
where
\begin{subequations}
  \begin{align}
    \widehat{ a } ( \widehat{ u } _h , \widehat{ v } _h )
    &\coloneqq
      - \bigl\langle \sigma _{\widehat{u}} ^{\mathrm{tan}} - \beta ^{ k -1 } ( \widehat{ u } _h ^{\mathrm{nor}} - u _{\widehat{u}} ^{\mathrm{nor}} ), \widehat{ v } _h ^{\mathrm{nor}} \bigr\rangle _{ \partial \mathcal{T} _h } + \bigl\langle \rho _{\widehat{u}} ^{\mathrm{nor}} - \alpha ^k ( \widehat{ u } _h ^{\mathrm{tan}} - u _{\widehat{u}} ^{\mathrm{tan}} ) , \widehat{ v } _h ^{\mathrm{tan}} \bigr\rangle _{ \partial \mathcal{T} _h } , \label{e:ahat_ns} \\
    \widehat{ F } ( \widehat{ v } _h )
    &\coloneqq \langle \sigma _f ^{\mathrm{tan}} + \beta ^{ k -1 } u _f ^{\mathrm{nor}} , \widehat{ v } _h ^{\mathrm{nor}} \rangle _{ \partial \mathcal{T} _h } - \langle \rho _f ^{\mathrm{nor}} + \alpha ^k u _f ^{\mathrm{tan}} , \widehat{ v } _h ^{\mathrm{tan}} \rangle _{ \partial \mathcal{T} _h } . \label{e:Fhat}
  \end{align}
\end{subequations}
The system \eqref{e:ahat_Fhat} does not yet account for
\eqref{e:laplace_uhat_q}, and consequently it has a nullspace
corresponding to traces of global harmonics, as we will show. Before
characterizing $ \widehat{ a } ( \cdot , \cdot ) $, we first show that
$ \widehat{ F } $ vanishes on these global harmonic traces, which will
later allow us to quotient them out.

\begin{lemma}
  \label{l:Fhat}
  For all $ q _h \in \mathfrak{H} _h ^k $, we have
  $ \widehat{ F } ( q _h ^{\mathrm{nor}} \oplus q _h ^{\mathrm{tan}} )
  = 0 $.
\end{lemma}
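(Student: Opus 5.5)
The plan is to test the $v_h$-equation of the local problem that defines $\sigma _f , u _f , \rho _f$ with $v _h = q _h$, and then integrate by parts. Harmonicity of $q _h$ should reduce the identity to exactly $\widehat{ F } ( q _h ^{\mathrm{nor}} \oplus q _h ^{\mathrm{tan}} ) = 0$.

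First I will check that $q _h ^{\mathrm{nor}} \oplus q _h ^{\mathrm{tan}}$ lies in $\widehat{ V } _h ^k$, so that $\widehat{ F }$ is defined on it. Since $q _h \in \mathfrak{H} _h ^k \subset W _h ^k \cap \mathfrak{H} ^k (\Omega)$, its traces are piecewise polynomials of degree $r$ on each facet. Moreover $q _h \in H \Lambda (\Omega) \cap \mathring{ H } ^\ast \Lambda (\Omega)$ gives $\jump{ q _h ^{\mathrm{tan}} } = 0$ and $\jump{ q _h ^{\mathrm{nor}} } = 0$. The latter includes vanishing on $\partial \Omega$, so $q _h ^{\mathrm{nor}} \in \ringhat{ V } _h ^{k-1,\mathrm{nor}}$ and $q _h ^{\mathrm{tan}} \in \widehat{ V } _h ^{k,\mathrm{tan}}$.

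Next, recall that $\sigma _f , u _f , \rho _f$ solve \eqref{e:laplace_uhat_tau}--\eqref{e:laplace_uhat_eta} with $\widehat{ u } _h = 0$ and $p _h = P _{ \mathfrak{H} _h } f$. With $\widehat{ u } _h = 0$, \eqref{e:laplace_uhat_v} reads
\begin{equation*}
  ( \mathrm{d} \sigma _f + \delta \rho _f + P _{ \mathfrak{H} _h } f , v _h ) _{ \mathcal{T} _h } + \langle \beta ^{ k -1 } u _f ^{\mathrm{nor}} , v _h ^{\mathrm{nor}} \rangle _{ \partial \mathcal{T} _h } - \langle \alpha ^k u _f ^{\mathrm{tan}} , v _h ^{\mathrm{tan}} \rangle _{ \partial \mathcal{T} _h } = ( f, v _h ) _{ \mathcal{T} _h } .
\end{equation*}
Taking $v _h = q _h$, the projection term satisfies $( P _{ \mathfrak{H} _h } f , q _h ) = ( f , q _h )$, and these two terms cancel.

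For the remaining volume terms, I will use the elementwise integration by parts \eqref{e:ibp_broken} together with $\delta q _h = 0$ and $\mathrm{d} q _h = 0$. This gives
\begin{equation*}
  ( \mathrm{d} \sigma _f , q _h ) _{ \mathcal{T} _h } = \langle \sigma _f ^{\mathrm{tan}} , q _h ^{\mathrm{nor}} \rangle _{ \partial \mathcal{T} _h } , \qquad ( \delta \rho _f , q _h ) _{ \mathcal{T} _h } = - \langle q _h ^{\mathrm{tan}} , \rho _f ^{\mathrm{nor}} \rangle _{ \partial \mathcal{T} _h } .
\end{equation*}
Substituting these into the tested equation yields
\begin{equation*}
  \langle \sigma _f ^{\mathrm{tan}} + \beta ^{ k -1 } u _f ^{\mathrm{nor}} , q _h ^{\mathrm{nor}} \rangle _{ \partial \mathcal{T} _h } - \langle \rho _f ^{\mathrm{nor}} + \alpha ^k u _f ^{\mathrm{tan}} , q _h ^{\mathrm{tan}} \rangle _{ \partial \mathcal{T} _h } = 0 .
\end{equation*}
By \eqref{e:Fhat}, the left-hand side is precisely $\widehat{ F } ( q _h ^{\mathrm{nor}} \oplus q _h ^{\mathrm{tan}} )$, which proves the claim.

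The only real obstacle is sign bookkeeping. This concerns the penalty terms in \eqref{e:laplace_uhat_v} when $\widehat{ u } _h = 0$, and the sign convention in \eqref{e:ibp_broken} for the $\delta$ term. I will check these carefully against the definition of $\widehat{ F }$.
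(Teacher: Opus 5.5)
Your proposal is correct and follows essentially the same route as the paper: test \eqref{e:laplace_uhat_v} (with $\widehat{u}_h = 0$, $p_h = P_{\mathfrak{H}_h} f$) against $v_h = q_h$, cancel the projection term against $f$, and integrate by parts using $\mathrm{d} q_h = 0$ and $\delta q_h = 0$. Your signs check out against \eqref{e:ibp_broken} and \eqref{e:Fhat}. The membership check $q_h^{\mathrm{nor}} \oplus q_h^{\mathrm{tan}} \in \widehat{V}_h^k$ is a harmless extra detail that the paper leaves implicit.
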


\begin{proof}
  Taking $ \widehat{ u } _h = 0 $ and
  $ p _h = P _{ \mathfrak{H} _h } f $ in \eqref{e:laplace_uhat_v}
  implies
  \begin{equation*}
    ( \mathrm{d} \sigma _f + \delta \rho _f , v _h ) _{ \mathcal{T} _h } + \langle \beta ^{ k -1 } u _f ^{\mathrm{nor}} , v _h ^{\mathrm{nor}} \rangle _{ \partial \mathcal{T} _h } - \langle \alpha ^k u _f ^{\mathrm{tan}} , v _h ^{\mathrm{tan}} \rangle _{ \partial \mathcal{T} _h } = ( f - P _{ \mathfrak{H} _h } f , v _h ) _{ \mathcal{T} _h } , \quad \forall v _h \in W _h ^k .
  \end{equation*}
  If $ v _h = q _h \in \mathfrak{H} _h ^k $, the right-hand side
  vanishes, and integrating by parts on the left-hand side gives
  \begin{equation*}
    \langle \sigma _f ^{\mathrm{tan}} + \beta ^{ k -1 } u _f ^{\mathrm{nor}} , q _h ^{\mathrm{nor}} \rangle _{ \partial \mathcal{T} _h } - \langle \rho _f ^{\mathrm{nor}} + \alpha ^k u _f ^{\mathrm{tan}} , q _h ^{\mathrm{tan}} \rangle _{ \partial \mathcal{T} _h } = 0 , \quad \forall q _h \in \mathfrak{H} _h ^k ,
  \end{equation*}
  as claimed.
\end{proof}

Next, we express $ \widehat{ a } ( \cdot , \cdot ) $ in a form that
better reveals its symmetry and other properties. Compare with the
condensed bilinear form for the original LDG-H method in
\citet[p.~1341]{CoGoLa2009}.

\begin{lemma}
  \label{l:ahat}
  The condensed bilinear form may be written as
  \begin{align*}
    \widehat{ a } ( \widehat{ u } _h , \widehat{ v } _h ) =
    ( \sigma _{ \widehat{ u } } , \tau _{ \widehat{ v } } ) _{ \mathcal{T} _h } + ( \rho _{ \widehat{ u } } , \eta _{ \widehat{ v } } ) _{ \mathcal{T} _h }
    &+ \bigl\langle \beta ^{ k -1 } ( \widehat{ u } _h ^{\mathrm{nor}} - u _{ \widehat{ u } } ^{\mathrm{nor}} ) , \widehat{ v } _h ^{\mathrm{nor}} - v _{ \widehat{ v } } ^{\mathrm{nor}} \bigr\rangle _{ \partial \mathcal{T} _h } \\
    &- \bigl\langle \alpha ^k ( \widehat{ u } _h ^{\mathrm{tan}} - u _{ \widehat{ u } } ^{\mathrm{tan}} ) , \widehat{ v } _h ^{\mathrm{tan}} - v _{ \widehat{ v } }  ^{\mathrm{tan}} \bigr\rangle _{ \partial \mathcal{T} _h },
  \end{align*}
  where $ \tau _{ \widehat{ v } } $, $ v _{ \widehat{ v } } $,
  $ \eta _{ \widehat{ v } } $ are obtained by applying the local
  solvers to the test function $ \widehat{ v } _h $.
\end{lemma}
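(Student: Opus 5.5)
Write $\sigma_{\widehat u}, u_{\widehat u}, \rho_{\widehat u}$ and $\tau_{\widehat v}, v_{\widehat v}, \eta_{\widehat v}$ for the local solutions with $p_h = 0$ and $f = 0$. The plan is the standard LDG-H condensation argument of \citet{CoGoLa2009}: test the local equations for $\widehat u_h$ with the local solution generated by $\widehat v_h$, and subtract the result from the definition \eqref{e:ahat_ns}.

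\textbf{Step 1.} Starting from \eqref{e:ahat_ns}, I add to $\widehat a(\widehat u_h, \widehat v_h)$ three identities, each of which equals zero. They come from \eqref{e:laplace_uhat_tau}--\eqref{e:laplace_uhat_eta} for $\widehat u_h$ with the test functions $\tau_h = -\tau_{\widehat v}$, $v_h = v_{\widehat v}$ and $\eta_h = -\eta_{\widehat v}$; these are the same sign choices as in \cref{t:laplace_uhat_existence_uniqueness}. The sum contains:
\begin{itemize}
\item the volume terms $(\sigma_{\widehat u}, \tau_{\widehat v}) + (\rho_{\widehat u}, \eta_{\widehat v})$;
\item the mixed terms $-(u_{\widehat u}, \mathrm d\tau_{\widehat v}) + (\mathrm d\sigma_{\widehat u} + \delta\rho_{\widehat u}, v_{\widehat v}) - (u_{\widehat u}, \delta\eta_{\widehat v})$;
\item boundary terms involving $\widehat u_h$, $u_{\widehat u}$, $v_{\widehat v}$, $\tau_{\widehat v}$ and $\eta_{\widehat v}$.
\end{itemize}

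\textbf{Step 2.} I eliminate the mixed terms $(\mathrm d\sigma_{\widehat u}, v_{\widehat v})$ and $(\delta\rho_{\widehat u}, v_{\widehat v})$. For this I use the local equations for $\widehat v_h$, \eqref{e:laplace_uhat_tau} and \eqref{e:laplace_uhat_eta}, tested with $\sigma_{\widehat u}$ and $\rho_{\widehat u}$, combined with integration by parts \eqref{e:ibp_broken}. This gives
\[
(v_{\widehat v}, \mathrm d\sigma_{\widehat u}) = (\tau_{\widehat v}, \sigma_{\widehat u}) + \langle \widehat v_h^{\mathrm{nor}} - v_{\widehat v}^{\mathrm{nor}}, \sigma_{\widehat u}^{\mathrm{tan}} \rangle_{\partial\mathcal T_h},
\]
and similarly for $\rho$. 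I treat the terms $(u_{\widehat u}, \mathrm d\tau_{\widehat v})$ symmetrically, using the $\widehat u$ equations tested with $\tau_{\widehat v}$ and $\eta_{\widehat v}$. After these substitutions, each volume inner product should appear with net coefficient $+1$. The boundary terms involving $\sigma_{\widehat u}^{\mathrm{tan}}$ and $\rho_{\widehat u}^{\mathrm{nor}}$ should cancel against the ones already present in \eqref{e:ahat_ns}, namely $-\langle \sigma_{\widehat u}^{\mathrm{tan}}, \widehat v_h^{\mathrm{nor}} \rangle$ and $\langle \rho_{\widehat u}^{\mathrm{nor}}, \widehat v_h^{\mathrm{tan}} \rangle$.

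\textbf{Step 3.} The remaining penalty terms from \eqref{e:ahat_ns} and from \eqref{e:laplace_uhat_v} tested with $v_{\widehat v}$ are
\[
\langle \beta^{k-1}(\widehat u^{\mathrm{nor}} - u^{\mathrm{nor}}), \widehat v^{\mathrm{nor}} \rangle - \langle \beta^{k-1}(\widehat u^{\mathrm{nor}} - u^{\mathrm{nor}}), v^{\mathrm{nor}} \rangle .
\]
These combine into $\langle \beta^{k-1}(\widehat u_h^{\mathrm{nor}} - u_{\widehat u}^{\mathrm{nor}}), \widehat v_h^{\mathrm{nor}} - v_{\widehat v}^{\mathrm{nor}} \rangle$. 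The $\alpha^k$ tangential terms combine in the same way, with a minus sign. Together with the volume terms from Step 2, this yields the claimed formula, which is manifestly symmetric.

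The main obstacle is the sign and boundary bookkeeping in Step 2. The mixed volume terms must cancel exactly, and no boundary term of the form $\langle u^{\mathrm{tan}}, \eta^{\mathrm{nor}} \rangle$ may be left over. I will check each such pairing carefully against \eqref{e:ibp_broken} before collecting terms.
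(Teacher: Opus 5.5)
Your strategy is the paper's. The proof needs the local equations for $\widehat v_h$ tested with $\sigma_{\widehat u}$ and $\rho_{\widehat u}$, together with \eqref{e:laplace_uhat_v} for $\widehat u_h$ tested with $v_{\widehat v}$, and your Step 3 combination of the penalty terms is correct. However, the one identity you display in Step 2 is wrong, and as written the argument does not close. Since \eqref{e:laplace_uhat_tau} already contains $(u_h, \mathrm{d}\tau_h)_{\mathcal{T}_h}$, no integration by parts is needed. Taking $\tau_h = \sigma_{\widehat u}$ in the $\widehat v_h$ version gives directly
\[
  (v_{\widehat v}, \mathrm{d}\sigma_{\widehat u})_{\mathcal{T}_h} = (\tau_{\widehat v}, \sigma_{\widehat u})_{\mathcal{T}_h} + \langle \widehat v_h^{\mathrm{nor}}, \sigma_{\widehat u}^{\mathrm{tan}} \rangle_{\partial\mathcal{T}_h},
\]
and similarly $(v_{\widehat v}, \delta\rho_{\widehat u})_{\mathcal{T}_h} = (\eta_{\widehat v}, \rho_{\widehat u})_{\mathcal{T}_h} - \langle \widehat v_h^{\mathrm{tan}}, \rho_{\widehat u}^{\mathrm{nor}} \rangle_{\partial\mathcal{T}_h}$. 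Your right-hand side, with $\widehat v_h^{\mathrm{nor}} - v_{\widehat v}^{\mathrm{nor}}$, is instead the value of $(\sigma_{\widehat u}, \delta v_{\widehat v})_{\mathcal{T}_h}$. By \eqref{e:ibp_broken}, that differs from $(\mathrm{d}\sigma_{\widehat u}, v_{\widehat v})_{\mathcal{T}_h}$ by $\langle \sigma_{\widehat u}^{\mathrm{tan}}, v_{\widehat v}^{\mathrm{nor}} \rangle_{\partial\mathcal{T}_h}$. Substituting it for the term $(\mathrm{d}\sigma_{\widehat u}, v_{\widehat v})$ coming from \eqref{e:laplace_uhat_v} leaves a stray $-\langle \sigma_{\widehat u}^{\mathrm{tan}}, v_{\widehat v}^{\mathrm{nor}} \rangle_{\partial\mathcal{T}_h}$. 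Nothing in \eqref{e:ahat_ns} or in your Step 1 sum cancels it (the only $v_{\widehat v}^{\mathrm{nor}}$ term there is the $\beta^{k-1}$ penalty term), so you would not arrive at the stated formula.

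Separately, the $\tau$- and $\eta$-identities you add in Step 1 (the $\widehat u_h$ equations tested with $-\tau_{\widehat v}$ and $-\eta_{\widehat v}$) are removed again in Step 2. There you use those very same equations to eliminate $(u_{\widehat u}, \mathrm{d}\tau_{\widehat v})$ and $(u_{\widehat u}, \delta\eta_{\widehat v})$, so they contribute nothing and that part of the plan is circular. With the corrected identities, you can drop them:
\begin{enumerate}
\item The boundary terms from the two $\widehat v_h$ equations cancel $-\langle \sigma_{\widehat u}^{\mathrm{tan}}, \widehat v_h^{\mathrm{nor}}\rangle_{\partial\mathcal{T}_h}$ and $\langle \rho_{\widehat u}^{\mathrm{nor}}, \widehat v_h^{\mathrm{tan}}\rangle_{\partial\mathcal{T}_h}$ in \eqref{e:ahat_ns}. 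This leaves $(\sigma_{\widehat u},\tau_{\widehat v})_{\mathcal{T}_h} + (\rho_{\widehat u},\eta_{\widehat v})_{\mathcal{T}_h} - (\mathrm{d}\sigma_{\widehat u} + \delta\rho_{\widehat u}, v_{\widehat v})_{\mathcal{T}_h}$.
\item Then \eqref{e:laplace_uhat_v} for $\widehat u_h$ with $v_h = v_{\widehat v}$, $p_h = 0$, $f = 0$ turns the last term into the $v_{\widehat v}$ parts of the two penalty terms.
\end{enumerate}
That is exactly the paper's proof.
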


\begin{proof}
  By \eqref{e:laplace_uhat_tau}--\eqref{e:laplace_uhat_eta} applied to
  $ \tau _{ \widehat{ v } } $, $ v _{ \widehat{ v } } $,
  $ \eta _{ \widehat{ v } } $, we have
  \begin{subequations}
    \label{e:laplace_vhat}
    \begin{alignat}{2}
      - ( \tau _{ \widehat{ v } } , \tau _h ) _{ \mathcal{T} _h } + ( v _{ \widehat{ v } } , \mathrm{d} \tau _h ) _{ \mathcal{T} _h } - \langle \widehat{ v } _h ^{\mathrm{nor}} , \tau _h ^{\mathrm{tan}} \rangle _{ \partial \mathcal{T} _h } &= 0, \quad &\forall \tau _h &\in W _h ^{ k -1 } , \label{e:laplace_vhat_tau} \\
      ( \mathrm{d} \tau _{ \widehat{ v } } + \delta \eta _{ \widehat{ v } } , v _h ) _{ \mathcal{T} _h } - \bigl\langle \beta ^{ k -1 } ( \widehat{ v } _h ^{\mathrm{nor}} - v _{ \widehat{ v } } ^{\mathrm{nor}} ) , v _h ^{\mathrm{nor}} \bigr\rangle _{ \partial \mathcal{T} _h }  \label{e:laplace_vhat_v} \\
      + \bigl\langle \alpha ^k ( \widehat{ v } _h ^{\mathrm{tan}} - v _{ \widehat{ v } } ^{\mathrm{tan}} ) , v _h ^{\mathrm{tan}} \bigr\rangle _{ \partial \mathcal{T} _h } &= 0 , \quad &\forall v _h &\in W _h ^k , \notag \\
      - ( \eta _{ \widehat{ v } } , \eta _h ) _{ \mathcal{T} _h } + ( v _{ \widehat{ v } } , \delta \eta _h ) _{ \mathcal{T} _h } + \langle \widehat{ v } _h ^{\mathrm{tan}} , \eta _h ^{\mathrm{nor}} \rangle _{ \partial \mathcal{T} _h } &= 0, \quad &\forall \eta _h &\in W _h ^{ k + 1 } . \label{e:laplace_vhat_eta}
    \end{alignat}
  \end{subequations}
  Taking $ \tau _h = \sigma _{ \widehat{ u } } $ in
  \eqref{e:laplace_vhat_tau} and $ \eta _h = \rho _{ \widehat{ u } } $
  in \eqref{e:laplace_vhat_eta}, adding and rearranging gives
  \begin{align*}
    - \langle \sigma _{ \widehat{ u } } ^{\mathrm{tan}} , \widehat{ v } _h ^{\mathrm{nor}} \rangle _{ \partial \mathcal{T} _h } + \langle \rho _{ \widehat{ u } } ^{\mathrm{nor}} , \widehat{ v } _h ^{\mathrm{tan}} \rangle _{ \partial \mathcal{T} _h }
    &= ( \sigma _{ \widehat{ u } } , \tau _{ \widehat{ v } } ) _{ \mathcal{T} _h } + ( \rho _{ \widehat{ u } } , \eta _{ \widehat{ v } } ) _{ \mathcal{T} _h } - ( \mathrm{d} \sigma _{ \widehat{ u } } + \delta \rho _{ \widehat{ u } } , v _{ \widehat{ v } } ) _{ \mathcal{T} _h } \\
    &=( \sigma _{ \widehat{ u } } , \tau _{ \widehat{ v } } ) _{ \mathcal{T} _h } + ( \rho _{ \widehat{ u } } , \eta _{ \widehat{ v } } ) _{ \mathcal{T} _h }
      \begin{aligned}[t]
        &- \bigl\langle \beta ^{ k -1 } ( \widehat{ u } _h ^{\mathrm{nor}} - u _{ \widehat{ u } } ^{\mathrm{nor}} ) , v _{ \widehat{ v } } ^{\mathrm{nor}} \bigr\rangle _{ \partial \mathcal{T} _h } \\
        &+ \bigl\langle \alpha ^k ( \widehat{ u } _h ^{\mathrm{tan}} - u _{ \widehat{ u } } ^{\mathrm{tan}} ) , v _{ \widehat{ v } } ^{\mathrm{tan}} \bigr\rangle _{ \partial \mathcal{T} _h } ,
      \end{aligned}
  \end{align*}
  where the last equality holds since
  $ \sigma _{ \widehat{ u } } , u _{ \widehat{ u } } , \rho _{
    \widehat{ u } } $ satisfy \eqref{e:laplace_uhat_v} with
  $ p _h = 0 $, $ f = 0 $, and $ v _h = v _{ \widehat{ v } }
  $. Substituting into \eqref{e:ahat_ns} completes the proof.
\end{proof}

Denote the equivalence class of
$ \widehat{ v } _h \in \widehat{ V } _h ^k $, modulo global harmonics,
as
\begin{equation*}
  \widehat{ v } _h + \mathfrak{H} _h ^k \coloneqq \bigl\{ ( \widehat{ v } _h ^{\mathrm{nor}} + q _h ^{\mathrm{nor}} ) \oplus ( \widehat{ v } _h ^{\mathrm{tan}} + q _h ^{\mathrm{tan}} ) : q _h \in \mathfrak{H} _h ^k \bigr\} ,
\end{equation*}
and denote the quotient space
$ \widehat{ V } _h ^k / \mathfrak{H} _h ^k $. We are now ready to
state the main result of this section.

\begin{theorem}
  The condensed bilinear form $ \widehat{ a } ( \cdot , \cdot ) $ is
  symmetric positive-semidefinite on $ \widehat{ V } _h ^k $ with
  nullspace $ \mathfrak{H} _h ^k $, and thus is symmetric
  positive-definite on $ \widehat{ V } _h ^k / \mathfrak{H} _h ^k
  $. Consequently, there exists a unique equivalence class
  $ \widehat{u} _h + \mathfrak{H} _h ^k \in \widehat{ V } _h ^k /
  \mathfrak{H} _h ^k $ satisfying \eqref{e:ahat_Fhat}, and furthermore
  there is a unique $ \widehat{ u } _h \in \widehat{ V } _h ^k $ in
  this equivalence class such that
  $ u _h \in \mathfrak{H} _h ^{k \perp} $.
\end{theorem}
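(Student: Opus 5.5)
Symmetry and semidefiniteness come directly from \cref{l:ahat}. The expression there is visibly symmetric in $\widehat{u}_h$ and $\widehat{v}_h$: the local solvers are linear, and the $\mathcal{T}_h$ inner products and weighted boundary pairings are symmetric. Taking $\widehat{v}_h = \widehat{u}_h$ gives
\begin{equation*}
  \widehat{a}(\widehat{u}_h,\widehat{u}_h) = \lVert\sigma_{\widehat{u}}\rVert^2_{\mathcal{T}_h} + \lVert\rho_{\widehat{u}}\rVert^2_{\mathcal{T}_h} + \bigl\langle \beta^{k-1}(\widehat{u}_h^{\mathrm{nor}} - u_{\widehat{u}}^{\mathrm{nor}}), \widehat{u}_h^{\mathrm{nor}} - u_{\widehat{u}}^{\mathrm{nor}}\bigr\rangle_{\partial\mathcal{T}_h} - \bigl\langle \alpha^k(\widehat{u}_h^{\mathrm{tan}} - u_{\widehat{u}}^{\mathrm{tan}}), \widehat{u}_h^{\mathrm{tan}} - u_{\widehat{u}}^{\mathrm{tan}}\bigr\rangle_{\partial\mathcal{T}_h}.
\end{equation*}
By \cref{a:laplace_penalty} ($\beta^{k-1}>0$, $\alpha^k<0$), every term is nonnegative.

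To identify the nullspace, first suppose $\widehat{a}(\widehat{u}_h,\widehat{u}_h)=0$. Then $\sigma_{\widehat{u}}=0$, $\rho_{\widehat{u}}=0$, $u_{\widehat{u}}^{\mathrm{nor}}=\widehat{u}_h^{\mathrm{nor}}$ and $u_{\widehat{u}}^{\mathrm{tan}}=\widehat{u}_h^{\mathrm{tan}}$. Since $\jump{\widehat{u}_h^{\mathrm{tan}}}=0$ and $\jump{\widehat{u}_h^{\mathrm{nor}}}=0$ (by definition of $\widehat{V}_h^k$, including the vanishing normal trace on $\partial\Omega$), $u_{\widehat{u}}$ lies in $H\Lambda^k(\Omega)\cap\mathring{H}^\ast\Lambda^k(\Omega)$. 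Substituting $\sigma_{\widehat{u}}=0$ and $\widehat{u}_h^{\mathrm{nor}}=u_{\widehat{u}}^{\mathrm{nor}}$ into \eqref{e:laplace_uhat_tau} and integrating by parts gives $(\delta u_{\widehat{u}},\tau_h)_{\mathcal{T}_h}=0$ for all $\tau_h$. Taking $\tau_h=\delta u_{\widehat{u}}$ then shows $\delta u_{\widehat{u}}=0$. The same argument with \eqref{e:laplace_uhat_eta} and $\eta_h=\mathrm{d}u_{\widehat{u}}$ shows $\mathrm{d}u_{\widehat{u}}=0$. Hence $u_{\widehat{u}}\in\mathfrak{H}_h^k$, and $\widehat{u}_h$ is the trace pair of a global harmonic.

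Conversely, given $q_h\in\mathfrak{H}_h^k$, set $\widehat{u}_h = q_h^{\mathrm{nor}}\oplus q_h^{\mathrm{tan}}$; this lies in $\widehat{V}_h^k$ because $q_h$ is conforming with vanishing normal trace. One checks that $(\sigma,u,\rho)=(0,q_h,0)$ satisfies \eqref{e:laplace_uhat_tau}--\eqref{e:laplace_uhat_eta} with $p_h=0$, $f=0$: after integration by parts the boundary terms cancel exactly and $\mathrm{d}q_h=\delta q_h=0$. By the uniqueness in \cref{t:laplace_uhat_existence_uniqueness}, these are the local solutions, so the quadratic form vanishes. Because $\widehat{a}$ is symmetric positive-semidefinite, $\widehat{a}(\widehat{u}_h,\widehat{u}_h)=0$ forces $\widehat{a}(\widehat{u}_h,\cdot)=0$. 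The nullspace is therefore exactly the harmonic traces. The map $q_h\mapsto$ traces is injective, since a conforming harmonic form with zero traces has $u_{\widehat{u}}=q_h$ determined by zero data and so vanishes. This lets us identify the nullspace with $\mathfrak{H}_h^k$.

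Consequently $\widehat{a}$ descends to a symmetric positive-definite form on the quotient. By \cref{l:Fhat}, $\widehat{F}$ annihilates the nullspace, so it is well defined on the quotient, and Lax--Milgram (finite dimensions) gives a unique class $\widehat{u}_h+\mathfrak{H}_h^k$ solving \eqref{e:ahat_Fhat}.

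For the representative, adding the traces of $q_h$ to $\widehat{u}_h$ changes $u_h = u_{\widehat{u}}+u_f$ by exactly $q_h$, by the converse computation above and linearity. Choosing $q_h = -P_{\mathfrak{H}_h}u_h$ gives $u_h\perp\mathfrak{H}_h^k$, and this $q_h$ is unique. The main obstacle I expect is the nullspace characterization, specifically verifying the trace bookkeeping: the jump conventions must give genuine conformity, including the boundary condition encoded by the ring on $\ringhat{V}_h^{k-1,\mathrm{nor}}$.
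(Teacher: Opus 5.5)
Your proposal is correct and follows the paper's proof essentially step for step: \cref{l:ahat} for symmetry and semidefiniteness, testing with $\tau_h = \delta v_{\widehat{v}}$ and $\eta_h = \mathrm{d} v_{\widehat{v}}$ to identify the nullspace, \cref{l:Fhat} to pass to the quotient, and shifting by a harmonic $q_h$ to select the representative. You also spell out several points the paper leaves implicit: the converse (traces of $q_h \in \mathfrak{H}_h^k$ have local solution $(0, q_h, 0)$ and so lie in the nullspace), the Cauchy--Schwarz step from $\widehat{a}(\widehat{u}_h, \widehat{u}_h) = 0$ to $\widehat{a}(\widehat{u}_h, \cdot) = 0$, and the injectivity of $q_h \mapsto q_h^{\mathrm{nor}} \oplus q_h^{\mathrm{tan}}$.
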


\begin{proof}
  From \cref{a:laplace_penalty,l:ahat}, we immediately have
  $ \widehat{ a } ( \widehat{ v } _h , \widehat{ v } _h ) \geq 0 $ for
  all $ \widehat{ v } _h \in \widehat{ V } _h ^k $, i.e.,
  $ \widehat{ a } ( \cdot , \cdot ) $ is positive-semidefinite. This
  equals zero if and only if
  \begin{equation*}
    \tau _{ \widehat{ v } } = 0 , \qquad \eta _{
      \widehat{ v } } = 0 , \qquad v _{ \widehat{ v } }
    ^{\mathrm{nor}} = \widehat{ v } _h ^{\mathrm{nor}} , \qquad v _{
      \widehat{ v } } ^{\mathrm{tan}} = \widehat{ v } _h
    ^{\mathrm{tan}} ;
  \end{equation*}
  the last two equalities imply
  $ v _{ \widehat{ v } } \in H \Lambda ^k (\Omega) \cap \mathring{ H }
  ^\ast \Lambda ^k (\Omega) $. Taking
  $ \tau _h = \delta v _{ \widehat{ v } } $ and
  $ \eta _h = \mathrm{d} v _{ \widehat{ v } } $ in
  \eqref{e:laplace_vhat}, and integrating by parts, implies
  $ \delta v _{ \widehat{ v } } = 0 $ and
  $ \mathrm{d} v _{ \widehat{ v } } = 0 $. Thus,
  $ v _{ \widehat{ v } } \in \mathfrak{H} _h ^k $, proving the first
  claim.

  Next, \cref{l:Fhat} implies that $ \widehat{ F } $ is a well-defined
  linear functional on the quotient
  $ \widehat{ V } _h ^k / \mathfrak{H} _h ^k $. Since
  $ \widehat{ a } ( \cdot , \cdot ) $ is symmetric positive-definite
  on $ \widehat{ V } _h ^k / \mathfrak{H} _h ^k $, it follows that
  there exists a unique equivalence class
  $ \widehat{ u } _h + \mathfrak{H} _h ^k $ satisfying
  \eqref{e:ahat_Fhat}.

  Finally, observe from \eqref{e:laplace_vhat} that adding the same
  harmonic $ q _h \in \mathfrak{H} _h ^k $ to both
  $ \widehat{ v } _h $ and $ v _{ \widehat{ v } } $ yields another
  solution, i.e.,
  $ \widehat{ v } _h + q _h \coloneqq ( \widehat{ v } _h
  ^{\mathrm{nor}} + q _h ^{\mathrm{nor}} ) \oplus ( \widehat{ v } _h
  ^{\mathrm{tan}} + q _h ^{\mathrm{tan}} ) $ has the local
  solution
  $ \tau _{ \widehat{ v } } , v _{ \widehat{ v } } + q _h , \eta _{
    \widehat{ v } } $. Indeed, the $ q _h $ terms cancel in
  \eqref{e:laplace_vhat_v} by simple subtraction, while they cancel in
  \eqref{e:laplace_vhat_tau} and \eqref{e:laplace_vhat_eta} due to
  integration by parts. Hence, if
  $ \widehat{ u } _h ^\prime \in \widehat{ V } _h ^k $ is a particular
  solution to \eqref{e:ahat_Fhat} with
  $ u _h ^\prime = u _{ \widehat{ u } ^\prime } + u _f $, then
  $ \widehat{ u } _h = \widehat{ u } _h ^\prime - P _{ \mathfrak{H} _h
  } u _h ^\prime $ uniquely gives
  $ u _h = u _h ^\prime - P _{ \mathfrak{H} _h } u _h ^\prime \in
  \mathfrak{H} _h ^{ k \perp } $.
\end{proof}

This proof gives a practical recipe for solving \eqref{e:laplace_uhat}
using static condensation:
\begin{itemize}
\item Compute $ p _h = P _{ \mathfrak{H} _h } f $.
\item Use the local solvers
  \eqref{e:laplace_uhat_tau}--\eqref{e:laplace_uhat_eta} to assemble
  the condensed system \eqref{e:ahat_Fhat}.
\item Compute a particular solution $ \widehat{ u } _h ^\prime $ to
  \eqref{e:ahat_Fhat}. (When there are nontrivial harmonics, one can
  do this in Firedrake/PETSc, for example, by explicitly supplying a
  basis for the nullspace $ \mathfrak{H} _h ^k $.)
\item Use the local solvers to compute $ \sigma _h $,
  $ u _h ^\prime $, $ \rho _h $.
\item Compute $ P _{ \mathfrak{H} _h } u _h ^\prime $, and subtract to
  get
  $ \widehat{ u } _h = \widehat{ u } _h ^\prime - P _{ \mathfrak{H} _h
  } u _h ^\prime $ and
  $ u _h = u _h ^\prime - P _{ \mathfrak{H} _h } u _h ^\prime $.
\end{itemize}
For contractible $\Omega$ with $ k > 0 $, we recall that
$ \mathfrak{H} _h ^k $ is trivial, so the first and last steps may be
omitted, and the condensed system \eqref{e:ahat_Fhat} is symmetric
positive-definite (rather than -semidefinite) on
$ \widehat{ V } _h ^k $.

\begin{remark}
  We assemble a possibly singular condensed system for
  \eqref{e:laplace_uhat_vnor}--\eqref{e:laplace_uhat_vtan}, using its
  nullspace to obtain a solution that also satisfies
  \eqref{e:laplace_uhat_q}. Alternatively, one could assemble a Schur
  complement for
  \eqref{e:laplace_uhat_q}--\eqref{e:laplace_uhat_vtan}, which would
  give a nonsingular \emph{mixed} system for both $ p _h $ and
  $ \widehat{ u } _h $, rather than a positive-semidefinite one for
  $ \widehat{ u } _h $ alone. From this perspective, our preferred
  approach might be seen as solving the mixed system using an
  ``ellipticity on the kernel''-style argument; cf.~\citep[Section
  3.4.5]{BoBrFo2013}.
\end{remark}

\section{Numerical experiments}
\label{s:numerical_experiments}

This section presents numerical experiments for the Hodge--Dirac and
Hodge--Laplace problems in dimensions $ n = 2 $ and $ n = 3 $,
illustrating the convergence behavior of the equal-order HDG
methods. All computations have been carried out using the Firedrake
finite element library
\citep{RaHaMiLaLuMcBeMaKe2017,FiredrakeUserManual} (version 2026.4.0),
and a Firedrake component called Slate \citep{GiMiHaCo2020} was used
to implement the local solvers for static condensation. Differential
forms and operators are implemented using the corresponding objects
from vector calculus, as \cref{r:proxies} illustrates for $ n = 3
$; for $ n = 2 $, we likewise identify
\begin{equation*}
  \begin{tikzcd}
    \Lambda ^0 (\Omega) \ar[r, shift left, "\mathrm{d}"] \ar[d,
    phantom, sloped, "\cong"] & \Lambda ^1 (\Omega) \ar[l, shift left,
    "\delta"] \ar[r, shift left, "\mathrm{d}"] \ar[d, phantom, sloped,
    "\cong"] & \Lambda ^2 (\Omega) \ar[l, shift left, "\delta"] \ar[d, phantom, sloped, "\cong"] \\
    C ^\infty (\Omega) \ar[r, shift left, "\operatorname{grad}"] & C
    ^\infty ( \Omega , \mathbb{R}^2 ) \ar[l, shift left,
    "-\operatorname{div}"] \ar[r, shift left, "\operatorname{curl}"] &
    C ^\infty ( \Omega ) \ar[l, shift left,
    "\operatorname{curl}"] \mathrlap{\quad .}
  \end{tikzcd}
\end{equation*}

\subsection{Hodge--Dirac}

For all of the Hodge--Dirac numerical experiments, we take the HDG
penalty coefficients to be $ \alpha ^k = ( - 1 ) ^k $.

\begin{table}
  \centering
  \small
  \begin{filecontents*}{dirac_square.csv}
r,N,z0_error,z1_error,z2_error,A_error,z0_rate,z1_rate,z2_rate,A_rate
0,1,0.5000011387595962,1.1762509491503388,0.7669970143285231,1.3612278223956877,,,,
0,2,0.5520687353070441,0.6226675739517034,0.5904449510472906,1.8501846891659877,-0.1429165202547935,0.9176618306360426,0.3774184026846562,-0.442760746420954
0,4,0.30175832715759304,0.34570487087084234,0.3355239484201043,1.5245853809320413,0.8714543175519208,0.8489212205029555,0.8153868149372944,0.27925234319744563
0,8,0.15441441874915965,0.18479798713362533,0.16767132133926527,1.1259921936634976,0.9665861099775492,0.9035918882137051,1.0007797940875038,0.4372201219976142
0,16,0.07770443449518341,0.09537720621808203,0.08211857395797387,0.8055603249803863,0.9907386344725397,0.9542326134908992,1.0298554706378233,0.48313229008809755
0,32,0.038920382855429245,0.04838094647119885,0.04041469521105787,0.5713543314654025,0.9974710319717106,0.9792055311568264,1.0228286067712595,0.4956069044555957
0,64,0.019468838252093404,0.02435367435336253,0.020020635479922607,0.40432522249374686,0.9993591039407628,0.9902995379663198,1.0133921997230937,0.4988695216236696
1,1,0.3697556957576622,0.5764418859946756,0.36023840051057354,1.449300184483036,,,,
1,2,0.12352744393414,0.15786723185257595,0.12895826438912328,0.6901002010176385,1.581740771798414,1.8684634245689158,1.4820477448105145,1.0704786839152618
1,4,0.03548537456914084,0.04255333334752237,0.0341804462008279,0.27814006025880805,1.7995351590971125,1.891367692630881,1.9156610967237486,1.3109943035193248
1,8,0.00915897506352585,0.01119262318944016,0.00840266628706217,0.1024688443275054,1.9539664674582848,1.9267239504811036,2.024252136899096,1.440626225009628
1,16,0.002301875041145521,0.002873394548519257,0.0020567362266685333,0.03667898898401789,1.992376644437635,1.9617201876690444,2.030490406203609,1.482159546397296
1,32,0.0005753442832417437,0.0007276894300904211,0.0005070637370149605,0.013017316838680924,2.000312096594163,1.9813613439023623,2.0201177742998766,1.494521765877706
1,64,0.00014371677798347035,0.0001830695382722864,0.00012577645886783332,0.004608127450834493,2.0011970194729223,1.99093110040255,2.011305179454478,1.4981795830336795
2,1,0.14862639068604508,0.19295912121705566,0.2280536506617159,0.6837961116462349,,,,
2,2,0.024883172179431482,0.03551304048941339,0.030506045947121578,0.1784389022022546,2.5784479879429894,2.441874453241533,2.902206163234797,1.9381360415656887
2,4,0.0032449441429395864,0.0047068937711277695,0.0034394515310323713,0.03302505199530901,2.938904866543906,2.9155016849925914,3.1488447662153094,2.4337974394602977
2,8,0.0004052641636611325,0.0005786062377588233,0.0003985904655681712,0.005779578316537759,3.0012591317276005,3.024121511847836,3.1091994193320467,2.514524691588938
2,16,5.0373224542637495e-05,7.092212413171319e-05,4.85162465311562e-05,0.0010119300405595868,3.008133617328623,3.028274225560793,3.03836735528294,2.51385468293344
2,32,6.274703860663497e-06,8.763209546045113e-06,6.0175364751278756e-06,0.00017800554490208338,3.0050378085177734,3.016704483720334,3.0112230536991813,2.5071154660543424
2,64,7.829187495968292e-07,1.088847085495267e-06,7.504970178691608e-07,3.1393612114869306e-05,3.002612872426777,3.0086579966517704,3.0032547372198657,2.5033792435746456
\end{filecontents*}
\pgfplotstabletypeset[
  col sep = comma,
  every head row/.style = {
    before row=\toprule,
    after row=\midrule
  },
  every last row/.style = { after row=\bottomrule },
  every nth row = {7}{ before row=\midrule }, 
  every odd column/.style = {
    fixed,
    fixed zerofill,
    column type = r
  },
  every even column/.style = {
    sci,
    sci e,
    sci zerofill
  },
  columns={
    r,
    N,
    A_error,
    A_rate,
    z0_error,
    z0_rate,
    z1_error,
    z1_rate,
    z2_error,
    z2_rate
  },
  columns/r/.style = {
    column name = $r$,
    fixed,
    fixed zerofill,
    precision = 0,
    column type = r,
    postproc cell content/.code = {
      \pgfmathtruncatemacro{\tmp}{mod(\pgfplotstablerow, 7)}
      \ifnum\tmp=0\relax
      \else
        \pgfkeyssetvalue{/pgfplots/table/@cell content}{}
      \fi
    },
  },
  columns/N/.style = {
    column name = $N$,
    precision = 0,
    column type = r,
  },
  columns/{A_error}/.style={
    column name=$ \lvert z - z _h \rvert _{ \mathcal{A} } $
  },
  columns/{A_rate}/.style={
    column name=rate
  },
  columns/{z0_error}/.style={
    column name=$ \lVert z ^0 - z _h ^0  \rVert _\Omega  $
  },
  columns/{z0_rate}/.style={
    column name=rate
  },
  columns/{z1_error}/.style={
    column name=$ \lVert z ^1 - z _h ^1  \rVert _\Omega  $
  },
  columns/{z1_rate}/.style={
    column name=rate
  },
  columns/{z2_error}/.style={
    column name=$ \lVert z ^2 - z _h ^2  \rVert _\Omega  $
  },
  columns/{z2_rate}/.style={
    column name=rate
  }
  ]{dirac_square.csv}
  \medskip
  \caption{Hodge--Dirac on unit square, smooth solution. \label{t:dirac_square}}
\end{table}

We first consider the unit square $ \Omega = ( 0 , 1 ) ^2 $, taking
$ \mathcal{T} _h $ to be the structured triangle mesh obtained by
dividing $\Omega$ into an $ N \times N $ square grid and further
subdividing each smaller square into two
triangles. \Cref{t:dirac_square} shows the result of applying the
equal-order HDG methods to the Hodge--Dirac problem whose exact
solution is
\begin{equation*}
  z ^0 = \cos ( \pi x _1 ) \cos ( \pi x _2 ) , \qquad z ^1 =
  \begin{bmatrix}
    \sin ( \pi x _1 ) \\
    \sin ( \pi x _2 )
  \end{bmatrix} +
  \begin{bmatrix}
    \hphantom{-}\sin ( \pi x _1 ) \cos ( \pi x _2 ) \\
    - \cos ( \pi x _1 ) \sin ( \pi x _2 )
  \end{bmatrix}, \qquad z ^2 = \sin ( \pi x _1 ) \sin ( \pi x _2 ) ,
\end{equation*}
noting that the two terms of $ z ^1 $ are in
$ \mathfrak{B} ^1 (\Omega) $ and
$ \mathring{ \mathfrak{B} } ^\ast _1 (\Omega) $ respectively. Since
$\Omega$ is convex and thus $1$-regular, and $z$ is smooth, we observe
the expected convergence rates of $ r + \frac{1}{2} $ in the
$\mathcal{A}$ seminorm and $ r + 1 $ in the $ L ^2 $ norm. These match
the estimates in \cref{t:dirac_error} with $ s = r + 1 $ and
$ t = 1 $; since $ t = 1 $, there is no improvement from
\cref{t:dirac_error_n=2}.

\begin{table}
  \centering
  \small
  \begin{filecontents*}{dirac_lsd.csv}
r,N,z0_error,z1_error,z2_error,A_error,z0_rate,z1_rate,z2_rate,A_rate
0,1,0.3318650587527897,0.19281082277401496,0.14396749127757755,0.6556533960690101,,,,
0,2,0.2688072366211462,0.13143306747875722,0.1422781999791369,0.6199692119376846,0.3040247618740717,0.5528577430427104,0.017028451764078856,0.08073677941780055
0,4,0.22519067204497736,0.08602097565755908,0.1234724270828021,0.5741675569637441,0.255424909106611,0.6115678918708585,0.2045257213262599,0.11072475800973192
0,8,0.18673176055867335,0.05540568232029785,0.1021510301275187,0.5239350195220304,0.27017973721685884,0.634654550137046,0.2734851533675551,0.13208391996763813
0,16,0.15273609138029323,0.035375383107826903,0.08275305085028363,0.47375177638617405,0.28992632167549637,0.6472881723062169,0.30381934705850755,0.14525654131183674
0,32,0.1236845893171233,0.022469822319252823,0.06633626885898554,0.4259764970830715,0.3043752531907926,0.6547570556959847,0.3190146318528466,0.15335751975598555
0,64,0.09948623947356428,0.01422785398375547,0.05288858709355766,0.38169214808084084,0.31409685963463163,0.6592706427744535,0.32684143414544353,0.1583643243992987
1,1,0.1986358164530196,0.10936458991253936,0.1373070032571236,0.5355746819370489,,,,
1,2,0.16049850888773784,0.06886919669697,0.10032626417165798,0.4810789973923336,0.3075658877364442,0.6672149432852494,0.45270587604926016,0.1548139455476959
1,4,0.1272390108488488,0.04338562459745362,0.07467017945802082,0.42901669883864335,0.3350188324655985,0.6666417503172206,0.42609523161593454,0.16524001304983993
1,8,0.10081039758357055,0.027327208877518484,0.05668571784303361,0.3822684548581104,0.33589661551168604,0.6668789842836942,0.39754690924361835,0.1664476510096824
1,16,0.07991881322026673,0.017213542676214146,0.043656630478752685,0.34057041528663123,0.335037381421484,0.666794069267179,0.37678450505616734,0.16663303632606907
1,32,0.06338494473716175,0.010843417446074583,0.03396650463628911,0.30341484516288664,0.3343949490151612,0.666724532664781,0.3620880204350172,0.16666144226690371
1,64,0.050285608539374686,0.0068308106276707475,0.026611755056148694,0.2703120497704407,0.33399464268613865,0.666690811078495,0.35204910377311815,0.16666584928202832
2,1,0.1563458952805405,0.07412573451867528,0.10004315634800086,0.4724484383435557,,,,
2,2,0.12271151917493034,0.04670379351626471,0.07382780760438805,0.42064053371229104,0.34947065909517105,0.6664347579077909,0.4383862582866307,0.16756900878636638
2,4,0.09705243382394575,0.02941250744701396,0.05558441534127927,0.374737296763074,0.3384343864528949,0.6671099568900623,0.40948387738733505,0.16670830455726682
2,8,0.0768848017123908,0.018526117235768972,0.042527568494417925,0.3338524831802047,0.3360659524075116,0.6668692324268397,0.3862820678865937,0.16666880204267098
2,16,0.06095277436944834,0.011670068051880185,0.032934173774863566,0.2974287391561752,0.3350065509796081,0.6667475744181051,0.36881301406325484,0.1666667159339592
2,32,0.048343142999218915,0.007351518421365898,0.025721537156818425,0.2649788846331674,0.33438061699573485,0.6666988062059414,0.356608494630132,0.16666665479297404
2,64,0.03835246515892202,0.00463112544894491,0.020203259999314382,0.23606934917363148,0.3339919600967747,0.6666794249535046,0.34838875769152367,0.16666666349070125
\end{filecontents*}
\pgfplotstabletypeset[
  col sep = comma,
  every head row/.style = {
    before row=\toprule,
    after row=\midrule
  },
  every last row/.style = { after row=\bottomrule },
  every nth row = {7}{ before row=\midrule }, 
  every odd column/.style = {
    fixed,
    fixed zerofill,
    column type = r
  },
  every even column/.style = {
    sci,
    sci e,
    sci zerofill
  },
  columns={
    r,
    N,
    A_error,
    A_rate,
    z0_error,
    z0_rate,
    z1_error,
    z1_rate,
    z2_error,
    z2_rate
  },
  columns/r/.style = {
    column name = $r$,
    fixed,
    fixed zerofill,
    precision = 0,
    column type = r,
    postproc cell content/.code = {
      \pgfmathtruncatemacro{\tmp}{mod(\pgfplotstablerow, 7)}
      \ifnum\tmp=0\relax
      \else
        \pgfkeyssetvalue{/pgfplots/table/@cell content}{}
      \fi
    },
  },
  columns/N/.style = {
    column name = $N$,
    precision = 0,
    column type = r,
  },
  columns/{A_error}/.style={
    column name=$ \lvert z - z _h \rvert _{ \mathcal{A} } $
  },
  columns/{A_rate}/.style={
    column name=rate
  },
  columns/{z0_error}/.style={
    column name=$ \lVert z ^0 - z _h ^0  \rVert _\Omega  $
  },
  columns/{z0_rate}/.style={
    column name=rate
  },
  columns/{z1_error}/.style={
    column name=$ \lVert z ^1 - z _h ^1  \rVert _\Omega  $
  },
  columns/{z1_rate}/.style={
    column name=rate
  },
  columns/{z2_error}/.style={
    column name=$ \lVert z ^2 - z _h ^2  \rVert _\Omega  $
  },
  columns/{z2_rate}/.style={
    column name=rate
  }
  ]{dirac_lsd.csv}
  \medskip
  \caption{Hodge--Dirac on L-shaped domain, minimum-regularity
    solution. \label{t:dirac_lsd}}
\end{table}

Next, we consider the L-shaped domain
$ \Omega = ( - \frac{1}{2} , \frac{1}{2} ) ^2 \setminus ( -
\frac{1}{2} , 0] ^2 $, which removes the third quadrant from the unit
square centered at the origin. Since this domain has a reentrant
corner at the origin with angle $ \frac{ 3 \pi }{ 2 } $, it is merely
$t$-regular for all $ t < \frac{ 2 }{ 3 } $. We take
$ \mathcal{T} _h $ to be the structured triangle mesh corresponding to
a $ 2 N \times 2 N $ grid on the original unit square.
\Cref{t:dirac_lsd} shows the result of applying the equal-order HDG
methods to the Hodge--Dirac problem whose exact solution in polar
coordinates is
\begin{equation*}
  z ^0 = 0 , \qquad z ^1 = \operatorname{grad} \Bigl( \lvert x \rvert ^{ 2 / 3 } \cos \bigl[  \tfrac{2}{3} ( \theta + \tfrac{ \pi }{ 2 } ) \bigr]  \Bigr) , \qquad z ^2 = 0 ,
\end{equation*}
which has minimum regularity $ H ^s \Lambda (\Omega) $ for all
$ s < \frac{ 2 }{ 3 } $. For all $r$, we observe the expected
minimum-regularity convergence rates of
$ \frac{ 2 }{ 3 } - \frac{1}{2} = \frac{ 1 }{ 6 } $ in the
$\mathcal{A}$ seminorm and
$ \frac{ 2 }{ 3 } + \frac{ 2 }{ 3 } - 1 = \frac{ 1 }{ 3 } $ in the
$ L ^2 $ norm, matching the estimates in
\cref{t:dirac_error}. Furthermore, we observe the improved
$ L ^2 $-norm convergence rate of $ \frac{ 2 }{ 3 } $ for $ z ^1 $,
matching the estimate in \cref{t:dirac_error_n=2}.

\begin{remark}
  \label{r:lsd_bc}
  Note that this minimum-regularity solution satisfies the normal
  boundary conditions on the two radial segments adjacent to the
  reentrant corner ($ \theta = - \frac{ \pi }{ 2 } $ and
  $ \theta = \pi $) but not all of $ \partial \Omega $. However, since
  the solution is smooth away from the reentrant corner, a bump
  function/smooth extension argument can be used to relate this
  solution and its properties to one satisfying the boundary
  conditions on all of $ \partial \Omega $; cf.~\citet[Remark
  4.1]{BaCaSt2024}.
\end{remark}

\begin{table}
  \centering
  \small
  \begin{filecontents*}{dirac_cube.csv}
r,N,z0_error,z1_error,z2_error,z3_error,A_error,z0_rate,z1_rate,z2_rate,z3_rate,A_rate
0,1,0.3542883954880806,1.2595604373477636,0.8317766711706803,0.46400111466982386,1.8260144745626863,,,,,
0,2,0.5021624942810154,0.6568536396528648,0.6645246120085676,0.4367863357294757,2.2786379317608865,-0.5032300684456611,0.939276498729301,0.3238735886928932,0.0872005469450643,-0.3194735018014718
0,4,0.29367185570885584,0.36554695755843125,0.3689696852753076,0.2762030640014414,1.9056605679143193,0.7739492696465394,0.8455152002617742,0.84882034496547,0.6611984002749347,0.25788053076143613
0,8,0.15241645492319153,0.1965739717300813,0.19088936546819227,0.14365305743406176,1.4192899784292092,0.9461863453853956,0.8949844365722321,0.9507655568490614,0.9431406251841459,0.42512179256437344
0,16,0.07659570322356878,0.10167674157230046,0.09656905061136135,0.07101749706740702,1.0180435537394648,0.9926832960915393,0.9510826054866843,0.9831039324847639,1.0163422785514924,0.4793700960560494
1,1,0.3342291300659368,0.5072962709895207,0.5638580097838061,0.28564089919606445,1.7015352485692075,,,,,
1,2,0.11054692315199736,0.15970890164361606,0.18622736832973877,0.1210401332050758,0.898613589889088,1.5961786071320982,1.6673838295915688,1.5982668004967726,1.2387170845922804,0.9210642528479703
1,4,0.03186294612773487,0.04959042070216808,0.04974856846649838,0.0317157140336826,0.3760093900554047,1.7947072979036736,1.6873113563463473,1.9043381924149392,1.9322157525061638,1.2569321894814267
1,8,0.008227983546856162,0.013733872696184493,0.012781182265472759,0.00788339647826323,0.1417710507337418,1.9532688541410872,1.8523229685889042,1.9606337197301933,2.008308584059951,1.4072057224580194
1,16,0.0020647308321666735,0.0035814849127981997,0.0032535182141410655,0.0019378756072303599,0.05130230429895239,1.9945851914466664,1.9391087262275848,1.9739487569457461,2.0243413668349692,1.4664674355924823
2,1,0.1260979093900864,0.2238572970357071,0.2621993973172071,0.1835764599271902,0.9254800552271308,,,,,
2,2,0.02460455273122786,0.04426924148870959,0.04598476267517994,0.035453103308477854,0.2652040616175809,2.357547160963811,2.3382027867218116,2.5114365695552534,2.3723972521252588,1.8030990287075463
2,4,0.003268276265286603,0.005896261413888547,0.006273405013485729,0.004192948072882096,0.051720087470509754,2.912323447140226,2.908432260825858,2.873835283039723,3.0798750475861505,2.358306250398276
2,8,0.00041131238925666834,0.0006959974131536556,0.0007919130906344665,0.00047112691186013757,0.009112111475868481,2.990223505289899,3.0826466394395196,2.9858346926477566,3.1537773138384795,2.50486741314979
2,16,5.125742954776196e-05,8.361989613952526e-05,9.836626751665863e-05,5.654668203255617e-05,0.001596556318755735,3.0044014908821737,3.057163787361583,3.009106542950134,3.0586014650799314,2.5128219517329713
\end{filecontents*}
\pgfplotstabletypeset[
  col sep = comma,
  every head row/.style = {
    before row=\toprule,
    after row=\midrule
  },
  every last row/.style = { after row=\bottomrule },
  every nth row = {5}{ before row=\midrule }, 
  every odd column/.style = {
    fixed,
    fixed zerofill,
    column type = r
  },
  every even column/.style = {
    sci,
    sci e,
    sci zerofill
  },
  columns={
    r,
    N,
    A_error,
    A_rate,
    z0_error,
    z0_rate,
    z1_error,
    z1_rate,
    z2_error,
    z2_rate,
    z3_error,
    z3_rate
  },
  columns/r/.style = {
    column name = $r$,
    fixed,
    fixed zerofill,
    precision = 0,
    column type = r,
    postproc cell content/.code = {
      \pgfmathtruncatemacro{\tmp}{mod(\pgfplotstablerow, 5)}
      \ifnum\tmp=0\relax
      \else
        \pgfkeyssetvalue{/pgfplots/table/@cell content}{}
      \fi
    },
  },
  columns/N/.style = {
    column name = $N$,
    precision = 0,
    column type = r,
  },
  columns/{A_error}/.style={
    column name=$ \lvert z - z _h \rvert _{ \mathcal{A} } $
  },
  columns/{A_rate}/.style={
    column name=rate
  },
  columns/{z0_error}/.style={
    column name=$ \lVert z ^0 - z _h ^0  \rVert _\Omega  $
  },
  columns/{z0_rate}/.style={
    column name=rate
  },
  columns/{z1_error}/.style={
    column name=$ \lVert z ^1 - z _h ^1  \rVert _\Omega  $
  },
  columns/{z1_rate}/.style={
    column name=rate
  },
  columns/{z2_error}/.style={
    column name=$ \lVert z ^2 - z _h ^2  \rVert _\Omega  $
  },
  columns/{z2_rate}/.style={
    column name=rate
  },
  columns/{z3_error}/.style={
    column name=$ \lVert z ^3 - z _h ^3  \rVert _\Omega  $
  },
  columns/{z3_rate}/.style={
    column name=rate
  }
  ]{dirac_cube.csv}
  \medskip
  \caption{Hodge--Dirac on unit cube, smooth
    solution. \label{t:dirac_cube}}
\end{table}

Finally, we consider the unit cube $ \Omega = ( 0, 1 ) ^3 $, taking
$ \mathcal{T} _h $ to be a structured tetrahedral mesh, obtained by
dividing $\Omega$ into an $ N \times N \times N $ cubical grid and
further subdividing each smaller cube into six
tetrahedra. \Cref{t:dirac_cube} shows the result of applying the equal-order HDG methods to the Hodge--Dirac problem whose exact solution is
\begin{alignat*}{2}
  z ^0
  &= \mathrlap{\cos ( \pi x _1 ) \cos ( \pi x _2 ) \cos ( \pi x _3 ) ,}\\
  z ^1
  &=
    \begin{bmatrix}
      \sin ( \pi x _1 )\\
      \sin ( \pi x _2 )\\
      \sin ( \pi x _3 )
    \end{bmatrix}
    &&+
    \begin{bmatrix}
      \hphantom{-} \sin ( \pi x _1 ) \cos ( \pi x _2 ) \\
      - \cos ( \pi x _1 ) \sin ( \pi x _2 ) \\
      0
    \end{bmatrix} ,\\
  z ^2
  &=
    \begin{bmatrix}
      \sin ( \pi x _2 ) \sin ( \pi x _3 ) \\
      \sin ( \pi x _1 ) \sin ( \pi x _3 ) \\
      \sin ( \pi x _1 ) \sin ( \pi x _2 )
    \end{bmatrix}
    &&+
    \begin{bmatrix}
      \cos ( \pi x _1 ) \sin ( \pi x _2 ) \sin ( \pi x _3 ) \\
      \sin ( \pi x _1 ) \cos ( \pi x _2 ) \sin ( \pi x _3 ) \\
      \sin ( \pi x _1 ) \sin ( \pi x _2 ) \cos ( \pi x _3 )
    \end{bmatrix}, \\
  z ^3 &= \mathrlap{\sin ( \pi x _1 ) \sin ( \pi x _2 ) \sin ( \pi x _3 ) ,}
\end{alignat*}
where $ z ^1 $ and $ z ^2 $ are each written as a sum of exact and
coexact terms. Again, since $\Omega$ is convex and thus $1$-regular,
and $z$ is smooth, we observe the expected convergence rates of
$ r + \frac{1}{2} $ in the $\mathcal{A}$ seminorm and $ r + 1 $ in the
$ L ^2 $ norm, matching the estimates in \cref{t:dirac_error}.

\subsection{Hodge--Laplace}

We now present numerical experiments for the Hodge--Laplace problem,
using the same domains $\Omega$ just considered for the Hodge--Dirac
problem. Since $ k = 0 $ and $ k = n $ correspond to the scalar
Poisson equation, for which HDG methods have already been well
studied, we focus on the cases corresponding to the vector Poisson
equation. For each domain, we take the $k$-form Hodge--Laplace problem
whose exact solution is $u = z ^k $ from the corresponding
Hodge--Dirac experiment, and we take the HDG penalty coefficients to
be $ \alpha ^{k-1} = 1 $ and $ \alpha ^k = - 1 $.

\begin{table}
  \centering
  \small
  \begin{filecontents*}{laplace_square.csv}
r,N,sigma_error,u_error,rho_error,A_error,sigma_rate,u_rate,rho_rate,A_rate
0,1,3.1415926446411446,3.729007015102228,2.5492617702453697,7.151908846200964,,,,
0,2,1.5699546278345204,2.354873679439213,1.9244663611049828,6.468673712750739,1.0007732599682766,0.6631418385437293,0.4056210700687384,0.14485840553047424
0,4,0.7756003909534835,1.4075666231675643,0.9746236630834861,5.212846788828113,1.0173374296241262,0.7424464628117043,0.9815412980918142,0.31139848521217006
0,8,0.3815124753645369,0.7650072926904302,0.4671173824176596,3.838862628171966,1.0235832970520873,0.8796578038317542,1.0610601185920623,0.44139252322892597
0,16,0.18869912638698622,0.39640802743242737,0.2262191360943824,2.7462663119113944,1.0156424898411134,0.948487322835621,1.0460641583944439,0.4832074011654681
0,32,0.09377748597518455,0.20130280006711337,0.11118251652757856,1.9483799766438699,1.0087742354276727,0.9776189381191835,1.0247910316313047,0.4951964719066986
0,64,0.04673906583903871,0.10136456655012235,0.055116689447726665,1.3791027108403204,1.0046127035677426,0.9898138143394521,1.0123688007113092,0.4985451540653252
1,1,0.648347407264223,1.0963331738011868,1.7468057792705534,4.102111633078228,,,,
1,2,0.28274821371641506,0.47065798969134637,0.5090147282950294,2.227096603953318,1.1972491569563983,1.2199353066810406,1.778939903097404,0.8812026144889615
1,4,0.0703610838662772,0.14319171084526552,0.13267857607130418,0.8966497159747339,2.0066682980439223,1.716731106172169,1.9397719670782725,1.3125477398105907
1,8,0.017067564907253228,0.03899324500095282,0.03288066146807047,0.3308499487010963,2.043520468351481,1.8766518542532569,2.012624206614186,1.4383674379517444
1,16,0.004172174786881968,0.010076024299177437,0.00813626291161478,0.1185163810501796,2.032385734660263,1.9522977138186404,2.0148011186769166,1.481090577105328
1,32,0.0010297510165217252,0.0025527734903716573,0.0020217022852309107,0.04207631690870501,2.018504048258574,1.9807890693143548,2.0087957361584783,1.4940061477710502
1,64,0.00025570732962629835,0.0006418851880897245,0.0005038094071999631,0.014897646840813525,2.009730130494784,1.9916783565633338,2.0046205967614537,1.497923958218907
2,1,0.351744810710066,0.49801954668915643,0.6243139017721783,1.894823108809575,,,,
2,2,0.04212795523517938,0.11666240865910009,0.11046342559744585,0.569298914248742,3.0616793409299152,2.093862601475438,2.4987028182036184,1.7348049185270489
2,4,0.005431060906385458,0.014846698047500826,0.014019889516783002,0.10550097754911161,2.955471945998104,2.974125754151432,2.9780218874407884,2.431929981570457
2,8,0.0006917060403955409,0.001763479971021278,0.0017696569754171315,0.018538062716329677,2.9730030823150306,3.073645011418561,2.9859333351077133,2.508693975738519
2,16,8.732457239818414e-05,0.0002134177872792631,0.00022244277128964556,0.0032573370768365613,2.9856994766508986,3.046672862712761,2.991963618860785,2.508725559823014
2,32,1.0956399766078193e-05,2.6287915906780365e-05,2.7886282546396158e-05,0.0005742577681196811,2.994613861479836,3.0212087469750397,2.9958066860223305,2.503922648383611
2,64,1.371217781870662e-06,3.2649445890489354e-06,3.4909952523183504e-06,0.00010140080401412181,2.9982441831391924,3.0092693579086554,2.997845324990978,2.5016293757925325
\end{filecontents*}
\pgfplotstabletypeset[
  col sep = comma,
  every head row/.style = {
    before row=\toprule,
    after row=\midrule
  },
  every last row/.style = { after row=\bottomrule },
  every nth row = {7}{ before row=\midrule }, 
  every odd column/.style = {
    fixed,
    fixed zerofill,
    column type = r
  },
  every even column/.style = {
    sci,
    sci e,
    sci zerofill
  },
  columns={
    r,
    N,
    A_error,
    A_rate,
    sigma_error,
    sigma_rate,
    u_error,
    u_rate,
    rho_error,
    rho_rate
  },
  columns/r/.style = {
    column name = $r$,
    fixed,
    fixed zerofill,
    precision = 0,
    column type = r,
    postproc cell content/.code = {
      \pgfmathtruncatemacro{\tmp}{mod(\pgfplotstablerow, 7)}
      \ifnum\tmp=0\relax
      \else
        \pgfkeyssetvalue{/pgfplots/table/@cell content}{}
      \fi
    },
  },
  columns/N/.style = {
    column name = $N$,
    precision = 0,
    column type = r,
  },
  columns/{A_error}/.style={
    column name=$ \lvert z - z _h \rvert _{ \mathcal{A} } $
  },
  columns/{A_rate}/.style={
    column name=rate
  },
  columns/{sigma_error}/.style={
    column name=$ \lVert \sigma - \sigma _h  \rVert _\Omega  $
  },
  columns/{sigma_rate}/.style={
    column name=rate
  },
  columns/{u_error}/.style={
    column name=$ \lVert u - u _h \rVert _\Omega  $
  },
  columns/{u_rate}/.style={
    column name=rate
  },
  columns/{rho_error}/.style={
    column name=$ \lVert \rho - \rho _h \rVert _\Omega  $
  },
  columns/{rho_rate}/.style={
    column name=rate
  }
  ]{laplace_square.csv}
  \medskip
  \caption{$1$-form Hodge--Laplace on unit square, smooth
    solution. \label{t:laplace_square}}
\end{table}

We again start by letting $\Omega$ be the unit square, where
$ \mathcal{T} _h $ is an $ N \times N $ structured triangle
mesh. \Cref{t:laplace_square} shows the result of applying the
equal-order HDG methods to the $1$-form Hodge--Laplace problem whose
exact solution is
\begin{equation*}
  u =
  \begin{bmatrix}
    \sin ( \pi x _1 ) \\
    \sin ( \pi x _2 )
  \end{bmatrix} +
  \begin{bmatrix}
    \hphantom{-}\sin ( \pi x _1 ) \cos ( \pi x _2 ) \\
    - \cos ( \pi x _1 ) \sin ( \pi x _2 )
  \end{bmatrix} ,
\end{equation*}
with $ \sigma = - \operatorname{div} u $ and
$ \rho = \operatorname{curl} u $. Since $\Omega$ is $1$-regular and
the exact solution is smooth, we observe the expected convergence
rates of $ r + \frac{1}{2} $ in the $\mathcal{A}$ seminorm and
$ r + 1 $ in the $ L ^2 $ norm for all three components, matching the
estimates in \cref{t:laplace_error,c:B_Bstar}.

\begin{table}
  \centering
  \small
  \begin{filecontents*}{laplace_lsd.csv}
r,N,sigma_error,u_error,rho_error,A_error,sigma_rate,u_rate,rho_rate,A_rate
0,1,0.23313473679103075,0.20776452236023663,0.10747220074288656,0.5207375860263749,,,,
0,2,0.2060892149448913,0.1543874916420488,0.11238384975578176,0.5078242215432475,0.17789497250618158,0.4283934509873255,-0.06447119249334587,0.03622733353357965
0,4,0.18257212917408142,0.11350572577853782,0.10166849035530666,0.48201324497752607,0.17480246244942776,0.44379079540136424,0.14456210571130246,0.07525641840410968
0,8,0.1579659104756944,0.084120838135708,0.08710194182135736,0.45119182097170574,0.20885329131590424,0.4322299463697856,0.22309583341932626,0.09533187466135518
0,16,0.1336207443600088,0.0633253344450626,0.07268687850030317,0.41818568312425863,0.24146925321863888,0.40968043301800394,0.26100993107767634,0.10959724425249795
0,32,0.11113998489881201,0.048506278634367725,0.05972888080514387,0.38448390988468784,0.2657660514172622,0.38461129008916695,0.28326626213054507,0.12122044643630336
0,64,0.09133086041497089,0.037708021768375684,0.04860268166166964,0.3511818095074052,0.283203619486573,0.3633000352367297,0.2973927719472904,0.1307051064908915
1,1,0.16445746421525273,0.11548035290587842,0.11080662762805005,0.4468112045734749,,,,
1,2,0.13880268098767898,0.08072317080574092,0.0858727384343217,0.4170189614557038,0.24467905513718513,0.5165926718195029,0.3677720708811032,0.09955238124644139
1,4,0.1137349558065383,0.058142065381538904,0.06649133081833998,0.3837476715971772,0.28735970729894245,0.47340052334723914,0.3690339458722045,0.11995498622797807
1,8,0.09234356549115572,0.04285570898665023,0.051855216336358735,0.35023455991033287,0.3005923839971314,0.44009491673698253,0.3586771302279619,0.1318365460543982
1,16,0.0745826313686771,0.03225725507145679,0.04072315365853206,0.3178503958909289,0.3081717394776877,0.4098637268295686,0.34863983307612756,0.13997356550431173
1,32,0.060012850920920686,0.02470289060500243,0.03215464777181421,0.28727863226204864,0.3135682311318222,0.3849438092679547,0.340822004076597,0.14589719590806838
1,64,0.04815228869881718,0.019165758016913503,0.02548153313846775,0.2588458563944624,0.31766709535724935,0.36614880973188146,0.3355752036725699,0.15035746572293962
2,1,0.1352377414039325,0.08273812474147511,0.08551830713731458,0.4097644894603222,,,,
2,2,0.10990893097681549,0.05915383242317183,0.06584194304391616,0.37706158241688437,0.29918920567274876,0.4840806192213982,0.37722638334977043,0.11999479814186031
2,4,0.08909899512297835,0.04307999405174385,0.05095096730447834,0.34411346239808976,0.3028275555868474,0.457453589375314,0.3698973751472976,0.13191583218578695
2,8,0.07190031693748689,0.032050621839602146,0.039749521970536654,0.3122327824781822,0.30941103072765763,0.42666570269315685,0.3581720256381331,0.14026231508465248
2,16,0.057814679729028616,0.024307969477039844,0.03123309949272179,0.2821226859910558,0.31456227606672416,0.39892296255167903,0.34786176476853825,0.14629933888971935
2,32,0.04636280116475678,0.018720564572367614,0.02466645590232264,0.25412611683522096,0.3184681197726791,0.3768054412496138,0.34052530296071293,0.15077802983303523
2,64,0.03710138851872865,0.014579943922737823,0.019543857361229662,0.2283691811274645,0.321494553567077,0.36063877339449135,0.3358352034598928,0.15417668116166472
\end{filecontents*}
\pgfplotstabletypeset[
  col sep = comma,
  every head row/.style = {
    before row=\toprule,
    after row=\midrule
  },
  every last row/.style = { after row=\bottomrule },
  every nth row = {7}{ before row=\midrule }, 
  every odd column/.style = {
    fixed,
    fixed zerofill,
    column type = r
  },
  every even column/.style = {
    sci,
    sci e,
    sci zerofill
  },
  columns={
    r,
    N,
    A_error,
    A_rate,
    sigma_error,
    sigma_rate,
    u_error,
    u_rate,
    rho_error,
    rho_rate
  },
  columns/r/.style = {
    column name = $r$,
    fixed,
    fixed zerofill,
    precision = 0,
    column type = r,
    postproc cell content/.code = {
      \pgfmathtruncatemacro{\tmp}{mod(\pgfplotstablerow, 7)}
      \ifnum\tmp=0\relax
      \else
        \pgfkeyssetvalue{/pgfplots/table/@cell content}{}
      \fi
    },
  },
  columns/N/.style = {
    column name = $N$,
    precision = 0,
    column type = r,
  },
  columns/{A_error}/.style={
    column name=$ \lvert z - z _h \rvert _{ \mathcal{A} } $
  },
  columns/{A_rate}/.style={
    column name=rate
  },
  columns/{sigma_error}/.style={
    column name=$ \lVert \sigma - \sigma _h  \rVert _\Omega  $
  },
  columns/{sigma_rate}/.style={
    column name=rate
  },
  columns/{u_error}/.style={
    column name=$ \lVert u - u _h \rVert _\Omega  $
  },
  columns/{u_rate}/.style={
    column name=rate
  },
  columns/{rho_error}/.style={
    column name=$ \lVert \rho - \rho _h \rVert _\Omega  $
  },
  columns/{rho_rate}/.style={
    column name=rate
  }
  ]{laplace_lsd.csv}
  \medskip
  \caption{$1$-form Hodge--Laplace on L-shaped domain,
    minimum-regularity solution. \label{t:laplace_lsd}}
\end{table}

Next, we take $\Omega$ to be the L-shaped domain, with the same family
of structured triangle meshes $ \mathcal{T} _h $ as
before. \Cref{t:laplace_lsd} shows the result of applying the
equal-order HDG methods to the $1$-form Hodge--Laplace problem whose
exact solution is
\begin{equation*}
  u = \operatorname{grad} \Bigl( \lvert x \rvert ^{ 2 / 3 } \cos \bigl[  \tfrac{2}{3} ( \theta + \tfrac{ \pi }{ 2 } ) \bigr]  \Bigr) ,
\end{equation*}
with $ \sigma = - \operatorname{div} u = 0 $ and
$ \rho = \operatorname{curl} u = 0 $. (See \cref{r:lsd_bc} regarding
the boundary conditions.) Recall that $\Omega$ is only $t$-regular for
$ t < \frac{ 2 }{ 3 } $, and $u$ has minimum Sobolev regularity. For
all $r$, we therefore observe the expected convergence rates of
$ \frac{ 2 }{ 3 } - \frac{1}{2} = \frac{ 1 }{ 6 } $ in the
$\mathcal{A}$ seminorm and
$ \frac{ 2 }{ 3 } + \frac{ 2 }{ 3 } - 1 = \frac{ 1 }{ 3 } $ in the
$ L ^2 $ norm for all three components, matching the estimates in
\cref{t:laplace_error,c:B_Bstar}. Unlike in the Hodge--Dirac case when
$ n = 2 $, we do not observe improved $ L ^2 $-norm convergence in the
$1$-form component $u$.

\begin{table}
  \centering
  \small
  \begin{filecontents*}{laplace_cube.csv}
r,N,sigma_error,u_error,rho_error,A_error,sigma_rate,u_rate,rho_rate,A_rate
0,1,2.407737353552305,1.6018889659033986,2.689214464371947,6.101388146478902,,,,
0,2,1.8328643706173235,1.4481083772677983,2.085991446499186,5.968993122384218,0.3935779928137836,0.14560457296705498,0.36645157229710446,0.031649920736452675
0,4,0.9912013752308734,1.0623534810012727,1.0932607989276417,5.179353377321956,0.8868499381828275,0.44690569904452043,0.9320956431409821,0.20471559734571843
0,8,0.5131808293023775,0.6203292839247002,0.522700056631923,3.901893542615579,0.9497109118964856,0.776157742519566,1.0645823783126844,0.4085975774482884
0,16,0.26084986595963083,0.32863356985479814,0.25135638405292005,2.8078273146694848,0.9762475852116929,0.9165543730841829,1.0562489839453795,0.4747202060912512
1,1,1.6479699663436107,1.00490011020503,2.0111681632777345,4.60801181915245,,,,
1,2,0.5288580255576177,0.4937824328324021,0.6669649486328645,2.860664345154916,1.639737569055277,1.025104684608747,1.5923508674519375,0.6877941875962679
1,4,0.14849951535168548,0.1623910197137732,0.1773356828647894,1.2124465127051518,1.8324222533949222,1.6044036578771061,1.911128085186817,1.2384291256232105
1,8,0.03806049380595096,0.04794815546327555,0.044200483844747264,0.46097752545348347,1.9640920378861741,1.7599246313226242,2.0043487920754375,1.395152783776895
1,16,0.00950934775469083,0.012920849229583604,0.010905130450529096,0.16730138049352652,2.0008759843353623,1.8917744216643806,2.019055134255596,1.462247065187325
2,1,0.6686482534533593,0.41949406253612453,0.8188910125954281,2.389699572136005,,,,
2,2,0.12939367602978907,0.1616607945269715,0.18529166328147653,0.9316564194975229,2.3694803628175345,1.3756805477767622,2.1438734817477507,1.3589593426869684
2,4,0.017300799348848178,0.023607155556414797,0.024008974006188316,0.18663507214979547,2.902856507602998,2.7756737175553767,2.9481523142884,2.3195778888058247
2,8,0.0021938907526646784,0.002802835974769544,0.0030395584396114856,0.03315836551742398,2.9792751042731753,3.074264996891846,2.9816400911749836,2.49277532188263
2,16,0.0002763828395829657,0.00033611129694137925,0.00038227718856282934,0.005831388054856357,2.988751739162021,3.0598763800572844,2.9911707381819443,2.5074616570226915
\end{filecontents*}
\pgfplotstabletypeset[
  col sep = comma,
  every head row/.style = {
    before row=\toprule,
    after row=\midrule
  },
  every last row/.style = { after row=\bottomrule },
  every nth row = {5}{ before row=\midrule }, 
  every odd column/.style = {
    fixed,
    fixed zerofill,
    column type = r
  },
  every even column/.style = {
    sci,
    sci e,
    sci zerofill
  },
  columns={
    r,
    N,
    A_error,
    A_rate,
    sigma_error,
    sigma_rate,
    u_error,
    u_rate,
    rho_error,
    rho_rate
  },
  columns/r/.style = {
    column name = $r$,
    fixed,
    fixed zerofill,
    precision = 0,
    column type = r,
    postproc cell content/.code = {
      \pgfmathtruncatemacro{\tmp}{mod(\pgfplotstablerow, 5)}
      \ifnum\tmp=0\relax
      \else
        \pgfkeyssetvalue{/pgfplots/table/@cell content}{}
      \fi
    },
  },
  columns/N/.style = {
    column name = $N$,
    precision = 0,
    column type = r,
  },
  columns/{A_error}/.style={
    column name=$ \lvert z - z _h \rvert _{ \mathcal{A} } $
  },
  columns/{A_rate}/.style={
    column name=rate
  },
  columns/{sigma_error}/.style={
    column name=$ \lVert \sigma - \sigma _h  \rVert _\Omega  $
  },
  columns/{sigma_rate}/.style={
    column name=rate
  },
  columns/{u_error}/.style={
    column name=$ \lVert u - u _h \rVert _\Omega  $
  },
  columns/{u_rate}/.style={
    column name=rate
  },
  columns/{rho_error}/.style={
    column name=$ \lVert \rho - \rho _h \rVert _\Omega  $
  },
  columns/{rho_rate}/.style={
    column name=rate
  }
  ]{laplace_cube.csv}
  \medskip
  \caption{$2$-form Hodge--Laplace on unit cube, smooth
    solution. \label{t:laplace_cube}}
\end{table}

Finally, we take $\Omega$ to be the unit cube with the same family of
$ N \times N \times N $ structured tetrahedral meshes
$ \mathcal{T} _h $ previously considered. \Cref{t:laplace_cube} shows
the result of applying the equal-order HDG methods to the $2$-form
Hodge--Laplace problem whose exact solution is
\begin{equation*}
  u = \begin{bmatrix}
    \sin ( \pi x _2 ) \sin ( \pi x _3 ) \\
    \sin ( \pi x _1 ) \sin ( \pi x _3 ) \\
    \sin ( \pi x _1 ) \sin ( \pi x _2 )
  \end{bmatrix}
  +
  \begin{bmatrix}
    \cos ( \pi x _1 ) \sin ( \pi x _2 ) \sin ( \pi x _3 ) \\
    \sin ( \pi x _1 ) \cos ( \pi x _2 ) \sin ( \pi x _3 ) \\
    \sin ( \pi x _1 ) \sin ( \pi x _2 ) \cos ( \pi x _3 )
  \end{bmatrix} ,
\end{equation*}
with $ \sigma = \operatorname{curl} u $ and
$ \rho = \operatorname{div} u $. We observe the expected convergence
rates of $ r + \frac{1}{2} $ in the $\mathcal{A}$ seminorm and
$ r + 1 $ in the $ L ^2 $ norm for $u$ and (since $ k = n -1 $) for
$\rho$, matching \cref{t:laplace_error,c:B_Bstar}. Interestingly, we
also observe order-$ ( r + 1 ) $ convergence for $\sigma$, which is
better than the order-$ ( r + \frac{1}{2} ) $ estimate in
\cref{t:laplace_error}; since $ k \neq 1 $, \cref{c:B_Bstar} does not
apply.

\section{Concluding remarks}

We conclude by mentioning several possible directions for future
investigation.

First, it would be interesting to investigate the application of the
HDG methods in this paper to domains with arbitrary topology, in which
\cref{a:harmonic,a:harmonic_k} might not necessarily hold. One
possible approach would be to replace $ \mathfrak{H} _h $ by a space
of ``discrete harmonic forms'' used in conforming FEEC, as mentioned
in \cref{r:contractible}. For instance, on a simplicial triangulation
$ \mathcal{T} _h $ of $\Omega$, one might use discrete harmonic forms
taken from the conforming $ \mathcal{P} _{ r + 1 } ^- \Lambda $
complex, which are in $ \mathcal{P} _r \Lambda $ by \citep[Theorem
3.4]{ArFaWi2006}. However, such discrete harmonics are generally not
contained in $ \mathfrak{H} (\Omega) $, so additional analytical
ingredients (e.g., an inf-sup stability analysis) might be needed to
control the associated ``variational crime.''

Next, for the $k$-form Hodge--Laplace problem, it would be interesting
to know whether the convergence rate of $ r + \frac{1}{2} $ for the
auxiliary variables can be sharpened to $ r + 1 $ for general $k$, not
just the cases $ k = 1 $ and $ k = n - 1 $ covered by
\cref{c:B_Bstar}. This is known to be possible at least for $ k = 0 $
and $ k = n $, where optimal-order estimates for the auxiliary
variable in the scalar Poisson equation have been obtained by HDG
projection and $M$-decomposition techniques
\citep{CoGoSa2010,CoQiSh2012,CoFuSa2017,CoFu2017a,CoFu2017b}. It would
be worthwhile to see whether these techniques can be extended to the
Hodge--Laplace problem, and to investigate related questions of
superconvergence and postprocessing.

Finally, it would be interesting to investigate efficient solvers and
preconditioners for the HDG methods introduced here. The alternative
Hodge--Laplace hybridization of \cref{s:laplace_uhat} seems
particularly attractive for such an investigation, due to the
symmetric positive-(semi)definite form of the condensed system, which
might be solved by the conjugate gradient method with an
appropriately devised preconditioner.

\end{document}